\DeclareFontFamily{OT1}{rsfs}{}
\DeclareFontShape{OT1}{rsfs}{n}{it}{<-> rsfs10}{}
\DeclareMathAlphabet{\curly}{OT1}{rsfs}{n}{it}
\newcommand{\eqnum}{\refstepcounter{equation}\textup{\tagform@{\theequation}}}
\newcommand\beq[1]{\begin{equation}\label{#1}}
	\newcommand\eeq{\end{equation}}
\newcommand\beqa{\begin{eqnarray*}}
	\newcommand\eeqa{\end{eqnarray*}}
\title[Categorical wall-crossing formula]{Categorical wall-crossing formula 
for Donaldson-Thomas theory on the resolved conifold}
\date{}
\author{Yukinobu Toda}
\DeclareFontFamily{U}{rsfs}{%
	\skewchar\font127}
\DeclareFontShape{U}{rsfs}{m}{n}{%
	<-6>rsfs5<6-8.5>rsfs7<8.5->rsfs10}{}
\DeclareSymbolFont{rsfs}{U}{rsfs}{m}{n}
\DeclareRobustCommand*\rsfs{%
	\@fontswitch\relax\mathrsfs}
\theoremstyle{plain}
\newtheorem{thm}{Theorem}[section]
\newtheorem{prop}[thm]{Proposition}
\newtheorem{lem}[thm]{Lemma}
\newtheorem{defi}[thm]{Definition}
\newtheorem{rmk}[thm]{Remark}
\newtheorem{cor}[thm]{Corollary}
\newtheorem{prop-defi}[thm]{Proposition-Definition}
\newtheorem{thm-defi}[thm]{Theorem-Definition}
\newtheorem{lem-defi}[thm]{Lemma-Definition}
\newcommand{\ssslash}{/\!\!/}
\newcommand{\cC}{\mathcal{C}}
\newcommand{\dD}{\mathcal{D}}
\newcommand{\eE}{\mathcal{E}}
\newcommand{\gG}{\mathcal{G}}
\newcommand{\hH}{\mathcal{H}}
\newcommand{\mM}{\mathcal{M}}
\newcommand{\oO}{\mathcal{O}}
\newcommand{\pP}{\mathcal{P}}
\newcommand{\sS}{\mathcal{S}}
\newcommand{\uU}{\mathcal{U}}
\newcommand{\wW}{\mathcal{W}}
\newcommand{\yY}{\mathcal{Y}}
\newcommand{\fM}{\mathfrak{M}}
\newcommand{\Hom}{\mathop{\rm Hom}\nolimits}
\newcommand{\dR}{\mathbf{R}}
\newcommand{\Pic}{\mathop{\rm Pic}\nolimits}
\newcommand{\id}{\textrm{id}}
\newcommand{\Ext}{\mathop{\rm Ext}\nolimits}
\newcommand{\Spec}{\mathop{\rm Spec}\nolimits}
\newcommand{\Coh}{\mathop{\rm Coh}\nolimits}
\newcommand{\us}{\mathchar`-\rm{us}}
\newcommand{\sss}{\mathchar`-\rm{ss}}
\newcommand{\cneq}{\mathrel{\raise.095ex\hbox{:}\mkern-4.2mu=}}
\newcommand{\eqcn}{\mathrel{=\mkern-4.5mu\raise.095ex\hbox{:}}}
\newcommand{\ext}{\mathop{\rm ext}\nolimits}
\newcommand{\Aut}{\mathop{\rm Aut}\nolimits}
\newcommand{\PPer}{\mathop{\rm Per}\nolimits}
\newcommand{\IC}{\mathop{\rm IC}\nolimits}
\newcommand{\DT}{\mathop{\rm DT}\nolimits}
\newcommand{\modu}{\mathop{\rm mod}\nolimits}
\newcommand{\End}{\mathop{\rm End}\nolimits}
\newcommand{\RHom}{\mathop{\dR\mathrm{Hom}}\nolimits}
\newcommand{\Ker}{\mathop{\rm Ker}\nolimits}
\newcommand{\GL}{\mathop{\rm GL}\nolimits}
\newcommand{\cl}{\mathop{\rm cl}\nolimits}
\newcommand{\MF}{\mathop{\rm MF}\nolimits}
\newcommand{\Crit}{\mathop{\rm Crit}\nolimits}
\newcommand{\wt}{\mathrm{wt}}
\newcommand{\qcoh}{\mathrm{qcoh}}
\newcommand{\inclusion}{\ar@<-0.3ex>@{^{(}->}[r]}
\newcommand{\upinclusion}{\ar@<-0.3ex>@{^{(}->}[u]}
\newcommand{\leinclusion}{\ar@<-0.3ex>@{^{(}->}[l]}
\newcommand{\doinclusion}{\ar@<-0.3ex>@{^{(}->}[d]}
\newcommand{\diasquare}{\ar@{}[rd]|\square}
\newcommand{\lgakko}{(\!(}
\newcommand{\rgakko}{)\!)}
\newcommand{\C}{\mathbb{C}^{\ast}}
\renewcommand{\theequation}{%
	\thesection.\arabic{equation}}
\begin{document}
	
	\begin{abstract}
		We prove wall-crossing formula 
		for categorical Donaldson-Thomas invariants on the 
		resolved conifold, which categorifies Nagao-Nakajima 
		wall-crossing formula for numerical DT invariants on it. 
		The categorified Hall products are used to describe the wall-crossing 
		formula as semiorthogonal decompositions. 
		A successive application of 
		categorical wall-crossing formula 
		yields semiorthogonal decompositions of 
		categorical Pandharipande-Thomas stable pair invariants 
		on the resolved conifold, which categorify the product expansion 
		formula of the generating series of numerical PT invariants on it. 
	\end{abstract}
	
	\maketitle
	

	\section{Introduction}
	\subsection{Background and summary of the paper}
	In this paper, we establish
	wall-crossing formula
	for categorical Donaldson-Thomas invariants 
	on the resolved conifold, 
	and apply it to 
	give a complete description of 
	categorical Pandharipande-Thomas (PT) stable pair invariants
	on it. 
	
	The PT invariants count stable pairs
	on CY 3-folds, which were 
	introduced in~\cite{PT} in order to give 
	a better formulation of GW/DT correspondence conjecture~\cite{MNOP}. 
	They are special cases of Donaldson-Thomas (DT) type invariants 
	counting stable objects in the derived category, 
	and are 
	now understood as fundamental enumerative invariants of curves 
	on CY 3-folds as well as Gromov-Witten invariants and 
	Gopakumar-Vafa invariants. 
Now by efforts from derived algebraic geometry~\cite{PTVV, BBJ}, 
the moduli spaces which define DT (in particular PT) 
invariants are known to be locally 
written as critical loci. 
	In~\cite{TocatDT}, we proposed a study of categorical 
	DT theory by gluing locally defined dg-categories of matrix factorizations
	on these moduli spaces. 
	A definition of categorical DT invariants is introduced in the case of local 
	surfaces in~\cite{TocatDT} via Koszul duality and singular 
	support quotients.
	We also proposed several conjectures on wall-crossing 
	of categorical DT invariants on local surfaces, 
	motivated by 
	d-critical analogue of
	Bondal-Orlov and Kawamata's D/K equivalence conjecture~\cite{B-O2, MR1949787}, and also categorifications of wall-crossing formulas 
	of numerical DT invariants~\cite{JS, K-S}.
	In~\cite{TocatDT}, we also derived wall-crossing formula of categorical PT invariants
	on local surfaces in the setting of simple wall-crossing 
	(i.e. there are at most two Jordan-H\"{o}lder factors at the wall). 
	
	The purpose of this paper is to prove wall-crossing formula for categorical DT invariants 
	 on the resolved conifold, which categorifies Nagao-Nakajima 
	 wall-crossing formula~\cite{NN} for numerical DT invariants on it. 
	In this case the relevant moduli spaces are global critical loci, so there is no issue 
	on gluing dg-categories of 
	matrix factorizations. However
	 wall-crossing is not necessary a simple wall-crossing, and 
	the analysis of categorical wall-crossing is much harder. 
		Our strategy is to 
	use categorified Hall products for quivers with super-potentials 
	introduced by P{\u{a}}durariu~\cite{Tudor, Tudor1.5}. 
	A key observation is that, up to Kn\"{o}rrer periodicity, a wall-crossing diagram 
	for the resolved conifold 
	locally looks like a Grassmannian flip together with some super-potential 
	(d-critical Grassmannian flip
	in the sense of d-critical birational geometry~\cite{Toddbir}). 
We 
refine the result of~\cite{BNFV} on derived 
	categories of Grassmannian flips via categorified Hall products, 
	and compare them with more global categorified Hall products 
	under the Kn\"{o}rrer periodicity. 
	The above approach via categorified Hall products 
	yields a desired categorical wall-crossing formula. 
	A successive iteration of wall-crossing gives a 
	semiorthogonal decomposition  
	of categorical PT invariants on the resolved conifold
	whose semiorthogonal summands are the simplest
	categories of matrix factorizations over a point. 
	 We emphasize that the result of this paper is a first instance where 
	categorical wall-crossing formula is obtained for non-simple wall-crossing
	in the context of categorical DT theory.

	\subsection{Categorical PT stable pair theory on the resolved conifold}
	The \textit{resolved conifold} $X$ is defined by 
	\begin{align*}
		X \cneq \mathrm{Tot}_{\mathbb{P}^1}(\oO_{\mathbb{P}^1}(-1)^{\oplus 2}),
		\end{align*}
	which is also obtained as a crepant small resolution 
	of the conifold singularity 
	$\{xy+zw=0\} \subset \mathbb{C}^4$. 
	The resolved conifold is a non-compact CY 3-fold, and 
	an important toy model for enumerative geometry on 
	CY 3-folds
	such as PT invariants. 
	
	For each $(\beta, n) \in \mathbb{Z}^2$, 
	we denote by 
	\begin{align*}
		P_n(X, \beta)
		\end{align*}
	 the moduli space of 
	PT stable pairs $(F, s)$ on $X$, i.e. 
	$F$ is a pure one dimensional coherent sheaf on $X$
	and $s \colon \oO_X \to F$ is surjective in dimension one, 
	satisfying $[F]=\beta[C]$ and $\chi(F)=n$. Here $C \subset X$ is the 
	zero section of the projection $X \to \mathbb{P}^1$, and $[F]$ is the fundamental 
	one cycle of $F$. 
	The PT invariant $P_{n, \beta} \in \mathbb{Z}$ is 
	defined by either taking the integration over the zero dimensional 
	virtual 
	fundamental class on $P_n(X, \beta)$, or weighted Euler characteristic of the Behrend 
	constructible function~\cite{MR2600874} on it. 
	It is well-known that the generating series of PT 
	invariants on $X$ is given by the formula:
	\begin{align}\label{intro:formula}
		\sum_{n, \beta}P_{n, \beta}q^n t^{\beta} =
		\prod_{m\ge 1}(1-(-q)^m t)^m. 
		\end{align}
	The above formula is available in~\cite[Theorem~3.15]{NN}, 
	which is also obtained 
	from the DT calculation in~\cite{BeBryan} together with 
	the DT/PT correspondence~\cite{BrH, Tcurve1, StTh}. 
	
	The purpose of this paper is to give a 
	categorification of the formula (\ref{intro:formula}).
In the case of the resolved conifold, 
	the moduli space $P_n(X, \beta)$ is written as a 
	global critical locus, i.e. 
	there is a pair $(M, w)$ where $M$ is a smooth quasi-projective 
	scheme and $w \colon M \to \mathbb{A}^1$ is a 
	regular function such that $P_n(X, \beta)$ is isomorphic 
	to the critical locus of $w$. 
		A choice of $(M, w)$ is not unique, and we 
	take it 
	 using Van den Bergh's non-commutative crepant 
	 resolution of $X$~\cite{MR2057015} (see Subsection~\ref{subsec:catPT}). 
	 We define the \textit{categorical PT invariant} on $X$
	 to be 
	 the triangulated category of matrix  
	 factorizations of $w$ (see Definition~\ref{defi:catPT})
	 \begin{align*}
	 	\mathcal{DT}(P_n(X, \beta)) \cneq \mathrm{MF}(M, w). 
	 	\end{align*}
 	The above triangulated category (or more precisely its 
 	dg-enhancement) recovers $P_{n, \beta}$ by taking the 
 	Euler characteristic of its periodic cyclic homology (see Equation (\ref{eqn:PT})). 
 	The following is a consequence of 
 	the main result in this paper:
 	\begin{thm}\emph{(Corollary~\ref{cor:sod2.5})}\label{intro:thm}
 		There exists a semiorthogonal decomposition of the form 
 		\begin{align}\label{intro:PT:sod}
 				\mathcal{DT}(P_n(X, \beta))=\langle  a_{n, \beta}
 				\mbox{-copies of }\MF(\Spec \mathbb{C}, 0)
 				\rangle. 
 			\end{align}
 		Here $a_{n, \beta}$ is defined by 
 		\begin{align}\label{def:an}
 			a_{n, \beta}
 			\cneq \sum_{\begin{subarray}{c}
 					l \colon \mathbb{Z}_{\ge 1} \to \mathbb{Z}_{\ge 0} \\
 					\sum_{m\ge 1} l(m) \cdot (m, 1)=(n, \beta)
 				\end{subarray}	
 			} \prod_{m\ge 1}\binom{m}{l(m)}. 
 		\end{align}
 		
 		\end{thm}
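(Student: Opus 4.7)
The plan is to derive this corollary as a purely combinatorial consequence of the paper's main categorical wall-crossing theorem, applied inductively to a finite chain of walls separating the PT chamber from the empty chamber in the space of stability conditions for representations of Van den Bergh's non-commutative crepant resolution of $X$. Since that theorem is the technical heart of the paper and is assumed available here, the task reduces to organizing the iteration and the bookkeeping of summands.

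First I would fix the presentation of $P_n(X,\beta)$ as the critical locus $(M,w)$ constructed via Van den Bergh's tilting equivalence, together with the superpotential of the Klebanov--Witten conifold quiver. By definition $\mathcal{DT}(P_n(X,\beta)) = \MF(M,w)$, and the PT chamber is the open cone of stability parameters whose stable objects are exactly PT stable pairs. Following Nagao--Nakajima, the walls separating the PT chamber from the empty chamber are indexed by the positive roots $(m,1)$ for $m \ge 1$; crossing the $m$-th wall in reverse introduces a simple stable factor of class $(m,1)$.

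Next I would apply the paper's categorical wall-crossing theorem at each wall in sequence. At a wall where the Jordan--H\"older types are parameterized by functions $l \colon \mathbb{Z}_{\ge 1} \to \mathbb{Z}_{\ge 0}$ with $\sum_{m} l(m) \cdot (m,1) = (n,\beta)$, the categorified Hall product produces a semiorthogonal decomposition of $\MF(M,w)$ whose pieces are indexed by strata of given polystable type. By the local model of the paper, each such stratum looks, after Kn\"orrer periodicity, like a d-critical Grassmannian flip, and the semiorthogonal decomposition of $\MF$ on the flip refines (via Kapranov's exceptional collection on $\mathrm{Gr}(l(m),m)$, one factor per simple stable summand of class $(m,1)$) into $\prod_{m\ge 1}\binom{m}{l(m)}$ copies of $\MF(\Spec \mathbb{C},0)$. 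Summing over all admissible $l$ gives exactly $a_{n,\beta}$ summands, yielding the semiorthogonal decomposition (\ref{intro:PT:sod}).

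The hard part, as I see it, is the bookkeeping of iteration rather than any new geometric input: one must check that the summands produced at one wall are not further decomposed at the next wall (so that the induction terminates with pieces of the form $\MF(\Spec \mathbb{C},0)$), and that the semiorthogonal order on the final output is independent of the chosen path through the wall-and-chamber structure. Both are controlled by the compatibility between categorified Hall products and Kn\"orrer periodicity established in the main theorem; granted that compatibility, the corollary follows by the combinatorial identity $\sum_{l}\prod_{m}\binom{m}{l(m)} = a_{n,\beta}$, which is precisely the absolute-value version of the coefficient of $q^n t^\beta$ in the product formula (\ref{intro:formula}).
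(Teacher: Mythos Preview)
Your overall strategy matches the paper's: iterate the single-wall semiorthogonal decomposition (Corollary~\ref{cor:sod}) from the PT chamber toward the empty chamber, then invoke Lemma~\ref{lem:empty}. However, your middle paragraph conflates what happens at a single wall with the cumulative output of the full iteration, and this muddles the argument.

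At the wall $W_m$ there is a \emph{unique} $\theta$-stable simple $(Q,W)$-representation $S_m$ of class $(n,\beta)=(m,1)$, so the Jordan--H\"older types there are indexed by a single integer $l$, not by a function on all of $\mathbb{Z}_{\ge 1}$. Crossing $W_m$ yields $\binom{m}{l}$ copies of $\MF(\mM_Q^{\dag,\theta_-}(v-ls_m),w)$ for each $l\ge 0$ --- categories over smaller dimension vectors, not yet $\MF(\Spec\mathbb{C},0)$. Contrary to your ``not further decomposed'' remark, these summands \emph{are} decomposed again at $W_{m-1}$, and so on down to $W_1$; that is the entire mechanism of the induction. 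Only after reaching the empty chamber does Lemma~\ref{lem:empty} identify the surviving pieces (those with residual dimension vector zero) as $\MF(\Spec\mathbb{C},0)$, and the product $\prod_m\binom{m}{l(m)}$ then appears as the product of multiplicities accrued one wall at a time, exactly as in Corollary~\ref{cor:sod2}. Your reference to Kapranov's collection is also slightly misplaced: the $\binom{m}{l}$ at each wall comes from the Grassmannian-flip decomposition (Corollary~\ref{cor:DG}) with $a-b=m$, which counts sequences $0\le j_1\le\cdots\le j_l\le m-l$, rather than from an exceptional collection on $\mathrm{Gr}(l,m)$ directly.
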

 	Here $\MF(\Spec \mathbb{C}, 0)$ is the 
 category of matrix factorizations of the zero super-potential 
 over the point, which is equivalent to the $\mathbb{Z}/2$-periodic 
 derived category of finite dimensional $\mathbb{C}$-vector spaces. 
 As the formula (\ref{intro:formula}) is equivalent to $P_{n, \beta}=(-1)^{n+\beta}a_{n, \beta}$, 
 by taking the periodic cyclic homologies of 
 	both sides and Euler characteristics, the 
 	result of Theorem~\ref{intro:thm} recovers the formula (\ref{intro:formula})
 	(see Remark~\ref{rmk:recover}).

	\subsection{Categorical wall-crossing formula}
	In~\cite[Theorem~3.15]{NN}, Nagao-Nakajima
	derived the formula (\ref{intro:formula})
	by proving wall-crossing formula for stable perverse 
	coherent systems on $X$. 
	Under a derived equivalence of $X$ with a non-commutative 
	crepant resolution of the conifold~\cite{MR2057015}, 
	the category of perverse coherent systems on $X$
	is equivalent to the 
	category of representations of the following 
	quiver with super-potential $(Q^{\dag}, W)$
	\[
	Q^{\dag}=
\begin{tikzcd}
		\bullet_{\infty} \arrow[d] & & \\
	\bullet_{0}
		\arrow[rr, bend left,  "a_2"]
		\arrow[rr,bend left=70,  "a_1"]
 & &
		\bullet_1
		\arrow[ll, bend left, "b_1" ]
			\arrow[ll, bend left=70, "b_2"]
		\end{tikzcd}
	\quad 
	W=a_1 b_1 a_2 b_2-a_1 b_2 a_2 b_1.	
\]

	For $v=(v_0, v_1) \in \mathbb{Z}_{\ge 0}^2$, we denote by 
	$\mM_Q^{\dag}(v)$ the $\C$-rigidified moduli stack 
	of $Q^{\dag}$-representations with dimension vector $(1, v_0, v_1)$, 
	where $1$ is the dimension vector at $\infty$. 
	It is equipped with a super-potential 
	\begin{align*}
		w=\mathrm{Tr}(W) \colon \mM_Q^{\dag}(v) \to \mathbb{A}^1
		\end{align*}
		whose critical locus is isomorphic to 
		the moduli stack of $(Q^{\dag}, W)$-representations with dimension vector $(1, v_0, v_1)$. 
	There is also a stability parameter $\theta=(\theta_0, \theta_1)
	\in \mathbb{R}^2$ of $(Q^{\dag}, W)$-representations, 
	whose wall-chamber structure is given by 
	the following picture
	(see~\cite[Figure~1]{NN}): 
	\begin{figure}[H]
		\centering
		\begin{tikzpicture}[scale=0.6][node distance=1cm]
			\draw[thick] (-4.6,0)--(4.6,0)  node [pos=0, anchor=east]{\tiny{$\theta_1=0$}} ;\draw[thick](0,4.6)--(0,-4.6)  node [pos=0, anchor=east]{\tiny{$\theta_0=0$}} ;
			\draw[thick] (-4,4)--(4,-4)  node [pos=0, anchor=east]{\tiny{$\theta_0+\theta_1=0$}} ;
			\draw[thick] (-3.5,4.3)--(3.5,-4.3)  node[pos=0, anchor=east]{\tiny{$m\theta_0+(m-1)\theta_1=0$} } ;
			\draw[thick] (-2.3,4.6)--(2.3,-4.6)  node[pos=0, anchor=east]{\tiny{$2\theta_0+\theta_1=0$} } ;
			\draw[thick] (-4.3,3.5)--(4.3,-3.5)  node[pos=0, anchor=east]{\tiny{$m\theta_0+(m+1)\theta_1=0$} } ;
			\draw[thick] (-4.6,2.3)--(4.6,-2.3)  node[pos=0, anchor=east]{\tiny{$\theta_0+2\theta_1=0$} } ;
			\draw[fill] (-3.5,3.6) circle [radius=0.025];
			\node [thick, right] at (-3.5,3.6) {\tiny{PT}};
			\draw[fill] (-3.6,3.5) circle [radius=0.025];
			\node [thick, below] at (-3.6,3.5) {\tiny{DT}};
			\draw[fill] (3.8,-3.7) circle [radius=0.025];
			\node [thick, above] at (3.8,-3.7) {\tiny{PT}};
			
			\draw[fill] (3.7,-3.8) circle [radius=0.025];
			\node [thick, left] at (3.7,-3.8) {\tiny{DT}};
			
			\node at (4.2,-3.7) {\small{$X^+$}};
			\node at (-4,3.7) {\small{$X$}};
			
			\draw[fill] (-3.46,2.5) circle [radius=0.015];
			\draw[fill] (-3.51,2.4) circle [radius=0.015];
			\draw[fill] (-3.56,2.3) circle [radius=0.015];
			
			\draw[fill] (-2.5,3.46) circle [radius=0.015];
			\draw[fill] (-2.4,3.51) circle [radius=0.015];
			\draw[fill] (-2.3,3.56) circle [radius=0.015];
			
			\draw[fill] (-3.15, 2.97) circle [radius=0.015];
			\draw[fill] (-3.2, 2.9) circle [radius=0.015];
			\draw[fill] (-3.25, 2.83) circle [radius=0.015];
			
			\draw[fill] (-2.97, 3.15) circle [radius=0.015];
			\draw[fill] (-2.9, 3.22) circle [radius=0.015];
			\draw[fill] (-2.83, 3.28 ) circle [radius=0.015];
			
			\node at (2.5,2) {$\begin{subarray}{c}\mathrm{empty\,\, chamber} \end{subarray}$ } ;
			\node at (-3,-2.3) {$\begin{subarray}{c}\mathrm{non-commutative}  \\ \mathrm{chamber} \end{subarray}$ } ;
		\end{tikzpicture} 
		\caption{Wall-chamber structures}
		\label{intro:figure}
	\end{figure}
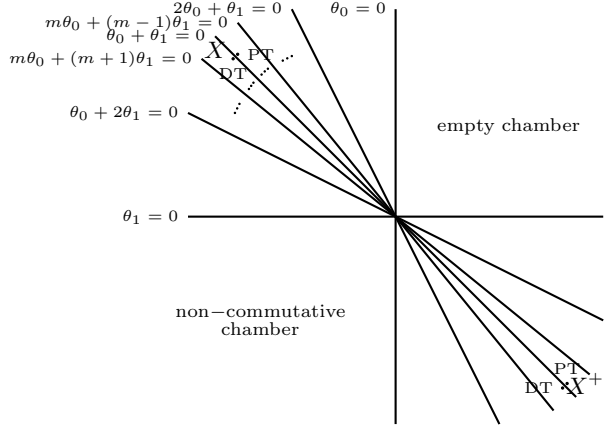
For $m \in \mathbb{Z}_{\ge 1}$, 
there is a wall in the second quadrant in the above picture 
\begin{align*}
	W_m \cneq \mathbb{R}_{>0} (1-m, m) \subset \mathbb{R}^2. 
	\end{align*}
We take a stability condition 
on the wall $\theta \in W_m$ and $\theta_{\pm}=\theta\pm (-\varepsilon, \varepsilon)$
for $\varepsilon>0$ which lie on its adjacent chambers.  
Let $\DT^{\theta_{\pm}}(v_0, v_1) \in \mathbb{Z}$ be the 
DT invariant counting
$\theta_{\pm}$-stable $(Q^{\dag}, W)$-representations 
with dimension vector $(1, v_0, v_1)$. 
We have the 
following wall-crossing formula
proved in~\cite[Theorem~3.12]{NN}
\begin{align}\label{intro:wcf}
	\sum_{(v_0, v_1) \in \mathbb{Z}_{\ge 0}^2} \DT^{\theta_{+}}(v_0, v_1)q_0^{v_0} q_1^{v_1}
	=\left(\sum_{(v_0, v_1) \in \mathbb{Z}_{\ge 0}^2}  \DT^{\theta_{-}}(v_0, v_1)q_0^{v_0} q_1^{v_1}\right)
	\cdot (1+q_0^m (-q_1)^{m-1})^{m}.
	\end{align}	
	The formula (\ref{intro:formula}) is obtained from the above 
	wall-crossing formula by applying it from $m=1$ to $m \gg 0$, 
	and noting that 
	the PT invariants correspond to a chamber which 
	is sufficiently close to (and above) the wall $\mathbb{R}_{>0}(-1, 1)$. 
	
	We prove Theorem~\ref{intro:thm} by giving a categorification of
	the formula (\ref{intro:wcf}). 
	For $\theta \in \mathbb{R}^2$, 
	we denote by 
	\begin{align*}
		\mM^{\dag, \theta \sss}_{Q}(v) \subset \mM_Q^{\dag}(v)
		\end{align*}
	the open substack of $\theta$-semistable $Q^{\dag}$-representations. 
	The following is the main result of this paper, 
	which gives a
	categorification of the formula (\ref{intro:wcf}): 
	\begin{thm}\emph{(Corollary~\ref{cor:sod})}\label{intro:thm2}
		For $\theta \in W_m$, 
		by setting $s_m=(m, m-1)$, 
		there exists a semiorthogonal decomposition 
		\begin{align}\label{intro:catWCF}\MF(\mM_Q^{\dag, \theta_+ \sss}(v), w)
		=\left\langle \binom{m}{l} \mbox{-copies of }
		\MF(\mM_Q^{\dag, \theta_- \sss}(v-ls_m), w) : 
		l\ge 0  \right\rangle.
		\end{align} 
		\end{thm}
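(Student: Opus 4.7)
Our plan is to build~(\ref{intro:catWCF}) stratum by stratum with respect to the number $l$ of destabilizing Jordan--H\"older factors crossing the wall $W_m$, using the categorified Hall product of~\cite{Tudor, Tudor1.5} to realize the embedding functors globally while reducing each local stratum to a (d-critical) Grassmannian flip to which the window decomposition of~\cite{BNFV} applies. We would first identify the unique $\theta$-stable unframed $Q^\dag$-representation $E_m$ of dimension $s_m = (m,m-1)$ on the wall (rigid up to the $\C$-rigidification, as in Nagao--Nakajima). Any strictly $\theta$-semistable framed representation of dimension $(1,v)$ then admits a JH filtration with one framed $\theta$-stable piece of dimension $(1,v-ls_m)$ together with $l$ copies of $E_m$ for some $l \ge 0$, giving a Halpern-Leistner $\Theta$-stratification of the symmetric difference of $\mM_Q^{\dag,\theta_+\sss}(v)$ and $\mM_Q^{\dag,\theta_-\sss}(v)$ indexed by $l \ge 1$.

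Next, on each stratum of index $l$, the Luna slice theorem combined with the rigidity of $E_m$ would present the ambient smooth stack \'etale-locally as a $[V/\GL(l)]$-bundle over a framed $\theta$-stable stratum at dimension $v - l s_m$, with $V \cong \mathrm{Ext}^1(E_m,F)^{\oplus l} \oplus \mathrm{Ext}^1(F,E_m)^{\oplus l} \oplus \mathrm{End}(E_m)^{\oplus l^2}$-type linear data; the conifold Ext computation gives $\dim\mathrm{Ext}^1(E_m,F)=m$. The super-potential $w$ restricts to a nondegenerate pairing on the two $\mathrm{Ext}^1$-summands plus higher-order corrections, so Kn\"orrer periodicity removes them, reducing the local problem to the pure GIT variation for $\GL(l)$ acting on $\mathrm{Hom}(\mathbb{C}^l,\mathbb{C}^m)$---the elementary Grassmannian contraction with $\mathrm{Gr}(l,m)$ on one side and the empty quotient on the other. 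The magic-window theorem of~\cite{BNFV} then yields a semiorthogonal decomposition with $\binom{m}{l}$ pieces indexed by Young diagrams inside the $l \times (m-l)$ rectangle---Kapranov's exceptional collection on $\mathrm{Gr}(l,m)$---each piece being equivalent to $\MF$ of the smaller stratum. Finally, the categorified Hall products of~\cite{Tudor, Tudor1.5}, twisted by the Schur functor $\mathbb{S}^\lambda$, would provide global integral transforms $\Phi_{l,\lambda} \colon \MF(\mM_Q^{\dag,\theta_-\sss}(v-ls_m),w) \to \MF(\mM_Q^{\dag,\theta_+\sss}(v),w)$ realizing these local embeddings; inducting on $l$ along the $\Theta$-stratification then assembles them into~(\ref{intro:catWCF}).

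The main obstacle is the globalization step: showing that the local magic-window SODs patch into a global SOD compatibly with both the super-potential and the categorified Hall product. Concretely, one must verify that the Hall-product embeddings $\Phi_{l,\lambda}$ and $\Phi_{l',\lambda'}$ for $(l,\lambda) \ne (l',\lambda')$ are mutually semiorthogonal in the correct order and together generate, and that Kn\"orrer periodicity intertwines the slice-level window construction with the global Hall product of~\cite{Tudor, Tudor1.5}. The weight inequalities that enforce semiorthogonality in the linearized Grassmannian flip must survive pushforward along the Hall correspondence, which is the delicate technical core of the argument; everything else reduces to bookkeeping along the $\Theta$-stratification.
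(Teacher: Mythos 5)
Your overall architecture---stratify by the number $l$ of $S_m$-factors, reduce formally locally to a Grassmannian-type GIT problem via Luna slices and Kn\"orrer periodicity, and globalize using categorified Hall products---is indeed the skeleton of the actual proof, but your local model is wrong at two essential points. First, the wall-crossing factor $E_m=S_m$ is \emph{not} rigid in the ambient smooth stack: $\ext^1_Q(S_m,S_m)=2m^2-2m$, and only the critical-locus Ext group $\Ext^1_{(Q,W)}(S_m,S_m)\cong\Ext^1_X(\oO_C(m-1),\oO_C(m-1))$ vanishes. Hence the slice at a polystable point $R_\infty\oplus(V\otimes S_m)$, $\dim V=l$, contains the summand $\End(V)\otimes\Ext^1_Q(S_m,S_m)$, and before any Kn\"orrer reduction one must prove that $w$ restricts to a $\GL(V)$-invariant nondegenerate quadratic form on $\End_0(V)\otimes\Ext^1_Q(S_m,S_m)$; this needs an equivariant Morse-lemma argument (Proposition~\ref{prop:func}, Lemma~\ref{lem:morse}) and cannot be assumed away by ``rigidity.'' Second, the two mixed Ext spaces are not dual to each other: $a=\ext^1_{Q^\dag}(R_\infty,S_m)$ and $b=\ext^1_{Q^\dag}(S_m,R_\infty)$ are both large with $a-b=m$ (Lemma~\ref{lem:computea}), so $w$ cannot pair them nondegenerately and Kn\"orrer periodicity does not cancel them. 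What survives the correct reduction is the Grassmannian \emph{flip} stack $\gG_{a,b}(l)$ equipped with a nonzero residual super-potential $w_p^{\rm red}$, not the potential-free contraction $[\Hom(\mathbb{C}^l,\mathbb{C}^m)/\GL(l)]$ with ``empty quotient on the other side'': the $\theta_-$-side appears in the formal fiber as the $l=0$ piece and is nonempty. Consequently the appeal to Kapranov's collection on $\mathrm{Gr}(l,m)$ is unavailable; the count $\binom{m}{l}$ must instead come from a refined semiorthogonal decomposition for the flip $G^+_{a,b}(d)\dashrightarrow G^-_{a,b}(d)$ with $a-b=m$ and for its factorization categories (Theorem~\ref{thm:sod}, Corollary~\ref{cor:MF}), which your reduction does not establish.

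The globalization step you defer is also where much of the content lies, and it is not mere bookkeeping. One needs: (i) functoriality of Kn\"orrer periodicity with respect to the attracting-loci push-pull, including the determinant/shift correction (Proposition~\ref{prop:Knoer}); (ii) an exact comparison of the slice-level windows with the global windows $\mathbb{W}_{\rm glob}^{\theta_\pm}(v)$ defined by the specific weights (\ref{choice:m}), which forces the precise twists $\chi_0^{d(m^2-m)}$ and $j_i+(2i-1)(m^2-m)$ (Propositions~\ref{prop:Koszul} and~\ref{prop:compare}); and (iii) the observation that after these twists the shape of the local decomposition is independent of the multiplicity $d$ at the chosen polystable point, which is exactly what allows the formally local statements to glue into the global functors $\Upsilon_{j_\bullet}$, with fully-faithfulness and generation verified pointwise via a right adjoint of Fourier--Mukai type and a formal-local vanishing criterion (Lemmas~\ref{lem:radjoint} and~\ref{lem:locvani}). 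Without (ii) and (iii) the weight inequalities enforcing semiorthogonality in the slice do not transfer to the global Hall-product images, so the decomposition (\ref{intro:catWCF}) is not yet obtained.
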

	 There is also a precisely defined order among semiorthogonal summands 
	in (\ref{intro:PT:sod}), 
	see~Corollary~\ref{cor:sod} for the precise statement. 
	Again by taking the periodic cyclic homologies and 
	the Euler characteristics, the result of Theorem~\ref{intro:thm2}
	recovers the Nagao-Nakajima formula (\ref{intro:wcf}) (see Remark~\ref{rmk:recover0}). 
	The result of Theorem~\ref{intro:thm} follows by applying 
	Theorem~\ref{intro:thm2} from $m=1$ to $m \gg 0$.
	 
	We also remark that the similar 
	categorical wall-crossing formula holds 
	at other walls except walls at $\{\theta_0+\theta_1=0\}$, i.e. 
	DT/PT wall on $X$ or on its flop
	(see Remark~\ref{rmk:owall}). 
	Note that the numerical DT/PT wall-crossing formula 
	was not directly obtained in~\cite{NN}, but 
	was proved in~\cite{BrH, Tcurve1, StTh}
	using the full machinery of motivic Hall algebras in~\cite{JS, K-S}. 
	
\subsection{Outline of the proof of Theorem~\ref{intro:thm2}}
The strategy of the proof of Theorem~\ref{intro:thm2} is to 
use the following ingredients: 
\begin{enumerate}
	\item The window subcategories 
	for GIT quotient stacks developed by Halpern-Leistner~\cite{MR3327537}
	and Ballard-Favero-Katzarkov~\cite{MR3895631}. 
	\item The categorified Hall products
	for quivers with super-potentials introduced and studied by P{\u{a}}durariu~\cite{Tudor1.5, Tudor1.7, Tudor}.
	\item The descriptions of 
	derived categories under Grassmannian flips
	by Ballard-Chidambaram-Favero-McFaddin-Vandermolen~\cite{BNFV}, 
	which it self relies on an earlier work by Donovan-Segal~\cite{DoSe}
	for Grassmannian flops. 
	\end{enumerate}
For $\theta \in W_m$, 
let $\mM_{Q}^{\dag, \theta \sss}(v) \to M_{Q}^{\dag, \theta \sss}(v)$
be the good moduli space~\cite{MR3237451}. We have the wall-crossing diagram 
\begin{align}\label{intro:dia:wall}
	\xymatrix{
		M_{Q}^{\dag, \theta_+\sss}(v) \ar[rd] \ar@{.>}[rr] & 
		& M_Q^{\dag, \theta_-\sss}(v) \ar[ld] \\
		& M_Q^{\dag, \theta \sss}(v)	
}
	\end{align}
which is shown to be a flip of smooth quasi-projective varieties. 
The D/K principle by Bondal-Orlov~\cite{B-O2} and Kawamata~\cite{MR1949787}
predicts the existence of a fully-faithful functor of 
their derived categories or categories of matrix factorizations. 

The window subcategories have been used 
to investigate the D/K conjecture
under variations of GIT quotients. 
In the above setting, there  
 exist subcategories (called \textit{window subcategories})
$\mathbb{W}_{\rm{glob}}^{\theta_{\pm}}(v) 
	\subset \mathrm{MF}(\mM_Q^{\dag, \theta\sss}(v), w)
$
such that the compositions
\begin{align*}
	\mathbb{W}_{\rm{glob}}^{\theta_{\pm}}(v)
	\hookrightarrow  \mathrm{MF}(\mM_Q^{\dag, \theta \sss}(v), w)
	\twoheadrightarrow \mathrm{MF}(\mM_Q^{\dag, \theta_{\pm} \sss}(v), w)
	\end{align*}
are equivalences. 
If we can show that 
$\mathbb{W}_{\rm{glob}}^{\theta_-}(v) \subset \mathbb{W}_{\rm{glob}}^{\theta_+}(v)$ for some 
choice of window subcategories, then 
we have a desired fully-faithful functor
\begin{align}\label{intro:inclu}
	\mathrm{MF}(\mM_Q^{\dag, \theta_{-} \sss}(v), w)
	\hookrightarrow \mathrm{MF}(\mM_Q^{\dag, \theta_{+} \sss}(v), w). 
	\end{align} 
In fact, the above argument is used in~\cite[Theorem~4.3.5]{TocatDT} to show 
the existence of a fully-faithful functor (\ref{intro:inclu}). 

We are interested in the semiorthogonal complement of the fully-faithful 
functor (\ref{intro:inclu}). 
If the wall-crossing is enough simple, e.g. satisfying the DHT condition 
in~\cite[Definition~4.1.4]{MR3895631}, 
then the above window subcategory argument also 
describes the semiorthogonal complement (see~\cite[Theorem~4.2.1]{MR3895631}).
However our wall-crossing (\ref{intro:dia:wall}) does not necessary 
satisfy the DHT condition, and 
we cannot directly apply it. 
Instead we use categorified Hall products 
to describe the semiorthogonal complement of (\ref{intro:inclu}). 

The categorified Hall product for quivers with super-potentials
is introduced in~\cite{Tudor1.5, Tudor1.7, Tudor} in order to 
give a K-theoretic version of
 critical COHA, which was introduced in~\cite{MR2851153} and developed in~\cite{MR3667216}. 
For $v=v_1+v_2$ with $\theta(v_i)=0$, it is a functor 
\begin{align*}
	\ast \colon 
	\MF(\mM_Q^{\theta \sss}(v_1), w) \boxtimes 
	\MF(\mM_Q^{\dag, \theta \sss}(v_2), w) \to 
	\MF(\mM_Q^{\dag, \theta \sss}(v), w)
	\end{align*}
defined by the pull-back/push-forward with respect 
to the stack of short exact sequences of $Q^{\dag}$-representations. 
We will show that, 
for 
$l\ge 0$ and a sequence of integers 
$0\le j_1 \le \cdots \le j_l \le m-l$, 
the categorified Hall product gives a fully-faithful functor 
\begin{align}\label{intro:FF:global}
	\boxtimes_{i=1}^l \MF(\mM_Q^{\theta\sss}(s_m), w)_{j_i+(2i-1)(m^2-m)}
	\boxtimes \left(\mathbb{W}_{\rm{glob}}^{\theta_-}(v-ls_m) \otimes \chi_0^{j_l+2l(m^2-m)}  \right) 
	\to 	\mathbb{W}_{\rm{glob}}^{\theta_+}(v)
\end{align}
whose essential images form a semiorthogonal decomposition. 
Here the subscript $j_i+(2i-1)(m^2-m)$ indicates the fixed 
$\C$-weight part, and $\chi_0$ is some character regarded as a line 
bundle on $\mM_Q^{\dag}(v)$
(see Theorem~\ref{thm:wincon} for details). 
It follows that the categorified Hall products describe the semiorthogonal 
complement of (\ref{intro:inclu}), which lead to a proof of Theorem~\ref{intro:thm2}. 

In order to show that the functor (\ref{intro:FF:global}) is fully-faithful and they 
form a semiorthogonal decomposition, we prove these statements 
formally locally on the good moduli space $M_Q^{\dag, \theta \sss}(v)$
at any point $p$
corresponding to a $\theta$-polystable $(Q^{\dag}, W)$-representation $R$. 
By the \'{e}tale slice theorem, one can describe the formal 
fibers of the diagram (\ref{intro:dia:wall}) at $p$ 
in terms of a wall-crossing diagram 
of 
the Ext-quiver $Q_p^{\dag}$ associated with $R$, 
which is much simpler than $Q^{\dag}$. 
After removing a quadratic part of the super-potential, one 
observes that the wall-crossing 
diagram for $Q_p^{\dag}$-representations
is the product of a Grassmannian flip with 
some trivial part. 
Here a \textit{Grassmannian flip} is a birational map
\begin{align*}
	G_{a, b}^+(d) \dashrightarrow G_{a, b}^-(d)
	\end{align*}
given by two GIT stable loci of the quotient stack 
\begin{align*}
	\gG_{a, b}(d)=\left[(\Hom(A, V) \oplus \Hom(V, B))/\GL(V) \right]
	\end{align*}
where $d=\dim V$, $a=\dim A$, $b=\dim B$ with $a\ge b$. 

Donovan-Segal~\cite{DoSe}
proved a derived equivalence $D^b(G_{a, b}^-(d)) \simeq D^b(G_{a, b}^+(d))$
in the case of $a=b$ (i.e. Grassmannian flop) using window 
subcategories, and 
the same argument also applies to 
construct a fully-faithful functor $D^b(G_{a, b}^-(d)) \hookrightarrow 
D^b(G_{a, b}^+(d))$. 
However it is in a rather recent work~\cite{BNFV}
where the semiorthogonal complement of 
the above fully-faithful functor is considered. 
We will interpret the description of semiorthogonal complement in~\cite{BNFV} 
in terms of categorified Hall products,
and refine it as a semiorthogonal decomposition (see Corollary~\ref{cor:DG})
\begin{align}\label{sod:grass}
D^b(G_{a, b}^+(d))=\left\langle \binom{a-b}{l}\mbox{-copies of }
D^b(G_{a, b}^-(d-l)) : 0\le l\le d   \right\rangle. 	
\end{align}
The above semiorthogonal decomposition 
unifies Kapranov's exceptional collections of derived categories of 
Grassmannians, and also semiorthogonal decompositions of standard toric 
flips, so it may be of independent interest (see Remark~\ref{rmk:exceptional}, 
Remark~\ref{rmk:d=1}). 

A semiorthogonal decomposition similar to (\ref{sod:grass}) 
also holds 
for categories of factorizations of a super-potential 
of $\gG_{a, b}(d)$. 
Under the Kn\"{o}rrer periodicity, 
we compare global categorified Hall products (\ref{intro:FF:global})
with local categorified Hall products giving
the semiorthogonal decomposition 
(\ref{sod:grass}). 
By combining these arguments, 
we see that the functor (\ref{intro:FF:global}) is fully-faithful 
and they form a semiorthogonal decomposition formally locally 
on $M_Q^{\dag, \theta\sss}(v)$, 
hence they also hold globally.

	\subsection{Related works}
	The wall-crossing formula (\ref{intro:wcf}) was proved by Nagao-Nakajima~\cite{NN}
	in order to give an understanding of the product expansion formula of 
	non-commutative DT invariants of the conifold studied by Szendr{\H o}i~\cite{Sz}. 
	The wall-crossing formula (\ref{intro:wcf}) was later 
	extended to
	the case of a global 
	flopping contraction in~\cite{Tcurve2, Calab}, 
	and to 
	 the motivic DT invariants in~\cite{MMNS}. 
	Recently Tasuki Kinjo studies cohomological DT theory on the resolved 
	conifold and proves a cohomological version of DT/PT correspondence 
	in this case~\cite{Kinjo3}. It would be interesting to extend the argument in this paper 
	and categorify his cohomological DT/PT correspondence. 
	
	As we already mentioned, the study of 
	wall-crossing of categorical PT invariants
	was posed in~\cite{TocatDT}. In the case of local 
	surfaces, a categorical wall-crossing formula is conjectured  in~\cite[Conjecture~6.2.6]{TocatDT}
	in the case of simple wall-crossing, and proved in 
	some cases in~\cite[Theorem~6.3.19]{TocatDT}
	using Porta-Sala categorified Hall products for surfaces~\cite{PoSa}. 
	The wall-crossing we consider in this paper is not necessary 
	simple, so it is beyond the cases we considered in~\cite[Conjecture~6.2.6]{TocatDT}. A similar wall-crossing 
	at $(-1, -1)$-curve
	is also considered in~\cite[Section~7]{Totheta}, but we only proved the 
	existence of fully-faithful functors and their semiorthogonal 
	complements are not considered. 
	
	The categorified (K-theoretic) Hall algebras for quivers with super-potentials 
	was introduced and studied by Tudor P{\u{a}}durariu~\cite{Tudor, Tudor1.5}. 
	He also proved the PBW theorem for K-theoretic Hall algebras~\cite{Tudor, Tudor1.7} via much more 
	sophisticated combinatorial arguments (based on earlier works~\cite{MR3698338, HLKSAM}). 
	We expect that his arguments proving the K-theoretic PBW theorem can be applied to prove
	categorical (or K-theoretic) wall-crossing formula 
	in a broader setting, 
	including DT/PT wall-crossing in this paper. 
	
	Recently 
	Qingyuan Jiang~\cite{JiangQuot} studies
	 derived categories of Quot schemes of 
	locally free quotients, and proposed 
	conjectural semiorthogonal decompositions of them (see~\cite[Conjecture~A.5]{JiangQuot}). 
	He proved the above conjecture 
	in the case of rank two quotients. 
	His conjectural semiorthogonal decompositions resemble 
	the one in Theorem~\ref{intro:thm2}.  
	It would be interesting to see whether the technique in this 
	paper can be applied to his conjecture.

	\subsection{Acknowledgements}
	The author is grateful to Qingyuan Jiang
	and Tudor P{\u{a}}durariu
	for comments on the first version of this paper. 
	The author is supported by World Premier International Research Center
	Initiative (WPI initiative), MEXT, Japan, and Grant-in Aid for Scientific
	Research grant (No.~19H01779) from MEXT, Japan.

	\subsection{Notation and Convention}\label{subsec:notation}
	In this paper, all the schemes or stacks are defined over $\mathbb{C}$. 
	For an Artin stack $\yY$, we denote by $D^b(\yY)$ the 
	bounded derived category of coherent sheaves on $\yY$. 
	For an algebraic group $G$ and its representation $V$, 
	we regard it as a vector bundle on $BG$. 
	For a variety $Y$ on which $G$ acts, we denote by 
	$V \otimes \oO_{[Y/G]}$ the vector bundle 
	given by the pull-back of $V$ by $[Y/G] \to BG$. 	
	For a morphism $\fM \to M$ from a stack $\fM$ to a scheme $M$
	and a closed point $y \in M$, the \textit{formal fiber} at $y$ is defined by 
	\begin{align*}
		\widehat{\fM}_y \cneq \fM \times_{M} \Spec \widehat{\oO}_{M, y} 
		\to \widehat{M}_y \cneq \Spec \widehat{\oO}_{M, y}. 
		\end{align*}
		For a triangulated category $\dD$, its
	triangulated subcategory $\dD' \subset \dD$ is called 
	\textit{dense} if any object in $\dD$ is a direct summand of 
	an object in $\dD'$.

	\section{Preliminary}
	In this section, we review triangulated categories of factorizations, 
	the window theorem for categories of factorizations over 
	 GIT quotient stacks, and the Kn\"{o}rrer periodicity. 	
	\subsection{The category of factorizations}\label{subsec:MF}
		Let $\yY$ be a noetherian algebraic stack 
	over $\mathbb{C}$
	and take $w \in \Gamma(\oO_{\yY})$. 
	A (coherent) factorization of $w$ consists of 
\begin{align}\notag
	\xymatrix{
		\pP_0 \ar@/^8pt/[r]^{\alpha_0} &  \ar@/^8pt/[l]^{\alpha_1} \pP_1,  
	} \
	\alpha_0 \circ \alpha_1=\cdot w, \ 
	\alpha_1 \circ \alpha_0=\cdot w, 
\end{align}
where each $\pP_i$ is a coherent sheaf on $\yY$ and 
$\alpha_i$ are morphisms of coherent sheaves. 
The category of coherent factorizations naturally
forms a dg-category, whose homotopy 
category is denoted by $\mathrm{HMF}(\yY, w)$. 
The subcategory of absolutely acyclic 
objects 
\begin{align*}
	\mathrm{Acy}^{\rm{abs}} \subset \mathrm{HMF}(\yY, w)
	\end{align*}
is defined to be the minimum thick triangulated subcategory 
which contains totalizations of short exact sequences of 
coherent factorizations of $w$. 
The triangulated category of factorizations of $w$ is defined by 
(cf.~\cite{Ornonaff, MR3366002, MR3112502})
\begin{align*}
	\mathrm{MF}(\yY, w) \cneq \mathrm{HMF}(\yY, w)/\mathrm{Acy}^{\rm{abs}}. 
	\end{align*}
If $\yY$ is an affine scheme, then $\MF(\yY, w)$ 
is equivalent to Orlov's triangulated category of matrix factorizations 
of $w$~\cite{Orsin}. 
For two pairs $(\yY_i, w_i)$ for $i=1, 2$, 
we use the notation 
\begin{align*}
	\MF(\yY_1, w_1) \boxtimes \MF(\yY_2, w_2)
\cneq \MF(\yY_1 \times \yY_2, w_1 + w_2).
\end{align*}

It is well-known that 
$\MF(\yY, w)$ only depends on 
an open neighborhood of $\Crit(w) \subset \yY$. 
Namely let $\yY' \subset \yY$ be an open substack 
such that $\Crit(w) \subset \yY'$. Then 
the restriction functor gives an equivalence (see~\cite[Corollary~5.3]{MR3112502}, \cite[Lemma~5.5]{HLKSAM})
\begin{align}\label{Pre:rest}
	\MF(\yY, w) \stackrel{\sim}{\to} \MF(\yY', w|_{\yY'}). 
	\end{align}

Suppose that $\yY=[Y/G]$ where $G$ is an algebraic group which acts on 
a scheme $Y$. 
Assume that $\C \subset G$ lies in the center of $G$ which acts on $Y$ trivially. 
Then $\MF(\yY, w)$ decomposes into the direct sum
\begin{align}\label{decom:Y}
	\MF(\yY, w)=\bigoplus_{j \in \mathbb{Z}} \MF(\yY, w)_{j}
	\end{align}
where each summand corresponds to the
$\C$-weight $j$-part.

	\subsection{Attracting loci}
		Let $G$ be a reductive algebraic group with maximal torus $T$, 
	which acts on a smooth affine scheme $Y$. 
	We denote by 
	$M$ the character lattice of $T$ and $N$ the cocharacter lattice
	of $T$. 
	There is a perfect pairing 
	\begin{align*}
		\langle -, -\rangle \colon N \times M \to \mathbb{Z}. 
	\end{align*}
	For a one parameter subgroup $\lambda \colon \C \to G$, 
	let $Y^{\lambda \ge 0}$, $Y^{\lambda=0}$ be defined by 
	\begin{align*}
		Y^{\lambda \ge 0} &\cneq \{y \in Y: 
		\lim_{t\to 0} \lambda(t)(y) \mbox{ exists }\}, \\
		Y^{\lambda = 0} &\cneq \{y \in Y: 
		\lambda(t)(y)=y \mbox{ for all } t \in \C\}. 
	\end{align*}
	The Levi subgroup and the parabolic subgroup
	\begin{align*}
		G^{\lambda=0} \subset G^{\lambda \ge 0} \subset G
	\end{align*}
	are also similarly defined by the conjugate $G$-action on $G$, i.e. 
	$g\cdot (-)=g(-)g^{-1}$. The $G$-action on 
	$Y$ restricts to the $G^{\lambda \ge 0}$-action 
	on $Y^{\lambda \ge 0}$, 
	and the $G^{\lambda=0}$-action on $Y^{\lambda=0}$. 
	We note that
	$\lambda$ factors through 
	$\lambda \colon \C \to G^{\lambda=0}$, and it
	acts on $Y^{\lambda=0}$ trivially. 
	So we have the decomposition into $\lambda$-weight spaces
	\begin{align*}
		D^b([Y^{\lambda=0}/G^{\lambda=0}])=
		\bigoplus_{j \in \mathbb{Z}}D^b([Y^{\lambda=0}/G^{\lambda=0}])_{\lambda \mathchar`- \wt=j}.
		\end{align*}
	We have the diagram of 
	attracting loci
	\begin{align}\label{dia:attract}
		\xymatrix{
			[Y^{\lambda \ge 0}/G^{\lambda \ge 0}] \ar[r]^-{p_{\lambda}}	 \ar[d]^-{q_{\lambda}}	&
			[Y/G] \\
			[Y^{\lambda=0}/G^{\lambda=0}]\ar@/^10pt/[u]^-{\sigma_{\lambda}}. & 	
		}
	\end{align}
	Here $p_{\lambda}$ is induced by the inclusion 
	$Y^{\lambda \ge 0} \subset Y$, and $q_{\lambda}$ is given by 
	taking the $t \to 0$ limit of the action of $\lambda(t)$
	for $t \in \C$. The morphism $\sigma_{\lambda}$ is a section of 
	$q_{\lambda}$ induced by inclusions $Y^{\lambda=0} \subset Y^{\lambda \ge 0}$
	and $G^{\lambda=0} \subset G^{\lambda \ge 0}$. 
	We will use the following lemma: 
	\begin{lem}\label{lem:vanish}\emph{(\cite[Corollary~3.17, Amplification~3.18]{MR3327537})}
		
		(i)
		For $\eE_i \in D^b([Y^{\lambda \ge 0}/G^{\lambda \ge 0}])$
		with $i=1, 2$, suppose that 
		\begin{align*}
			\sigma_{\lambda}^{\ast}\eE_1 \in D^b([Y^{\lambda=0}/G^{\lambda=0}])_{\lambda \mathchar`- \wt\ge j}, \ \sigma_{\lambda}^{\ast}\eE_2 \in D^b([Y^{\lambda=0}/G^{\lambda=0}])_{\lambda \mathchar`- \wt <j}
			\end{align*}
		for some $j$. 
		Then $\Hom(\eE_1, \eE_2)=0$. 
		
		(ii) For $j \in \mathbb{Z}$, the functor 
		\begin{align*}
			q_{\lambda}^{\ast} \colon 
			D^b([Y^{\lambda=0}/G^{\lambda=0}])_{\lambda  \mathchar`- \wt=j}
			\to D^b([Y^{\lambda \ge 0}/G^{\lambda \ge 0}])
			\end{align*} 
		is fully-faithful. 
	\end{lem}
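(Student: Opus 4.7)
The plan is to follow the standard Kempf--Ness/Kirwan--Teleman approach to weight-based vanishing on GIT quotient stacks, as systematized by Halpern-Leistner. Two structural facts underlie both statements: (a) the morphism $q_\lambda \colon [Y^{\lambda\ge 0}/G^{\lambda\ge 0}] \to [Y^{\lambda=0}/G^{\lambda=0}]$ is affine, with $q_{\lambda *}\oO$ naturally graded in non-negative $\lambda$-weights and weight-zero piece equal to $\oO_{[Y^{\lambda=0}/G^{\lambda=0}]}$; and (b) since $\lambda$ factors through the center of $G^{\lambda=0}$ and acts trivially on $Y^{\lambda=0}$, the category $D^b([Y^{\lambda=0}/G^{\lambda=0}])$ splits as a direct sum of $\lambda$-weight components, and $R\Gamma$ on this stack picks out the $\lambda$-weight-zero summand after $G^{\lambda=0}$-invariants.

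For (ii), I would use the $(q_\lambda^*, q_{\lambda *})$-adjunction and the projection formula to rewrite
\[
R\Hom(q_\lambda^* \fF_1, q_\lambda^* \fF_2)
\cong R\Hom\bigl(\fF_1, \fF_2 \otimes q_{\lambda *}\oO_{[Y^{\lambda\ge 0}/G^{\lambda\ge 0}]}\bigr).
\]
Because $\fF_2$ has pure $\lambda$-weight $j$ and $q_{\lambda *}\oO$ is concentrated in $\lambda$-weights $\ge 0$, the right tensor factor decomposes into pieces of $\lambda$-weight $\ge j$, with the weight-$j$ component equal to $\fF_2$ itself. The grading compatibility of $R\Hom$ on $[Y^{\lambda=0}/G^{\lambda=0}]$ then selects only this summand, yielding $R\Hom(q_\lambda^* \fF_1, q_\lambda^* \fF_2) \cong R\Hom(\fF_1, \fF_2)$, as needed.

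For (i), I would factor the Hom through pushforward along $q_\lambda$,
\[
R\Hom(\eE_1, \eE_2) = R\Gamma\bigl([Y^{\lambda=0}/G^{\lambda=0}],\, q_{\lambda *} R\hH om(\eE_1, \eE_2)\bigr),
\]
and argue that $q_{\lambda *} R\hH om(\eE_1, \eE_2)$ lies in strictly negative $\lambda$-weights under the hypotheses, so its weight-zero summand (and hence $R\Hom$) vanishes. Reducing $\eE_1$ to a locally free complex and resolving $\eE_i$ by $q_\lambda^*$-pullbacks tensored with symmetric powers of the positive-weight bundle $N^\vee$ (with $N$ the normal bundle of $\sigma_\lambda$), each graded piece reduces to the elementary observation that $R\hH om(\sigma_\lambda^*\eE_1, \sigma_\lambda^*\eE_2)$ has weights bounded above by $(\text{max weight of }\sigma_\lambda^*\eE_2) - (\text{min weight of }\sigma_\lambda^*\eE_1) < 0$, and this negativity is preserved after tensoring with non-negatively-graded $\Sym^\bullet N^\vee$.

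The main obstacle is the $\lambda$-weight bookkeeping in (i): coherent sheaves on $[Y^{\lambda\ge 0}/G^{\lambda\ge 0}]$ can have $\lambda$-weights bounded below but unbounded above, so one must truncate to bounded weight ranges and verify that only finitely many graded pieces contribute to any given degree of $R\Hom$ before passing to the limit. This finite-type/filtration analysis, carried out via stratifications of $Y^{\lambda\ge 0}$ in Halpern-Leistner's paper, is the technical heart of both claims and yields the precise statements of Corollary~3.17 and Amplification~3.18 cited above.
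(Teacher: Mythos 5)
The paper itself does not prove this lemma: it is imported verbatim from Halpern--Leistner \cite[Corollary~3.17, Amplification~3.18]{MR3327537}, so the only benchmark is the argument in that reference, and your outline is indeed a reconstruction of that standard proof (adjunction plus projection formula along the affine morphism $q_{\lambda}$ for (ii), and a weight estimate on $q_{\lambda\ast}R\hH om(\eE_1,\eE_2)$ for (i)). Part (ii) as you present it is fine: only the $\lambda$-weight-$j$ summand of $\fF_2\otimes q_{\lambda\ast}\oO$ can receive maps from the pure weight-$j$ object $\fF_1$, and that summand is $\fF_2$ itself, regardless of which sign convention one adopts for the grading of $q_{\lambda\ast}\oO$.

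For (i), however, your sign conventions are internally inconsistent in a way that breaks the key step as written. With the convention forced by the statement of the lemma (weights of $\sigma_{\lambda}^{\ast}(-)$ given by the natural pairing, attracting locus defined by existence of $\lim_{t\to 0}\lambda(t)y$), the fibres of $q_{\lambda}$ have \emph{positive} tangent weights, hence their coordinate functions have \emph{negative} weights: in the model $[\mathbb{A}^1/\mathbb{G}_m]$ with the weight-one scaling action, the coordinate $x$ has $\lambda$-weight $-1$, so $q_{\lambda\ast}\oO\cong \Sym^{\bullet}N^{\vee}$ is concentrated in non-positive weights, not non-negative ones as you assert. If $\Sym^{\bullet}N^{\vee}$ really were non-negatively graded (and unbounded above), tensoring the weight-bounded-above-by-a-negative-number object $R\hH om(\sigma_{\lambda}^{\ast}\eE_1,\sigma_{\lambda}^{\ast}\eE_2)$ with it would produce arbitrarily large weights, the weight-zero part would not vanish, and your conclusion would not follow; indeed with that grading the lemma itself would fail already for $\Hom_{[\mathbb{A}^1/\mathbb{G}_m]}(\oO,\oO\otimes\chi)$ with $\chi$ a character of negative weight. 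Once the grading of $q_{\lambda\ast}\oO\cong\Sym^{\bullet}N^{\vee}$ is corrected to be non-positive, the negativity of the $\lambda$-weights of $q_{\lambda\ast}R\hH om(\eE_1,\eE_2)$ does persist, your truncation/filtration remarks are exactly the finiteness points handled in Halpern--Leistner, and both parts close.
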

	
	\subsection{Kempf-Ness stratification}\label{subsec:exam:KN}
Here review Kempf-Ness stratifications associated with GIT quotients of 
reductive algebraic groups, and the corresponding window theorem 
following the convention of~\cite[Section~2.1]{MR3327537}. 
Let $Y$, $G$ be as in the previous subsection. 
For an element 
$l \in 
\Pic([Y/G])_{\mathbb{R}}$, 
we have the open subset of $l$-semistable points
\begin{align}\notag
	Y^{l\sss} \subset Y
\end{align}
characterized 
by the set of points $y \in Y$ such that 
for any one parameter subgroup $\lambda \colon \mathbb{C}^{\ast} \to G$
such that 
the limit $z=\lim_{t\to 0}\lambda(t)(y)$
exists in $Y$, we 
have $\wt(l|_{z})\ge 0$. 
Let $\lvert \ast \rvert$ be the Weyl-invariant norm 
on $N_{\mathbb{R}}$. 
The above subset of $l$-semistable 
points fits into the \textit{Kempf-Ness (KN) stratification} 
\begin{align}\label{KN:strata}
	Y=S_{1} \sqcup S_{2} \sqcup \cdots \sqcup S_N \sqcup 
	Y^{l\sss}.  
\end{align}
Here for each $1\le i\le N$ there exists a 
one parameter subgroup $\lambda_{i} \colon \mathbb{C}^{\ast} \to
T \subset G$, an open and closed subset 
$Z_{i}$ of
$(Y \setminus \cup_{i'<i} S_{i'})^{\lambda_{i}=0}$
(called \textit{center} of $S_i$)
such that 
\begin{align*}
	S_{i}=G \cdot Y_{i}, \ 
	Y_{i}\cneq \{ y \in Y^{\lambda_{i} \ge 0}: 
	\lim_{t \to 0}\lambda_{i}(t)(y) \in Z_{i}\}. 
\end{align*}
Moreover by setting the slope to be
\begin{align}\notag
	\mu_{i} \cneq -\frac{
		\wt(l|_{Z_{i}})}{\lvert \lambda_{i} \rvert} \in \mathbb{R}
\end{align}
we have 
the inequalities
$\mu_1>\mu_2>\cdots>0$.
We have the following diagram (see~\cite[Definition~2.2]{MR3327537})
\begin{align}\label{dia:YZ}
	\xymatrix{
		[Y_{i}/G^{\lambda_{i}\ge 0}] \ar[r]^-{\cong} \ar[d] & [S_{i}/G] \ar[dl]_{p_{i}} \ar@<-0.3ex>@{^{(}->}[r]^-{q_{i}}
		& \left[\left(Y\setminus \cup_{i'<i} S_{i'}\right)/G \right]  \\
		[Z_{i}/G^{\lambda_{i}=0}]. \ar@<-0.3ex>@{^{(}->}[rru]_-{\tau_{i}} & & 
	}
\end{align}
Here the left vertical arrow is given by
taking the $t\to 0$ limit of the action of $\lambda_{i}(t)$
for $t \in \C$, and $\tau_{i}, q_{i}$ are induced by the 
embedding $Z_{i} \hookrightarrow Y$, $S_{i} \hookrightarrow Y$ respectively. 
Let $\eta_{i} \in \mathbb{Z}$ be defined by 
\begin{align}\label{etai}
	\eta_{i} \cneq \wt_{\lambda_{i}}(\det(N_{S_{i}/Y}^{\vee}|_{Z_{i}})).
\end{align}
In the case that $Y$ is a $G$-representation, 
it is also written as 
\begin{align*}
	\eta_i =\langle \lambda_i, (Y^{\vee})^{\lambda_i>0}-(\mathfrak{g}^{\vee})^{\lambda_i>0}   \rangle. 
	\end{align*}
Here for a $G$-representation $W$ and a one parameter 
subgroup $\lambda \colon \C \to T$, 
we denote by $W^{\lambda>0} \in K(BT)$ the 
subspace of $W$ spanned by weights which pair positively with $\lambda$. 
We will use the following version of window theorem. 
\begin{thm}\label{thm:window}\emph{(\cite{MR3327537, MR3895631})}
	For each $i$, we take $m_{i} \in \mathbb{R}$. 
For $N' \le N$, let 
\begin{align}\label{window:m}
	\mathbb{W}_{m_{\bullet}}^l([(Y \setminus \cup_{1\le i\le N'} S_{i})/G]) \subset 
	D^b([(Y \setminus \cup_{1\le i\le N'} S_{i})/G])
\end{align}
be the subcategory of objects $\pP$
satisfying the condition
	\begin{align}\label{condition:P}
	\tau_{i}^{\ast}(\pP) \in 
	\bigoplus_{j \in [m_{i}, m_{i}+\eta_{i})}
	D^b([Z_{i}/G^{\lambda_i=0}])_{\lambda_{i} \mathchar`- \wt= j}
\end{align}
for all $N' <i\le N$. 
Then the composition functor 
\begin{align*}
	\mathbb{W}_{m_{\bullet}}^l([(Y \setminus \cup_{1\le i\le N'} S_{i})/G]) \hookrightarrow 
	D^b([(Y\setminus \cup_{1\le i\le N'}S_i)/G])
	\twoheadrightarrow
	D^b([Y^{l\sss}/G])
\end{align*}
is an equivalence. 
\end{thm}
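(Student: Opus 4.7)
The plan is to prove the theorem by descending induction on $N'$, successively adjoining the KN strata $S_N, S_{N-1}, \ldots$ from the semistable locus outward. The base case $N'=N$ is trivial since $Y \setminus \cup_{1\le i\le N} S_i = Y^{l\sss}$ and no window condition is imposed. For the inductive step, assume the equivalence is known for $N'$ and set
\begin{align*}
U \cneq [(Y \setminus \cup_{i\le N'-1} S_i)/G], \quad U' \cneq [(Y \setminus \cup_{i\le N'} S_i)/G],
\end{align*}
so $j \colon U' \hookrightarrow U$ is an open immersion with closed complement $[S_{N'}/G]$. It then suffices to upgrade the window at $S_{N'}$, i.e.\ to show that the window condition (\ref{condition:P}) at $i=N'$ cuts out a subcategory of $D^b(U)$ mapping isomorphically to $D^b(U')$ under $j^*$.

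For this one-step reduction, which is the content of the grade-restriction theorems of Halpern-Leistner~\cite{MR3327537} and Ballard-Favero-Katzarkov~\cite{MR3895631}, the key inputs are the attracting diagram (\ref{dia:YZ}) for $i=N'$ and the Hom-vanishing Lemma~\ref{lem:vanish}(i), which forbids morphisms between objects on the attracting locus whose $\lambda_{N'}$-weights at $Z_{N'}$ are separated by a gap of size $\eta_{N'}$. Combined with the open-closed recollement along $[S_{N'}/G] \subset U$, these yield a semiorthogonal decomposition
\begin{align*}
D^b(U) = \bigl\langle \cC_{<m_{N'}}, \; \mathbb{W}_{m_{N'}}(U), \; \cC_{\ge m_{N'}+\eta_{N'}} \bigr\rangle,
\end{align*}
where $\mathbb{W}_{m_{N'}}(U)$ consists of objects whose pullback along $\tau_{N'}$ has $\lambda_{N'}$-weights in $[m_{N'}, m_{N'}+\eta_{N'})$, and the two outer pieces are generated by $p_{N'\ast} q_{N'}^*$ applied to the weight-restricted pieces of $D^b([Z_{N'}/G^{\lambda_{N'}=0}])$ (using Lemma~\ref{lem:vanish}(ii) for the fully-faithfulness of $q_{N'}^*$). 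Since $j^*$ kills the two outer pieces set-theoretically supported on $S_{N'}$, it restricts to an equivalence $\mathbb{W}_{m_{N'}}(U) \stackrel{\sim}{\to} D^b(U')$.

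The main obstacle is essential surjectivity of this intermediate equivalence: given $\fF \in D^b(U')$, one must produce a window-restricted extension $\widetilde{\fF} \in D^b(U)$ with $j^*\widetilde{\fF} \simeq \fF$. Start with any coherent extension and iteratively modify it by taking cones with Koszul-type complexes of the form $p_{N'\ast} q_{N'}^*(\wW \otimes \chi)$ supported on $[S_{N'}/G]$, where $\wW$ is the weight-$j$ summand (for $j$ outside $[m_{N'}, m_{N'}+\eta_{N'})$) of the derived restriction to $Z_{N'}$ and $\chi$ is an appropriate $\lambda_{N'}$-character. The crucial numerical input is $\eta_{N'} = \wt_{\lambda_{N'}}\det(N_{S_{N'}/Y}^{\vee}|_{Z_{N'}})$, precisely the length of the Koszul resolution of the conormal bundle; this guarantees that each modification strictly decreases a positive weight-multiplicity invariant outside the window, so the procedure terminates after finitely many steps. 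Assembling the one-step reductions from $N'=N$ down to the given value---possible because the window conditions at distinct strata are imposed independently on the disjoint centers $Z_i$---yields the global window equivalence claimed in the theorem.
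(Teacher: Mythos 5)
The paper does not actually prove Theorem~\ref{thm:window}: it is imported verbatim from Halpern-Leistner~\cite{MR3327537} and Ballard-Favero-Katzarkov~\cite{MR3895631}, so there is no internal proof to compare against. Your sketch reproduces the strategy of those references: descending induction to reduce to a single KN stratum, the semiorthogonal decomposition of $D^b(U)$ into the window category and two pieces supported on $[S_{N'}/G]$ generated by pushforwards of weight-restricted objects pulled back from the center (with $\eta_{N'}$ entering through $\det(N_{S_{N'}/Y}^{\vee}|_{Z_{N'}})$), and the observation that $j^{\ast}$ kills the supported pieces; the final reduction closes because the centers $Z_i$ for $i>N'$ lie in $U'$, so the remaining window conditions only depend on $j^{\ast}\pP$ — all of this is consistent with the cited proofs. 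The one place your outline is substantially thinner than the actual argument is essential surjectivity: in~\cite{MR3327537} this is done via baric (weight) truncation functors on the subcategory of complexes supported on the stratum, which give canonical, functorial corrections of an arbitrary coherent extension of $\fF$, rather than an ad hoc iteration with a ``strictly decreasing weight-multiplicity invariant''; making your Koszul-modification-and-termination step precise would essentially amount to reconstructing those baric truncations, so this is where the real content of the theorem lies.
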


Let $w \colon Y \to \mathbb{A}^1$ be a $G$-invariant function. 
We will apply Theorem~\ref{thm:window} for a KN stratification 
of $\Crit(w)$
\begin{align*}
	\Crit(w)= S_1' \sqcup S_2' \sqcup \cdots \sqcup S_N' \sqcup \Crit(w)^{l\sss}
\end{align*}
in the following way. 
After discarding 
KN strata $S_{i} \subset Y$ 
with $\Crit(w) \cap S_{i}=\emptyset$, 
the above stratification is obtained by
restricting a KN stratification (\ref{KN:strata}) 
for $Y$ to $\Crit(w)$. 
Let $\lambda_{i} \colon \C \to G$ be a 
one parameter subgroup for $S_{i}'$
with center $Z_{i}' \subset S_{i}'$. 
We define 
$\overline{Z}_{i} \subset Y$ to 
be the union of connected components 
of the $\lambda_{i}$-fixed part of $Y$
which contains $Z_{i}'$, 
and $\overline{Y}_{i} \subset Y$ is the set of 
points $y \in Y$ with 
$\lim_{t\to 0}\lambda_{i}(t)y \in \overline{Z}_{i}$. 
Similarly to (\ref{dia:YZ}), 
we have the diagram
\begin{align}\notag
	\xymatrix{
		[\overline{Y}_{i}/G^{\lambda_{i}\ge 0}] \ar@/^15pt/[rr]^-{\overline{q}_{i}} 
		\ar[d]_-{\overline{p}_{i}} \ar@<-0.3ex>@{^{(}->}[r] & 
		[Y^{\lambda_{i}\ge 0}/G^{\lambda_{i}\ge 0}] \ar[d] 
		\ar[r] & [Y/G] \\
		[\overline{Z}_{i}/G^{\lambda_{i}= 0}] \ar@<-0.3ex>@{^{(}->}[r] & [Y^{\lambda_{i}=0}/G^{\lambda_{i}=0}]. \ar[ru] & 	
	}
\end{align}
Here the left horizontal arrows are 
open and closed immersions. 	
Using the equivalence (\ref{Pre:rest}), 
we also have the following 
version of window theorem for factorization categories 
(see~\cite[Subsection~2.4]{Totheta})
\begin{thm}\label{thm:window:MF}
For each $i$, we take $m_i \in \mathbb{R}$. 
For $N' \le N$, let 
\begin{align*}
	\mathbb{W}_{m_{\bullet}}^l([(Y \setminus \cup_{1\le i\le N'}S_i')/G], w) \subset \MF([(Y \setminus \cup_{1\le i\le N'}S_i')/G], w)
	\end{align*}
be the subcategory
consisting of factorizations $(\pP, d_{\pP})$ such that 
	\begin{align}\label{cond:P}
		(\pP, d_{\pP})|_{[(\overline{Z}_{i} \setminus 
			\cup_{i'<i}S_{i'}')/G^{\lambda_i=0}]} \in 
		\bigoplus_{j \in [m_{i}, m_{i}+\overline{\eta}_{i})}
		\mathrm{MF}[(\overline{Z}_{i} \setminus 
		\cup_{i'<i}S_{i'}')/G^{\lambda_i=0}], w|_{\overline{Z}_{i}})_{\lambda_{i} \mathchar`- \wt= j}
	\end{align}
for all $N'<i \le N$. 
Here $\overline{\eta}_{i}=\wt_{\lambda_{i}}\det (\mathbb{L}_{\overline{q}_{i}})^{\vee}|_{\overline{Z}_{i}}$. 
Then the composition functor
	\begin{align*}
		\mathbb{W}_{m_{\bullet}}^l([(Y \setminus \cup_{1\le i\le N'}S_i')/G], w) \hookrightarrow 
		\MF([(Y \setminus \cup_{1\le i\le N'}S_i')/G], w)
		\twoheadrightarrow 
		\MF([Y^{l\sss}/G], w)
	\end{align*}
	is an equivalence.  
\end{thm}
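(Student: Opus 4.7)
The plan is to reduce to the derived-category window theorem (Theorem~\ref{thm:window}) via the localization property (\ref{Pre:rest}) and an induction over the KN strata. By induction on $N-N'$ it suffices to treat the removal of a single stratum $S_N'$, so we may assume $N'=N-1$ and must show that the inclusion $\mathbb{W}_{m_\bullet}^l \hookrightarrow \MF([U/G], w)$ becomes an equivalence after restriction to $[(U \setminus \overline{S}_N)/G]$, where $U \cneq Y \setminus \cup_{i<N}S_i'$.

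First I would choose a $G$-invariant open neighborhood $V \subset U$ of $\Crit(w|_U) \setminus S_N'$ such that $V \supset \overline{Y}_N \setminus S_N'$ and such that $\overline{S}_N \cap V$ is a bona fide KN stratum of $V$, with one-parameter subgroup $\lambda_N$, center $\overline{Z}_N \cap V$, and numerical invariant $\overline{\eta}_N$. Such a $V$ exists because by construction $\Crit(w) \cap \overline{Y}_N \subset Y_N'$ away from $\cup_{i<N} S_i'$, so we can shrink $U$ transversally to the flow of $\lambda_N$ until the extended attracting locus becomes closed. By (\ref{Pre:rest}) restriction to $V$ gives equivalences
\begin{align*}
\MF([U/G], w) \stackrel{\sim}{\to} \MF([V/G], w), \quad \MF([(U \setminus \overline{S}_N)/G], w) \stackrel{\sim}{\to} \MF([(V \setminus \overline{S}_N)/G], w).
\end{align*}
Under this identification the window condition (\ref{cond:P}) at $i=N$ coincides with the window condition associated to the single KN stratum $\overline{S}_N \cap V$ of $V$ in the sense of Theorem~\ref{thm:window}.

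Next I would promote the $D^b$-window theorem to factorizations. The window condition is closed under shifts, cones, and totalizations of short exact sequences of coherent factorizations, so it descends to a triangulated subcategory $\mathbb{W} \subset \MF([V/G], w)$. For essential surjectivity, present an arbitrary factorization $(\pP_0, \pP_1, \alpha_0, \alpha_1)$ as the totalization of its underlying $\mathbb{Z}/2$-graded complex of $G$-equivariant coherent sheaves, and replace each $\pP_i$ by its window representative given by the $D^b$-version of Theorem~\ref{thm:window}; here one uses that $w$ has $\lambda_N$-weight zero, so the maps $\alpha_i$ automatically respect the prescribed weight range and the resulting totalization still lies in $\mathbb{W}$ after absorbing an absolutely acyclic error. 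Fully-faithfulness reduces to the vanishing of $\Hom$ from an object of $\mathbb{W}$ to a factorization set-theoretically supported on $\overline{S}_N \cap V$. Such a supported factorization admits a filtration whose graded pieces are pushed forward from $[(\overline{Y}_N \cap V)/G^{\lambda_N \ge 0}]$ via $\overline{q}_N$, so by adjunction the required $\Hom$-vanishing is implied by Lemma~\ref{lem:vanish}(i), once one notes that $\Hom$ in $\MF$ is computed as a $\mathbb{Z}/2$-periodic $\RHom$ of coherent sheaves on a small enough neighborhood of $\Crit(w)$ so the $D^b$-vanishing translates directly.

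The main obstacle is the bookkeeping around the distinction between $Z_N' = \overline{Z}_N \cap \Crit(w)$ and its enlargement $\overline{Z}_N \subset Y^{\lambda_N=0}$: the window condition must be imposed on the possibly larger fixed locus $\overline{Z}_N$ even though only $Z_N'$ appears in the KN stratification of $\Crit(w)$. This is precisely what is enabled by the open-neighborhood reduction in the second paragraph together with the localization equivalence (\ref{Pre:rest}), which lets us treat the factorization category on $[V/G]$ as a $w$-twisted analogue of the derived category and apply the GIT window theorem of~\cite{MR3327537, MR3895631} in its $\MF$-form, as already carried out in~\cite[Subsection~2.4]{Totheta}.
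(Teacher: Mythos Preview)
The paper does not give its own proof of this theorem: it states the result as a known consequence of the localization equivalence (\ref{Pre:rest}) together with the $D^b$-window theorem, with a pointer to \cite[Subsection~2.4]{Totheta}. Your proposal correctly identifies exactly these two ingredients and assembles them in the expected way (reduce to a single stratum, shrink to an open neighborhood of $\Crit(w)$ on which the enlarged attracting locus $\overline{S}_N$ is a genuine KN stratum, then invoke the factorization version of the window theorem), so your approach is essentially the same as what the paper indicates.

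One small point to tighten: in your essential surjectivity step you ``replace each $\pP_i$ by its window representative'' and then say the maps $\alpha_i$ automatically respect the weight range up to an absolutely acyclic error. It is cleaner, and closer to how this is usually run, to observe that the $D^b$ semiorthogonal decomposition of $D^b([V/G])$ produced by Theorem~\ref{thm:window} is compatible with $\otimes\,\oO_{w^{-1}(0)}$ and hence induces a semiorthogonal decomposition of $\mathrm{HMF}$ preserving absolutely acyclic objects; this avoids having to talk about adjusting $\alpha_i$ by hand. Your fully-faithfulness argument via Lemma~\ref{lem:vanish}(i) is fine.
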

	
		\subsection{Kn\"{o}rrer periodicity}
	Let $Y$ be a smooth affine scheme and $G$ be an affine 
	algebraic group which acts on $Y$. 
	Let $W$ be a $G$-representation, which determines 
	a vector bundle $\wW \to \yY \cneq [Y/G]$. 
	Given a function $w \colon \yY \to \mathbb{A}^1$, 
	we have another function on the total space of 
	$\wW \oplus \wW^{\vee}$
	\begin{align*}
		w+q \colon \wW \oplus \wW^{\vee} \to \mathbb{A}^1, \ 
		q(x, x')=\langle x, x'\rangle. 
	\end{align*}
	We have the following diagram 
	\begin{align*}
		\xymatrix{
			\wW^{\vee} 
			\ar@<-0.3ex>@{^{(}->}[r]^-{i}
			 \ar[rd]_-{w} \ar[d]_{\mathrm{pr}} 
			& \wW \oplus \wW^{\vee} \ar[d]^-{w+q} \\
			\yY \ar[r]_-{w}	& \mathbb{A}^1. 	
		}
	\end{align*}
	Here $i(x)=(0, x)$. 
	The following is a version of the Kn\"{o}rrer periodicity 
	(cf.~\cite[Theorem~4.2]{MR3631231}):
	\begin{thm}\label{thm:period}
		The following composition functor is an equivalence  
		\begin{align}\label{Phi:period}
			\Phi \cneq  
			i_{\ast}\mathrm{pr}^{\ast} \colon 
			\MF(\yY, w) \stackrel{\mathrm{pr}^{\ast}}{\to}
			\MF(\wW^{\vee}, w) \stackrel{i_{\ast}}{\to}
			\MF(\wW \oplus \wW^{\vee}, w+q). 
		\end{align}
	\end{thm}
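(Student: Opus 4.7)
The plan is to construct an explicit quasi-inverse to $\Phi$ via restriction to the zero section and to verify the adjunction compositions using a Kn\"{o}rrer-type Koszul factorization naturally associated to the hyperbolic potential $q$. The preliminary geometric observation is that $\Crit(w + q) \subset \wW \oplus \wW^{\vee}$ equals $\Crit(w) \subset \yY$ embedded via the composition of zero sections: the differential $d(w + q) = dw + x' \, dx + x \, dx'$ vanishes precisely when $dw = 0$, $x = 0$, and $x' = 0$. Consequently, by the restriction property~(\ref{Pre:rest}), the category $\MF(\wW \oplus \wW^{\vee}, w + q)$ depends only on an arbitrarily small open neighborhood of the zero section $\yY \hookrightarrow \wW \oplus \wW^{\vee}$, which provides the correct ``size'' matching between the two categories.

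The central construction is a Koszul-type factorization on $\wW \oplus \wW^{\vee}$: with tautological sections $x \in \pi^{*}\wW$ and $x' \in \pi^{*}\wW^{\vee}$ for $\pi \colon \wW \oplus \wW^{\vee} \to \yY$, set $K \cneq (\Lambda^{\bullet} \pi^{*} \wW^{\vee}, \delta)$ with differential $\delta = \iota_{x} + (x' \wedge -)$. A short computation gives $\delta \circ \delta = \langle x, x' \rangle \cdot \idd = q \cdot \idd$, so $K$ is a matrix factorization of $q$. Tensoring with $\pi^{*} w$ produces the analogous factorization of $w + q$, and $K$ is equivalent to the pushforward $i_{*} \oO_{\wW^{\vee}}$ in $\MF(\wW \oplus \wW^{\vee}, q)$ (viewing the latter as the image of the trivial factorization of $0$).

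I would then define a candidate quasi-inverse $\Psi \colon \MF(\wW \oplus \wW^{\vee}, w + q) \to \MF(\yY, w)$ as $\Psi \cneq s^{*} \circ i^{*}$, where $i^{*}$ is derived pullback to $\wW^{\vee}$ and $s^{*}$ is pullback along the zero section $s \colon \yY \to \wW^{\vee}$. The adjunction morphism $\idd \to \Psi \Phi$ reduces, via base change and the projection formula, to the Koszul-collapse identity $i^{*} i_{*} \oO \simeq \oO$ in $\MF(\wW^{\vee}, \mathrm{pr}^{*} w)$: while in the ordinary derived category the self-intersection gives $\Lambda^{\bullet} \wW^{\vee}$ which is nontrivial, in the factorization category the cosection supplied by $x'$ contracts the Koszul complex, which is the defining phenomenon of Kn\"{o}rrer periodicity. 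The reverse composition $\Phi \Psi \to \idd$ is handled analogously via the equivalence $K \simeq i_{*} \oO_{\wW^{\vee}}$ noted above.

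The main obstacle is establishing the Koszul-collapse identity globally on the (possibly non-affine) stack $\yY$. I would handle this by first verifying it affine-locally, where $\yY = [\Spec R / G]$ and $\wW$ is equivariantly trivialized as $R^{\oplus n}$; then $w + q = w + \sum_{i=1}^{n} x_{i} y_{i}$, and the statement follows by iteration from the classical one-variable equivalence $\MF(S, f) \simeq \MF(S[x, y], f + xy)$. Globalization proceeds via the naturality of pushforward/pullback under \'{e}tale base change together with descent for the dg-enhancement of $\MF$. Alternatively, one can follow the adjoint-functor approach of~\cite[Theorem~4.2]{MR3631231}, which directly establishes the equivalence in the global setting without invoking descent.
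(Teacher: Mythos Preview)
The paper does not actually prove Theorem~\ref{thm:period}; it is quoted from Hirano~\cite[Theorem~4.2]{MR3631231}, and the text following the statement merely records that $\Phi$ is given by tensoring with the Koszul factorization~(\ref{Koszul}). Your identification of $\Phi$ with $-\otimes K$ and the critical-locus computation match this description.

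However, your proposed quasi-inverse $\Psi = s^{\ast}\circ i^{\ast}$ does not work, and the claimed ``Koszul-collapse identity'' $i^{\ast}i_{\ast}\oO \simeq \oO$ in $\MF(\wW^{\vee},\mathrm{pr}^{\ast}w)$ is false. Take $\yY=\Spec\mathbb{C}$, $W=\mathbb{C}$, $w=0$, so $q=xy$ on $\mathbb{C}^{2}$. Then $\Phi(\mathbb{C})$ is the factorization $(\oO\xrightarrow{x}\oO,\ \oO\xrightarrow{y}\oO)$. Restricting along $i$ (setting $x=0$) gives the $2$-periodic complex $\cdots\to\oO_{\mathbb{C}_{y}}\xrightarrow{0}\oO_{\mathbb{C}_{y}}\xrightarrow{y}\oO_{\mathbb{C}_{y}}\to\cdots$, whose cohomology is the skyscraper $s_{\ast}\mathbb{C}$ at the origin, \emph{not} $\oO_{\mathbb{C}_{y}}$. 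Applying $s^{\ast}$ then yields $\mathbb{C}\oplus\mathbb{C}[1]$, so $\Psi\Phi\not\cong\id$. In general the cosection $x'$ turns $i^{\ast}i_{\ast}\oO$ into a Koszul resolution of the zero section $s_{\ast}\oO_{\yY}$, not into $\oO_{\wW^{\vee}}$; the further pullback $s^{\ast}$ then reintroduces the exterior algebra.

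The actual argument in~\cite{MR3631231} does not proceed via a naive restriction quasi-inverse. One either shows full faithfulness by computing $\Hom$'s (using that $K\otimes K^{\vee}$ is, after pushforward along $\pi$ in the appropriate completed or ind-sense, concentrated on the diagonal) together with generation, or uses the adjoint pair $(\mathrm{pr}^{\ast},\mathrm{pr}_{\ast})$ and $(i_{\ast},i^{!})$ with care about where the adjoints land. Your local-to-global strategy is sound, but the local input should be the classical Kn\"{o}rrer equivalence $\MF(R,f)\simeq\MF(R[x,y],f+xy)$ established by an explicit equivalence of homotopy categories of matrix factorizations, rather than the incorrect composition $s^{\ast}i^{\ast}$.
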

	The equivalence (\ref{Phi:period}) is given by 
	taking the tensor product over $\oO_{\yY}$
	 with the following factorization of $q$ on $\wW \oplus \wW^{\vee}$
	\begin{align*}
		\xymatrix{
			i_{\ast}\oO_{\wW^{\vee}} \ar@<1ex>[r] & 0.  \ar@<1ex>[l]. 	
		}
	\end{align*}
	The above factorization is isomorphic to the Koszul factorization
	of $q$ on $\wW \oplus \wW^{\vee}$, 
	which is of the form (see~\cite[Proposition~3.20]{MR3270588})
	
	\begin{align}\label{Koszul}
		\xymatrix{
			\left(\bigwedge^{\rm{even}}\wW^{\vee} \right) \otimes_{\oO_{\yY}} \oO_{\wW \oplus \wW^{\vee}} 
			\ar@<1ex>[r] & \left(\bigwedge^{\rm{odd}}\wW^{\vee} \right) \otimes_{\oO_{\yY}} \oO_{\wW \oplus \wW^{\vee}}.  \ar@<1ex>[l]. 	
		}
	\end{align}
	
	
	Let $\lambda \colon \C \to G$ be a one parameter subgroup. 
	We have the following diagrams of attracting loci 
	\begin{align*}
		\xymatrix{
			\yY^{\lambda \ge 0} \ar[rd]^-{w^{\lambda \ge 0}}\ar[r]^-{p_{\lambda}} \ar[d]_-{q_{\lambda}} & \yY \ar[d]^-{w}, \\
			\yY^{\lambda=0} \ar[r]_-{w^{\lambda=0}} & \mathbb{A}^1,  
		}
		\quad 
		\xymatrix{
			(\wW \oplus \wW^{\vee})^{\lambda \ge 0} \ar[rd]^-{(w+q)^{\lambda \ge 0}}\ar[r]^-{p'_{\lambda}} 
			\ar[d]_-{q'_{\lambda}} & \wW \oplus \wW^{\vee} \ar[d]^-{w+q}. \\
			(\wW \oplus \wW^{\vee})^{\lambda=0} 
			\ar[r]_-{w^{\lambda=0}+q^{\lambda=0}}  & 	\mathbb{A}^1. 
		}
	\end{align*}
	Note that we have equivalences
	\begin{align*}
		&\Phi^{\lambda=0} \colon 
		\MF(\yY^{\lambda=0}, w^{\lambda=0}) \stackrel{\sim}{\to}
		\MF((\wW \oplus \wW^{\vee})^{\lambda=0}, w^{\lambda=0}+
		q^{\lambda=0}), \\
		&\Phi^{\lambda \ge 0} \colon 
		\MF(\yY^{\lambda \ge 0}, w^{\lambda \ge 0}) \stackrel{\sim}{\to}
		\MF((\wW^{\lambda \ge 0} \oplus (\wW^{\lambda \ge 0})^{\vee}, w^{\lambda\ge 0}+
		q^{\lambda\ge 0})
	\end{align*}
	by applying Theorem~\ref{thm:period} for $\wW^{\lambda=0} \to \yY^{\lambda=0}$,
	$\wW^{\lambda \ge 0} \to \yY^{\lambda \ge 0}$ respectively. 
	
	\begin{prop}\label{prop:Knoer}
		The following diagram commutes:
		\begin{align*}
			\xymatrix{
				\MF(\yY^{\lambda=0}, w^{\lambda=0}) \ar[r]^-{p_{\lambda\ast}q_{\lambda}^{\ast}} \ar[d]_-{\Phi^{\lambda=0} \circ \otimes (\det \wW^{\lambda>0})^{\vee}[\dim W^{\lambda>0}]} & 
				\MF(\yY, w) \ar[d]_-{\Phi} \\
				\MF((\wW\oplus \wW^{\vee})^{\lambda=0}, w^{\lambda=0}+q^{\lambda=0}) \ar[r]_-{p_{\lambda\ast}'q_{\lambda}^{'\ast}} & 
				\MF(\wW \oplus \wW^{\vee}, w+q). 
			}	
		\end{align*}
	\end{prop}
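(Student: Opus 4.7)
The plan is to present $\Phi$ as tensoring with the Koszul factorization of $q$, apply base change and the projection formula to the attracting-locus squares, and reduce the claim to an explicit identification of two factorizations on $(\wW\oplus\wW^\vee)^{\lambda\ge 0}$.

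First, by (\ref{Koszul}) we have $\Phi(\eE)\cong\pi^{\ast}\eE\otimes_{\oO_{\yY}}K$, where $\pi\colon\wW\oplus\wW^\vee\to\yY$ is the bundle projection and $K$ is the Koszul factorization of $q$ with differential $\iota_x+x'\wedge$ for the tautological sections $x$, $x'$ of $\wW$, $\wW^\vee$; the analogous formula holds for $\Phi^{\lambda=0}$ in terms of a Koszul factorization $K^{\lambda=0}$ on $(\wW\oplus\wW^\vee)^{\lambda=0}$. The squares
\[
\xymatrix{
(\wW\oplus\wW^\vee)^{\lambda\ge 0}\ar[r]^-{p'_\lambda}\ar[d]_-{\pi'} & \wW\oplus\wW^\vee \ar[d]^-{\pi} \\
\yY^{\lambda\ge 0}\ar[r]_-{p_\lambda} & \yY
}
\qquad
\xymatrix{
(\wW\oplus\wW^\vee)^{\lambda\ge 0}\ar[r]^-{q'_\lambda}\ar[d]_-{\pi'} & (\wW\oplus\wW^\vee)^{\lambda=0} \ar[d]^-{\pi''} \\
\yY^{\lambda\ge 0}\ar[r]_-{q_\lambda} & \yY^{\lambda=0}
}
\]
are Cartesian, since forming attracting loci is compatible with taking direct sums of representations. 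Flat base change and the projection formula put both compositions in the common form $p'_{\lambda\ast}\bigl(q^{'\ast}_\lambda\pi^{''\ast}\eE\otimes T\bigr)$, where $T_{\rm top}=p^{'\ast}_\lambda K$ (top-then-right) and $T_{\rm bot}=q^{'\ast}_\lambda K^{\lambda=0}\otimes q^{'\ast}_\lambda\pi^{''\ast}L[N]$ (left-then-bottom), with $L=(\det\wW^{\lambda>0})^\vee$ and $N=\dim W^{\lambda>0}$. It thus suffices to construct a canonical isomorphism $T_{\rm top}\cong T_{\rm bot}$ as factorizations on $(\wW\oplus\wW^\vee)^{\lambda\ge 0}$.

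Using the $\lambda$-weight decomposition $\wW^\vee=(\wW^{\lambda=0})^\vee\oplus(\wW^{\lambda>0})^\vee\oplus(\wW^{\lambda<0})^\vee$, restrict $K$ to $(\wW\oplus\wW^\vee)^{\lambda\ge 0}=\wW^{\lambda\ge 0}\oplus(\wW^{\lambda\le 0})^\vee$. On the attracting locus the tautological sections decompose as $x=x^{\lambda=0}+x^{\lambda>0}$ and $x'=x^{'\lambda=0}+x^{'\lambda<0}$ because the opposite-weight coordinates are absent, so the differential $\iota_x+x'\wedge$ splits into four commuting pieces acting on the three wedge-algebra factors, yielding a tensor decomposition
\[
p^{'\ast}_\lambda K\,\cong\,q^{'\ast}_\lambda K^{\lambda=0}\otimes \bigl(\textstyle\bigwedge^\bullet(\wW^{\lambda>0})^\vee,\,\iota_{x^{\lambda>0}}\bigr)\otimes \bigl(\textstyle\bigwedge^\bullet(\wW^{\lambda<0})^\vee,\,x^{'\lambda<0}\wedge\bigr).
\]
The two extra tensor factors are factorizations of the zero potential: the first is the contract Koszul resolving $\oO_{\{x^{\lambda>0}=0\}}$, while the second is the dual (wedge) Koszul, whose derived pushforward along the affine fibration $(\wW^{\lambda<0})^\vee\to\yY^{\lambda\ge 0}$ is computed by Grothendieck duality. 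Combining the two extra factors in the $\mathbb{Z}/2$-graded category $\MF(\cdot,q|_{\lambda\ge 0})$, together with the self-duality of the Kn\"orrer data $\wW\oplus\wW^\vee$ supplied by the pairing $q$ (which at the level of $\lambda$-weight spaces identifies $(\wW^{\lambda<0})^\vee$ with the dual of $\wW^{\lambda<0}$ and pairs it against $\wW^{\lambda>0}$), produces precisely the twist $q^{'\ast}_\lambda\pi^{''\ast}L[N]$, giving the desired isomorphism $T_{\rm top}\cong T_{\rm bot}$.

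The main obstacle is the last identification: matching the combined contribution of the contract Koszul on $\wW^{\lambda>0}$ with the dual Koszul on $(\wW^{\lambda<0})^\vee$ to the single twist $(\det\wW^{\lambda>0})^\vee[\dim W^{\lambda>0}]$ in the statement. The cleanest way to verify it is to reduce to a universal case: take $\yY=BT$ with $T$ a maximal torus containing $\lambda$ and $\wW$ a $T$-representation, where $T_{\rm top}$ and $T_{\rm bot}$ become explicit $T$-equivariant $\mathbb{Z}/2$-graded complexes, match them weight-by-weight using the pairing $q$, and then transport the resulting canonical isomorphism to arbitrary $\yY$ by functoriality and base change.
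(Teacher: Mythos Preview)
Your two displayed squares are not Cartesian, and this breaks the base-change step at the outset. The fiber product $\yY^{\lambda\ge 0}\times_{\yY}(\wW\oplus\wW^\vee)$ is the pullback bundle $p_\lambda^\ast(\wW\oplus\wW^\vee)$, whose fiber is all of $W\oplus W^\vee$, whereas $(\wW\oplus\wW^\vee)^{\lambda\ge 0}$ has fiber only $(W\oplus W^\vee)^{\lambda\ge 0}=W^{\lambda\ge 0}\oplus(W^\vee)^{\lambda\ge 0}$. Your justification ``forming attracting loci is compatible with direct sums'' yields $(Y\times W\times W^\vee)^{\lambda\ge 0}=Y^{\lambda\ge 0}\times(W\oplus W^\vee)^{\lambda\ge 0}$, which is precisely the assertion that the attracting locus is a \emph{proper} sub-bundle of the fiber product whenever $W$ carries nonzero $\lambda$-weights. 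The $q_\lambda$-square fails for the same reason, with the roles of positive and zero weight parts exchanged. Consequently $\Phi\circ p_{\lambda\ast}q_\lambda^\ast(\eE)$ does not land in $p'_{\lambda\ast}\bigl(q'^{\,\ast}_\lambda\pi''^{\ast}\eE\otimes T\bigr)$ by flat base change alone, and the comparison of $T_{\rm top}$ with $T_{\rm bot}$ never gets off the ground.

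The paper handles exactly this non-Cartesian situation by factoring through the intermediate level $\yY^{\lambda\ge 0}$ and invoking two general functoriality lemmas for Kn\"orrer periodicity (Lemmas~\ref{lem:commute1} and~\ref{lem:commute2}). These introduce auxiliary total spaces such as $\wW^{\lambda\ge 0}\oplus p_\lambda^\ast\wW^\vee$ and $\wW^{\lambda\ge 0}\oplus q_\lambda^\ast(\wW^{\lambda=0})^\vee$ (see the diagram~(\ref{dia:comX})), and the determinant twist appears in one clean stroke from the identity $g_{1!}(-)=g_{1\ast}\bigl(-\otimes\det(\wW^{\lambda>0})^\vee[\dim W^{\lambda>0}]\bigr)$ for the closed immersion $g_1$, with no need to analyse Koszul factors weight-by-weight. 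Your decomposition of $p'^{\,\ast}_\lambda K$ into $q'^{\,\ast}_\lambda K^{\lambda=0}$ times two auxiliary Koszul pieces is correct and is morally the source of the twist, but to turn it into a proof you would have to replace the false base change by the correct one against the pullback bundle and then bridge the gap between that bundle and the attracting locus---which ultimately reproduces the intermediate-space manoeuvre.
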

	\begin{proof}
		We have the following diagram
		\begin{align}\label{dia:YYWW}
			\xymatrix{
	\wW^{\lambda=0} \ar[d] & \ar[l] \wW^{\lambda \ge 0} \ar[r] \ar[d] & \wW \ar[d] \\
	\yY^{\lambda=0} & \ar[l]^-{q_{\lambda}} \yY^{\lambda \ge 0} \ar[r]_-{p_{\lambda}} & \yY. 		
	}
			\end{align}
			Here each horizontal diagrams are diagrams of attracting loci, 
			and vertical arrows are projections. 
			From the above diagram, 
			we construct the following diagram 
		\begin{align}\label{dia:comX}
			\xymatrix{	
				(\wW \oplus \wW^{\vee})^{\lambda \ge 0}
				\ar@/_50pt/[dd]_-{q'_{\lambda}} 
				\ar[r]_-{r_1} \ar[d]_-{r_2} \ar@/^20pt/[rr]^-{p'_{\lambda}}\diasquare & 
				\wW^{\lambda \ge 0} \oplus p_{\lambda}^{\ast}\wW^{\vee} \ar[r]_-{f_2} \ar[d]^-{g_2} & \wW \oplus \wW^{\vee} \ar[dd]^-{w+q}  \\
				\wW^{\lambda \ge 0} \oplus q_{\lambda}^{\ast}(\wW^{\lambda=0})^{\vee}	
				\ar[r]_-{g_1} \ar[d]_-{f_1} & \wW^{\lambda \ge 0} \oplus 
				(\wW^{\lambda \ge 0})^{\vee} 
				\ar[dr]^-{w^{\lambda \ge 0}+q^{\lambda \ge 0}} & \\
				\wW^{\lambda=0} \oplus (\wW^{\lambda=0})^{\vee}
				\ar[rr]_-{w^{\lambda=0}+q^{\lambda=0}}  & 
				& \mathbb{A}^1,
			} 
		\end{align}
	Here $(f_1, f_2)$ is induced by the top horizontal diagram in (\ref{dia:YYWW}), 
	$(g_1, g_2)$ is induced by the duals of
	the morphisms of vector bundles 
	\begin{align*}
		q_{\lambda}^{\ast}\wW^{\lambda=0} \leftarrow \wW^{\lambda \ge 0} 
	\to p_{\lambda}^{\ast}\wW
	\end{align*} 
on $\yY^{\lambda \ge 0}$,
	and $(r_1, r_2)$ is induced by the diagram of 
	attracting loci
	$(\wW^{\vee})^{\lambda=0} \leftarrow (\wW^{\vee})^{\lambda \ge 0} \to 
	\wW^{\vee}$ for $\wW^{\vee}$. 
	
	By applying Lemma~\ref{lem:commute1} for the right square of (\ref{dia:YYWW}) 
	 (and also noting that $p_{\lambda}$, $f_2$
	are proper), 
	we have the commutative diagram:
	\begin{align*}
		\xymatrix{
			\MF(\yY^{\lambda \ge 0}, w^{\lambda \ge 0}) \ar[r]^-{p_{\lambda\ast}} \ar[d]_-{\Phi^{\lambda \ge 0}} & 
			\MF(\yY, w) \ar[d]_-{\Phi} \\
			\MF(\wW^{\lambda \ge 0} \oplus (\wW^{\lambda \ge 0})^{\vee}, w^{\lambda \ge 0}+q^{\lambda \ge 0}) \ar[r]_-{f_{2\ast}g_2^{\ast}} & 
			\MF(\wW \oplus 
			\wW^{\vee}, w+q).
		}
	\end{align*}
Similarly by applying Lemma~\ref{lem:commute2} for the left square of (\ref{dia:YYWW}), 
	we have the commutative diagram: 
		\begin{align*}
			\xymatrix{
				\MF(\yY^{\lambda=0}, w^{\lambda=0}) \ar[r]^-{q_{\lambda}^{\ast}} \ar[d]_-{\Phi^{\lambda=0}} & 
				\MF(\yY^{\lambda \ge 0}, w^{\lambda \ge 0}) \ar[d]_-{\Phi^{\lambda \ge 0}} \\
				\MF((\wW\oplus \wW^{\vee})^{\lambda=0}, w^{\lambda=0}+q^{\lambda=0}) \ar[r]_-{g_{1!}f_1^{\ast}} & 
				\MF(\wW^{\lambda \ge 0} \oplus 
				(\wW^{\lambda \ge 0})^{\vee}, w^{\lambda \ge 0}+q^{\lambda \ge 0}).
			}	
		\end{align*}
	
		Note that we have 
		\begin{align*}
			g_{1!}(-)=g_{1\ast}(- \otimes f_1^{\ast}\mathrm{pr}^{\ast}\det(\wW^{\lambda >0})^{\vee}[\dim W^{\lambda >0}]).
		\end{align*}
	Here $\mathrm{pr} \colon (\wW \oplus \wW^{\vee})^{\lambda=0} \to \yY^{\lambda=0}$ is the 
	projection. 
		By the diagram (\ref{dia:comX}) and the base change, we have the isomorphism of 
		functors 
		\begin{align*}
			f_{2\ast}g_2^{\ast}g_{1\ast}f_1^{\ast} \cong 
			p_{\lambda \ast}' q_{\lambda}^{'\ast} \colon 
			\MF(\wW^{\lambda \ge 0} \oplus (\wW^{\lambda \ge 0})^{\vee}, w^{\lambda \ge 0}+q^{\lambda \ge 0})
			\to \MF(\wW \oplus 
			\wW^{\vee}, w+q).
		\end{align*}
		Therefore the proposition holds. 
	\end{proof}

\section{Categorified Hall products for quivers with super-potentials}
In this section, we review categorified Hall products 
for quivers with super-potentials introduced in~\cite{Tudor, Tudor1.5}. 
	\subsection{Moduli stacks of representations of quivers}\label{subsec:catH}
	A \textit{quiver} consists of data $Q=(Q_0, Q_1, s, t)$, 
where $Q_0$, $Q_1$ are finite sets and $s, t \colon Q_1 \to Q_0$
are maps. The set $Q_0$ is the set of vertices, $Q_1$ is the set of 
edges, and $s$, $t$ are maps which assign source and target of each edge. 
A
\textit{$Q$-representation} consists of data 
\begin{align*}
	\mathbb{V}=\{(V_i, u_e) : i \in Q_0, u_e  \in \Hom(V_{s(e)}, V_{t(e)})\}
	\end{align*}
where each $V_i$ is a finite dimensional vector space. 
The dimension vector $v(\mathbb{V})$ of $\mathbb{V}$
is $(\dim V_i)_{i \in Q_0}$. 
For $v=(v_i)_{i \in Q_0} \in \mathbb{Z}_{\ge 0}^{Q_0}$, 
let $R_Q(v)$ be the vector space 
\begin{align*}
	R_Q(v)=\bigoplus_{e \in Q_1}\Hom(V_{s(e)}, V_{t(e)})
	\end{align*}
where $\dim V_i=v_i$. 
The algebraic group $G(v) \cneq \prod_{i\in Q_0} \GL(V_i)$ acts on $R_Q(v)$ 
by conjugation. 
The stack of $Q$-representations of dimension vector $v$ is given by 
the quotient stack 
\begin{align*}
	\mM_Q(v) \cneq [R_Q(v)/G(v)]. 
	\end{align*}
We discuss King's $\theta$-stability condition on $Q^{\dag}$-representations~\cite{Kin}.
We take 
\begin{align*}
	\theta =(\theta_i)_{i \in Q_0} \in \mathbb{R}^{Q_0}. 
	\end{align*}
For a dimension vector $d \in \mathbb{Z}_{\ge 0}^{Q_0}$, 
we set $\theta(v) =\sum_{i \in Q_0} \theta_i v_i$. 
For a $Q$-representation $\mathbb{V}$, we set 
$\theta(\mathbb{V}) \cneq \theta(v(\mathbb{V}))$. 
\begin{defi}\label{def:thetastab}
	A $Q$-representation $\mathbb{V}$ is called $\theta$-(semi)stable 
	if $\theta(\mathbb{V})=0$ and for any non-zero subobject $\mathbb{V}' \subsetneq \mathbb{V}$
	we have $\theta(\mathbb{V}') <(\le) 0$. 
	\end{defi}
There is an open substack 
\begin{align*}
	\mM_Q^{\theta\sss}(v) \subset \mM_Q(v)
	\end{align*}
corresponding to 
$\theta$-semistable representations. 
By~\cite[Proposition~3.1]{Kin}, if each $\theta_i$ is an integer,   
the above open substack corresponds to the 
GIT semistable locus with respect to the character
\begin{align}\notag
	\chi_{\theta} \colon 
	G(v) \to \C, \ 
	(g_i)_{i \in Q_0} \mapsto \prod_{i\in Q_0} \det g_i^{-\theta_i}. 
	\end{align}
By taking the GIT quotient, it admits a 
good moduli space~\cite{MR3237451}
\begin{align}\label{gmoduli:M}
	\pi_{M} \colon \mM_Q^{\theta \sss}(v) \to M_Q^{\theta \sss}(v)
	\end{align}
such that each closed point of $M_Q^{\theta \sss}(v)$
corresponds to a $\theta$-polystable $Q$-representation. 

Let $(a_i, b_i) \in \mathbb{Z}_{\ge 0}^2$ be a pair of non-negative 
integers for each vertex $i \in Q_0$, 
and take $c \in \mathbb{Z}_{\ge 0}$. 
We define the extended quiver $Q^{\dag}$ 
so that its vertex set is $\{\infty\} \cup Q_0$, 
with edges consist of edges in $Q$ and 
\begin{align*}
	\sharp(\infty \to i)=a_i, \ 
	\sharp(i \to \infty)=b_i, \ 
	\sharp(\infty \to \infty)=c. 
	\end{align*}
The $\C$-rigidified moduli stack of $Q^{\dag}$-representations 
of dimension vector $(1, d)$ is given by 
\begin{align*}
	\mM_Q^{\dag}(v) \cneq 
	[R_{Q^{\dag}}(1, v)/G(v)]
	\end{align*}
where $1$ is the dimension vector at $\infty$. 
Note that there is a natural morphism 
$\mM_{Q^{\dag}}(1, v) \to \mM_Q^{\dag}(v)$
which is a trivial $\C$-gerbe. 
For $\theta=(\theta_{\infty}, \theta_i)_{i \in Q_0}$
with $\theta(1, v)=0$, 
the open substack of $\theta$-semistable representations 
\begin{align*}
	\mM_Q^{\dag, \theta\sss}(v) \subset \mM_Q^{\dag}(v)
	\end{align*}
is defined in a similar way. 
The condition $\theta(1, v)=0$ determines 
$\theta_{\infty}$ by 
$\theta_{\infty}=-\sum_{i\in Q_0} \theta_i v_i$, 
so we just write $\theta=(\theta_i)_{i \in Q_0}$.

\subsection{Categorified Hall products}
For a dimension vector $v \in \mathbb{Z}_{\ge 0}^{Q_0}$, let us take 
a decomposition 
\begin{align*}
	v=v^{(1)}+\cdots + v^{(l)}, \ v^{(j)} \in \mathbb{Z}_{\ge 0}^{Q_0}.
	\end{align*}
Let $V_i=\oplus_{j=1}^l V_i^{(j)}$ be a direct sum decomposition 
such that $\{V_i^{(j)}\}_{i\in Q_0}$ has dimension vector $v^{(j)}$. 
We take integers 
$\lambda^{(1)}>\cdots> \lambda^{(l)}$, 
and a one parameter subgroup $\lambda \colon \C \to G(v)$
which acts on $V_i^{(j)}$ by weight $\lambda^{(j)}$. 
We have the stack of attracting loci
\begin{align*}
	\mM_Q(v^{\bullet}) \cneq \left[ R_Q(v)^{\lambda \ge 0}/G(v)^{\lambda \ge 0} \right]. 
	\end{align*} 
The above stack is isomorphic to the stack of 
filtrations of $Q$-representations 
\begin{align}\label{filt:V}
	0=\mathbb{V}^{(0)} \subset \mathbb{V}^{(1)} \subset 
	\cdots \subset \mathbb{V}^{(v)} =\mathbb{V}
	\end{align}
such that each $\mathbb{V}^{(j)}/\mathbb{V}^{(j-1)}$ has dimension 
vector $v^{(j)}$. 
Moreover we have 
\begin{align*}
	\prod_{j=1}^{l} \mM_Q(v^{(j)})=
	\left[ R_Q(v)^{\lambda=0}/G(v)^{\lambda=0} \right]
	\end{align*}
and we have the diagram of attracting loci (see Subsection~\ref{subsec:exam:KN})
\begin{align}\label{dia:quiver}
	\xymatrix{
	\mM_Q(v^{\bullet}) \ar[r]^-{p_{\lambda}} \ar[d]_-{q_{\lambda}} & \mM_Q(v) \\
	\prod_{j=1}^l \mM_Q(v^{(j)}). & 
}
	\end{align}
Here $p_{\lambda}$ sends a filtration (\ref{filt:V}) to $\mathbb{V}$, and 
$q_{\lambda}$ sends a filtration (\ref{filt:V}) to its associated graded 
$Q$-representation. 
Since $p_{\lambda}$ is proper, we have the functor 
(called \textit{categorified Hall product})
	\begin{align}\label{Hproduct}
		p_{\lambda \ast}q_{\lambda}^{\ast} \colon 
		\boxtimes_{j=1}^l D^b(\mM_Q(v^{(j)})) \to 
		D^b(\mM_Q(v)). 
		\end{align}
	For $\eE^{(j)} \in D^b(\mM_Q(v^{(j)}))$, we set 
	\begin{align*}
		\eE^{(1)} \ast \cdots \ast \eE^{(l)} \cneq 
		p_{\lambda\ast}q_{\lambda}^{\ast}(\eE^{(1)} \boxtimes 
		\cdots \boxtimes \eE^{(l)}). 
		\end{align*}
	The above $\ast$-product is associative, i.e. 
	\begin{align*}
		(\eE^{(1)} \ast \eE^{(2)})\ast \eE^{(3)} \cong
		\eE^{(1)} \ast (\eE^{(2)}\ast \eE^{(3)}) \cong 
		\eE^{(1)} \ast \eE^{(2)}\ast \eE^{(3)}.
		\end{align*}
	
	We take $\theta=(\theta_i)_{i\in Q_0}$ such that 
	$\theta(v^{(j)})=0$ for all $j$. 
	Then 
	the diagram (\ref{dia:quiver}) restricts to the 
	diagram 
		\begin{align}\label{dia:quiver:theta}
			\xymatrix{
				\mM_Q^{\theta\sss}(v^{\bullet}) \ar[r]^-{p_{\lambda}} \ar[d]_-{q_{\lambda}} & 
				\mM_Q^{\theta\sss}(v) \\
				\prod_{j=1}^l \mM_Q^{\theta\sss}(v^{(j)}). & 
			}
		\end{align}
		which is a diagram of attracting loci for 
	$\mM_Q^{\theta\sss}(v)$. Similarly we have the functor 
		\begin{align*}
		p_{\lambda \ast}q_{\lambda}^{\ast} \colon 
		\boxtimes_{j=1}^l D^b(\mM_Q^{\theta \sss}(v^{(j)})) \to 
		D^b(\mM_Q^{\theta \sss}(v)),
	\end{align*}
which coincides with (\ref{Hproduct}) when $\theta=0$. 
		
	Similarly applying the above construction for the 
	extended quiver $Q^{\dag}$, for a decomposition 
	\begin{align}\label{decompose:dag}
		v=v^{(1)}+\cdots +v^{(l)}+v^{(\infty)}, \ 
		v^{(j)} \in \mathbb{Z}^{Q_0}
		\end{align}
	such that $\theta(v^{(j)})=0$ for $1\le j\le l$, 
		we have the functor
	\begin{align}\label{Hproduct:dag}
		\boxtimes_{j=1}^l D^b(\mM_Q^{\theta \sss}(v^{(j)})) \boxtimes 
		D^b(\mM_Q^{\dag, \theta \sss}(v^{(\infty)})) \to 
		D^b(\mM_Q^{\dag, \theta \sss}(v))
		\end{align}
	which gives a left 
	action of $\bigoplus_v D^b(\mM_Q^{\theta \sss}(v))$
	on $\bigoplus_v D^b(\mM_Q^{\dag, \theta \sss}(v))$. 
	
	\subsection{Categorified Hall products for quivers with super-potentials}\label{subsec:catHw}
	Let $W$ be a super-potential of a quiver $Q$, i.e. 
	$W \in \mathbb{C}[Q]/[\mathbb{C}[Q], \mathbb{C}[Q]]$
		where $\mathbb{C}[Q]$ is the path algebra of $Q$. 
	Then there is a function 
	\begin{align}\label{func:tr}
		w \cneq \mathrm{Tr}(W) \colon \mM_Q(v) \to \mathbb{A}^1
		\end{align}
	whose critical locus is identified with the 
	moduli stack of $(Q, W)$-representations $\mM_{(Q, W)}(v)$, i.e. 
	$Q$-representations satisfying the relation $\partial W$. 
	
	The diagram (\ref{dia:quiver:theta}) is extended to the diagram 
		\begin{align}\label{dia:quiver:theta:1.5}
		\xymatrix{
			\mM_Q^{\theta\sss}(v^{\bullet}) \ar[r]^-{p_{\lambda}} \ar[d]_-{q_{\lambda}} & 
			\mM_Q^{\theta\sss}(v) \ar[d]^-{w} \\
			\prod_{j=1}^l \mM_Q^{\theta\sss}(v^{(j)}) \ar[r]^-{\sum_{j=1}^l w^{(j)}} & \mathbb{A}^1. 
		}
	\end{align}
Here $w^{(j)}$ is the function (\ref{func:tr}) 
on $\mM_Q(v^{(j)})$. 
Similarly to (\ref{Hproduct}), we have the functor between triangulated categories of factorizations 
	\begin{align*}
		p_{\lambda \ast}q_{\lambda}^{\ast} \colon 
		\boxtimes_{j=1}^l \MF(\mM_Q^{\theta \sss}(v^{(j)}), w^{(j)}) \to 
		\MF(\mM_Q^{\theta \sss}(v), w), 
	\end{align*}
called the \textit{categorified Hall products} for 
representations of quivers with super-potentials. 

The super-potential naturally defines the super-potential of the 
extended quiver $Q^{\dag}$, 
so we have the regular function 
$w \colon \mM_Q^{\dag}(v) \to \mathbb{A}^1$
as in (\ref{func:tr}). 
Similarly to (\ref{Hproduct:dag}), for a decomposition (\ref{decompose:dag})
we have the left action  
	\begin{align}\label{act:quiver}
	\boxtimes_{j=1}^l \MF(\mM_Q^{\theta \sss}(v^{(j)}), w^{(j)}) \boxtimes 
	\MF(\mM_Q^{\dag, \theta \sss}(v^{(\infty)}), w^{(\infty)}) \to 
	\MF(\mM_Q^{\dag, \theta \sss}(v), w).
\end{align}
Note that we have the decomposition (\ref{decom:Y})
with respect to the diagonal torus $\C \subset G(v)$
\begin{align*}
	\MF(\mM^{\theta \sss}_Q(v), w)=\bigoplus_{j \in \mathbb{Z}} \MF(\mM^{\theta \sss}_Q(v), w)_{j}. 
	\end{align*} 
We will often restrict
the functor (\ref{act:quiver}) to the fixed weight spaces 
\begin{align}\notag
	\boxtimes_{j=1}^l \MF(\mM_Q^{\theta \sss}(v^{(j)}), w^{(j)})_{i_j} \boxtimes 
	\MF(\mM_Q^{\dag, \theta \sss}(v^{(\infty)}), w^{(\infty)}) \to 
	\MF(\mM_Q^{\dag, \theta \sss}(v), w).
\end{align}

\subsection{Base change to formal fibers}\label{subsec:bchange}
Later we will take a base change of the categorified Hall product
to a formal neighborhood of a point 
in the good moduli space (\ref{gmoduli:M}). 
Note that the diagram (\ref{dia:quiver:theta:1.5}) 
extends to the commutative diagram 
	\begin{align}\label{dia:quiver:theta2}
	\xymatrix{
		\mM_Q^{\theta\sss}(v^{\bullet}) \ar[r]^-{p_{\lambda}} \ar[d]_-{q_{\lambda}} & 
		\mM_Q^{\theta\sss}(v) \ar[dd]^-{\pi_M}  \ar[rdd]^-{w} & \\
		\prod_{j=1}^l \mM_Q^{\theta\sss}(v^{(j)}) \ar[d]  & & \\
	\prod_{j=1}^l M_Q^{\theta\sss}(v^{(j)})	 \ar[r]^-{\oplus} & 
	M_Q^{\theta \sss}(v) \ar[r] & \mathbb{A}^1. 
	}
\end{align}
Here the bottom arrow is the morphism taking the direct sum of 
$\theta$-polystable representations
which is a finite morphism (see~\cite[Lemma~2.1]{MeRe}), 
and the left bottom vertical arrow is the good moduli space morphism. 
For a closed point $p \in M_Q^{\theta \sss}(v)$, 
we consider the following formal fiber 
\begin{align*}
	\widehat{\mM}_Q^{\theta \sss}(v)_p \cneq 
	\mM_Q^{\theta \sss}(v) \times_{M_Q^{\theta \sss}(v)} 
	\widehat{M}_Q^{\theta \sss}(v)_p
	\to \widehat{M}_Q^{\theta \sss}(v)_p
	\cneq \Spec \widehat{\oO}_{M_Q^{\theta \sss}(v), p}.
	\end{align*}
Let $(p^{(1)}, \ldots, p^{(l)}) \in \prod_{j=1}^l M_Q^{\theta \sss}(v^{(j)})$
be a point such that 
$\oplus(p^{(1)}, \ldots, p^{(l)})=p$. 
By taking the fiber product of the diagram (\ref{dia:quiver:theta2})
by $\widehat{M}_Q^{\theta \sss}(v)_p \to M_Q^{\theta \sss}(v)$, 
we obtain the diagram 
	\begin{align}\label{dia:quiver:theta3}
	\xymatrix{
		\widehat{\mM}_Q^{\theta\sss}(v^{\bullet})_{p} \ar[r]^-{\widehat{p}_{\lambda}} \ar[d]_-{\widehat{q}_{\lambda}} & 
\widehat{\mM}_Q^{\theta\sss}(v)_p \\
		\coprod_{p^{(\bullet)} \in \oplus^{-1}(p)}		\prod_{j=1}^l \widehat{\mM}_Q^{\theta\sss}(v^{(j)})_{p^{(j)}}. & 
	}
\end{align}
The above diagram is a diagram of attracting loci for 
$\widehat{\mM}_Q^{\theta \sss}(v)_p$
(see~\cite[Lemma~4.11]{Totheta}).
By the derived base change, 
we have the commutative diagram 
\begin{align}\label{bchange:1}
	\xymatrix{
\boxtimes_{j=1}^l D^b(\mM_Q^{\theta \sss}(v^{(j)})) 
\ar[r]^-{p_{\lambda \ast}q_{\lambda}^{\ast}} \ar[d] 
& D^b(\mM_Q^{\theta \sss}(v)) \ar[d] \\
	\bigoplus_{p^{(\bullet)} \in \oplus^{-1}(p)}\boxtimes_{j=1}^l D^b(\widehat{\mM}_Q^{\theta \sss}(v^{(j)})_{p^{(j)}})
\ar[r]^-{\widehat{p}_{\lambda \ast}\widehat{q}_{\lambda}^{\ast}}
& D^b(\widehat{\mM}_Q^{\theta \sss}(v)_p). 
}	
	\end{align} 
Here the vertical arrows are
pull-backs to formal fibers. 

We denote by $\widehat{w}_p \colon \widehat{\mM}_Q^{\theta \sss}(v)
\to \mathbb{A}^1$ the pull-back of the function 
(\ref{func:tr}) to the formal fiber. 
Similarly to (\ref{bchange:1}), we have the commutative diagram 
\begin{align}\label{bchange:2}
	\xymatrix{
		\boxtimes_{j=1}^l \MF(\mM_Q^{\theta \sss}(v^{(j)}), w)
		\ar[r]^-{p_{\lambda \ast}q_{\lambda}^{\ast}} \ar[d] 
		&\MF(\mM_Q^{\theta \sss}(v)) \ar[d] \\
		\bigoplus_{p^{(\bullet)} \in \oplus^{-1}(p)}	\boxtimes_{j=1}^l 
		\MF(\widehat{\mM}_Q^{\theta \sss}(v^{(j)})_{p^{(j)}}, \widehat{w}^{(j)}_{p^{(j)}})
		\ar[r]^-{\widehat{p}_{\lambda \ast}\widehat{q}_{\lambda}^{\ast}}
		& \MF(\widehat{\mM}_Q^{\theta \sss}(v)_p, \widehat{w}_{p}). 
	}	
\end{align}

	\section{Derived categories of Grassmannian flips}\label{sec:DGflip}
	In this section, we use categorified Hall products
	to refine the result of~\cite[Theorem~5.4.4]{BNFV} on 
	variation of derived categories under Grassmannian flips. 
	\subsection{Grassmannian flips}\label{subsec:Gflip}
	Let $V$ be a vector space with dimension $d$, 
	and $A$, $B$ be another vector spaces such that 
	\begin{align*}
		a \cneq \dim A, \ b \cneq \dim B, \ a \ge b. 
		\end{align*}
	We form the following quotient stack 
	\begin{align}\label{Gabd}
		\gG_{a, b}(d) \cneq 
		\left[ (\Hom(A, V) \oplus \Hom(V, B))/\GL(V)   \right]. 
		\end{align}
	\begin{rmk}\label{rmk:Gab}
	The stack $\gG_{a, b}(d)$ is the 
	$\C$-rigidified moduli stack of 
	representations of the 
	quiver $Q_{a, b}$
	of dimension vector $(1, d)$, 
	where the vertex set is $\{\infty, 1\}$, the
	number of arrows from $\infty$ to $1$ is $a$, 
	that from $1$ to $\infty$ is $b$, and 
	there are no self loops (see Subsection~\ref{subsec:catH}), e.g. the quiver $Q_{3, 2}$ is described 
	below: 
\begin{align}\label{cy4 quiver}
	Q_{3, 2}=
	\xymatrix{ \bullet_{\infty}  \ar@/^2.5pc/[rr]^{}   \ar@/^1.5pc/[rr]^{}  \ar@/^0.5pc/[rr]^{} 
		&& \bullet_{1}\ar@/^0.5pc/[ll]^{}  \ar@/^1.5pc/[ll]^{}   }   \end{align}
	\end{rmk}
Below we fix a basis of $V$, and take the 
maximal torus $T \subset \GL(V)$ to be consisting of 
diagonal matrices. 
For a one parameter subgroup $\lambda \colon \C \to T$, 
we use the following notation for the diagram of 
attracting loci (\ref{dia:attract}) 
\begin{align}\label{dia:quiver2}
	\xymatrix{
\gG_{a, b}(d)^{\lambda \ge 0} \ar[r]^-{p_{\lambda}} \ar[d]_-{q_{\lambda}} & \gG_{a, b}(d) \\
\gG_{a, b}(d)^{\lambda=0}. & 	
}
	\end{align}
We use the following determinant character 
\begin{align}\label{ch:det}
	\chi_0 \colon \GL(V) \to \C, \ g \mapsto \det(g),
	\end{align}
and often regard it as a line bundle on $\gG_{a, b}(d)$. 
There exist two GIT quotients 
with respect to $\chi_0^{\pm 1}$ 
given by open substacks
\begin{align*}
	G_{a, b}^{\pm}(d) \subset \gG_{a, b}(d). 
	\end{align*}
Here $\chi_0$-semistable 
locus $G_{a, b}^{+}(d)$ 
consists of $(\alpha, \beta) \in \Hom(A, V) \oplus \Hom(V, B)$
such that $\alpha \colon A \to V$ is surjective, 
and $\chi_0^{-1}$-semistable locus 
$G_{a, b}^-(d)$ consists of $(\alpha, \beta)$
such that $\beta \colon V \to B$ is injective. 
We have the following diagram 
\begin{align}\label{dia:Grass}
	\xymatrix{
	G_{a, b}^+(d) \ar@<-0.3ex>@{^{(}->}[r] \ar[rd] & \gG_{a, b}(d) \ar[d]
	 & G_{a, b}^-(d) \ar@<0.3ex>@{_{(}->}[l] \ar[ld] \\
	& G_{a, b}^0(d). &
}
		\end{align}
Here the middle vertical arrow is the good moduli space for 
$\gG_{a, b}(d)$. 
\begin{rmk}\label{rmk:ad}
	When $a\ge d$ and $b=0$, then 
$G_{a, 0}^-(d)=\emptyset$ and 
$G_{a, 0}^+(d)$ is the Grassmannian 
parameterizing surjections $A \twoheadrightarrow V$. 
If $a\ge b\ge d$, then 
$G_{a, b}^{\pm}(d) \to G_{a, b}^0(d)$ are birational 
and 
$G_{a, b}^+(d) \dashrightarrow G_{a, b}^-(d)$
is a flip $(a>b)$, flop $(a=b)$. 
\end{rmk}

We have the KN stratifications with respect to $\chi_0^{\pm 1}$
\begin{align*}
	\gG_{a, b}(d)=
	\sS_0^{\pm} \sqcup \sS_1^{\pm} \sqcup \cdots \sqcup \sS_{d-1}^{\pm} 
	\sqcup G_{a, b}^{\pm}(d)
	\end{align*}
where $\sS_i^{+}$ consists of $(\alpha, \beta)$
such that the image of $\alpha \colon A \to V$
has dimension $i$, 
and $\sS_i^{-}$ consists of 
$(\alpha, \beta)$ such that the 
kernel of $\beta \colon V \to B$ has dimension $d-i$. 
The associated one parameter subgroups 
$\lambda_i^{\pm} \colon \C \to T$ are taken as (see~\cite[Example~4.12]{MR3327537})
\begin{align}\label{lambdai}
	\lambda_i^{+}(t)=(\overbrace{1, \ldots, 1}^i, \overbrace{t^{-1}, \ldots, t^{-1}}^{d-i}), \ 
	\lambda_i^-(t)=(\overbrace{t, \ldots, t}^{d-i}, \overbrace{1, \ldots, 1}^i). 
	\end{align}

\subsection{Window subcategories for Grassmannian flips}\label{subsec:Gflip:2}
We fix a Borel subgroup $B \subset \GL(V)$ 
to be consisting of upper triangular matrices, 
and set roots of $B$ to be negative roots. 
Let $M=\mathbb{Z}^d$ be the character lattice for $\GL(V)$, 
and $M^+_{\mathbb{R}} \subset M_{\mathbb{R}}$ the dominant chamber. 
By the above choice of negative roots, we have 
\begin{align*}
	M_{\mathbb{R}}^+=\{(x_1, x_2, \ldots, x_d) \in \mathbb{R}^d : 
	x_1 \le x_2 \le \cdots \le x_d\}. 
	\end{align*} 
We set $M^+ \cneq M_{\mathbb{R}}^+ \cap M$. 
For $c \in \mathbb{Z}$, we set 
\begin{align}\label{Bcd}
\mathbb{B}_{c}(d) \cneq \{(x_1, x_2, \ldots, x_d) \in M^+ : 
0 \le x_i \le c-d\}. 
	\end{align}
Here $\mathbb{B}_c(d)=\emptyset$ if $c<d$. 
For $\chi \in \mathbb{B}_c(d)$, we assign the Young diagram 
whose number of boxes at the $i$-th row is $x_{d-i+1}$.
The above assignment 
identifies $\mathbb{B}_c(d)$ with the set of Young diagrams 
with height less than or equal to $d$, 
width less than or equal to $c-d$. 
For example, the following 
picture illustrates the case of 
$(3, 7, 7, 10, 15) \in \mathbb{B}_{c}(d)$ for $d=5$
and $c\ge 20$:
\begin{figure}[H]
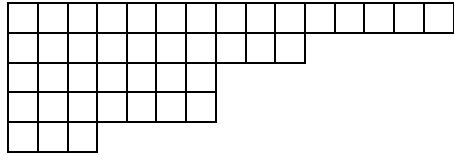
\caption{$(3, 7, 7, 10, 15) \in \mathbb{B}_{c}(d), d=5, c\ge 20$}
\begin{align*}
	\yng(15,10,7,7,3)
	\end{align*}
\end{figure}

We define the triangulated subcategory 
\begin{align}\label{window:Wc}
	\mathbb{W}_c(d) \subset D^b(\gG_{a, b}(d))
	\end{align}
to be the smallest thick triangulated subcategory 
which contains $V(\chi) \otimes \oO_{\gG_{a, b}(d)}$
for $\chi \in \mathbb{B}_c(d)$. 
Here $V(\chi)$ is the irreducible $\GL(V)$ representation with 
highest weight $\chi$, i.e. it is a Schur power of $V$ associated 
with the Young diagram corresponding to $\chi$. 
The following proposition is well-known (see~\cite[Proposition~3.6]{DoSe}), 
which give window subcategories for Grassmannian flips. 
We reprove it here using Theorem~\ref{thm:window}:
\begin{prop}\label{prop:WGD}
	The following composition functors are equivalences
	\begin{align}\label{compose:W}
		&\mathbb{W}_{b}(d) \subset D^b(\gG_{a, b}(d)) \twoheadrightarrow 
		D^b(G^-_{a, b}(d)), \\ 
		\notag &\mathbb{W}_{a}(d) \subset D^b(\gG_{a, b}(d)) \twoheadrightarrow 
		D^b(G_{a, b}^+(d)). 
		\end{align}
	\end{prop}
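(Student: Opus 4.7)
The plan is to apply the window theorem (Theorem~\ref{thm:window}) to the Kempf-Ness stratifications of $\gG_{a,b}(d)$ with respect to $\chi_0^{\pm 1}$ described in Subsection~\ref{subsec:Gflip}, combined with Kapranov's exceptional collection on the Grassmannian. By symmetry I treat only the second equivalence in (\ref{compose:W}); the first is obtained by swapping the roles of $A$ and $B$.

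First I will compute the integer $\eta_i^+$ attached to each KN stratum $\sS_i^+$ with one-parameter subgroup $\lambda_i^+$ from (\ref{lambdai}). Decomposing $V = V_i \oplus V_{d-i}$ into $\lambda_i^+$-weights $0$ and $-1$, the formula from Subsection~\ref{subsec:exam:KN} applied to $Y = \Hom(A,V) \oplus \Hom(V,B)$ yields
\[
\eta_i^+ \;=\; a(d-i) - i(d-i) \;=\; (d-i)(a-i),
\]
where the two contributions arise from $(Y^\vee)^{\lambda_i^+ > 0} = A \otimes V_{d-i}^\vee$ and $(\mathfrak{gl}(V)^\vee)^{\lambda_i^+ > 0} \cong V_i \otimes V_{d-i}^\vee$.

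Next I will verify that each generator $V(\chi) \otimes \oO_{\gG_{a,b}(d)}$ of $\mathbb{W}_a(d)$, for $\chi = (x_1, \ldots, x_d) \in \mathbb{B}_a(d)$, satisfies the window condition (\ref{condition:P}) for a suitable $m_i$. Using the standard fact that the $T$-weights of $V(\chi)$ lie in the convex hull of the Weyl orbit $S_d \cdot \chi$, the $\lambda_i^+$-weight of any $T$-weight $(\mu_1, \ldots, \mu_d)$ of $V(\chi)$ is $-\sum_{j=i+1}^{d} \mu_j$, which lies in the interval $\bigl[-\sum_{j=i+1}^{d} x_j,\, -\sum_{j=1}^{d-i} x_j\bigr]$. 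The constraint $0 \le x_j \le a-d$ in (\ref{Bcd}) bounds the length of this interval by $(d-i)(a-d) \le (d-i)(a-i) = \eta_i^+$, so the choice $m_i = -(d-i)(a-d)$ places all such $\lambda_i^+$-weights inside $[m_i, m_i + \eta_i^+)$. Hence $\mathbb{W}_a(d) \subseteq \mathbb{W}_{m_\bullet}^{\chi_0}$, and Theorem~\ref{thm:window} shows that the composition $\mathbb{W}_a(d) \to D^b(G^+_{a,b}(d))$ is fully-faithful.

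Finally I will establish essential surjectivity. Quotienting freely by $\GL(V)$ on the surjective locus of $\alpha \colon A \to V$ identifies $G^+_{a,b}(d)$ with the total space of the vector bundle $\uU^\vee \otimes B$ over the Grassmannian $\mathrm{Gr}(d,A)$, where $\uU$ is the tautological rank-$d$ quotient bundle. By Kapranov's exceptional collection theorem, the Schur bundles $\bS^\chi \uU$ for $\chi$ ranging over the $d \times (a-d)$ Young-diagram box $\mathbb{B}_a(d)$ generate $D^b(\mathrm{Gr}(d,A))$, and pulling back along the affine bundle projection they also generate $D^b(G^+_{a,b}(d))$. Since these are precisely the restrictions of the generators $V(\chi) \otimes \oO_{\gG_{a,b}(d)}$ of $\mathbb{W}_a(d)$, combined with fully-faithfulness and thickness of $\mathbb{W}_a(d)$ this yields the desired equivalence. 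The main technical obstacle lies in the weight-range inequality of the third paragraph: one must use the convex-hull description of $T$-weights of irreducibles together with the precise box constraint $0 \le x_j \le a-d$ to fit $V(\chi)$ into the window of length $\eta_i^+$, and it is this numerical compatibility that forces the bound $a-d$ in the definition (\ref{Bcd}).
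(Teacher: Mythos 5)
Your proposal is correct and follows essentially the same route as the paper: the same computation $\eta_i^+=(a-i)(d-i)$, the same box-weight estimate for the $T$-weights of $V(\chi)$ (with a different but equally valid choice of $m_i$ than the paper's $m_i=-\eta_i^++\varepsilon$), and the same essential-surjectivity argument via $G^+_{a,b}(d)$ being a vector bundle over the Grassmannian together with Kapranov's collection. The only caveat is your one-line reduction of the first equivalence to the second by swapping $A$ and $B$: this swap dualizes $V$ and hence carries $\mathbb{W}_b(d)$ to a $\chi_0$-twist of itself, so one should either note that twisting by $\chi_0^{b-d}$ is an autoequivalence commuting with restriction, or (as the paper implicitly does) simply rerun the same weight computation with $\lambda_i^-$ and $\eta_i^-=(b-i)(d-i)$.
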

\begin{proof}
	We only prove the statement for $+$. 
	Let $\lambda_i^+$ be the one parameter 
	subgroup in (\ref{lambdai}). 
	Then $\eta_i^+$ given in (\ref{etai}) is 
	\begin{align*}
		\eta_i^+ &=\langle \lambda_i^+, 
		(\Hom(A, V)^{\vee} \oplus \Hom(V, B)^{\vee})^{\lambda_i^+>0}
		-\End(V)^{\lambda_i^+>0} \rangle \\
		&=(a-i)(d-i). 
		\end{align*}
	Let $\chi'=(x_1', \ldots, x_d')$ be a $T$-weight of $V(\chi)$ for $\chi \in \mathbb{B}_a(d)$. 
	Then we have $0\le x_j' \le a-d$ for $1\le j\le d$, 
so
\begin{align*}
	-\eta_i^+ <-(a-d)(d-i) \le 
	\langle \chi', \lambda_i^+ \rangle 
	=-\sum_{j=i+1}^d x_j' \le 0.
	\end{align*}
	Therefore by setting $m_i=-\eta_i^+ + \varepsilon$ for $0<\varepsilon \ll 1$
	and $l=\chi_0$ in (\ref{window:m}), we have
	\begin{align*}
		\mathbb{W}_a(d) \subset \mathbb{W}_{m_{\bullet}}^{\chi_0}(\gG_{a, b}(d)) \subset 
		D^b(\gG_{a, b}(d)). 
		\end{align*} 
	It follows that the second composition functor in (\ref{compose:W}) 
	is fully-faithful. 
	
	In order to show that it is essentially surjective, 
	note that 
	the projection to $\Hom(A, V)$
	defines a morphism 
	\begin{align}\label{Gr:ab}
		G_{a, b}^+(d) \to \mathrm{Gr}(a, d)
		\end{align}
	where $\mathrm{Gr}(a, d)$ is the Grassmannian 
	which parametrizes $d$-dimensional quotients of $A$. 
	By the above morphism, 
		$G_{a, b}^+(d)$ is the total space of a vector bundle 
	over $\mathrm{Gr}(a, d)$. 
	The objects $V(\chi) \otimes \oO_{\gG_{a, b}(d)}$
	for $\chi \in \mathbb{B}_a(d)$ restricted to 
	the zero section of (\ref{Gr:ab})
	forms Kapranov's exceptional collection~\cite{Kapranov}. 
	Since $D^b(G_{a, b}^+(d))$ is generated by pull-backs 
	of objects from $D^b(\mathrm{Gr}(a, d))$, the essentially 
	surjectivity of (\ref{compose:W}) holds. 
	\end{proof}

\subsection{Resolutions of categorified Hall products}
Let $d=d^{(1)}+\cdots +d^{(l)}+d^{(\infty)}$ be a decomposition of $d$. 
Note that from Subsection~\ref{subsec:catH}, we have the 
categorified Hall product
\begin{align*}
	\boxtimes_{j=1}^l D^b(B\GL(d^{(j)}))
	\boxtimes D^b(\gG_{a, b}(d^{(\infty)})) \to 
	D^b(\gG_{a, b}(d)). 
	\end{align*}
In particular by setting $d^{(1)}=1$ and $d^{(\infty)}=d-1$, 
we have the functor 
\begin{align}\label{def:ast}
	\ast \colon 
	D^b(B\C) \boxtimes D^b(\gG_{a, b}(d-1)) \to 
	D^b(\gG_{a, b}(d)). 
	\end{align}
It is explicitly given as follows. 
Let $\lambda \colon \C \to T$ be given by 
\begin{align}\label{lambdat}
	\lambda(t)=(t, 1, \ldots, 1). 
	\end{align}
Then we have the decomposition 
$V=V^{\lambda>0} \oplus V^{\lambda=0}$
where $V^{\lambda>0}$ is one dimensional. 
Then 
\begin{align*}
	\gG_{a, b}(d)^{\lambda=0}
	&=[B\GL(V^{\lambda>0})] \times 
	\left[(\Hom(A, V^{\lambda=0}) \oplus \Hom(V^{\lambda=0}, B))/\GL(V^{\lambda=0}))    \right] \\
	&=B\C \times \gG_{a, b}(d-1). 
	\end{align*}
The functor (\ref{def:ast}) is given by 
$p_{\lambda\ast}q_{\lambda}^{\ast}(-)$ in the diagram (\ref{dia:quiver2}). 
The stack $\gG_{a, b}(d)^{\lambda \ge 0}$ is the moduli stack of 
exact sequences of $Q_{a, b}$-representations 
\begin{align}\label{exact:lambda}
	0 \to \mathbb{V}^{\lambda>0} \to \mathbb{V} \to \mathbb{V}^{\lambda=0} \to 0
	\end{align}
such that $\mathbb{V}^{\lambda>0}$ has dimension vector $(0, 1)$. 
We will often use the following lemmas: 
\begin{lem}\label{lem:chi0}
	For $\eE_1 \in D^b(B\C)$ and $\eE_2 \in D^b(\gG_{a, b}(d-1))$, we have 
	\begin{align*}
		(\eE_1 \ast \eE_2) \otimes \chi_0^j =(\eE_1 \otimes \oO_{B\C}(j)) \ast (\eE_2 \otimes \chi_0^j).
		\end{align*}
	Here we have used the same symbol $\chi_0$ for the determinant 
	character of $\GL(V^{\lambda=0})$. 
	\end{lem}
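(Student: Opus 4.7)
The plan is to verify the identity by a direct computation using the projection formula, after identifying the pullback of the determinant character $\chi_0$ along $p_{\lambda}$. First I would recall that, by construction, $\chi_0 \in \Pic(\gG_{a,b}(d))$ is the determinant of the tautological rank-$d$ bundle associated to the representation $V$ of $\GL(V)$, and that on the attracting locus $\gG_{a,b}(d)^{\lambda \ge 0}$ parametrizing extensions (\ref{exact:lambda}), the tautological bundle $V$ sits in a short exact sequence
\begin{equation*}
0 \to \mathbb{V}^{\lambda>0} \to \mathbb{V} \to \mathbb{V}^{\lambda=0} \to 0,
\end{equation*}
where $\mathbb{V}^{\lambda>0}$ is a line bundle corresponding to the standard character of $\C \subset \GL(V)$ under the decomposition $\gG_{a,b}(d)^{\lambda=0} = B\C \times \gG_{a,b}(d-1)$.

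Taking determinants in this sequence gives $p_{\lambda}^{\ast}\chi_0 \cong q_{\lambda}^{\ast}(\oO_{B\C}(1) \boxtimes \chi_0)$, where the $\chi_0$ on the right-hand side denotes the analogous determinant character on $\gG_{a,b}(d-1)$. Using this identity together with the projection formula for $p_{\lambda\ast}$, I compute
\begin{align*}
(\eE_1 \ast \eE_2) \otimes \chi_0^j
&= p_{\lambda\ast}q_{\lambda}^{\ast}(\eE_1 \boxtimes \eE_2) \otimes \chi_0^j \\
&\cong p_{\lambda\ast}\bigl(q_{\lambda}^{\ast}(\eE_1 \boxtimes \eE_2) \otimes p_{\lambda}^{\ast}\chi_0^j\bigr) \\
&\cong p_{\lambda\ast}q_{\lambda}^{\ast}\bigl((\eE_1 \otimes \oO_{B\C}(j)) \boxtimes (\eE_2 \otimes \chi_0^j)\bigr) \\
&= (\eE_1 \otimes \oO_{B\C}(j)) \ast (\eE_2 \otimes \chi_0^j),
\end{align*}
which is the claimed identity.

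There is essentially no obstacle here: the argument is a formal consequence of the definition of $\ast$ as $p_{\lambda\ast}q_{\lambda}^{\ast}$, the multiplicativity of determinant under short exact sequences, and the projection formula. The only point that requires a moment of care is the compatibility of the character identifications on both factors of $\gG_{a,b}(d)^{\lambda=0} = B\C \times \gG_{a,b}(d-1)$ with the determinant of $V$ on $\gG_{a,b}(d)$, which is exactly what the decomposition $V = V^{\lambda>0} \oplus V^{\lambda=0}$ on the fixed locus provides.
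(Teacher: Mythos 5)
Your argument is correct and is essentially the paper's own proof: the paper's one-line justification is precisely the identity $p_{\lambda}^{\ast}\chi_0=\oO_{B\C}(1)\boxtimes\chi_0$ (pulled back via $q_{\lambda}$) together with the definition of $\ast$, and you have merely made the projection formula and the determinant-of-the-extension justification explicit.
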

\begin{proof}
	The lemma follows since $p_{\lambda}^{\ast}\chi_0=\oO_{B\C}(1) \boxtimes \chi_0$
	and the definition of the functor (\ref{def:ast}). 
	\end{proof}
\begin{lem}\label{lem:chi02}
	For $\eE \in \mathbb{W}_c(d)$ and $j\ge 0$, we have 
	$\eE \otimes \chi_0^j \in \mathbb{W}_{c+j}(d)$. 
	\end{lem}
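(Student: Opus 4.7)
The plan is to unwind the definitions and reduce the statement to an inclusion of weight sets $\mathbb{B}_c(d) + (j,\ldots,j) \subset \mathbb{B}_{c+j}(d)$.

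First I would note that the character $\chi_0 \colon \GL(V) \to \C$ is the determinant, which corresponds to the irreducible $\GL(V)$-representation of highest weight $(1,1,\ldots,1)$. Since tensoring a Schur module with the determinant representation shifts the highest weight by $(1,\ldots,1)$, for any $\chi = (x_1,\ldots,x_d) \in M^+$ we have an isomorphism of $\GL(V)$-representations
\begin{align*}
V(\chi) \otimes \chi_0^j \cong V(\chi + (j,\ldots,j)).
\end{align*}
Pulling this back to $\gG_{a,b}(d)$, we obtain
\begin{align*}
(V(\chi) \otimes \oO_{\gG_{a,b}(d)}) \otimes \chi_0^j \cong V(\chi+(j,\ldots,j)) \otimes \oO_{\gG_{a,b}(d)}.
\end{align*}

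Next I would check the combinatorial containment. Suppose $\chi \in \mathbb{B}_c(d)$, so $0 \le x_1 \le \cdots \le x_d \le c-d$. Then the entries of $\chi + (j,\ldots,j)$ satisfy $j \le x_i + j \le c - d + j = (c+j) - d$, and since $j \ge 0$ the lower bound $0 \le x_i+j$ also holds. Thus $\chi + (j,\ldots,j) \in \mathbb{B}_{c+j}(d)$, so $V(\chi+(j,\ldots,j)) \otimes \oO_{\gG_{a,b}(d)}$ is one of the generators of $\mathbb{W}_{c+j}(d)$.

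Finally I would conclude by a standard generation argument. The autoequivalence $(-)\otimes \chi_0^j$ of $D^b(\gG_{a,b}(d))$ is exact and sends each generator $V(\chi) \otimes \oO_{\gG_{a,b}(d)}$ with $\chi \in \mathbb{B}_c(d)$ into $\mathbb{W}_{c+j}(d)$ by the previous step. Since $\mathbb{W}_c(d)$ is by definition the smallest thick triangulated subcategory containing these generators, and $\mathbb{W}_{c+j}(d)$ is itself thick and triangulated, it follows that $(-)\otimes \chi_0^j$ sends all of $\mathbb{W}_c(d)$ into $\mathbb{W}_{c+j}(d)$, which is precisely the claim. There is no real obstacle here; the only thing to be careful about is the convention relating $\chi \in M^+$ to its Young diagram (so that the bound $x_i \le c-d$ is exactly the width constraint).
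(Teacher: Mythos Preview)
Your proof is correct and follows the same approach as the paper's own proof, which simply notes that $V(\chi)\otimes\chi_0^j = V(\chi')$ with $\chi' = \chi + (j,\ldots,j)$. You have just written out the combinatorial check and the thick-subcategory generation step in more detail than the paper does.
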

\begin{proof}
	The lemma follows since $V(\chi) \otimes \chi_0^j=V(\chi')$
	where $\chi'=\chi+(j, j, \ldots, j)$. 
	\end{proof}
The following proposition is essentially proved in~\cite{DoSe, BNFV}, 
which we interpret in terms of categorified Hall products: 
\begin{prop}\emph{(\cite[Theorem~A.7]{DoSe}, \cite[Proposition~5.4.6]{BNFV})}\label{prop:resol}
For $\chi \in \mathbb{B}_{c}(d-1)$ with $c \ge b$, let $\delta$ be the 
corresponding Young diagram. 
Then the object 
$\oO_{B\C}\ast (V(\chi) \otimes \oO_{\gG_{a, b}(d-1)})$ is a sheaf 
which fits into an exact sequence 
\begin{align}\notag
	0 \to V(\chi_K) \otimes \oO_{\gG_{a, b}(d)}^{\oplus m_K} \to 
	\cdots &\to   V(\chi_1) \otimes \oO_{\gG_{a, b}(d)}^{\oplus m_1} \\
	\label{seq:Vchi} &\to   V(\chi) \otimes \oO_{\gG_{a, b}(d)} \to 
	\oO_{B\C}\ast (V(\chi) \otimes \oO_{\gG_{a, b}(d-1)}) \to 0. 
	\end{align}
Here $\chi \in \mathbb{B}_c(d-1)$ is regarded 
as an element of $\mathbb{B}_{c+1}(d)$ by 
$(x_2, \ldots, x_d) \mapsto (0, x_2, \ldots, x_d)$,
and each $\chi_i \in \mathbb{B}_{c+1}(d)$ in (\ref{seq:Vchi}) corresponds to a Young diagram $\delta_i$
obtained from $\delta$ by the 
following algorithm (see Figure~\ref{fig:delta}): 
\begin{itemize}
	\item The Young diagram $\delta_1$ is obtained from $\delta$ by adding boxes to the first column
	until it reaches to height $d$. 
	\item $\delta_i$ is obtained from $\delta_{i-1}$ by adding boxes to the $i$-th 
	column until its height is one more than the height of the $(i-1)$-th column of $\delta$. 
	\end{itemize}
Moreover $m_i=\dim \bigwedge^{s_i} B$ for $s_i=\lvert \delta_i \rvert -\lvert \delta \rvert$, 
and the sequence (\ref{seq:Vchi}) terminates when we reach a positive integer $K$ 
such that $s_{K+1} >b$. 
	\end{prop}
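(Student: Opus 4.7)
The plan is to realize the categorified Hall product as the pushforward of a Koszul complex along a projective bundle, and then identify its terms via relative Borel--Weil--Bott. First I would describe the attracting locus geometrically: let $\widetilde{\gG} \to \gG_{a,b}(d)$ denote the $\mathbb{P}(V)$-bundle parametrizing an additional line $V' \subset V$. Then $\gG_{a,b}(d)^{\lambda \ge 0}$ embeds in $\widetilde{\gG}$ as the zero locus of the canonical section of the rank-$b$ bundle $(V')^{\vee} \otimes B$ given by $\beta\vert_{V'}$; this section is regular, so we have the Koszul resolution
\begin{align*}
0 \to (V')^{\otimes b} \otimes \bigwedge^{b} B^{\vee} \to \cdots \to V' \otimes B^{\vee} \to \oO_{\widetilde{\gG}} \to \oO_{\gG_{a,b}(d)^{\lambda \ge 0}} \to 0
\end{align*}
on $\widetilde{\gG}$, with the $k$-th term placed in cohomological degree $-k$.

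Next I would rewrite the categorified Hall product as the pushforward along the proper projection $\tilde{p} \colon \widetilde{\gG} \to \gG_{a,b}(d)$. Under $q_{\lambda}$, the bundle $\oO_{B\C} \boxtimes (V(\chi) \otimes \oO_{\gG_{a,b}(d-1)})$ pulls back to the Schur power of $V/V'$ associated to $\chi$, so the Hall product equals $R\tilde{p}_{\ast}$ of the complex whose $k$-th term is that Schur power tensored with $(V')^{\otimes k} \otimes \bigwedge^{k} B^{\vee}$, placed in degree $-k$. This tensor product has $\GL(V)$-equivariant highest weight $(k, x_2, \ldots, x_d)$ at the standard torus-fixed flag. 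Setting $y_j := x_j + j - 1$ for $2 \le j \le d$, relative Borel--Weil--Bott says that $R\tilde{p}_{\ast}$ vanishes when $k \in \{y_2, \ldots, y_d\}$; otherwise $k$ falls in a unique gap of $y_2 < \cdots < y_d$, and after reordering and subtracting $\rho_0 = (0, 1, \ldots, d-1)$ one obtains a single nonzero $R^{\ell(w)} \tilde{p}_{\ast}$ equal to an irreducible Schur bundle $V(\chi_{(k)})$ on $\gG_{a,b}(d)$, where $\ell(w)$ is the number of $y_j$ strictly less than $k$.

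The third step is combinatorial: the non-singular values of $k \in \mathbb{Z}_{\ge 0}$ form precisely the set $\{s_i : i \ge 0\}$ where $s_i = d + i - 1 - h_i$ and $h_j$ is the height of the $j$-th column of $\delta$ (with $s_0 = 0$). This is the classical bead-and-gap description of a Young diagram: the $y_j$ are the bead positions and the $s_i$ are the gaps. For $k = s_i$ the Weyl length equals $s_i - i$, so the cohomological degree of the corresponding summand is $-s_i + (s_i - i) = -i$, and a direct unfolding of the dominant weight formula shows that $\chi_{(s_i)}$ agrees with the weight associated to the Young diagram $\delta_i$ produced by the column-filling algorithm. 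The multiplicity $\dim \bigwedge^{s_i} B^{\vee} = \binom{b}{s_i}$ recovers $m_i$, and the resolution terminates at $K$ with $s_{K+1} > b$ because $\bigwedge^{s_{K+1}} B^{\vee} = 0$.

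The main obstacle will be verifying the combinatorial bijection: one must check carefully that inserting $s_i$ into the sorted $y$-sequence and subtracting $\rho_0$ recovers the weight of $\delta_i$, and that the Weyl lengths line up so the complex has cohomology concentrated in degree zero, giving a genuine resolution rather than a bounded complex with extra cohomology in intermediate positions. The hypothesis $c \ge b$ is used to keep each $\chi_i$ inside $\mathbb{B}_{c+1}(d)$ throughout. Identifying the differentials with the natural maps between the Schur bundles then follows from functoriality of the Koszul construction under the equivariant $\mathbb{P}(V)$-bundle structure.
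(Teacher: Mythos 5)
Your route is genuinely different from the paper's: the paper does not recompute the pushforward at all, but identifies the morphism $p_{\lambda}$ (after dualizing, as a map from the stack of injections $(V^{\lambda=0})^{\vee}\hookrightarrow V^{\vee}$ over the relevant Hom-spaces) with the morphism appearing in Donovan--Segal, Theorem~A.7 and in BNFV, Proposition~5.4.6, quotes their computation of $p_{\lambda\ast}$ of Schur powers verbatim, and then only adds the short combinatorial check that each $\chi_i$ stays in $\mathbb{B}_{c+1}(d)$ (via $s_i=d+i-1-\mu_i$, $\mu_{c-d+2}=0$, hence $s_{c-d+2}=c+1>b$ and $K\le c-d+1$; this is where $c\ge b$ enters, which you only gesture at). What you propose instead is essentially a self-contained reproof of the cited result by the Kempf--Weyman geometric technique: realize the attracting stack inside the $\mathbb{P}(V)$-bundle $\widetilde{\gG}$ as the zero locus of the regular section of $(V')^{\vee}\otimes B$, take the Koszul resolution, and push forward using relative Borel--Weil--Bott. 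Your computational core is correct: the complementarity of $\{y_j=x_j+j-1\}$ and $\{s_i\}$ in $\mathbb{Z}_{\ge 0}$ is the standard beta-number description of $\delta$, the surviving term for $k=s_i$ sits in total degree $-s_i+(s_i-i)=-i$, its weight is the one attached to $\delta_i$, the multiplicity is $\binom{b}{s_i}=\dim\bigwedge^{s_i}B$, and the complex stops once $s_{K+1}>b$. This buys a proof independent of the references, at the cost of redoing their work.

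The one soft spot is the final step, which you flag but whose proposed resolution is insufficient as stated: having each Borel--Weil--Bott contribution in a distinct non-positive total degree does not by itself give ``a sheaf with the exact sequence (\ref{seq:Vchi}) as a resolution.'' The naive hypercohomology spectral sequence only places these groups on the $E_1$ page, and in this situation its higher differentials are forced to be nonzero (they are exactly what kills the entries in negative total degree), so the resolution cannot simply be read off from $E_1$. What closes the argument is: (i) sheafness follows from a squeeze --- $Rp_{\lambda\ast}$ of a sheaf along the proper morphism $p_{\lambda}$ has cohomology in degrees $\ge 0$, while the Koszul computation bounds it in degrees $[-K,0]$, hence it is concentrated in degree $0$; and (ii) the existence of an honest complex with terms $V(\chi_i)\otimes\oO^{\oplus m_i}$ quasi-isomorphic to the pushforward is the content of the geometric technique (Weyman's basic theorem), or equivalently of the very statements of Donovan--Segal and BNFV that the paper cites. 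If you supply (i) and either invoke (ii) or the citations at that point, your argument is complete; as written, the claim that checking ``the Weyl lengths line up'' yields the resolution is a genuine jump.
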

\begin{proof}
	Let 
	$\lambda$ be the one parameter subgroup (\ref{lambdat}). 
		Then we have 
	\begin{align*}
		(\Hom(A, V) \oplus \Hom(V, B))^{\lambda \ge 0}
		&= \Hom(A, V) \oplus \Hom(V^{\lambda=0}, B) \\
		&\cong \Hom(V^{\vee}, A^{\vee}) \oplus 
		\Hom(B^{\vee}, (V^{\lambda=0})^{\vee}).
		\end{align*}
	The parabolic subgroup 
	$\GL(V)^{\lambda \ge 0}$
	is the subgroup of $\GL(V)$ which preserves
	$V^{\lambda>0}\subset V$. 
	Therefore there is an isomorphism of quotient stacks 
	\begin{align*}
		&\left[(\Hom(A, V) \oplus \Hom(V, B))^{\lambda \ge 0}/
		\GL(V)^{\lambda \ge 0} \right] \\
		&\stackrel{\cong}{\to} \left[\Hom(V^{\vee}, A^{\vee}) \oplus 
		\Hom(B^{\vee}, (V^{\lambda=0})^{\vee}) \oplus \Hom^{\rm{inj}}((V^{\lambda=0})^{\vee}, V^{\vee})/
		\GL(V) \times \GL(V^{\lambda=0})\right]. 
		\end{align*}
	Here $\Hom^{\rm{inj}}((V^{\lambda=0})^{\vee}, V^{\vee})\subset \Hom((V^{\lambda=0})^{\vee}, V^{\vee})$
	is the subset consisting of injective homomorphisms. 
	The above isomorphism is induced by the 
	embedding into the direct summand $(V^{\lambda=0})^{\vee} 
	\hookrightarrow V^{\vee}$ together with the natural 
	inclusion $\GL(V)^{\lambda \ge 0} \hookrightarrow 
	\GL(V) \times \GL(V^{\lambda=0})$. 
	Under the above isomorphism, 
	the morphism 
	\begin{align*}
		p_{\lambda} \colon 
		\left[(\Hom(A, V) \oplus \Hom(V, B))^{\lambda \ge 0}/
		\GL(V)^{\lambda \ge 0} \right] \to \gG_{a, b}(d)
		\end{align*}
	from the diagram (\ref{dia:quiver}) is identified with the one 
	\begin{align*}
	&\left[\Hom(V^{\vee}, A^{\vee}) \oplus 
		\Hom(B^{\vee}, (V^{\lambda=0})^{\vee}) \oplus \Hom^{\rm{inj}}((V^{\lambda=0})^{\vee}, V^{\vee})/
		\GL(V) \times \GL(V^{\lambda=0})\right] \\
	&\stackrel{p_{\lambda}}{\to} \left[\Hom(V^{\vee}, A^{\vee}) \oplus \Hom(B^{\vee}, V^{\vee})/\GL(V)    \right]	
	\end{align*}
induced by the composition of maps. 
The above morphism is nothing but the one 
considered in~\cite[Theorem~A.7]{DoSe}, \cite[Proposition~5.4.6]{BNFV}. 
We then directly apply the computation of $p_{\lambda\ast}(-)$ 
for 
vector bundles given by Schur powers 
in~\cite[Theorem~A.7]{DoSe}, \cite[Proposition~5.4.6]{BNFV}
to obtain the resolution (\ref{seq:Vchi}). 
	
	We also check that each $\chi_i$ in (\ref{seq:Vchi}) is an element of $\mathbb{B}_{c+1}(d)$, i.e. 
	$\delta_i$ has at most height $d$ and width $c-d+1$. 
	It is obvious that $\delta_i$ has at most height $d$. 
	Let $\mu_j$ be the number of boxes of $\delta$ at the $j$-th column. 
	Then from the algorithm defining $\delta_i$, we have 
	\begin{align*}
		s_i&
=(d-\mu_{1})+(\mu_1+1-\mu_2)+\cdots (\mu_{i-1}+1-\mu_i) \\
&=d+i-1-\mu_i. 		
		\end{align*}
Since $\chi \in \mathbb{B}_c(d-1)$, we have 
$\mu_{c-d+2}=0$, 
so $s_{c-d+2}=c+1>b$. 
Therefore we have $K \le c-d+1$. Since 
$\delta$ has width at most $c-d+1$, 
it follows that $\delta_i$ also has width at most $c-d+1$. 
	\end{proof}

\begin{figure}
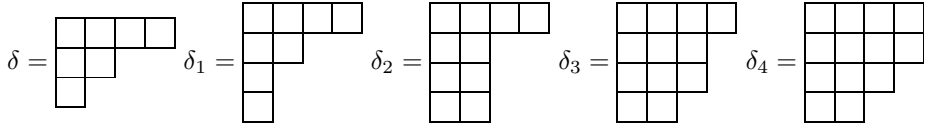
\caption{Algorithm for $\chi=(4, 2, 1)$, $d=4$, $c=b=7$}\label{fig:delta}
	\begin{align*}
\Yvcentermath1 \delta=\yng(4,2,1) \ 
\delta_1=\yng(4,2,1,1) \ 
\delta_2=\yng(4,2,2,2) \ 
\delta_3=\yng(4,3,3,2) \ 
\delta_4=\yng(4,4,3,2)
		\end{align*}
	\end{figure}

Using the above proposition, we have the following lemma: 
\begin{lem}\label{lem:inclu}
For $0\le j\le c-1$, we have 
\begin{align}\label{OB:W}
	\oO_{B\C}(j) \ast (\mathbb{W}_{c-1-j}(d-1) \otimes \chi_0^j) \subset 
	\mathbb{W}_c(d).
	\end{align}
	\end{lem}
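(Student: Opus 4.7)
The plan is to reduce the containment to a statement about the generators of $\mathbb{W}_{c-1-j}(d-1)$, then apply the explicit resolution of Proposition \ref{prop:resol} and track the Young diagrams through the $\chi_0^j$ twist.

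First I would observe that $\mathbb{W}_{c-1-j}(d-1)$ is, by definition, the smallest thick triangulated subcategory containing the objects $V(\chi') \otimes \oO_{\gG_{a,b}(d-1)}$ for $\chi' \in \mathbb{B}_{c-1-j}(d-1)$. Because the categorified Hall product $\oO_{B\C}(j) \ast (-)$ is a triangulated functor (it is $p_{\lambda\ast}q_{\lambda}^{\ast}$ with $p_{\lambda}$ proper and $q_{\lambda}$ flat in the diagram (\ref{dia:quiver2}) for the one-parameter subgroup (\ref{lambdat})), its image of a thick subcategory is contained in the thick subcategory generated by its image on generators. It therefore suffices to verify, for each $\chi' \in \mathbb{B}_{c-1-j}(d-1)$, that
\begin{align*}
    \oO_{B\C}(j) \ast \bigl(V(\chi') \otimes \chi_0^j \otimes \oO_{\gG_{a,b}(d-1)}\bigr) \in \mathbb{W}_c(d).
\end{align*}

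Next, applying Lemma \ref{lem:chi0} with $\eE_1 = \oO_{B\C}$ and $\eE_2 = V(\chi') \otimes \oO_{\gG_{a,b}(d-1)}$ pulls the twist outside the product:
\begin{align*}
    \oO_{B\C}(j) \ast \bigl(V(\chi') \otimes \chi_0^j \otimes \oO_{\gG_{a,b}(d-1)}\bigr)
    = \bigl(\oO_{B\C} \ast V(\chi') \otimes \oO_{\gG_{a,b}(d-1)}\bigr) \otimes \chi_0^j.
\end{align*}
Now I would invoke Proposition \ref{prop:resol} with the role of its parameter ``$c$'' played here by $c-1-j$: the hypothesis $\chi' \in \mathbb{B}_{c-1-j}(d-1)$ matches, and the resolution (\ref{seq:Vchi}) expresses $\oO_{B\C} \ast V(\chi') \otimes \oO_{\gG_{a,b}(d-1)}$ as an iterated cone on terms of the form $V(\chi_i) \otimes \oO_{\gG_{a,b}(d)}$ with $\chi_i \in \mathbb{B}_{c-j}(d)$. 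Hence
\begin{align*}
    \oO_{B\C} \ast V(\chi') \otimes \oO_{\gG_{a,b}(d-1)} \in \mathbb{W}_{c-j}(d).
\end{align*}
Finally, Lemma \ref{lem:chi02} shows that tensoring by $\chi_0^j$ sends this into $\mathbb{W}_{(c-j)+j}(d) = \mathbb{W}_c(d)$, completing the reduction.

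The one technical point to verify is the hypothesis $c-1-j \ge b$ required to invoke Proposition \ref{prop:resol} and guarantee $\chi_i \in \mathbb{B}_{c-j}(d)$ (that is, that the Young diagrams $\delta_i$ in the algorithm stay inside the $d \times (c-j-d)$ box). When this fails, the degenerate range $c - 1 - j < d - 1$ makes $\mathbb{B}_{c-1-j}(d-1)$ empty and the claim vacuous, and the intermediate range is handled by observing that the termination condition $s_{K+1}>b$ in Proposition \ref{prop:resol} bounds the width of each $\delta_i$ independently; this step is the main place where one has to be careful with the combinatorial bookkeeping rather than just invoking the previous results as a black box.
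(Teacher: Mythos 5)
Your proof is essentially the paper's own argument: Lemma~\ref{lem:chi0} to move the $\chi_0^j$-twist outside the Hall product, Proposition~\ref{prop:resol} (applied to generators $V(\chi')\otimes\oO$, $\chi'\in\mathbb{B}_{c-1-j}(d-1)$) to land in $\mathbb{W}_{c-j}(d)$, and Lemma~\ref{lem:chi02} to absorb the twist into $\mathbb{W}_c(d)$. Your closing remark about the hypothesis $c-1-j\ge b$ flags something the paper leaves implicit, but since every later application of the lemma has $j\le c-b-1$ this does not change the route, and the argument is the same as the paper's.
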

\begin{proof}
We have 
	\begin{align*}
		\oO_{B\C}(j) \ast (\mathbb{W}_{c-1-j}(d-1) \otimes \chi_0^j)
		&= 	(\oO_{B\C} \ast \mathbb{W}_{c-1-j}(d-1)) \otimes \chi_0^j \\
		& \subset \mathbb{W}_{c-j}(d) \otimes \chi_0^j \\
		& \subset \mathbb{W}_c(d). 
	\end{align*}
Here we have used Lemma~\ref{lem:chi0} for the first 
identity, Proposition~\ref{prop:resol} for the first inclusion
and Lemma~\ref{lem:chi02} for the last inclusion. 
	\end{proof}
\subsection{Generation of window subcategories}
We show that for $c\ge b$ the category $\mathbb{W}_c(d)$ is generated 
by its subcategory 
$\mathbb{W}_b(d)$ and subcategories (\ref{OB:W}) for $0\le j\le c-b-1$.
We first prove the case of $c=b+1$, which is a
 variant of~\cite[Lemma~5.4.9]{BNFV}. 
 \begin{lem}\label{lem:genen0}
 	The subcategory $\mathbb{W}_{b+1}(d) \subset D^b(\gG_{a, b}(d))$
 	is generated by $\mathbb{W}_b(d)$ and $\oO_{B\C} \ast \mathbb{W}_{b}(d-1)$.
 	\end{lem}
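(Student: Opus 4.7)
By Lemma~\ref{lem:inclu} applied with $j=0$ and $c=b+1$, we have $\oO_{B\C} \ast \mathbb{W}_b(d-1) \subseteq \mathbb{W}_{b+1}(d)$, and consequently the thick subcategory $\tT := \langle \mathbb{W}_b(d),\, \oO_{B\C} \ast \mathbb{W}_b(d-1)\rangle$ sits inside $\mathbb{W}_{b+1}(d)$. The content of the lemma is the reverse inclusion, for which it suffices to show $V(\chi)\otimes \oO_{\gG_{a,b}(d)} \in \tT$ for every $\chi \in \mathbb{B}_{b+1}(d)$. The case $\chi \in \mathbb{B}_b(d)$ is immediate, so the task reduces to handling the generators $\chi \in \mathbb{B}_{b+1}(d) \setminus \mathbb{B}_b(d)$, i.e.\ those whose Young diagram has width exactly $b-d+1$.

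The main tool is Proposition~\ref{prop:resol} with $c=b$. For each $\chi'\in\mathbb{B}_b(d-1)$, the rightmost term $\oO_{B\C}\ast(V(\chi')\otimes \oO)$ of the resulting exact sequence already lies in $\tT$, so the long exact sequence produces a relation inside the triangulated category: when all but one of $V(\chi_{\mathrm{emb}})\otimes \oO$ and $V(\chi_j)\otimes \oO$ (for $j=1,\ldots,K$) are in $\tT$, so is the last (splitting the sequence into triangles and using the two-out-of-three property, together with the thickness closure to strip off multiplicities $m_j$). The key structural observation is that the algorithm of Proposition~\ref{prop:resol} fills the first column to height $d$ at its first step, so every $\chi_j$ with $j\ge 1$ satisfies $x_1(\chi_j)\ge 1$, while $\chi_{\mathrm{emb}}=(0,\chi')$ always has $x_1=0$. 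Concretely, for $\chi$ with $x_1=0$ the choice $\chi'=(x_2,\ldots,x_d)$ realizes $\chi=\chi_{\mathrm{emb}}$ and expresses $V(\chi)\otimes\oO$ modulo $\tT$ in terms of generators with $x_1\ge 1$; and for $\chi$ with $x_1\ge 1$ of the form $(1,\ldots,1,x_{k+1},\ldots,x_d)$ with $k$ leading ones, the choice $\chi'=(0,\ldots,0,x_{k+1},\ldots,x_d)\in\mathbb{B}_b(d-1)$ (with $k-1$ leading zeros, permissible since $x_d\le b-d+1$) realizes $\chi$ as the term $\chi_1$ of the resolution while the remaining $\chi_j$ for $j\ge 2$ carry strictly more boxes.

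I will close the argument by induction on a total order refining both $|\chi|$ and Young-diagram containment, arranged so that at each stage the chosen resolution has all but one of its terms already known to be in $\tT$. The combinatorial reason that the iterative solving closes is Pascal's identity $\binom{b+1}{d}=\binom{b}{d}+\binom{b}{d-1}$, which matches the $\binom{b}{d-1}$ new generators indexed by $\mathbb{B}_{b+1}(d)\setminus\mathbb{B}_b(d)$ against the $\binom{b}{d-1}$ relations arising from choices of $\chi'\in\mathbb{B}_b(d-1)$. The main obstacle will be the base case of this induction, namely the full rectangle $\chi_{\mathrm{max}}=(b-d+1,\ldots,b-d+1)$, for which I will apply Proposition~\ref{prop:resol} with $\chi'=(b-d+1,\ldots,b-d+1)\in\mathbb{B}_b(d-1)$: here $\chi_{\mathrm{max}}=\chi_K$ sits at the top of the resolution, and the lower terms $\chi_j=(j,b-d+1,\ldots,b-d+1)$ together with $\chi_{\mathrm{emb}}=(0,b-d+1,\ldots,b-d+1)$ all have strictly smaller $|\chi|$, so that the bookkeeping must be set up to handle these via resolutions for strictly smaller $\chi'$'s first. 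Verifying that this ordering is globally consistent is the delicate part of the proof.
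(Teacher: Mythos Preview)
Your proposal has a genuine gap: the induction you sketch is not well-founded, and the ``delicate ordering'' you defer cannot be made to work with the two moves you describe. Take for instance $\chi_{\mathrm{emb}}=(0,b-d+1,\ldots,b-d+1)$, $\chi_1=(1,b-d+1,\ldots,b-d+1)$, and the full rectangle $\chi_{\mathrm{max}}$. Your $x_1=0$ move applied to $\chi_{\mathrm{emb}}$, your $x_1\ge 1$ move applied to $\chi_1$ (with $k=1$), and your base-case treatment of $\chi_{\mathrm{max}}$ all select the \emph{same} $\chi'=(b-d+1,\ldots,b-d+1)$, hence the same resolution from Proposition~\ref{prop:resol}. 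A single exact sequence can eliminate only one unknown, not three. More generally, your $x_1=0$ move replaces $\chi$ by terms with strictly larger $|\chi|$, while your $x_1=1$ move produces both a smaller $\chi_{\mathrm{emb}}$ and larger $\chi_j$'s, so no monotone quantity is available; and you give no construction at all for $x_1\ge 2$. The Pascal identity you invoke correctly counts equations versus unknowns, but a dimension count does not yield a triangular elimination in a triangulated category.

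The paper's proof avoids any induction by a sharper choice of $\chi'$ in the full-first-column case. When $\delta$ has height exactly $d$ (and width $b-d+1$), define $\delta'$ so that its $j$-th column has $\mu_{j+1}-1$ boxes. Then $\chi'\in\mathbb{B}_{b-1}(d-1)\subset\mathbb{B}_b(d-1)$ has width at most $b-d$, one computes $|\delta|-|\delta'|=b$, and the algorithm of Proposition~\ref{prop:resol} reconstructs $\delta$ at the final step $K=b-d+1$ with $m_K=1$ and $\chi_K=\chi$. The key point is that every intermediate $\chi_i$ for $i<K$ has width at most $b-d$, so already lies in $\mathbb{B}_b(d)$; thus $V(\chi)\otimes\oO$ is generated by $\mathbb{W}_b(d)$ and a single Hall product, with no recursion. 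The case of height $<d$ then reduces to this one via your $x_1=0$ move, since each $\chi_j$ it produces has a full first column.
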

 \begin{proof}
 	 	For $\chi \in \mathbb{B}_{b+1}(d)$, it 
 	is enough to show that $V(\chi) \otimes \oO_{\gG_{a, b}(d)}$ is generated by 
 	$\mathbb{W}_b(d)$ and $\oO_{B\C} \ast \mathbb{W}_{b}(d-1)$. 
 	Let $\delta$ be the Young diagram corresponding to $\chi$, and 
 	we denote by $\mu_j$ the number of boxes of $\delta$ at the $j$-th 
 	column. We may assume that the width of $\delta$ is exactly 
 	$b-d+1$, i.e. $\mu_j\ge 1$ for $1\le j\le b-d+1$ and $\mu_{b-d+2}=0$. 
 	
 	Suppose that the height of $\delta$ is exactly $d$, i.e. 
 	$\mu_1=d$. 
 	We define another Young diagram $\delta'$ 
 	whose number of boxes at the $j$-th column is $\mu_{j+1}-1$. 
 	Then the height of $\delta'$ is at most $d-1$, 
 	and the width of $\delta'$ is at most $b-d$ (see Figure~\ref{fig:delta2}). 
 	Let $\chi' \in \mathbb{B}_{b-1}(d-1)$ be the 
 	character corresponding to $\delta'$. 
 	As $\mathbb{B}_{b-1}(d-1) \subset \mathbb{B}_{b}(d-1)$, we 
 	apply Proposition~\ref{prop:resol} to obtain a resolution
 	 	\begin{align}\notag
 		0 \to V(\chi_K') \otimes \oO_{\gG_{a, b}(d)}^{\oplus m_K} \to 
 		\cdots &\to   V(\chi_1') \otimes \oO_{\gG_{a, b}(d)}^{\oplus m_1} \\
 		\label{seq:Vchi2} &\to   V(\chi') \otimes \oO_{\gG_{a, b}(d)} \to 
 		\oO_{B\C}\ast (V(\chi') \otimes \oO_{\gG_{a, b}(d-1)}) \to 0. 
 	\end{align}
  	for $\chi_i' \in \mathbb{B}_{b+1}(d)$. 
 	Note that we have  
 	\begin{align*}
 		\lvert \delta \rvert -\lvert \delta' \rvert
 		&=(d-\mu_2+1)+(\mu_2-\mu_3+1)+ \cdots + (\mu_{b-d}-\mu_{b-d+1}+1)+\mu_{b-d+1} \\
 		&= b.
 	\end{align*}
 	From the construction of $\delta'$, the Young diagram $\delta$ is 
 	reconstructed from $\delta'$ by 
 	the algorithm in Proposition~\ref{prop:resol}
 	at the $(b-d+1)$-th step. 
 	Therefore 
 	from the above identity, 
 	it follows that there are exactly $(b-d+1)$-terms of the resolution (\ref{seq:Vchi2}), i.e. 
 	$K=b-d+1$, and also $m_K=1$, $\chi_K'=\chi$. Moreover since the width of $\delta'$ is as most
 	$b-d$, we also have 
 	$\chi_i' \in \mathbb{B}_{b}(d)$ for $0\le i<b-d+1$. Therefore $V(\chi) \otimes \oO_{\gG_{a, b}(d)}$
 	is generated by objects in $\mathbb{W}_{b}(d)$ and
 	$\oO_{B\C} \ast (V(\chi') \otimes \oO_{\gG_{a, b}(d-1)}) \in \oO_{B\C} \ast \mathbb{W}_{b}(d-1)$. 
 	
 	Suppose that the height of $\delta$ is less than $d$. 
 	Then we have $\chi \in \mathbb{B}_{b}(d-1)$. 
 	By applying Proposition~\ref{prop:resol}, 
 	we see that $V(\chi) \otimes \oO_{\gG_{a, b}(d)}$ is generated by 
 	$\oO_{B\C} \ast  (V(\chi) \otimes \oO_{\gG_{a, b}(d-1)}) \in \oO_{B\C} \ast \mathbb{W}_{b}(d-1)$
 	and $V(\chi_i) \otimes \oO_{\gG_{a, b}(d)}$ for $\chi_i \in \mathbb{B}_{b+1}(d)$. 
 	By the algorithm in Proposition~\ref{prop:resol}, 
 	the Young diagram corresponding to $\chi_i$ has a full column, i.e. 
 	the height of $\chi_i$ is exactly $d$. 
 	Therefore by the above argument, each $V(\chi_i) \otimes \oO_{\gG_{a, b}(d)}$ is generated by 
 	$\mathbb{W}_b(d)$ and 
 	$\oO_{B\C} \ast \mathbb{W}_{b}(d-1)$. 
 	\end{proof}
 
 \begin{figure}
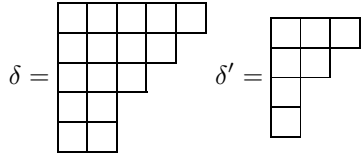
\caption{$\delta$ and $\delta'$ for 
 	$\chi=(2,2,3,4,5) \in \mathbb{B}_{10}(5)$}\label{fig:delta2}
 	\begin{align*}
 		\Yvcentermath1 \delta=\yng(5,4,3,2,2) \ 
 		\delta'=\yng(3,2,1,1)
 	\end{align*}
 \end{figure}
 
 We then show the generation for $\mathbb{W}_c(d)$: 
 
\begin{lem}\label{lem:genen}
	For $c\ge b$, the subcategory $\mathbb{W}_c(d) \subset D^b(\gG_{a, b}(d))$ is generated by 
	$\mathbb{W}_b(d)$ and $\oO_{B\C}(j) \ast (\mathbb{W}_{c-1-j}(d-1) \otimes \chi_0^j)$
	for $0\le j\le c-b-1$. 
	\end{lem}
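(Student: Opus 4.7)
The plan is to prove the lemma by induction on $c \geq b$. The base case $c = b$ is trivial (the family of generators collapses to $\mathbb{W}_b(d)$ itself), and the case $c = b+1$ is exactly Lemma~\ref{lem:genen0}. So I fix $c \geq b+2$, assume the statement for $c-1$, and take $\chi = (x_1, \ldots, x_d) \in \mathbb{B}_c(d)$; it suffices to place $V(\chi) \otimes \oO_{\gG_{a, b}(d)}$ in the subcategory generated by $\mathbb{W}_b(d)$ and $\oO_{B\C}(j) \ast (\mathbb{W}_{c-1-j}(d-1) \otimes \chi_0^j)$ for $0 \leq j \leq c-b-1$. I split into two cases according to whether $x_1 \geq 1$ or $x_1 = 0$, i.e.\ whether the Young diagram $\delta$ of $\chi$ has full height $d$ or not.

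In the case $x_1 \geq 1$, set $\chi' = \chi - (1, \ldots, 1) \in \mathbb{B}_{c-1}(d)$, so that $V(\chi) \otimes \oO = (V(\chi') \otimes \oO) \otimes \chi_0$. The inductive hypothesis expresses $V(\chi') \otimes \oO$ in terms of $\mathbb{W}_b(d)$ and $\oO_{B\C}(j) \ast (\mathbb{W}_{c-2-j}(d-1) \otimes \chi_0^j)$ for $0 \leq j \leq c-b-2$. Twisting by $\chi_0$ and invoking Lemma~\ref{lem:chi0}, the second family becomes $\oO_{B\C}(j+1) \ast (\mathbb{W}_{c-2-j}(d-1) \otimes \chi_0^{j+1})$ with index $j'=j+1 \in [1, c-b-1]$, which is a generator (using $\mathbb{W}_{c-2-j}(d-1) \subset \mathbb{W}_{c-1-j'}(d-1)$). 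For the first family, Lemma~\ref{lem:chi02} gives $\mathbb{W}_b(d) \otimes \chi_0 \subset \mathbb{W}_{b+1}(d)$, and Lemma~\ref{lem:genen0} identifies $\mathbb{W}_{b+1}(d)$ with the subcategory generated by $\mathbb{W}_b(d)$ and $\oO_{B\C} \ast \mathbb{W}_b(d-1)$; since $\mathbb{W}_b(d-1) \subset \mathbb{W}_{c-1}(d-1)$, the latter is contained in the $j=0$ generator. This handles Case A.

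In the case $x_1 = 0$, the Young diagram of $\chi$ has height at most $d-1$ and width at most $c-d$, so $\chi$ can be regarded as an element of $\mathbb{B}_{c-1}(d-1)$. Since $c - 1 \geq b+1 \geq b$, Proposition~\ref{prop:resol} (applied with $c-1$ in place of $c$) yields a resolution of $\oO_{B\C} \ast (V(\chi) \otimes \oO_{\gG_{a, b}(d-1)})$ by copies of $V(\chi_i) \otimes \oO_{\gG_{a, b}(d)}$ with $\chi_i \in \mathbb{B}_{c}(d)$, whose cokernel is also the term $V(\chi) \otimes \oO$. Hence $V(\chi) \otimes \oO$ is generated by $\oO_{B\C} \ast (V(\chi) \otimes \oO_{\gG_{a, b}(d-1)})$, which lies in $\oO_{B\C} \ast \mathbb{W}_{c-1}(d-1)$ (the $j = 0$ generator), together with the terms $V(\chi_i) \otimes \oO$. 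By the algorithm in Proposition~\ref{prop:resol}, each $\delta_i$ has its first column filled to height $d$, so the first coordinate of $\chi_i$ is at least $1$, and each $V(\chi_i) \otimes \oO$ falls under Case A above.

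The only delicate point is the book-keeping of Young-diagram constraints, namely checking that Proposition~\ref{prop:resol} applies in Case B (which uses $c-1 \geq b$) and that the resulting $\chi_i$ genuinely lie in $\mathbb{B}_c(d)$ with a filled first column. Both follow directly from the width and termination bounds already established in the proof of Proposition~\ref{prop:resol} under the shift $c \leadsto c-1$. The rest of the argument is a mechanical combination of Lemmas~\ref{lem:chi0} and~\ref{lem:chi02} governing the interaction of $\chi_0$-twisting with the categorified Hall product, together with the $c = b+1$ base case.
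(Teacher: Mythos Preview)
Your proof is correct and follows essentially the same approach as the paper: induction on $c$, splitting into the two cases according to whether the Young diagram has a full column ($x_1 \geq 1$) or not ($x_1 = 0$), handling the first case by stripping the column and twisting by $\chi_0$, and reducing the second case to the first via the resolution of Proposition~\ref{prop:resol}. The only cosmetic point is that your parenthetical inclusion $\mathbb{W}_{c-2-j}(d-1) \subset \mathbb{W}_{c-1-j'}(d-1)$ is in fact an equality since $j' = j+1$, so no extra justification is needed there.
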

\begin{proof}
The case of $c=b+1$ is proved in Lemma~\ref{lem:genen0}.
We prove the lemma for $c>b+1$ by the induction 
of $c$. 
For $\chi \in \mathbb{B}_{c}(d)$, suppose that the corresponding Young diagram $\delta$
has a full column. 
Let $\delta''$ be the Young diagram obtained by removing the first column, 
and $\chi''$ the corresponding character. 
Then $\chi'' \in \mathbb{B}_{c-1}(d)$, so 
by the induction hypothesis 
$V(\chi'') \otimes \oO_{\gG_{a, b}(d)}$ is generated by 
$\mathbb{W}_b(d)$ and $\oO_{B\C}(j) \ast (\mathbb{W}_{c-2-j}(d-1) \otimes \chi_0^j)$
for $0\le j \le c-b-2$. 
By taking the tensor product with $\chi_0$ and setting $j'=j+1$, 
we see that $V(\chi) \otimes \oO_{\gG_{a, b}(d)}$ is generated by 
$\mathbb{W}_b(d) \otimes \chi_0$ and $\oO_{B\C}(j') \ast (\mathbb{W}_{c-1-j'}(d-1) \otimes \chi_0^{j'})$
for $1\le j' \le c-b-1$.
Since $\mathbb{W}_b(d) \otimes \chi_0 \subset \mathbb{W}_{b+1}(d)$, and the latter is generated by 
$\mathbb{W}_b(d)$ and $\oO_{B\C} \ast \mathbb{W}_{b}(d-1) \subset \oO_{B\C} \ast \mathbb{W}_{c-1}(d-1)$ by Lemma~\ref{lem:genen0}, 
we have the desired generation for $V(\chi) \otimes \oO_{\gG_{a, b}(d)}$ when $\delta$ has a 
full column. 

If $\delta$ does not have a full column, then 
$\chi \in \mathbb{B}_{c-1}(d-1)$. 
By applying Proposition~\ref{prop:resol}, we see that 
$V(\chi) \otimes \oO_{\gG_{a, b}(d)}$ is generated by 
$\oO_{B\C} \ast (V(\chi) \otimes \oO_{\gG_{a, b}(d-1)})$
and $V(\chi_i) \otimes \oO_{\gG_{a, b}(d)}$ for $\chi_i \in \mathbb{B}_c(d)$. 
Since each Young diagram corresponding to $\chi_i$ has a full column, 
the desired generation of $V(\chi) \otimes \oO_{\gG_{a, b}(d)}$ is 
reduced to the case of the existence of full column which is proved above. 
	\end{proof}

The above generation result is stated in terms of 
iterated Hall products as follows: 
\begin{prop}\label{prop:Cgen}
	For $c\ge b$, the subcategory $\mathbb{W}_c(d) \subset 
	D^b(\gG_{a, b}(d))$ is generated by the 
	subcategories
	\begin{align}\label{subcat:product}
		\cC_{j_{\bullet}} \cneq 
		\oO_{B\C}(j_1) \ast \cdots \ast \oO_{B\C}(j_l)
		\ast (\mathbb{W}_b(d-l) \otimes \chi_0^{j_l})	\subset D^b(\gG_{a, b}(d))	
		\end{align}
	for $0\le l\le d$ and 
	$0\le j_1 \le \cdots \le j_l \le c-b-l$. 
	Here when $l=0$, the above subcategory is 
	set to be $\mathbb{W}_b(d)$. 
	\end{prop}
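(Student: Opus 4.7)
The plan is to prove the statement by induction on $d$, using Lemma~\ref{lem:genen} as the one-step generation result and then unfolding the inductive hypothesis into the inner factor of an iterated Hall product. The base case $d=0$ is vacuous since $\mathbb{W}_c(0) = \mathbb{W}_b(0) = \cC_\emptyset$, so the whole content lies in the induction step.

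For the induction step, assume the statement holds for $d-1$ (and all $c' \ge b$). Apply Lemma~\ref{lem:genen} to $\mathbb{W}_c(d)$: it is generated by $\mathbb{W}_b(d) = \cC_\emptyset$ together with the subcategories
\begin{align*}
\oO_{B\C}(j_1) \ast \bigl(\mathbb{W}_{c-1-j_1}(d-1) \otimes \chi_0^{j_1}\bigr), \qquad 0 \le j_1 \le c-b-1.
\end{align*}
Since $c-1-j_1 \ge b$, the induction hypothesis applies to $\mathbb{W}_{c-1-j_1}(d-1)$, expressing it as generated by iterated Hall products
\begin{align*}
\oO_{B\C}(k_2) \ast \cdots \ast \oO_{B\C}(k_l) \ast \bigl(\mathbb{W}_b(d-l) \otimes \chi_0^{k_l}\bigr)
\end{align*}
for $0 \le l-1 \le d-1$ and $0 \le k_2 \le \cdots \le k_l \le (c-1-j_1) - b - (l-1) = c-b-j_1-l$.

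Next I would feed this back into the outer Hall product $\oO_{B\C}(j_1) \ast (\,\cdot\, \otimes \chi_0^{j_1})$. Repeated application of Lemma~\ref{lem:chi0} gives
\begin{align*}
\bigl(\oO_{B\C}(k_2) \ast \cdots \ast \oO_{B\C}(k_l) \ast (\mathbb{W}_b(d-l) \otimes \chi_0^{k_l})\bigr) \otimes \chi_0^{j_1}
= \oO_{B\C}(k_2+j_1) \ast \cdots \ast \oO_{B\C}(k_l+j_1) \ast \bigl(\mathbb{W}_b(d-l) \otimes \chi_0^{k_l+j_1}\bigr),
\end{align*}
and composing with $\oO_{B\C}(j_1) \ast (-)$ using associativity of the $\ast$-product gives exactly $\cC_{j_\bullet}$ with $j_i \cneq k_i + j_1$ for $i\ge 2$. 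The inequalities $0 \le j_1 \le j_2 \le \cdots \le j_l$ and $j_l = k_l + j_1 \le c-b-l$ are immediate from the conditions on $k_\bullet$, so this produces precisely the subcategories listed in (\ref{subcat:product}).

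The only subtlety is the bookkeeping: one must verify that no additional generators escape the claimed range, and that $\mathbb{W}_b(d) = \cC_\emptyset$ is covered by the $l=0$ case. Both are straightforward; the slope bound $j_l \le c-b-l$ propagates correctly because the inductive bound $k_l \le c-b-j_1-l$ is shifted by exactly $j_1$ when we substitute $j_l = k_l + j_1$, and the $l=0$ summand coming from the first application of Lemma~\ref{lem:genen} gives $\mathbb{W}_b(d)$ directly. No step requires any genuinely new input beyond Lemmas~\ref{lem:chi0}, \ref{lem:chi02}, and~\ref{lem:genen}, so the argument is essentially a clean unravelling of the one-step generation.
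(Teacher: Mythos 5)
Your induction on $d$ for the \emph{generation} direction is correct and is in substance the same argument as the paper's second step: the paper also applies Lemma~\ref{lem:genen}, feeds the inductive expression for $\mathbb{W}_{c-1-j}(d-1)$ into the outer product $\oO_{B\C}(j)\ast((-)\otimes\chi_0^{j})$, and shifts indices via Lemma~\ref{lem:chi0}; the only cosmetic difference is that the paper inducts on $c$ rather than on $d$, which is immaterial since both parameters drop. Your bookkeeping of the bound $j_l\le c-b-l$ is also right.

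However, the proposal proves only one of the two inclusions contained in the statement. Since the $\cC_{j_{\bullet}}$ in (\ref{subcat:product}) are a priori subcategories of $D^b(\gG_{a,b}(d))$, the assertion that $\mathbb{W}_c(d)$ is generated by them also requires $\cC_{j_{\bullet}}\subset \mathbb{W}_c(d)$, and this is the first half of the paper's proof: one rewrites $\cC_{j_{\bullet}}=\oO_{B\C}(j_1)\ast\bigl((\cdots)\otimes\chi_0^{j_1}\bigr)$ by Lemma~\ref{lem:chi0}, applies induction to place the inner product in $\mathbb{W}_{c-1-j_1}(d-1)$, and then invokes Lemma~\ref{lem:inclu}, i.e. $\oO_{B\C}(j)\ast(\mathbb{W}_{c-1-j}(d-1)\otimes\chi_0^{j})\subset \mathbb{W}_c(d)$, which in turn rests on the explicit resolutions of Proposition~\ref{prop:resol} (the point being that the weights appearing in $\oO_{B\C}\ast(V(\chi)\otimes\oO)$ stay inside the window $\mathbb{B}_{c+1}(d)$). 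Your closing claim that nothing beyond Lemmas~\ref{lem:chi0}, \ref{lem:chi02} and \ref{lem:genen} is needed is therefore not accurate: without Lemma~\ref{lem:inclu} (or an equivalent weight estimate) you cannot conclude that the iterated Hall products land in $\mathbb{W}_c(d)$, and this containment is exactly what later makes Theorem~\ref{thm:sod} a semiorthogonal decomposition of $\mathbb{W}_c(d)$ itself rather than of some larger subcategory of $D^b(\gG_{a,b}(d))$. To complete the argument, add the containment half as a parallel induction using Lemma~\ref{lem:inclu}, or explicitly read Lemma~\ref{lem:genen} as an equality and trace its containment direction back to that lemma.
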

\begin{proof}
	We first show that (\ref{subcat:product}) 
	are subcategories of $\mathbb{W}_{c}(d)$
	by the induction on $c$. 
	By Lemma~\ref{lem:chi0}, 
	the subcategory (\ref{subcat:product}) is written as 
	\begin{align*}
		\oO_{B\C}(j_1) \ast \left(\left(
		\oO_{B\C}(j_2-j_1) \ast \cdots \ast \oO_{B\C}(j_l-j_1)
		\ast (\mathbb{W}_b((d-1)-(l-1)) \otimes \chi_0^{j_l-j_1})
		 \right)\otimes \chi_0^{j_1} \right) .
		\end{align*}
	Since $j_l-j_1 \le (c-1-j_1)-b-(l-1)$, by the induction hypothesis
	we have 
	\begin{align*}
		\oO_{B\C}(j_2-j_1) \ast \cdots \ast \oO_{B\C}(j_l-j_1)
		\ast \left(\mathbb{W}_b((d-1)-(l-1)) \otimes \chi_0^{j_l-j_1})
		\right) \in \mathbb{W}_{c-1-j_1}(d-1). 
		\end{align*}
	 Therefore (\ref{subcat:product}) is a subcategory of $\mathbb{W}_c(d)$
	 by Lemma~\ref{lem:inclu}. 
	
	We next show that $\mathbb{W}_c(d)$ is generated by 
	subcategories (\ref{subcat:product}) by the induction on $c$. 
	By Lemma~\ref{lem:genen}, 
	the subcategory $\mathbb{W}_c(d)$ is generated by 
	$\mathbb{W}_b(d)$ and $\oO_{B\C}(j) \ast (\mathbb{W}_{c-1-j}(d-1) \otimes \chi_0^j)$
	for $0\le j\le c-b-1$. 
	By the induction hypothesis and Lemma~\ref{lem:chi0}, 
	$\oO_{B\C}(j) \ast (\mathbb{W}_{c-1-j}(d-1) \otimes \chi_0^j)$
	is generated by 
	\begin{align*}
		&\oO_{B\C}(j) \ast \left(
		\left( \oO_{B\C}(j_1) \ast \cdots \ast \oO_{B\C}(j_{l'})
		\ast (\mathbb{W}_b(d-1-l') \otimes \chi_0^{j_{l'}})   \right) \otimes \chi_0^j
		\right) \\
		&=\oO_{B\C}(j) \ast \oO_{B\C}(j+j_1) \ast \cdots \ast \oO_{B\C}(j+j_{l'})
		\ast (\mathbb{W}_b(d-1-l') \otimes \chi_0^{j+j_{l'}})
			\end{align*}
		for $0\le l'\le d-1$ and $0\le j_1 \le \cdots \le j_{l'}
		 \le (c-1-j)-b-l'$. 
		Since $j+j_{l'} \le c-b-(l'+1)$, the above subcategory is 
		of the form (\ref{subcat:product}) for $l=l'+1$. 
		Therefore we obtain the desired generation. 
	\end{proof}

\begin{rmk}\label{rmk:Cj}
	Let $F_j(-) \cneq (\oO_{B\C} \ast (-)) \otimes \chi_0^j
	=\oO_{B\C}(j) \ast ((-) \otimes \chi_0^j)$. Then 
	the repeated use of Lemma~\ref{lem:chi0} implies that 
	\begin{align*}
		\cC_{j_{\bullet}}=
		F_{j_1} \circ F_{j_2-j_1} \circ \cdots \circ 
		F_{j_l-j_{l-1}}(\mathbb{W}_{b}(d-l)). 
		\end{align*}
	Similarly for an intermediate step, we have 
	\begin{align*}
		\oO_{B\C}(j_i) \ast \cdots \ast \oO_{B\C}(j_l)
		\ast (\mathbb{W}_b(d-l) \otimes \chi_0^{j_l}) 
		=F_{j_i} \circ F_{j_{i+1}-j_i} \circ \cdots \circ F_{j_{l}-j_{l-1}}(\mathbb{W}_b(d-l)).
		\end{align*}
	By the repeated use of Lemma~\ref{lem:inclu}, the above 
	category is a subcategory of $\mathbb{W}_{b+l-i+1+j_l}(d-i+1)$. 
	\end{rmk}

\subsection{Semiorthogonal decompositions under Grassmannian flips}
We show that the subcategories in Proposition~\ref{prop:Cgen} form a 
semiorthogonal decomposition. 
We prepare some lemmas:

\begin{lem}\label{lem:ortho1}
For any $\chi \in \mathbb{B}_{b}(d)$ and $\chi' \in \mathbb{B}_{c}(d-1)$ for some 
$c\ge 0$, we have the vanishing for $j\ge 0$
\begin{align}\label{Hom:Gab}
	\Hom_{\gG_{a, b}(d)}(\oO_{B\C}(j) \ast (V(\chi') \otimes \oO_{\gG_{a, b}(d-1)}), V(\chi)
	\otimes \oO_{\gG_{a, b}(d)})=0. 	
	\end{align}	
	\end{lem}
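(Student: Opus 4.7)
The plan is to reduce the claimed vanishing to an instance of the attracting-locus orthogonality in Lemma~\ref{lem:vanish}(i) via Grothendieck duality. Taking $\lambda$ to be the one-parameter subgroup in (\ref{lambdat}), the $\ast$-product is by definition $\oO_{B\C}(j)\ast (V(\chi')\otimes \oO_{\gG_{a,b}(d-1)})=p_{\lambda\ast}q_\lambda^{\ast}(\fF)$ with $\fF \cneq \oO_{B\C}(j)\boxtimes V(\chi')\otimes \oO_{\gG_{a,b}(d-1)}$. The morphism $p_\lambda$ is proper and lci, factoring as the closed immersion $[Y^{\lambda\ge 0}/G^{\lambda\ge 0}]\hookrightarrow [Y/G^{\lambda\ge 0}]$ followed by the smooth $G/G^{\lambda\ge 0}$-bundle $[Y/G^{\lambda\ge 0}]\to [Y/G]$, so the adjunction $p_{\lambda\ast}\dashv p_\lambda^{!}$ identifies the Hom in question with
\[
\Hom_{\gG_{a,b}(d)^{\lambda\ge 0}}\bigl(q_\lambda^{\ast}\fF,\ p_\lambda^{!}(V(\chi)\otimes \oO_{\gG_{a,b}(d)})\bigr).
\]

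The idea is then to apply Lemma~\ref{lem:vanish}(i) with threshold $j_0=0$ to these two objects on $\gG_{a,b}(d)^{\lambda\ge 0}$. For the source, $\sigma_\lambda^{\ast}q_\lambda^{\ast}\fF=\fF$ is concentrated in $\lambda$-weight $j\ge 0$: the $\GL(V^{\lambda=0})$-factor supporting $V(\chi')$ is pointwise fixed by $\lambda$, so only the $\oO_{B\C}(j)$ contributes. For the target, writing $p_\lambda^{!}=p_\lambda^{\ast}(-)\otimes \omega_{p_\lambda}$, the pullback $\sigma_\lambda^{\ast}p_\lambda^{\ast}(V(\chi)\otimes \oO)$ equals $V(\chi)$ restricted to $\GL(V)^{\lambda=0}=\C\times \GL(V^{\lambda=0})$; by the Pieri branching rule for $V=V^{\lambda>0}\oplus V^{\lambda=0}$ with $\dim V^{\lambda>0}=1$, its $\lambda$-weights lie in $[x_1,x_d]\subset [0,b-d]$, using $\chi=(x_1\le \cdots\le x_d)\in \mathbb{B}_b(d)$.

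The remaining ingredient is to compute $\mathrm{wt}_\lambda(\omega_{p_\lambda}|_{\sigma_\lambda})$. A direct $K$-theoretic bookkeeping from $T_{p_\lambda}|_{\sigma_\lambda}=\mathfrak{g}^{\lambda<0}-Y^{\lambda<0}$, using that $Y^{\lambda<0}=\Hom(V^{\lambda>0},B)$ and $\mathfrak{g}^{\lambda<0}=\Hom(V^{\lambda>0},V^{\lambda=0})$ are $b$- and $(d-1)$-dimensional respectively, both of pure $\lambda$-weight $-1$, yields $\mathrm{wt}_\lambda(\omega_{p_\lambda}|_{\sigma_\lambda})=d-b-1$. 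Hence $\sigma_\lambda^{\ast}p_\lambda^{!}(V(\chi)\otimes \oO)$ has $\lambda$-weights in $[d-b-1,-1]$, all strictly below $0\le j$, which is precisely the hypothesis of Lemma~\ref{lem:vanish}(i). The only delicate point is the sign tracking in $\omega_{p_\lambda}$; everything else is formal.
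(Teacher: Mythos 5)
Your proposal is correct and follows essentially the same route as the paper's proof: rewrite the Hall product as $p_{\lambda\ast}q_\lambda^{\ast}$, use the adjunction $p_{\lambda\ast}\dashv p_\lambda^{!}$, bound the $\lambda$-weights of source and target, and conclude by Lemma~\ref{lem:vanish}(i). The only cosmetic difference is that you recompute the $\lambda$-weight $d-b-1$ of $\omega_{p_\lambda}$ from the relative tangent complex, where the paper simply quotes the formula for $p_\lambda^{!}$ from Donovan--Segal and Ballard et al.; your weight bookkeeping agrees with that formula.
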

\begin{proof}
	Let $\lambda \colon \C \to T$
	be the one parameter subgroup 
	given by (\ref{lambdat}). 
	Using the notation of the diagram (\ref{dia:quiver2}), 
	the LHS 
	of (\ref{Hom:Gab}) is 
	\begin{align}\notag
		&\Hom(p_{\lambda \ast}q_{\lambda}^{\ast}(\oO_{B\C}(j) \boxtimes
		 (V(\chi') \otimes \oO_{\gG_{a, b}(d-1)}), V(\chi) \otimes \oO_{\gG_{a, b}(d)}) \\
		 \label{Hom:qp}
		 &=\Hom(q_{\lambda}^{\ast}(\oO_{B\C}(j) \boxtimes
		 (V(\chi') \otimes \oO_{\gG_{a, b}(d-1)}), 
		 p_{\lambda}^{!}(V(\chi) \otimes \oO_{\gG_{a, b}(d)})). 
		\end{align}
	We have the formula for $p_{\lambda}^!$ (cf.~\cite[Section~A.1]{DoSe}, \cite[(5.8)]{BNFV})
	\begin{align*}
		p_{\lambda}^{!}(-)=(-) \otimes (\det V^{\lambda>0})^{d-b-1}
		\otimes (\det V^{\lambda=0})^{-1}[d-b-1]. 
		\end{align*}
	Since $\chi \in \mathbb{B}_b(d)$
	and it is a highest weight of $V(\chi)$, 
	any $T$-weight $\chi''=(x_1'', \ldots, x_d'')$ 
	of $V(\chi)$ satisfies $x_i'' \le b-d$. 
	Therefore any $T$-weight of 
	$V(\chi)  \otimes (\det V^{\lambda>0})^{d-b-1}
	\otimes (\det V^{\lambda=0})^{-1}$
	pair negatively with $\lambda$. 
	On the other hand, a pairing of $\lambda$ with any $T$-weight of 
	the $\GL(V)^{\lambda=0}$-representation 
	 $(\det V^{\lambda >0})^j \boxtimes V(\chi')$
	 is $j\ge 0$. 
	 Therefore we have the vanishing of (\ref{Hom:qp}) by Lemma~\ref{lem:vanish}. 
		\end{proof}

\begin{lem}\label{lem:FF}
	For $\chi, \chi' \in \mathbb{B}_c(d-1)$ for some $c \ge 0$,
	we have the vanishing for $j>j'$
	\begin{align}\label{vanish:j}
	\Hom_{\gG_{a, b}(d)}(\oO_{B\C}(j) \ast (V(\chi) \otimes \chi_0^j \otimes 
	 \oO_{\gG_{a, b}(d-1)}), \oO_{B\C}(j') \ast (V(\chi) \otimes \chi_0^{j'} \otimes 
	 \oO_{\gG_{a, b}(d-1)}))=0.  		
		\end{align}
		\end{lem}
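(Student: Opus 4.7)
The plan is to adapt the adjunction and weight-vanishing strategy from the proof of Lemma \ref{lem:ortho1}, combined with the resolution from Proposition \ref{prop:resol}. Take the one-parameter subgroup $\lambda$ from (\ref{lambdat}), so that $\gG_{a,b}(d)^{\lambda=0} = B\C \times \gG_{a,b}(d-1)$ and every Hall product $\oO_{B\C}(k)\ast(-)$ equals $p_{\lambda\ast}q_\lambda^{\ast}(\oO_{B\C}(k)\boxtimes (-))$ in the diagram (\ref{dia:quiver2}). Denote the source of the Hom by $E_1$ and the target by $E_2$, and set $F_1 = \oO_{B\C}(j)\boxtimes (V(\chi)\otimes\chi_0^j\otimes\oO_{\gG_{a,b}(d-1)})$.

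The first step is to replace $E_2$ by a finite complex of $\GL(V)$-equivariant sheaves of the form $V(\tilde{\chi}'_i)\otimes\oO_{\gG_{a,b}(d)}$. By Lemma \ref{lem:chi0},
\[
E_2 = \bigl(\oO_{B\C}\ast(V(\chi')\otimes\oO_{\gG_{a,b}(d-1)})\bigr)\otimes\chi_0^{j'}.
\]
Enlarging $c$ to $\max(c,b)$ so that the hypothesis of Proposition \ref{prop:resol} is satisfied, that proposition resolves the bracketed factor by $V(\chi'_i)\otimes\oO_{\gG_{a,b}(d)}$ with $\chi'_i\in\mathbb{B}_{c+1}(d)$; tensoring by $\chi_0^{j'}$ produces a resolution of $E_2$ by $V(\tilde{\chi}'_i)\otimes\oO_{\gG_{a,b}(d)}$ with $\tilde{\chi}'_i = \chi'_i + (j',\dots,j')$. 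The desired vanishing reduces, term by term, to $\Hom_{\gG_{a,b}(d)}(E_1,\ V(\tilde{\chi}'_i)\otimes\oO_{\gG_{a,b}(d)}) = 0$ for each $i$.

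The second step is the $(p_{\lambda\ast},p_\lambda^!)$-adjunction used in the proof of Lemma \ref{lem:ortho1}. Using the formula for $p_\lambda^!$ recorded there, each reduced Hom becomes
\[
\Hom_{\gG_{a,b}(d)^{\lambda\ge 0}}\bigl(q_\lambda^{\ast}F_1,\ V(\tilde{\chi}'_i)\otimes(\det V^{\lambda>0})^{d-b-1}\otimes(\det V^{\lambda=0})^{-1}[d-b-1]\bigr).
\]
Pulling back along $\sigma_\lambda$ and decomposing into $\lambda$-weight pieces, the object $\sigma_\lambda^{\ast}q_\lambda^{\ast}F_1 = F_1$ sits in $\lambda$-weight exactly $j$, since $\oO_{B\C}(j)$ contributes weight $j$ while $\lambda$ acts trivially on $\GL(V^{\lambda=0})$ and hence on the $\gG_{a,b}(d-1)$-factor. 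On the other side the sheaf decomposes into $\lambda$-weight pieces of weight $y_1+(d-b-1)$ as $(y_1,\dots,y_d)$ ranges over the $T$-weights of $V(\tilde{\chi}'_i)$. When $\chi'\in\mathbb{B}_b(d-1)$, every such $y_1$ lies in $[j',\ j'+(b-d+1)]$, so the maximum $\lambda$-weight on the right is $j'$. Since $j>j'$, the two $\lambda$-weight ranges are disjoint and Lemma \ref{lem:vanish}(i) forces the Hom to vanish.

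The main obstacle is the weight bound in the last step: it requires every $T$-weight of $V(\tilde{\chi}'_i)$ to have first coordinate at most $j'+(b-d+1)$, which holds precisely when $\chi'\in\mathbb{B}_b(d-1)$, i.e.\ in the regime $c\le b$ relevant for the semiorthogonal decomposition of Proposition \ref{prop:Cgen}, whose innermost window is $\mathbb{W}_b(d-l)$. In that setting one may always reduce to $c=b$, so the plan succeeds; for $c>b$ the bound degrades to $j'+(c-b)$ and a finer argument (for example an iterated resolution applied dimension by dimension) would be required.
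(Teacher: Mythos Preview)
Your approach proves the lemma only for $c\le b$, as you yourself note, and the suggested reduction to $c=b$ does not cover the actual applications: in the proof of Proposition~\ref{prop:sod} (case (ii)) one applies Lemma~\ref{lem:FF} to objects of the form $\oO_{B\C}(j_{m+1})\ast X$ with $X$ lying in $\mathbb{W}_{c'}(d-m-1)$ for $c'=b+l-m-1+j_l$ (cf.\ Remark~\ref{rmk:Cj}), which is typically strictly larger than $b$. So the gap is genuine.

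The paper closes this gap by reversing the roles of source and target. After using Lemma~\ref{lem:chi0} to reduce to $j'=0$, instead of applying the $(p_{\lambda\ast},p_\lambda^{!})$-adjunction to the source and resolving the target as you do, one applies the $(p_\lambda^{\ast},p_{\lambda\ast})$-adjunction to the target, obtaining
\[
\Hom\bigl(p_\lambda^{\ast}((\oO_{B\C}\ast(V(\chi)\otimes\oO))\otimes\chi_0^{j}),\ q_\lambda^{\ast}(\oO_{B\C}\boxtimes(V(\chi')\otimes\oO))\bigr),
\]
and then resolves the \emph{source} via Proposition~\ref{prop:resol}. Each term $V(\chi'')\otimes\oO$ in that resolution has $\chi''\in\mathbb{B}_{c+1}(d)$ with first coordinate $\ge 0$ (it is either $\chi$, with first coordinate $0$, or has a full column, with first coordinate $\ge 1$); hence every $T$-weight of $V(\chi'')\otimes\chi_0^{j}$ pairs with $\lambda$ to give a value $\ge j>0$. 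The target $q_\lambda^{\ast}(\oO_{B\C}\boxtimes(\cdots))$ has $\lambda$-weight exactly $0$, so Lemma~\ref{lem:vanish}(i) gives the vanishing. The point is that this argument never invokes $p_\lambda^{!}$, so the correction term $(\det V^{\lambda>0})^{d-b-1}$ that forced your bound to degrade for $c>b$ never appears, and no upper bound on the weights of $V(\chi'')$ is needed---only the lower bound, which holds for every $c$.
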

	\begin{proof}
		By Lemma~\ref{lem:chi0}, we may assume that $j'=0$. 
		We use the notation in the proof of Lemma~\ref{lem:ortho1}. 
		Using Lemma~\ref{lem:chi0} and the adjunction, the 
		LHS of (\ref{vanish:j}) is 
		\begin{align*}
			&\Hom((p_{\lambda\ast}q_{\lambda}^{\ast}(\oO_{B\C} 
			\boxtimes (V(\chi) \otimes \oO_{\gG_{a, b}(d-1)}))) \otimes \chi_0^j, 
			p_{\lambda\ast}q_{\lambda}^{\ast}(\oO_{B\C} 
			\boxtimes (V(\chi') \otimes \oO_{\gG_{a, b}(d-1)}))) \\
			&\cong
			\Hom(p_{\lambda}^{\ast}((p_{\lambda\ast}q_{\lambda}^{\ast}(\oO_{B\C} 
			\boxtimes (V(\chi) \otimes \oO_{\gG_{a, b}(d-1)}))) \otimes \chi_0^j), 
			q_{\lambda}^{\ast}(\oO_{B\C} 
			\boxtimes (V(\chi') \otimes \oO_{\gG_{a, b}(d-1)}))). 
			\end{align*}
		By Proposition~\ref{prop:resol},
		the object 
		\begin{align*}
			p_{\lambda\ast}q_{\lambda}^{\ast}(\oO_{B\C} 
		\boxtimes (V(\chi) \otimes \oO_{\gG_{a, b}(d-1)}))
		\in D^b(\gG_{a, b}(d))
		\end{align*}
		is resolved by vector bundles of the form 
		$V(\chi'') \otimes \oO_{\gG_{a, b}(d)}$
		where $\chi''$ is either $\chi''=\chi$, 
		or $\chi'' \in \mathbb{B}_{c+1}(d)$ whose corresponding Young 
		diagram has a full column. 
		In the latter case, any $T$-weight of 
		$V(\chi'')$ pair positively 
		with $\lambda$. 
		Therefore in both cases, any $T$-weight 
		of $V(\chi'') \otimes \chi_0^j$ for $j>0$
		pair positively with $\lambda$. 
		On the other hand the $\lambda$-weight 
		of $\oO_{B\C} 
		\boxtimes (V(\chi') \otimes \oO_{\gG_{a, b}(d-1)})$ is 
		zero so the desired vanishing (\ref{vanish:j})
		follows from Lemma~\ref{lem:vanish}. 
		\end{proof}

	\begin{lem}\label{lem:FF2}
		In the situation of Lemma~\ref{lem:FF}, 
		we have the isomorphism for $j \in \mathbb{Z}$
		\begin{align}\label{isom:j}
			&\Hom_{\gG_{a, b}(d-1)}(V(\chi) \otimes \chi_0^j \otimes \oO_{\gG_{a, b}(d-1)}, 
			V(\chi') \otimes \chi_0^j \otimes \oO_{\gG_{a, b}(d-1)}) \\
			\notag &\stackrel{\cong}{\to}
			\Hom_{\gG_{a, b}(d)}(\oO_{B\C}(j) \ast (V(\chi)\otimes \chi_0^j \otimes \oO_{\gG_{a, b}(d-1)}), 
			\oO_{B\C}(j) \ast (V(\chi')\otimes \chi_0^j \otimes \oO_{\gG_{a, b}(d-1)})). 
			\end{align}		
		\end{lem}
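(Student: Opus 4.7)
The plan is to mirror the proof of Lemma~\ref{lem:FF}: reduce to $j=0$ via Lemma~\ref{lem:chi0}, resolve the first factor using Proposition~\ref{prop:resol}, and collapse the resulting spectral sequence onto the single term that matches the left-hand side of (\ref{isom:j}).

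First, by Lemma~\ref{lem:chi0} we have $\oO_{B\C}(j)\ast(V(\chi)\otimes\chi_0^j\otimes\oO)\cong (\oO_{B\C}\ast(V(\chi)\otimes\oO))\otimes\chi_0^j$, so tensoring both sides of (\ref{isom:j}) with $\chi_0^{-j}$ reduces the claim to $j=0$. Take $\lambda$ as in (\ref{lambdat}). If $c<b$, enlarge $c$ via $\mathbb{B}_c(d-1)\subset\mathbb{B}_b(d-1)$, so we may assume $c\ge b$ and apply Proposition~\ref{prop:resol} to obtain, on $\gG_{a,b}(d)$, a finite locally free resolution of $\oO_{B\C}\ast(V(\chi)\otimes\oO)=p_{\lambda\ast}q_\lambda^\ast(\oO_{B\C}\boxtimes V(\chi)\otimes\oO)$ whose terminal term is $V(\chi)\otimes\oO_{\gG_{a,b}(d)}$ (with $\chi$ viewed in $\mathbb{B}_{c+1}(d)$ by prepending $0$) and whose remaining terms $V(\chi_i)\otimes\oO^{\oplus m_i}$ correspond to Young diagrams with a full first column. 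Applying $\RHom(-,\oO_{B\C}\ast(V(\chi')\otimes\oO))$ and the adjunction $p_\lambda^\ast\dashv p_{\lambda\ast}$, each term becomes
\[
\RHom_{\gG_{a,b}(d)^{\lambda\ge 0}}\bigl(p_\lambda^\ast V(\chi'')\otimes\oO,\ q_\lambda^\ast(\oO_{B\C}\boxtimes V(\chi')\otimes\oO)\bigr).
\]
For intermediate $\chi''=\chi_i$, the identity $V(\chi_i)\cong \det V\otimes V(\chi_i-(1,\ldots,1))$ forces every $T$-weight to pair with $\lambda$ by $\ge 1$, while the target has $\lambda$-weight $0$; Lemma~\ref{lem:vanish}(i), applied to all shifts of the target, makes the $\RHom$ vanish. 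Only the terminal term $\chi''=\chi$ contributes.

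For the terminal term, both objects are vector bundles on the affine quotient stack $[(\Hom(A,V)\oplus\Hom(V^{\lambda=0},B))/\GL(V)^{\lambda\ge 0}]$, so the $\RHom$ is concentrated in degree $0$ and equals the $\GL(V)^{\lambda\ge 0}$-invariants of $V(\chi)^\vee\otimes V(\chi')\otimes\oO(\Hom(A,V)\oplus\Hom(V^{\lambda=0},B))$. All three tensor factors carry $\lambda$-weights in $(-\infty,0]$, so only the $\lambda$-weight-zero piece survives after taking invariants. The weight-zero part of the structure sheaf equals $\oO(\Hom(A,V^{\lambda=0})\oplus\Hom(V^{\lambda=0},B))$, and by the $\GL_d\downarrow\GL_{d-1}\times\GL_1$ branching rule applied to the dominant weight $(0,x_2,\ldots,x_d)$, the $U^{\lambda>0}$-coinvariants of $V(\chi)$ coincide with $V((x_2,\ldots,x_d))$ as a $\GL(V^{\lambda=0})$-representation. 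Since $V(\chi')$ carries trivial $U^{\lambda>0}$-action, parabolic invariance reduces to Levi invariance and produces precisely $\Hom_{\gG_{a,b}(d-1)}(V(\chi)\otimes\oO, V(\chi')\otimes\oO)$, which is the left-hand side of (\ref{isom:j}); the resulting isomorphism is manifestly induced by the functor $\oO_{B\C}\ast(-)$.

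The main obstacle is the branching-rule identification in the last paragraph; once granted, the remaining ingredients are direct analogues of the arguments in Lemma~\ref{lem:FF} combined with the vanishing of higher Ext's between vector bundles on the affine quotient by a reductive group.
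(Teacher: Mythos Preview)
Your overall strategy matches the paper's: reduce to $j=0$ via Lemma~\ref{lem:chi0}, resolve the first argument by Proposition~\ref{prop:resol}, kill the intermediate terms $V(\chi_i)$ with a full column using Lemma~\ref{lem:vanish}(i), and identify the surviving $\chi''=\chi$ term with the left-hand side. The divergence is only in this last identification, and that is where your argument has a gap.

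You assert that on the ``affine quotient stack'' $\bigl[(\Hom(A,V)\oplus\Hom(V^{\lambda=0},B))/\GL(V)^{\lambda\ge 0}\bigr]$ the $\RHom$ between vector bundles is concentrated in degree~$0$. This would be immediate if the group were reductive, but $\GL(V)^{\lambda\ge 0}$ is a parabolic: for $P=L\ltimes U$ one has $\Ext^i_{[Y/P]}(V_1\otimes\oO,V_2\otimes\oO)\cong H^i(\mathfrak{u},V_1^\vee\otimes V_2\otimes\oO(Y))^{L}$, and Lie-algebra cohomology of $\mathfrak{u}$ is generically nonzero. Your conclusion is in fact correct here, because $\Lambda^i\mathfrak{u}^\vee$ carries $\lambda$-weight $-i$ and your module has $\lambda$-weights $\le 0$, so the $L$-invariants in degree $i>0$ vanish for weight reasons; but this argument is not the one you gave, and it is precisely what is packaged in Lemma~\ref{lem:vanish}. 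Relatedly, saying ``the $U^{\lambda>0}$-coinvariants of $V(\chi)$ coincide with $V((x_2,\ldots,x_d))$'' is imprecise: the coinvariants can have higher $\lambda$-weight pieces, and it is only after imposing $\lambda$-weight zero (forced by $L$-invariance) that one lands on $V((x_2,\ldots,x_d))$.

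The paper sidesteps all of this. Instead of computing $P$-invariants directly, it observes that $p_\lambda^\ast(V(\chi)\otimes\oO)$ carries a filtration (coming from the exact sequence~(\ref{exact:lambda})) whose associated graded pieces are $q_\lambda^\ast(\oO_{B\C}(k)\boxtimes V(\chi''')\otimes\oO)$ with $k\ge 0$ and $k=0$ exactly when $\chi'''=\chi$. Lemma~\ref{lem:vanish}(i) kills the $k>0$ pieces, and Lemma~\ref{lem:vanish}(ii) (full faithfulness of $q_\lambda^\ast$ on a fixed weight) immediately identifies the remaining term with the left-hand side. This is cleaner: it never leaves the framework of Lemma~\ref{lem:vanish}, and it avoids both the parabolic-cohomology issue and any explicit appeal to the branching rule. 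Your direct computation can be repaired by inserting the missing weight argument for higher degrees, but at that point you are essentially reproving Lemma~\ref{lem:vanish}(ii) by hand.
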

	\begin{proof}
			By Lemma~\ref{lem:chi0}, we may assume that $j=0$. 
			Let $\chi''$ be a weight which appeared in the proof of Lemma~\ref{lem:FF}. 
			Note that we observed that 
		any $T$-weight of 
		$V(\chi'')$ pair positively with $\lambda$
		except $\chi''=\chi$. 
		Therefore by Lemma~\ref{lem:vanish} (i), the RHS of (\ref{isom:j}) is isomorphic to 
		\begin{align}\label{isom:j2}
			\Hom(p_{\lambda}^{\ast}(V(\chi) \otimes \oO_{\gG_{a, b}(d)}), 
			q_{\lambda}^{\ast}(\oO_{B\C} 
			\boxtimes (V(\chi') \otimes \oO_{\gG_{a, b}(d-1)}))).
			\end{align}
		Since $\gG_{a, b}(d)^{\lambda \ge 0}$
		parametrizes exact sequences (\ref{exact:lambda}), 
		the object $p_{\lambda}^{\ast}(V(\chi) \otimes \oO_{\gG_{a, b}(d)})$
		admits a filtration whose associated graded is of the form 
		$q_{\lambda}^{\ast}(\oO_{B\C}(j) 
		\boxtimes (V(\chi''') \otimes \oO_{\gG_{a, b}(d-1)}))$
		for $j \ge 0$ and $\chi''' \in \mathbb{B}_c(d-1)$, and $j=0$ if and only if $\chi'''=\chi$. 
		Therefore by Lemma~\ref{lem:vanish} (i), (ii), the above (\ref{isom:j2}) is isomorphic to 
		\begin{align}\label{isom:j3}
			&	\Hom(q_{\lambda}^{\ast}(\oO_{B\C} 
				\boxtimes (V(\chi) \otimes \oO_{\gG_{a, b}(d-1)})), 
			q_{\lambda}^{\ast}(\oO_{B\C} 
			\boxtimes (V(\chi') \otimes \oO_{\gG_{a, b}(d-1)}))) \\
		\notag	&\cong \Hom_{\gG_{a, b}(d-1)}(V(\chi) \otimes \oO_{\gG_{a, b}(d-1)}, 
			V(\chi') \otimes \oO_{\gG_{a, b}(d-1)}).
			\end{align}
		\end{proof}
	
	In order to state the order of semiorthogonal decompositions, 
	we take a 
	lexicographical order on $\mathbb{Z}^d$, i.e. 
	for $m_{\bullet}=(m_1, \ldots, m_d) \in \mathbb{Z}^d$
	and $m_{\bullet}'=(m_1', \ldots, m_d') \in \mathbb{Z}^d$, 
	we write $m_{\bullet} \succ m_{\bullet}'$ if 
	$m_i=m_i'$ for $1\le i\le k$ for some $k \ge 0$ and 
	$m_{k+1}>m_{k+1}'$. 
	\begin{defi}\label{defi:orderj}	
		For
	$j_{\bullet}=(j_1, j_2, \ldots, j_l)$ 
	and 
	$j_{\bullet}'=(j_1', j_2' \ldots, j_l')$
	with $l, l' \le d$, 
	we 
	define $j_{\bullet} \succ j_{\bullet}'$ if 
	we have $\widetilde{j}_{\bullet} \succ \widetilde{j}_{\bullet}'$, 
	where 
	$\widetilde{j}_{\bullet}$ is defined by 
		\begin{align}\label{jtilde}
		\widetilde{j}_{\bullet}=(j_1, j_2, \ldots, j_l, -1, \ldots, -1) \in \mathbb{Z}^d. 
		\end{align}
	\end{defi}
The following proposition shows the semiorthogonality of 
subcategories (\ref{subcat:product})
with respect to the above order. 
	
	\begin{prop}\label{prop:sod}
		For $j_{\bullet}=(j_1, j_2, \ldots, j_l)$
		with $0\le l\le d$ and $0\le j_1 \le j_2 \le \cdots \le j_l$, 
		and $j_{\bullet}'=(j_1', j_2', \ldots, j_{l'}')$
		with $0\le l'\le d$ and $0\le j_1' \le j_2' \le \cdots \le j_{l'}'$, 
		suppose that $j_{\bullet} \succ j_{\bullet}'$. 
		Then we have 
		$\Hom(\cC_{j_{\bullet}}, \cC_{j_{\bullet}'})=0$. 
		Here $\cC_{\bullet}$ is defined in (\ref{subcat:product}). 
		\end{prop}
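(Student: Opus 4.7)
The plan is to prove Proposition \ref{prop:sod} by induction on $d$, reducing to vanishing of $\Hom$ between the natural generators of the subcategories $\cC_{j_{\bullet}}$ and $\cC_{j_{\bullet}'}$. Since $\mathbb{W}_b(d-l)$ is thickly generated by Schur bundles $V(\chi) \otimes \oO_{\gG_{a, b}(d-l)}$ for $\chi \in \mathbb{B}_b(d-l)$ and the Hall product is triangulated in each argument, $\cC_{j_{\bullet}}$ is generated by
\[
\tilde{\eE}_{j_{\bullet}, \chi} \cneq \oO_{B\C}(j_1) \ast \cdots \ast \oO_{B\C}(j_l) \ast (V(\chi) \otimes \chi_0^{j_l} \otimes \oO_{\gG_{a, b}(d-l)})
\]
for $\chi \in \mathbb{B}_b(d-l)$, and similarly for $\cC_{j_{\bullet}'}$. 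The base case $d = 0$ is vacuous, since $l = l' = 0$ forces $\widetilde{j}_{\bullet} = \widetilde{j}_{\bullet}'$, contradicting $\succ$. For the inductive step, set $\tilde{j}_1 \cneq j_1$ if $l \ge 1$ and $\tilde{j}_1 \cneq -1$ otherwise (similarly $\tilde{j}_1'$); then $j_{\bullet} \succ j_{\bullet}'$ forces $\tilde{j}_1 \ge \tilde{j}_1'$, and the proof splits accordingly.

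Throughout I work with the one-parameter subgroup $\lambda(t) = (t, 1, \ldots, 1)$ of (\ref{lambdat}), so that $\gG_{a, b}(d)^{\lambda = 0} \cong B\C \times \gG_{a, b}(d-1)$ and $\lambda$ acts trivially on the factor $\gG_{a, b}(d-1)$; when $l \ge 1$, the generator factors as $\tilde{\eE}_{j_{\bullet}, \chi} = p_{\lambda \ast} q_{\lambda}^{\ast}(\oO_{B\C}(j_1) \boxtimes \tilde{\eE}_{(j_2, \ldots, j_l), \chi})$. The core input is a structural claim: applying the adjunction $p_{\lambda}^{\ast} \dashv p_{\lambda \ast}$, the $\sigma_{\lambda}^{\ast}$-pullback of $p_{\lambda}^{\ast} \tilde{\eE}_{j_{\bullet}, \chi}$ admits a filtration into $\lambda$-weight pieces bounded below by $j_1$, with the weight-$j_1$ piece isomorphic to $\oO_{B\C}(j_1) \boxtimes \tilde{\eE}_{(j_2, \ldots, j_l), \chi}$; this mirrors the filtrations produced in the proofs of Lemma \ref{lem:ortho1}, Lemma \ref{lem:FF}, and Lemma \ref{lem:FF2} via the explicit Schur resolution of Proposition \ref{prop:resol}.

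In the strict inequality case $\tilde{j}_1 > \tilde{j}_1'$, on the target side either $\sigma_{\lambda}^{\ast} q_{\lambda}^{\ast}(\oO_{B\C}(j_1') \boxtimes \tilde{\eE}_{(j_2', \ldots), \chi'})$ sits in $\lambda$-weight exactly $j_1'$ when $l' \ge 1$, or $\sigma_{\lambda}^{\ast} p_{\lambda}^{!}(V(\chi') \otimes \oO_{\gG_{a, b}(d)})$ has $\lambda$-weights bounded above by $-1$ when $l' = 0$, as computed in the proof of Lemma \ref{lem:ortho1}. In either case Lemma \ref{lem:vanish}(i) applied to the filtration above yields the vanishing. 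In the equality case $\tilde{j}_1 = \tilde{j}_1' \eqcn j$, necessarily $l, l' \ge 1$ and $(j_2, \ldots, j_l) \succ (j_2', \ldots, j_{l'}')$ in $\mathbb{Z}^{d-1}$; Lemma \ref{lem:vanish}(i) forces only the weight-$j$ piece of the filtration to contribute, and Lemma \ref{lem:vanish}(ii) identifies the resulting $\Hom$ with $\Hom_{\gG_{a, b}(d-1)}(\tilde{\eE}_{(j_2, \ldots, j_l), \chi}, \tilde{\eE}_{(j_2', \ldots, j_{l'}'), \chi'})$, which vanishes by the induction hypothesis on $\gG_{a, b}(d-1)$.

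The main technical obstacle is establishing the structural filtration claim for iterated Hall products, i.e.~showing that $\sigma_{\lambda}^{\ast} p_{\lambda}^{\ast} p_{\lambda \ast} q_{\lambda}^{\ast}(\oO_{B\C}(j_1) \boxtimes A)$ decomposes into $\lambda$-weight pieces $\ge j_1$ with the weight-$j_1$ piece isomorphic to $\oO_{B\C}(j_1) \boxtimes A$ for general $A$ in our subcategories. For Schur bundles $A = V(\chi) \otimes \oO$ this is exactly what Proposition \ref{prop:resol} computes via the explicit Schur resolution, and the extension to the thick triangulated subcategories $\cC_{(j_2, \ldots, j_l)} \subset \mathbb{W}_c(d-1)$ should follow by compatibility of the pullback-pushforward and of $\sigma_{\lambda}^{\ast}$ with exact triangles, together with the generation of $\cC_{(j_2, \ldots, j_l)}$ by Schur-bundle cones supplied by Proposition \ref{prop:Cgen}.
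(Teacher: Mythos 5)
Your argument is essentially the paper's own proof: the paper likewise reduces to window generators, strips the common prefix of $j_{\bullet}$ and $j_{\bullet}'$ by repeated use of Lemma~\ref{lem:FF2}, and then concludes with Lemma~\ref{lem:ortho1} (when the shorter sequence is exhausted) or Lemma~\ref{lem:FF} (when the first differing entries satisfy a strict inequality), and your induction on $d$ performs exactly this stripping one factor at a time using the same inputs (the Schur resolution of Proposition~\ref{prop:resol} and the weight/attracting-locus Lemma~\ref{lem:vanish}). The only packaging difference is that the paper extends the Hom-level statements of Lemmas~\ref{lem:ortho1}--\ref{lem:FF2} from Schur generators to the thick window subcategories (a formal d\'{e}vissage, since Hom-vanishing and naturally induced Hom-isomorphisms pass to cones and summands), whereas you phrase the input as an object-level filtration claim whose ``lowest-weight piece'' part does not d\'{e}vissage literally --- but since you only use it through Hom computations, this is a cosmetic rather than substantive difference.
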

	\begin{proof}
		Let us take $P \in \mathbb{W}_b(d-l)$ and $P' \in \mathbb{W}_b(d-l')$. 
		We need to show the vanishing of 
		\begin{align}
			\label{vanish:AB}
			\Hom(\oO_{B\C}(j_1) \ast \cdots \ast \oO_{B\C}(j_l) \ast (P \otimes \chi_0^{j_l}), \oO_{B\C}(j_1) \ast \cdots \ast \oO_{B\C}(j_{l'}) \ast P' \otimes \chi_0^{j_l'})).  
			\end{align}
		We note that, by Remark~\ref{rmk:Cj}, for each $i \le  l, l'$ the objects
		\begin{align}\label{note:W}
			\oO_{B\C}(j_{i+1}) \ast \cdots \ast \oO_{B\C}(j_l) \ast 
			(P \otimes \chi_0^{j_l}), \ 
			\oO_{B\C}(j_{i+1}) \ast \cdots \ast \oO_{B\C}(j_{l'}) \ast 
			(P' \otimes \chi_0^{j_l'})
		\end{align}
		are objects in $\mathbb{W}_{c'}(d-i)$ for some $c' \ge 0$. 
				
			From $j_{\bullet}\succ j_{\bullet}'$, 
			we have two cases:
		\begin{enumerate}
			\item $l>l'$ and $j_i=j_i'$ for $1\le i\le l'$; 
			\item there is $1\le m<l, l'$ such that 
			$j_i=j_i'$ for $1\le i\le m$ and $j_{m+1}>j_{m+1}'$. 
			\end{enumerate}		
				In the first case, 
			we have  		
		\begin{align*}
		(\ref{vanish:AB})&=	\Hom(\oO_{B\C}(j_1) \ast \cdots \ast \oO_{B\C}(j_{l'}) \ast (\oO_{B\C}(j_{l'+1})\ast \cdots \ast \oO_{B\C}(j_l)
			\ast (P \otimes \chi_0^{j_l})), \\
			& \qquad \qquad \qquad \qquad \qquad \qquad \qquad \oO_{B\C}(j_1) \ast \cdots \ast \oO_{B\C}(j_{l'}) \ast (P'
			\otimes \chi_0^{j_{l'}})) \\
			& \cong 	\Hom(\oO_{B\C}(j_{l'+1})\ast \cdots \ast \oO_{B\C}(j_l)
			\ast (P \otimes \chi_0^{j_l}),  P'
			\otimes \chi_0^{j_{l'}}) \\
			&\cong 0. 
			\end{align*}
		Here the first isomorphism 
		follows from the 
		repeated use of Lemma~\ref{lem:FF2} (noting that (\ref{note:W}) 
		are objects in $\mathbb{W}_{c'}(d-i)$), 
		and the second isomorphism follows from 
		Lemma~\ref{lem:ortho1}, 
				In the second case, 
		a similar argument as above shows that  
		\begin{align*}
			(\ref{vanish:AB})&= \Hom(\oO_{B\C}(j_1) \ast \cdots \ast \oO_{B\C}(j_{m}) \ast (\oO_{B\C}(j_{m+1})\ast \cdots \ast \oO_{B\C}(j_l)
			\ast (P \otimes \chi_0^{j_l})), \\
			& \qquad \qquad 	\oO_{B\C}(j_1) \ast \cdots \ast \oO_{B\C}(j_{m}) \ast (\oO_{B\C}(j'_{m+1})\ast \cdots \ast \oO_{B\C}(j'_{l'})
			\ast (P' \otimes \chi_0^{j'_{l'}})) \\
			& \cong 	\Hom(\oO_{B\C}(j_{m+1})\ast \cdots \ast \oO_{B\C}(j_l)
			\ast (P \otimes \chi_0^{j_l}),  \oO_{B\C}(j_{m+1}')\ast \cdots \ast \oO_{B\C}(j'_{l'})
			\ast (P' \otimes \chi_0^{j'_{l'}})) \\
			&\cong 0. 
			\end{align*}
			Here the first isomorphism follows from 
	the	repeated use of Lemma~\ref{lem:FF2}, 
		and the second 
		isomorphism follows from Lemma~\ref{lem:FF}. 
		\end{proof}
	
	The following is the main result in this section: 
	\begin{thm}\label{thm:sod}
		For $c\ge b$, there exists a semi-orthogonal decomposition 
		\begin{align*}
			\mathbb{W}_c(d)=\left \langle  \cC_{j_{\bullet}} : 0\le j\le d, 
			j_{\bullet}=(0\le j_1 \le \cdots \le j_l \le c-b-l)  \right \rangle
			\end{align*}
		where 
			$\Hom(\cC_{j_{\bullet}}, \cC_{j_{\bullet}'})=0$	
			for 
			$j_{\bullet} \succ j_{\bullet}'$, and 
			for each $j_{\bullet}$ we have an equivalence 
			$\mathbb{W}_{b}(d-l) \stackrel{\sim}{\to} \cC_{j_{\bullet}}$. 
		\end{thm}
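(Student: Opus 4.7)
The plan is to assemble the theorem from the three essentially-prepared ingredients established earlier in the section: the generation result (Proposition~\ref{prop:Cgen}), the semiorthogonality result (Proposition~\ref{prop:sod}), and an identification of each piece $\cC_{j_{\bullet}}$ with $\mathbb{W}_{b}(d-l)$ via iterated application of Lemma~\ref{lem:FF2}. Concretely, Proposition~\ref{prop:Cgen} shows that the collection $\{\cC_{j_{\bullet}}\}$ generates $\mathbb{W}_c(d)$ as a thick triangulated category, and Proposition~\ref{prop:sod} establishes the required $\Hom$-vanishing with respect to the lexicographic order of Definition~\ref{defi:orderj}. Together these give the semiorthogonal decomposition statement once we know each $\cC_{j_{\bullet}}$ is equivalent to $\mathbb{W}_b(d-l)$.

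The equivalence is realized by the functor
\[
\Phi_{j_{\bullet}} \colon \mathbb{W}_b(d-l) \to \cC_{j_{\bullet}}, \qquad \eE \mapsto \oO_{B\C}(j_1) \ast \cdots \ast \oO_{B\C}(j_l) \ast (\eE \otimes \chi_0^{j_l}).
\]
Essential surjectivity is immediate from the definition (\ref{subcat:product}) of $\cC_{j_{\bullet}}$, so the task reduces to proving fully-faithfulness. Using Remark~\ref{rmk:Cj}, I would factor
\[
\Phi_{j_{\bullet}} = F_{j_1} \circ F_{j_2 - j_1} \circ \cdots \circ F_{j_l - j_{l-1}}, \qquad F_j(-) = \oO_{B\C}(j) \ast ((-) \otimes \chi_0^j),
\]
and verify full-faithfulness one step at a time. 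At the innermost step we apply $F_{j_l - j_{l-1}}$ to $\mathbb{W}_b(d-l)$, whose generators are of the form $V(\chi) \otimes \oO$ with $\chi \in \mathbb{B}_b(d-l)$; Lemma~\ref{lem:FF2} then gives exactly the $\Hom$-preservation on generators. Since $F_j$ is exact and the generators of the source satisfy the property, a standard devissage argument (the objects for which the natural map on $\Hom$ is an isomorphism form a thick subcategory in each variable) promotes this to full-faithfulness on the entire thick subcategory.

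To iterate, I would track the intermediate categories using Remark~\ref{rmk:Cj}: after the first $i$ compositions the image lives inside some $\mathbb{W}_{c'}(d-l+i)$ generated by $V(\chi'') \otimes \oO$ with $\chi'' \in \mathbb{B}_{c'}(d-l+i)$. Since Lemma~\ref{lem:FF2} is stated for $\chi, \chi' \in \mathbb{B}_{c}(d-1)$ for \emph{any} $c \ge 0$, it applies at each intermediate stage, and another devissage extends full-faithfulness across the composition. Combining all layers yields that $\Phi_{j_{\bullet}}$ is fully-faithful, hence an equivalence onto $\cC_{j_{\bullet}}$.

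The main bookkeeping obstacle is simply keeping track that at each intermediate step the source category is the thick closure of highest-weight bundles inside some $\mathbb{W}_{c'}(d')$, so that Lemma~\ref{lem:FF2} is genuinely applicable; the inclusions supplied by Lemma~\ref{lem:inclu} and Remark~\ref{rmk:Cj} make this automatic. The condition $c \ge b$ is used only to ensure the generating set of Proposition~\ref{prop:Cgen} is the correct one, since for $c < b$ the window $\mathbb{W}_b(d)$ is no longer a subcategory of $\mathbb{W}_c(d)$. With these pieces in place the theorem follows directly.
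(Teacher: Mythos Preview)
Your proposal is correct and follows essentially the same approach as the paper: cite Proposition~\ref{prop:Cgen} for generation, Proposition~\ref{prop:sod} for semiorthogonality, and repeated use of Lemma~\ref{lem:FF2} for the equivalence $\mathbb{W}_b(d-l)\stackrel{\sim}{\to}\cC_{j_\bullet}$. The paper's proof is a three-line assembly of exactly these ingredients; your additional discussion of the devissage and the intermediate inclusions from Remark~\ref{rmk:Cj} is a reasonable elaboration of what ``repeated use of Lemma~\ref{lem:FF2}'' means in practice.
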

	\begin{proof}
		The generation of $\mathbb{W}_c(d)$ by $\cC_{j_{\bullet}}$ is proved in 
		Proposition~\ref{prop:Cgen}, and the semi-orthogonality 
		is proved in Proposition~\ref{prop:sod}. 
		The equivalence $\mathbb{W}_{b}(d-l) \stackrel{\sim}{\to} \cC_{j_{\bullet}}$
		follows from repeated use of Lemma~\ref{lem:FF2}. 
		\end{proof}
	
	By applying the above theorem to $c=a$ and using Proposition~\ref{compose:W}, 
	we obtain the following corollary 
	which relates derived categories under Grassmannian flips: 
	\begin{cor}\label{cor:DG}
		There exists a a semiorthogonal decomposition 
		\begin{align*}
			D^b(G_{a, b}^+(d))=
			\left \langle D^b(G_{a, b}^-(d-l))_{j_1, \ldots, j_l} : 
			0\le l\le d, 0\le j_1 \le \cdots \le j_l \le a-b-l \right \rangle. 
			\end{align*}
		Here $D^b(G_{a, b}^-(d-l))_{j_1, \ldots, j_l}$ is a copy of $D^b(G_{a, b}^-(d-l))$. 
		\end{cor}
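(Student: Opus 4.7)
The plan is to derive the corollary as an essentially immediate consequence of Theorem~\ref{thm:sod} specialized to $c=a$, combined with the identifications of window subcategories provided by Proposition~\ref{prop:WGD}. Since the heavy lifting (generation, semiorthogonality, and the equivalences $\mathbb{W}_b(d-l)\simeq \cC_{j_\bullet}$) has already been performed, the remaining task is to translate the statement on window subcategories into a statement on the actual GIT quotients $G_{a,b}^{\pm}(d)$ and $G_{a,b}^{\pm}(d-l)$.

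First I would invoke Theorem~\ref{thm:sod} with $c=a$ to obtain the semiorthogonal decomposition
\begin{align*}
\mathbb{W}_a(d)=\left\langle \cC_{j_{\bullet}} : 0\le l\le d,\ 0\le j_1\le \cdots\le j_l\le a-b-l \right\rangle,
\end{align*}
with semiorthogonality in the order $\succ$ of Definition~\ref{defi:orderj} and with an equivalence $\mathbb{W}_b(d-l)\stackrel{\sim}{\to} \cC_{j_\bullet}$ for each $j_\bullet$ of length $l$. Next I would apply the second equivalence in Proposition~\ref{prop:WGD} to identify the ambient category $\mathbb{W}_a(d)\stackrel{\sim}{\to} D^b(G_{a,b}^+(d))$ via restriction from $D^b(\gG_{a,b}(d))$. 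For each summand I would then apply the first equivalence in Proposition~\ref{prop:WGD} (with $d$ replaced by $d-l$) to identify $\mathbb{W}_b(d-l)\stackrel{\sim}{\to} D^b(G_{a,b}^-(d-l))$.

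Composing these equivalences with $\mathbb{W}_b(d-l)\stackrel{\sim}{\to}\cC_{j_\bullet}$ labels each semiorthogonal summand as a copy $D^b(G_{a,b}^-(d-l))_{j_1,\ldots,j_l}$ of $D^b(G_{a,b}^-(d-l))$ sitting inside $D^b(G_{a,b}^+(d))$, and the semiorthogonality among them is inherited from Theorem~\ref{thm:sod}. The index set $\{(l,j_\bullet) : 0\le l\le d,\ 0\le j_1\le \cdots\le j_l\le a-b-l\}$ matches the statement of the corollary exactly, completing the proof.

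There is really no obstacle beyond bookkeeping: the decomposition, semiorthogonality, and internal equivalences are already established in Theorem~\ref{thm:sod}, and the only additional ingredient is the pair of window equivalences from Proposition~\ref{prop:WGD}. The only point to be slightly careful about is that restriction $D^b(\gG_{a,b}(d))\twoheadrightarrow D^b(G_{a,b}^+(d))$ preserves $\Hom$-groups between objects in $\mathbb{W}_a(d)$ (since it is an equivalence on that subcategory), so the semiorthogonality on the window side transports to semiorthogonality on the GIT-quotient side without loss.
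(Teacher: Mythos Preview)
Your proposal is correct and matches the paper's approach exactly: the paper states this corollary as an immediate consequence of Theorem~\ref{thm:sod} with $c=a$ together with the window equivalences of Proposition~\ref{prop:WGD}, and gives no further argument.
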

	
	\begin{rmk}\label{rmk:exceptional}
		When $b=0$,
		from Remark~\ref{rmk:ad}
		the semiorthogonal decomposition in Corollary~\ref{cor:DG}
		is 
		\begin{align*}
			D^b(G_{a, 0}^+(d))=
				\left \langle D^b(\Spec \mathbb{C})_{j_1, \ldots, j_d} : 
			0\le j_1 \le \cdots \le j_d \le a-d \right \rangle. 
			\end{align*}
		Each factor $D^b(\Spec \mathbb{C})_{j_1, \ldots, j_d}$ is generated by a 
		vector bundle which forms Kapranov's exceptional collection~\cite{Kapranov}
		of the Grassmannian $G_{a, 0}^+(d)$. 
		\end{rmk}
	
	\begin{rmk}\label{rmk:d=1}
		When $d=1$, the birational map 
		$G_{a, b}^+(1) \dashrightarrow 
		G_{a, b}^-(1)$ is a standard toric flip. 
		In this case, the semiorthogonal decomposition in Corollary~\ref{cor:DG}
		is 
		\begin{align*}
			D^b(G_{a, b}^+(1))
			=\langle D^b(G_{a, b}^-(1)), D^b(\mathrm{pt})_{(0)}, 
			\ldots, D^b(\mathrm{pt})_{(a-b-1)} \rangle. 
			\end{align*}
		The above semiorthogonal decomposition is a (mutation of) 
		well-known 
		semiorthogonal decomposition for a standard 
		flip (see~\cite[Example~8.8 (2)]{KawBir}). 
		\end{rmk}
	
	\begin{rmk}
		For a fixed $(a, b, l)$, the set of 
		sequences of integers $(j_1, \ldots, j_l)$
		satisfying $0\le j_1 \le \cdots \le j_l \le a-b-l$
		consists of $\binom{a-b}{l}$ elements. 
		Therefore Corollary~\ref{cor:DG} implies (\ref{sod:grass}). 
		The same applies to Corollary~\ref{cor:sod}, Corollary~\ref{cor:sod2.5} below
		so that they imply (\ref{intro:catWCF}), (\ref{intro:PT:sod}) respectively. 
		\end{rmk}
	
	\subsection{Applications to categories of factorizations}
	We will use the following variant of Corollary~\ref{cor:DG}. 
	Let $Z$ be a smooth scheme with a closed point $z \in Z$. 
	Let us take the formal completion of $G_{a, b}^0(d) \times Z$ 
	where $G_{a, b}^0(d)$ is the good moduli space for $\gG_{a, b}(d)$, 
	\begin{align*}
		\widehat{G}^0_{a, b}(d)_Z \cneq \Spec \widehat{\oO}_{G^0_{a, b}(d) \times Z, (0, z)}. 
		\end{align*}
	We also take a regular function $w$ on it 
	\begin{align*} 
		w \colon \widehat{G}^0_{a, b}(d)_Z \to \mathbb{A}^1, \ 
	w(0, z)=0.
	\end{align*}
	By taking the product of the diagram (\ref{dia:Grass}) with $Z$
	and 
	pulling it back via 
	$\widehat{G}^0_{a, b}(d)_Z \to G^0_{a, b}(d) \times Z$, 
	we obtain the diagram  
	\begin{align}\label{dia:Grass2}
		\xymatrix{
			\widehat{G}_{a, b}^+(d)_Z \ar@<-0.3ex>@{^{(}->}[r] \ar[rd]
			 \ar@/_1.5pc/[rdd]_-{w} & \widehat{\gG}_{a, b}(d)_Z \ar[d]
			& \widehat{G}_{a, b}^-(d)_Z \ar@<0.3ex>@{_{(}->}[l] \ar[ld] \ar@/^1.5pc/[ldd]^-{w} \\
			& \widehat{G}_{a, b}^0(d)_Z \ar[d]_-{w} & \\
			& \mathbb{A}^1. &
		}
	\end{align}
	Similarly to (\ref{def:ast}), we have the categorified Hall product 
	for formal fibers
	(see Subsection~\ref{subsec:bchange})
	\begin{align*}
		\ast \colon \MF(B\C, 0) \boxtimes \MF(\widehat{\gG}_{a, b}(d-1)_Z, w)
		\to \MF(\widehat{\gG}_{a, b}(d)_Z, w). 
		\end{align*}
		The subcategory 
	\begin{align*}
		\widehat{\mathbb{W}}_c(d) \subset \MF(\widehat{\gG}_{a, b}(d)_Z, w)
		\end{align*}
	is also defined
	similarly to (\ref{window:Wc})
	 to be the smallest thick 
	triangulated subcategory which contains factorizations 
	with entries $V(\chi) \otimes \oO$
	for $\chi \in \mathbb{B}_c(d)$. 
		Note that we have the decomposition (\ref{decom:Y})
		\begin{align*}
		\MF(B\C, 0)=\bigoplus_{j \in \mathbb{Z}} \MF(\Spec \mathbb{C}, 0)_j
	\end{align*}
	such that
	$\MF(\Spec \mathbb{C}, 0)_j$ is equivalent to $\MF(\Spec \mathbb{C}, 0)$. 
	We then define 
		\begin{align}\label{subcat:product2}
		\widehat{\cC}_{j_{\bullet}} \cneq 
	\MF(\Spec \mathbb{C}, 0)_{j_1} \ast \cdots \ast 	\MF(\Spec \mathbb{C}, 0)_{j_l}
		\ast (\widehat{\mathbb{W}}_b(d-l) \otimes \chi_0^{j_l})	\subset
		\MF(\widehat{\gG}_{a, b}(d)_Z, w))	
	\end{align}
	for $0\le l\le d$ and 
	$0\le j_1 \le \cdots \le j_l \le c-b-l$. 
		We have the following variant of 
	Theorem~\ref{thm:sod}: 
	\begin{cor}\label{cor:MF}
			For $c\ge b$, there exists a semi-orthogonal decomposition 
		\begin{align*}
			\widehat{\mathbb{W}}_c(d)=\left \langle  \widehat{\cC}_{j_{\bullet}} : 0\le j\le d, 
			j_{\bullet}=(0\le j_1 \le \cdots \le j_l \le c-b-l)  \right \rangle
		\end{align*}
		where 
		$\Hom(\widehat{\cC}_{j_{\bullet}}, \widehat{\cC}_{j_{\bullet}'})=0$	
		for 
		$j_{\bullet} \succ j_{\bullet}'$, and 
		for each $j_{\bullet}$ we have an equivalence 
			$\widehat{\mathbb{W}}_{b}(d-l) \stackrel{\sim}{\to} \widehat{\cC}_{j_{\bullet}}$. 
		\end{cor}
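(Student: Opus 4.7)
The plan is to follow the three-step structure of the proof of Theorem~\ref{thm:sod}: establish a resolution analogous to (\ref{seq:Vchi}) for factorizations, deduce generation in the style of Proposition~\ref{prop:Cgen}, and verify semiorthogonality via Hom-vanishings as in Lemmas~\ref{lem:ortho1}, \ref{lem:FF}, \ref{lem:FF2}. The new feature is the super-potential $w$ on $\widehat{\gG}_{a,b}(d)_Z$, but all three steps rely only on the attracting-locus diagram associated to a one parameter subgroup of $T \subset \GL(V)$ and on weight computations of Schur powers of $V$, so they admit factorization analogues.

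First, I would establish the factorization counterpart of Lemma~\ref{lem:vanish} on $\widehat{\gG}_{a,b}(d)_Z$. For a one parameter subgroup $\lambda \colon \C \to T$, the diagram of attracting loci is the pullback of the one on $\gG_{a,b}(d) \times Z$ and $w$ is $\lambda$-invariant, so the Hom-vanishing between factorizations of fixed $\lambda$-weight and the fully-faithfulness of $q_\lambda^\ast$ follow from the window-theoretic arguments underlying Theorem~\ref{thm:window:MF} (as developed in~\cite{MR3895631}). I would then base-change the resolution of Proposition~\ref{prop:resol} to the formal fiber, using the factorization analogue of diagram (\ref{bchange:2}) for $Q = Q_{a,b}$ with $Z$ incorporated; viewing (\ref{seq:Vchi}) as a resolution with zero differential and pulling back produces the corresponding resolution of $\MF(\Spec \mathbb{C}, 0)_{0} \ast (V(\chi) \otimes \oO_{\widehat{\gG}_{a,b}(d-1)_Z})$ by the $V(\chi_i) \otimes \oO_{\widehat{\gG}_{a,b}(d)_Z}$ inside $\MF(\widehat{\gG}_{a,b}(d)_Z, w)$. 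The generation arguments of Lemma~\ref{lem:genen0}, Lemma~\ref{lem:genen}, and Proposition~\ref{prop:Cgen} are purely formal and then carry over verbatim, together with the factorization analogues of Lemma~\ref{lem:chi0} and Lemma~\ref{lem:chi02}.

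For the semiorthogonality and the equivalences $\widehat{\mathbb{W}}_b(d-l) \stackrel{\sim}{\to} \widehat{\cC}_{j_\bullet}$, I would adapt the proofs of Lemmas~\ref{lem:ortho1}, \ref{lem:FF}, \ref{lem:FF2}: each Hom is rewritten via adjunction for $p_\lambda, q_\lambda$ and the formula for $p_\lambda^!$, reducing to weight comparisons on $\widehat{\gG}_{a,b}(d)_Z^{\lambda=0}$. These comparisons are identical to those in Section~\ref{sec:DGflip}, since $\chi_0$ and the Schur weights of $V(\chi)$ do not interact with $w$, and the factorization Hom vanishing established in the first step converts them into the desired vanishings in $\MF(\widehat{\gG}_{a,b}(d)_Z, w)$. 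The main obstacle is the bookkeeping point of verifying that $p_{\lambda\ast} q_\lambda^\ast$ in the factorization setting commutes with tensoring by $\chi_0^j$ and respects the identification of the Koszul factorization (\ref{Koszul}) with the push-forward of the trivial factorization along the zero section, so that the resolution of Proposition~\ref{prop:resol} transfers to $\MF(\widehat{\gG}_{a,b}(d)_Z, w)$ as stated. Once this is in hand, combining generation with the Hom vanishings yields the claimed semiorthogonal decomposition with the ordering $\succ$ of Definition~\ref{defi:orderj}.
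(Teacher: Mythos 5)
Your route is viable but differs from the paper's, which is much shorter: the paper first observes that the whole argument of Theorem~\ref{thm:sod} goes through for $D^b(\widehat{\gG}_{a,b}(d)_Z)$ (the weight computations and attracting-locus pushforwards only involve $\GL(V)$ and are insensitive to the formal base and the extra factor $Z$), and then invokes the standard fact that a semiorthogonal decomposition of this window type on $D^b$ of a quotient stack induces one on the category of factorizations of any potential (citing Halpern-Leistner--Pomerleano, Orlov, and P{\u{a}}durariu). You instead propose to rerun every step directly inside $\MF(\widehat{\gG}_{a,b}(d)_Z, w)$. The semiorthogonality part of your plan is fine, since the factorization analogue of Lemma~\ref{lem:vanish} is available from the window theory behind Theorem~\ref{thm:window:MF} and the weight bookkeeping is unchanged. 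The place where your argument is not ``verbatim'' is generation: Proposition~\ref{prop:resol} and Lemmas~\ref{lem:genen0}, \ref{lem:genen} are statements about exact sequences of equivariant vector bundles, and converting an exact sequence of entries into a statement about thick subcategories of factorizations requires a genuine devissage (totalizations of short exact sequences of factorizations are absolutely acyclic, and one must splice and truncate carefully); this is exactly the content of the results the paper cites, so your direct approach in effect re-proves them in this special case rather than bypassing them. Also, your worry about compatibility with the Koszul factorization (\ref{Koszul}) is misplaced here: Kn\"{o}rrer periodicity plays no role in this corollary (it only enters later, in Proposition~\ref{prop:Koszul}). In exchange for the extra work, your route is self-contained and makes the $\MF$-level weight statements explicit, which is convenient for the later comparison arguments; the paper's route buys brevity by delegating the $D^b$-to-$\MF$ transfer to the literature.
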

	\begin{proof}
		The argument of Theorem~\ref{thm:sod}
		implies an analogous semiorthogonal decomposition for $D^b(\widehat{\gG}_{a, b}(d)_Z)$. 
		Then it is well-known that 
		the above semiorthogonal decomposition
		induces the one for categories of factorizations 
		(cf.~\cite[Lemma~1.17, 1.18]{HPHodge}, \cite[Proposition~1.10]{OrLG}, 
		\cite[Proposition~2.7]{Tudor}, \cite[Proposition~2.1]{Tudor1.5}). 
		\end{proof}
	
	\section{Categorical Donaldson-Thomas theory for the resolved conifold}
	In this section, we use the result in the previous section to prove Theorem~\ref{intro:thm2}. 
	\subsection{Geometry and algebras for the resolved conifold}
		Let $X$ be the resolved conifold 
	\begin{align*}
		X \cneq \mathrm{Tot}_{\mathbb{P}^1}(\oO_{\mathbb{P}}(-1)^{\oplus 2}). 
		\end{align*}
	Here we recall some well-known geometry and algebras
	for the resolved conifold (see~\cite{MR2057015, NN} for details). 
	There is a birational contraction 
	\begin{align*}
		f \colon X \to Y \cneq \{xy+zw=0\} \subset \mathbb{C}^4
		\end{align*}
	which contracts 
	the zero section $C=\mathbb{P}^1 \subset X$
	to the conifold singularity $0 \in Y$. 
		Let $\eE \cneq \oO_X \oplus \oO_X(1)$, and 
	$A \cneq \End(\eE)$. 
	Then there is an equivalence by Van den Bergh~\cite{MR2057015}
	\begin{align}\label{equiv:Phi}
		\Phi \cneq \RHom(\eE, -) \colon 
		D^b(X) \stackrel{\sim}{\to} D^b(\modu A). 
		\end{align}
	Here $\modu A$ is the abelian category 
	 finitely generated right $A$-modules. 
	 The non-commutative algebra $A$ is isomorphic to
	 the path algebra associated with a quiver 
	 with super-potential $(Q, W)$, given below
	 	\[
	 Q=
	 \begin{tikzcd}
	 	 	\bullet_{0}
	 	\arrow[rr, bend left,  "a_2"]
	 	\arrow[rr,bend left=70,  "a_1"]
	 	& &
	 	\bullet_1
	 	\arrow[ll, bend left, "b_1" ]
	 	\arrow[ll, bend left=70, "b_2"]
	 \end{tikzcd}
 \quad 
	 W=a_1 b_1 a_2 b_2-a_1 b_2 a_2 b_1.	
	 \]

   The equivalence (\ref{equiv:Phi}) restricts to the equivalences of
   abelian subcategories
   \begin{align*}
   	\Phi \colon \PPer(X/Y) \stackrel{\sim}{\to} \modu A, \ 
   	\Phi \colon \PPer_{c}(X/Y) \stackrel{\sim}{\to} \modu_{\rm{fd}}(A). 
   	\end{align*}
   Here $\PPer(X/Y)$ is the abelian category of 
   Bridgeland's perverse coherent sheaves~\cite{Br1}, 
   explicitly given by 
   \begin{align*}
   	\PPer(X/Y)=\left\{
   		E \in D^b(X):  \begin{array}{ll}
   	\hH^i(E)=0 \mbox{ for } i \neq -1, 0, R^1 f_{\ast}\hH^0(E)=f_{\ast}\hH^{-1}(E)=0 \\
   	\Hom(\hH^0(E), \oO_C(-1))=0
   	\end{array}
   	  \right\}. 
   	\end{align*}
   The subcategory $\PPer_c(X/Y) \subset \PPer(X/Y)$ consists of compactly 
   supported objects, and 
   $\modu_{\rm{fd}}(A) \subset \modu(A)$ consists of finite dimensional 
   $A$-modules. 
   The simple $(Q, W)$-representations corresponding to 
   the vertex $\{0, 1\}$
   are given by 
 \begin{align*}
 	\{\oO_C, \oO_C(-1)[1]\} \subset \PPer_c(X/Y). 
  	\end{align*}
 
   An object $F \in \PPer_c(X/Y)$ is supported on $C$ 
 or zero dimensional subscheme in $X$. 
 For $F \in \PPer_c(X/Y)$, 
 we set 
 \begin{align*}
 	\cl(F) \cneq (\beta, n) \in \mathbb{Z}^{\oplus 2}, \ [F]=\beta[C], \chi(F)=n
 \end{align*}
 where $[F]$ is the fundamental one cycle of $F$. 
 Under the equivalence $\Phi$, 
 an object $F \in \PPer_c(X/Y)$ with $\cl(F)=(\beta, n)$
 corresponds to a $(Q, W)$-representation with 
 dimension vector $(n, n-\beta)$. 
 
  Following~\cite[Section~1]{NN}, a \textit{perverse coherent system} is defined to be 
 a pair 
 \begin{align}\label{coh:sys}
 	(F, s), \ F \in \PPer_{c}(X/Y), \ s \colon \oO_X \to F. 
 \end{align}
  Let $(Q^{\dag}, W)$ be a quiver with super-potential, 
 given below 
 	\[
 Q^{\dag}=
 \begin{tikzcd}
 	\bullet_{\infty} \arrow[d] & & \\
 	\bullet_{0}
 	\arrow[rr, bend left,  "a_2"]
 	\arrow[rr,bend left=70,  "a_1"]
 	& &
 	\bullet_1
 	\arrow[ll, bend left, "b_1" ]
 	\arrow[ll, bend left=70, "b_2"]
 \end{tikzcd}
\quad 
 W=a_1 b_1 a_2 b_2-a_1 b_2 a_2 b_1.	
 \]
  Note that $Q^{\dag}$ is an extended quiver obtained from $Q$
 as in Subsection~\ref{subsec:catH}.  
 By the equivalence (\ref{equiv:Phi}),
 giving a perverse coherent system with $\cl(F)=(\beta, n)$
 is equivalent to
 giving a representation of $(Q^{\dag}, W)$
 with dimension vector $(v_{\infty}, v_0, v_1)=(1, n, n-\beta)$.  
 
 \subsection{Categorical DT invariants for the resolved conifold}
For a dimension vector $v=(v_0, v_1)$ of $Q$, let $V_0$, $V_1$ be 
vector spaces with dimension $v_0$, $v_1$ respectively. 
The $\C$-rigidified moduli stack of 
$Q^{\dag}$-representations of dimension vector $(1, v)$
in Subsection~\ref{subsec:catH} is explicitly written as  
\begin{align*}
	\mM_Q^{\dag}(v) &= [R_{Q^{\dag}}(v)/G(v)] \\
	&=	
	\left[V_0 \oplus \Hom(V_0, V_1)^{\oplus 2} \oplus 
	\Hom(V_1, V_0)^{\oplus 2}/\GL(V_0) \times \GL(V_1)    \right]. 
	\end{align*}
Let $w$ be the function 
\begin{align}\label{func:w}
	w=\mathrm{Tr}(W) \colon \mM_Q^{\dag}(v) \to \mathbb{A}^1, \ 
	w(v, A_1, A_2, B_1, B_2)=\mathrm{Tr}(A_1 B_1 A_2 B_2-A_1 B_2 A_2 B_1). 
	\end{align}
Then its critical locus 
\begin{align}\label{w-10}
	\mM_{(Q, W)}^{\dag}(v) \cneq \mathrm{Crit}(w)
	\subset w^{-1}(0)
	\subset \mM_{Q}^{\dag}(v)
\end{align}
is 
the $\C$-rigidified 
moduli stack of $(Q^{\dag}, W)$-representations
of dimension vector $(1, v)$. Here  
the first inclusion follows from the fact that $w$ is a 
homogeneous function on $R_{Q^{\dag}}(v)$ of degree four. 
By the equivalence (\ref{equiv:Phi}), 
$\mM_{(Q, W)}^{\dag}(v)$ is 
isomorphic to the moduli stack of perverse 
coherent systems (\ref{coh:sys})
satisfying $\cl(F)=(v_0-v_1, v_0)$. 

For $\theta=(\theta_0, \theta_1) \in \mathbb{R}^2$, 
we denote by 
 \begin{align*}
 	\mM_Q^{\dag, \theta \sss}(v)=[R_{Q^{\dag}}^{\theta \sss}(v)/G(v)] \subset 
 	\mM_Q^{\dag}(v)
 	\end{align*}
 the open substack of $\theta$-semistable $Q^{\dag}$-representations. 
 We also have the open substack 
 \begin{align*}
 	\mM_{(Q, W)}^{\dag, \theta \sss}(v) \cneq 
 	\mM_Q^{\dag, \theta \sss}(v) \cap \mM_{(Q, W)}^{\dag}(v)
 	\subset \mM_{(Q, W)}^{\dag}(v)
 	\end{align*}
 corresponding to $\theta$-semistable $(Q^{\dag}, W)$-representations. 
 If $\theta_i \in \mathbb{Z}$, as mentioned in Subsection~\ref{subsec:catH}
 these open substacks are 
 GIT semistable locus with respect to the character 
 \begin{align}\label{chi:theta}
 	\chi_{\theta} \colon G(v)=\GL(V_0) \times \GL(V_1) \to \C, \
 	(g_0, g_1) \mapsto \det(g_0)^{-\theta_0} \det(g_1)^{-\theta_1}. 
 	\end{align}
 We have the 
 good moduli spaces by taking GIT quotients 
 \begin{align}\label{gmoduli}
 	\pi_Q^{\dag} \colon \mM_Q^{\dag, \theta \sss}(v) \to M_Q^{\dag, \theta \sss}(v), \ 
 	\pi_{(Q, W)}^{\dag} \colon \mM_{(Q, W)}^{\dag, \theta \sss}(v) \to M_{(Q, W)}^{\dag, \theta \sss}(v).
 	\end{align}
 
 We will consider the triangulated category 
 \begin{align*}
 	\MF(\mM_Q^{\dag, \theta \sss}(v), w)
 	\end{align*}
 and call it the \textit{categorical DT invariant} for the conifold quiver 
 $(Q^{\dag}, W)$. 
 The above triangulated category (or more precisely its dg-enhancement)
 recovers the numerical DT invariant considered in~\cite{NN}: 
 \begin{lem}\label{lem:pcyc}
 	For a generic $\theta \in \mathbb{R}^2$, there is an equality
 	\begin{align*}
 		e_{\mathbb{C}\lgakko u \rgakko}(\mathrm{HP}_{\ast}(\MF(\mM_Q^{\dag, \theta \sss}(v), w))
 		=(-1)^{v_1}\DT^{\theta}(v). 
 		\end{align*}
 	Here $\mathrm{HP}_{\ast}(-)$ is the periodic cyclic homology
 	which is a $\mathbb{Z}/2$-graded $\mathbb{C}\lgakko u \rgakko$-vector 
 	space (see~\cite{MR1667558}), $e_{\mathbb{C}\lgakko u \rgakko}(-)$
 	is the Euler characteristic of
 	$\mathbb{Z}/2$-graded $\mathbb{C}\lgakko u \rgakko$-vector 
 	space, 
 	and $\DT^{\theta}(v) \in \mathbb{Z}$ is the numerical 
 	DT invariant counting 
 	$(Q^{\dag}, w)$-representations with dimension vector $(1, v)$. 
 	\end{lem}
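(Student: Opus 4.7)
The plan is to express both sides in terms of the Euler characteristic of the vanishing cycle complex $\phi_w$ on $\Crit(w)$, where $\mM \cneq \mM_Q^{\dag, \theta \sss}(v)$ is a smooth quotient stack and $\Crit(w) = \mM_{(Q,W)}^{\dag, \theta \sss}(v)$.

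The first step is to invoke the standard comparison identifying the periodic cyclic homology of the matrix factorization category with vanishing cycle cohomology: one has an isomorphism
\begin{align*}
	\mathrm{HP}_{\ast}(\MF(\mM, w)) \cong H^{\ast}(\Crit(w), \phi_w(\mathbb{Q}_{\mM}))
\end{align*}
of $\mathbb{Z}/2$-graded $\mathbb{C}\lgakko u \rgakko$-modules, up to a cohomological shift by $\dim \mM$ (see the discussion in~\cite{TocatDT} for the equivariant/stacky version, which adapts results of Efimov, Preygel, and Brown--Walker to this setting). On Euler characteristics this yields
\begin{align*}
	e_{\mathbb{C}\lgakko u \rgakko}(\mathrm{HP}_{\ast}(\MF(\mM, w))) = (-1)^{\dim \mM} \chi(\Crit(w), \phi_w(\mathbb{Q}_{\mM})).
\end{align*}

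The second step is to invoke Behrend's theorem~\cite{MR2600874}: for $\theta$ generic, $\theta$-semistability coincides with $\theta$-stability, so $\Crit(w)$ is a Deligne--Mumford stack with symmetric obstruction theory coming from its presentation as a global critical locus inside the smooth stack $\mM$. In this situation the pointwise Euler characteristic of $\phi_w(\mathbb{Q}_{\mM})$ agrees stalkwise with the Behrend function $\nu_{\Crit(w)}$, so that $\chi(\Crit(w), \phi_w(\mathbb{Q}_{\mM}))$ is precisely the weighted Euler characteristic that defines $\DT^{\theta}(v)$.

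The final step is a parity count. Writing $\mM = [R_{Q^\dag}(1, v)/G(v)]$ with $\dim R_{Q^\dag}(1, v) = v_0 + 4 v_0 v_1$ (from the single arrow $\infty \to 0$ and the four arrows between $0$ and $1$) and $\dim G(v) = v_0^2 + v_1^2$, one finds
\begin{align*}
	\dim \mM = v_0 + 4 v_0 v_1 - v_0^2 - v_1^2 \equiv v_1 \pmod{2},
\end{align*}
using $v_0^2 \equiv v_0 \pmod 2$, so $(-1)^{\dim \mM} = (-1)^{v_1}$ and the two steps combine into the claimed identity. The main care point is not conceptual but rather the bookkeeping of signs in the HP--vanishing cycle comparison in the equivariant setting, which must be compatible with Behrend's normalization of $\nu_{\Crit(w)}$; this compatibility is essentially the one already exploited in~\cite{TocatDT}.
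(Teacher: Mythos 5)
Your proposal is correct and follows essentially the same route as the paper: Efimov's identification of $\mathrm{HP}_{\ast}(\MF(-,w))$ with vanishing-cycle cohomology, the Behrend-function interpretation of $\DT^{\theta}(v)$, and the parity computation $\dim \mM_Q^{\dag,\theta\sss}(v)\equiv v_1 \pmod 2$ (which you do directly from $\dim R_{Q^{\dag}}(1,v)-\dim G(v)$ and the paper does via the Ext--Hom Euler pairing of Lemma~\ref{lem:Euler} --- the same number). Two minor points: since $\theta$ is generic and $(1,v)$ is primitive, the rigidified semistable stack is already a smooth quasi-projective \emph{scheme}, so Efimov's theorem applies directly and no equivariant/stacky extension is needed; and the sign $(-1)^{\dim \mM}$ you attach to the HP--vanishing-cycle comparison really arises from the shift $\mathbb{Q}_{\mM}$ versus $\IC_{\mM}$ when matching $\phi_w$ with the Behrend function (the paper places it there), but however the sign is distributed between your two steps it cancels to give the stated identity.
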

 \begin{proof}
 	Since $\theta$ is generic and the dimension vector $(1, v)$ 
 	of $Q^{\dag}$ is primitive,
 	the stack $M=\mM_Q^{\dag, \theta \sss}(v)$ consists of 
 	only $\theta$-stable objects 
 	and it is a smooth quasi-projective scheme. 
 	By~\cite[Theorem~5.4]{Eff},
 	there is an isomorphism of $\mathbb{Z}/2$-graded vector spaces
 	over $\mathbb{C} \lgakko u \rgakko$. 
 	\begin{align*}
 		\mathrm{HP}_{\ast}(\MF(M, w)) &\cong 
 		H^{\ast}(M, \phi_{w}(\mathbb{Q}_M)) \otimes_{\mathbb{Q}}
 		\mathbb{C}\lgakko u \rgakko \\
 		& \cong H^{\ast+\dim M}(M, \phi_{w}(\IC_M)) \otimes_{\mathbb{Q}}
 		\mathbb{C}\lgakko u \rgakko. 
 		\end{align*} 
 	Here $\phi_{w}(-)$ is the vanishing cycle functor
 	and $u$ has degree two, and $\IC_M=\mathbb{Q}_M[\dim M]$. 
We take the Euler characteristics of both sides as $\mathbb{Z}/2$-graded 
vector spaces over $\mathbb{C}\lgakko u \rgakko$. 
Since we have 
\begin{align*}
	e(H^{\ast}(M, \phi_w(\IC_M))) =
	\int_M \chi_B \ de =: \DT^{\theta}(v), 
\end{align*}
where $\chi_B$ is the Behrend function~\cite{MR2600874} on $M$, 
it is enough to show that $(-1)^{v_1}=(-1)^{\dim M}$. 
Let $E$ be a $Q^{\dag}$-representation with dimension vector $(1, v_0, v_1)$. 
Then we have 
\begin{align*}
	\dim M &=1+\dim \Ext_{Q^{\dag}}^1(E, E)-\dim \Hom_{Q^{\dag}}(E, E) \\
	&=v_0-v_0^2-v_1^2+4v_0 v_1. 
	\end{align*}
Here we have used Lemma~\ref{lem:Euler} below for the second identity. 
Therefore $(-1)^{v_1}=(-1)^{\dim M}$ holds. 
		 	\end{proof}
 
 The following lemma follows immediately from the Euler pairing computations of 
 quiver representations (see~\cite[Corollary~1.4.3]{Brion}): 
 \begin{lem}\label{lem:Euler}
 	For $Q^{\dag}$-representations $E$, $E'$ with dimension 
 	vector $(v_{\infty}, v_0, v_1)$, $(v_{\infty}', v_0', v_1')$, 
 	we have 
 	\begin{align*}
 		\dim \Hom_{Q^{\dag}}(E, E')-
 		\dim \Ext^1_{Q^{\dag}}(E, E')=
 		v_{\infty}v'_{\infty}-v_{\infty}v_0'+v_0 v_0'-2v_0 v_1' -2v_1 v_0'+v_1 v_1'. 
 	\end{align*}
 \end{lem}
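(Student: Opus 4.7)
The plan is to derive this as an immediate application of the classical Euler pairing formula for representations of a finite quiver. For any quiver $Q'=(Q'_0, Q'_1, s, t)$ (with no relations imposed) and any two representations $E, E'$ of $Q'$ with dimension vectors $v, v' \in \mathbb{Z}_{\ge 0}^{Q'_0}$, one has
\begin{align*}
	\dim \Hom_{Q'}(E, E') - \dim \Ext^1_{Q'}(E, E') = \sum_{i \in Q'_0} v_i v'_i - \sum_{a \in Q'_1} v_{s(a)} v'_{t(a)},
\end{align*}
and all higher Ext groups vanish. This is precisely the content of \cite[Corollary~1.4.3]{Brion}; it is obtained by applying $\Hom_{Q'}(-, E')$ to the standard length-one projective resolution
\begin{align*}
	0 \to \bigoplus_{a \in Q'_1} P_{t(a)} \otimes E_{s(a)} \to \bigoplus_{i \in Q'_0} P_i \otimes E_i \to E \to 0,
\end{align*}
where $P_i$ denotes the indecomposable projective at vertex $i$, and using $\Hom_{Q'}(P_i, E') = E'_i$.

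The second step is to specialize the formula to $Q^{\dag}$. Reading off the combinatorial data from the picture, the vertex set is $\{\infty, 0, 1\}$ and the arrow set consists of exactly one arrow $\infty \to 0$, two arrows $0 \to 1$ (namely $a_1, a_2$) and two arrows $1 \to 0$ (namely $b_1, b_2$), with no self-loops and no arrows touching the two remaining pairs of vertices. Plugging these multiplicities into the general Euler pairing formula yields
\begin{align*}
	\sum_{i} v_i v'_i - \sum_{a} v_{s(a)} v'_{t(a)} = v_{\infty} v'_{\infty} + v_0 v'_0 + v_1 v'_1 - v_{\infty} v'_0 - 2 v_0 v'_1 - 2 v_1 v'_0,
\end{align*}
which is the stated formula. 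Note that the superpotential $W$ plays no role here, since $\Hom_{Q^{\dag}}$ and $\Ext^1_{Q^{\dag}}$ refer to the abelian category of representations of the plain quiver $Q^{\dag}$ without imposing the relations $\partial W$.

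There is no real obstacle: the lemma is a pure bookkeeping consequence of a standard identity. The only point requiring care is making sure every arrow of $Q^{\dag}$ is counted with its correct source/target assignment, in particular the single arrow from $\infty$ to $0$, which contributes the term $-v_{\infty} v'_0$ with no symmetric counterpart.
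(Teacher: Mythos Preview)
Your proposal is correct and matches the paper's approach exactly: the paper simply states that the lemma follows immediately from the Euler pairing computation for quiver representations, citing \cite[Corollary~1.4.3]{Brion}, which is precisely what you do. Your explicit bookkeeping of the arrows of $Q^{\dag}$ is accurate and fills in the one-line specialization the paper leaves implicit.
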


We have the following unstable locus 
 \begin{align*}
 	\mM_{(Q, W)}^{\dag, \theta \us}(v) \cneq 
 	\mM_{(Q, W)}^{\dag}(v) \setminus \mM_{(Q, W)}^{\dag, \theta \sss}(v). 
 	\end{align*}
 Then we have the open immersion 
 \begin{align*}
 	\mM_{Q}^{\dag, \theta \sss}(v) \subset \mM_{Q}^{\dag}(v) \setminus \mM_{(Q, W)}^{\dag, \theta\us}(v).
 	\end{align*}
 The following lemma shows that the categorical DT invariant can be also 
 defined on a bigger ambient space: 
 \begin{lem}\label{lem:bigger}
 	The following restriction functor is an equivalence 
 	\begin{align*}
 		\MF(\mM_Q^{\dag}(v) \setminus \mM_{(Q, W)}^{\dag, \theta \us}(v), w)
 		\stackrel{\sim}{\to} 
 		\MF(\mM_{Q}^{\dag, \theta \sss}(v), w). 
 		\end{align*}
 	\end{lem}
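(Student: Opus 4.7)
The plan is to apply the general restriction equivalence for categories of factorizations recorded in equation~(\ref{Pre:rest}): whenever $\yY' \subset \yY$ is an open substack with $\Crit(w) \subset \yY'$, the restriction functor $\MF(\yY, w) \to \MF(\yY', w|_{\yY'})$ is an equivalence. So I will set
\[
\yY \cneq \mM_Q^{\dag}(v) \setminus \mM_{(Q, W)}^{\dag, \theta \us}(v), \qquad \yY' \cneq \mM_Q^{\dag, \theta \sss}(v),
\]
and verify the three hypotheses: that $\yY$ is an open substack of $\mM_Q^{\dag}(v)$, that $\yY' \subset \yY$ is open, and that the critical locus of $w|_{\yY}$ is contained in $\yY'$.

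For openness of $\yY$, note that $\mM_{(Q, W)}^{\dag}(v) = \Crit(w)$ is closed in $\mM_Q^{\dag}(v)$ by (\ref{w-10}), and $\mM_{(Q, W)}^{\dag, \theta \us}(v)$ is closed in $\mM_{(Q, W)}^{\dag}(v)$ because $\theta$-semistability is an open condition; composing these closed immersions shows that $\mM_{(Q, W)}^{\dag, \theta \us}(v)$ is closed in $\mM_Q^{\dag}(v)$, so $\yY$ is open. For the inclusion $\yY' \subset \yY$, any $p \in \mM_Q^{\dag, \theta \sss}(v)$ is $\theta$-semistable, hence cannot lie in $\mM_{(Q, W)}^{\dag, \theta \us}(v)$; and $\yY'$ is open in $\mM_Q^{\dag}(v)$, hence open in $\yY$.

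The key point is the critical locus computation. Using $\Crit(w) = \mM_{(Q, W)}^{\dag}(v)$ from (\ref{w-10}), we get
\[
\Crit(w|_{\yY}) = \mM_{(Q, W)}^{\dag}(v) \cap \yY = \mM_{(Q, W)}^{\dag}(v) \setminus \mM_{(Q, W)}^{\dag, \theta \us}(v) = \mM_{(Q, W)}^{\dag, \theta \sss}(v),
\]
and by definition $\mM_{(Q, W)}^{\dag, \theta \sss}(v) = \mM_Q^{\dag, \theta \sss}(v) \cap \mM_{(Q, W)}^{\dag}(v) \subset \yY'$. Then (\ref{Pre:rest}) applied to $\yY' \subset \yY$ yields the claimed equivalence. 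There is no real obstacle here; the lemma is a direct formal consequence of the open restriction property of $\MF$, once one carefully tracks which closed subsets are being removed and notes that the critical locus of $w$ inside $\yY$ already lies in the smaller open substack $\yY'$.
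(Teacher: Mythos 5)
Your proposal is correct and follows exactly the paper's own argument: the lemma is the restriction equivalence (\ref{Pre:rest}) applied to the open inclusion $\mM_Q^{\dag, \theta \sss}(v) \subset \mM_Q^{\dag}(v) \setminus \mM_{(Q, W)}^{\dag, \theta \us}(v)$, using that the critical locus of $w$ on the larger stack is $\mM_{(Q, W)}^{\dag, \theta \sss}(v)$, which lies in the semistable locus. Your write-up merely spells out the openness and critical-locus checks that the paper leaves implicit.
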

\begin{proof}
	The lemma follows since the category of factorizations only 
	depends on an open neighborhood of the critical locus (see the equivalence (\ref{Pre:rest})). 
	\end{proof}
 
 \subsection{Wall-chamber structure}
 There is a wall-chamber structure for the $\theta$-stability 
as in Figure~\ref{figure4} (see~\cite[Figure~1]{NN}): 
 \begin{figure}[H]
 	\centering
 	\begin{tikzpicture}[scale=0.6][node distance=1cm]
 		\draw[thick] (-4.6,0)--(4.6,0)  node [pos=0, anchor=east]{\tiny{$\theta_1=0$}} ;\draw[thick](0,4.6)--(0,-4.6)  node [pos=0, anchor=east]{\tiny{$\theta_0=0$}} ;
 		\draw[thick] (-4,4)--(4,-4)  node [pos=0, anchor=east]{\tiny{$\theta_0+\theta_1=0$}} ;
 		\draw[thick] (-3.5,4.3)--(3.5,-4.3)  node[pos=0, anchor=east]{\tiny{$m\theta_0+(m-1)\theta_1=0$} } ;
 		\draw[thick] (-2.3,4.6)--(2.3,-4.6)  node[pos=0, anchor=east]{\tiny{$2\theta_0+\theta_1=0$} } ;
 		\draw[thick] (-4.3,3.5)--(4.3,-3.5)  node[pos=0, anchor=east]{\tiny{$m\theta_0+(m+1)\theta_1=0$} } ;
 		\draw[thick] (-4.6,2.3)--(4.6,-2.3)  node[pos=0, anchor=east]{\tiny{$\theta_0+2\theta_1=0$} } ;
 		\draw[fill] (-3.5,3.6) circle [radius=0.025];
 		\node [thick, right] at (-3.5,3.6) {\tiny{PT}};
 		\draw[fill] (-3.6,3.5) circle [radius=0.025];
 		\node [thick, below] at (-3.6,3.5) {\tiny{DT}};
 		\draw[fill] (3.8,-3.7) circle [radius=0.025];
 		\node [thick, above] at (3.8,-3.7) {\tiny{PT}};
 		
 		\draw[fill] (3.7,-3.8) circle [radius=0.025];
 		\node [thick, left] at (3.7,-3.8) {\tiny{DT}};
 		
 		\node at (4.2,-3.7) {\small{$X^+$}};
 		\node at (-4,3.7) {\small{$X$}};
 		
 		\draw[fill] (-3.46,2.5) circle [radius=0.015];
 		\draw[fill] (-3.51,2.4) circle [radius=0.015];
 		\draw[fill] (-3.56,2.3) circle [radius=0.015];
 		
 		\draw[fill] (-2.5,3.46) circle [radius=0.015];
 		\draw[fill] (-2.4,3.51) circle [radius=0.015];
 		\draw[fill] (-2.3,3.56) circle [radius=0.015];
 		
 		\draw[fill] (-3.15, 2.97) circle [radius=0.015];
 		\draw[fill] (-3.2, 2.9) circle [radius=0.015];
 		\draw[fill] (-3.25, 2.83) circle [radius=0.015];
 		
 		\draw[fill] (-2.97, 3.15) circle [radius=0.015];
 		\draw[fill] (-2.9, 3.22) circle [radius=0.015];
 		\draw[fill] (-2.83, 3.28 ) circle [radius=0.015];
 		
 		\node at (2.5,2) {$\begin{subarray}{c}\mathrm{empty\,\, chamber} \end{subarray}$ } ;
 		\node at (-3,-2.3) {$\begin{subarray}{c}\mathrm{non-commutative}  \\ \mathrm{chamber} \end{subarray}$ } ;
 	\end{tikzpicture} 
 	\caption{Wall-chamber structures}
 	\label{figure4}
 \end{figure}
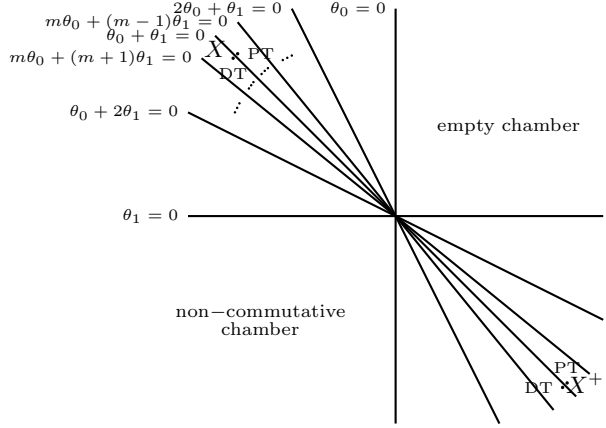
In Figure~\ref{figure4}, 
if $\theta$ lies in the first quadrant
then $\mM_{(Q, W)}^{\dag, \theta \sss}(v)=\emptyset$
unless $v=0$, 
so it is called an \textit{empty chamber}. 
In this case, the categorical DT invariants are given 
in the following lemma: 
\begin{lem}\label{lem:empty}
	Let $\theta_{\rm{en}} \in \mathbb{R}^2$ lies
	in an empty chamber. 
	Then 
	\begin{align*}
		\MF(\mM_Q^{\dag, \theta_{\rm{en}} \sss}(v), w)
		=\begin{cases}  \MF(\Spec \mathbb{C}, 0), & v=0 \\
		0, & v\neq 0.
		\end{cases}
		\end{align*}	
	\end{lem}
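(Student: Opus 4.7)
The plan is straightforward: for $v=0$ the statement is a direct unwinding of definitions, and for $v\neq 0$ the $\theta_{\rm{en}}$-semistable locus is already empty at the level of $Q^{\dag}$-representations (not just of $(Q^{\dag},W)$-representations), so the factorization category vanishes.

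First I would handle $v=0$. When $v=(0,0)$, the spaces $V_0, V_1$ are zero, there are no $a_i, b_i$ arrows to specify, and $R_{Q^{\dag}}(1,0)$ is a single point with trivial $G(0)$-action. After the $\C$-rigidification of $\mM_{Q^{\dag}}(1,0)$, we get $\mM_Q^{\dag}(0)\cong \Spec\mathbb{C}$, and the super-potential $w$ of (\ref{func:w}) vanishes identically. Any $\theta_{\rm{en}}\in\mathbb{R}^2$ satisfies $\theta(1,0,0)=\theta_\infty=0$ trivially, so the semistable locus equals $\mM_Q^{\dag}(0)$ and $\MF(\mM_Q^{\dag,\theta_{\rm{en}}\sss}(0),w)=\MF(\Spec\mathbb{C},0)$.

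Next I would show that $\mM_Q^{\dag,\theta_{\rm{en}}\sss}(v)$ is empty whenever $v=(v_0,v_1)\neq(0,0)$. Writing $\theta_{\rm{en}}=(\theta_0,\theta_1)$ with both $\theta_0,\theta_1>0$ (the first-quadrant condition), let $\mathbb{V}=(V_\infty,V_0,V_1)$ be any $Q^{\dag}$-representation of dimension vector $(1,v_0,v_1)$. The key observation is that $\mathbb{V}'\cneq (0,V_0,V_1)\subset \mathbb{V}$ is a subrepresentation of $Q^{\dag}$: the unique arrow $\infty\to 0$ has source $V_\infty'=0$ and so trivially lands in $V_0'=V_0$, while the arrows $a_i,b_i$ between vertices $0,1$ automatically preserve the full spaces $V_0,V_1$. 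Since $(v_0,v_1)\neq(0,0)$ and $\theta_0,\theta_1>0$, we compute
\begin{align*}
\theta(\mathbb{V}') = \theta_0 v_0+\theta_1 v_1 > 0,
\end{align*}
which, by Definition~\ref{def:thetastab}, contradicts $\theta_{\rm{en}}$-semistability of $\mathbb{V}$. Hence no $\theta_{\rm{en}}$-semistable representation of nonzero dimension vector exists, so $\mM_Q^{\dag,\theta_{\rm{en}}\sss}(v)=\emptyset$ and consequently $\MF(\mM_Q^{\dag,\theta_{\rm{en}}\sss}(v),w)=0$.

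There is no substantive obstacle here; the lemma is essentially a bookkeeping consequence of the definition of $\theta$-(semi)stability together with the observation that the vertex $\infty$ admits no incoming arrows in $Q^{\dag}$, so the $Q$-part of any representation is automatically a subrepresentation. The computation of the categorical DT invariant then reduces to the trivial point-case via the definition of $\MF$.
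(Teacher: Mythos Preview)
Your proof is correct and essentially parallel to the paper's, with one small but genuine difference in the $v\neq 0$ case. The paper argues that the critical locus $\mM_{(Q,W)}^{\dag,\theta_{\rm{en}}\sss}(v)=\Crit(w)$ is empty (this is exactly what ``empty chamber'' means, as stated just before the lemma) and then invokes the general restriction equivalence (\ref{Pre:rest}) to conclude $\MF=0$. You instead prove the stronger statement that the full semistable locus $\mM_Q^{\dag,\theta_{\rm{en}}\sss}(v)$ of $Q^{\dag}$-representations is already empty, via the destabilizing subrepresentation $(0,V_0,V_1)$; this makes the appeal to (\ref{Pre:rest}) unnecessary. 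Your route is marginally more self-contained, while the paper's route illustrates the recurring principle that $\MF$ only sees the critical locus; both are entirely adequate for this bookkeeping lemma.
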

\begin{proof}
	If $v \neq 0$, then $\MF(\mM_Q^{\dag, \theta_{\rm{en}} \sss}(v), w)=0$
	by the equivalence (\ref{Pre:rest}), 
	since the critical locus of $w$ is empty. 
	If $v=0$, then $\mM_Q^{\dag, \theta_{\rm{en}}\sss}(v)=\Spec \mathbb{C}$
	and $w=0$. 
	\end{proof}

  We focus on
  the wall in the second quadrant, classified by $m \in \mathbb{Z}_{\ge 1}$
 \begin{align*}
 	W_m \cneq \mathbb{R}_{>0} \cdot (1-m, m) \subset \mathbb{R}^2.  
 	\end{align*}
 If $\theta$ lies between $W_{m}$ and $W_{m+1}$, 
 then the moduli stack $\mM_{(Q, W)}^{\dag, \theta \sss}(v)$
 is constant, 
 consisting of 
$\theta$-stable objects. 
So $\mM_{(Q, W)}^{\dag, \theta \sss}(v)$
 is a quasi-projective scheme, 
 and the good moduli space morphism $\pi_{(Q, W)}^{\dag}$ in (\ref{gmoduli}) is an isomorphism. 
 If $\theta$ is also sufficiently close to the wall $W_m$, then 
 $\mM_Q^{\dag, \theta \sss}(v)$ also consists of 
 $\theta$-stable objects and the morphism $\pi_Q^{\dag}$ in (\ref{gmoduli})
 is an isomorphism. 
 The categorical DT invariant is also constant when $\theta$ deforms 
 inside a chamber: 
 \begin{lem}\label{lem:catwall}
 	The triangulated category $\MF(\mM_{Q}^{\dag, \theta \sss}(v), w)$ is 
 	constant (up to equivalence) when $\theta$ deforms inside a chamber in Figure~\ref{figure4}.
 	\end{lem}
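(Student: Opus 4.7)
The plan is to reduce the claim to constancy of the unstable locus under chamber-preserving deformations via Lemma~\ref{lem:bigger}. First I would invoke that lemma, which identifies $\MF(\mM_Q^{\dag, \theta \sss}(v), w)$ with $\MF(\mM_Q^{\dag}(v) \setminus \mM_{(Q,W)}^{\dag, \theta \us}(v), w)$. Since the ambient smooth stack $\mM_Q^{\dag}(v)$ and the function $w$ on it are both independent of $\theta$, the only $\theta$-dependence on the right-hand side is through the closed substack $\mM_{(Q,W)}^{\dag, \theta \us}(v) \subset \mM_{(Q,W)}^{\dag}(v)$. Hence it suffices to show that this closed substack is literally constant as $\theta$ varies inside one of the chambers of Figure~\ref{figure4}.

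To check this, I would recall that the walls in Figure~\ref{figure4} are precisely those rays in $\mathbb{R}^2$ along which the $\theta$-semistability condition on $(Q^\dag, W)$-representations of dimension vector $(1, v')$ for some $v' \le v$ admits a strictly semistable object, i.e.\ a nontrivial subobject of $\theta$-slope zero; equivalently, they are generated by classes of actual sub-$(Q^\dag, W)$-representations occurring inside representations of dimension $(1, v)$. When $\theta$ deforms inside a single connected chamber, no proper subobject acquires slope zero, and so the set of $\theta$-(semi)stable $(Q^\dag, W)$-representations of dimension $(1, v)$ does not change. Consequently $\mM_{(Q,W)}^{\dag, \theta \us}(v)$ is the same closed substack of $\mM_{(Q,W)}^{\dag}(v)$ for all $\theta$ in the chamber, and applying Lemma~\ref{lem:bigger} once more identifies $\MF(\mM_Q^{\dag, \theta \sss}(v), w)$ for any two such $\theta$.

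There is essentially no obstacle here beyond unwinding definitions; the substantive point was already placed in Lemma~\ref{lem:bigger}, which lets us ignore the $\theta$-dependence of the ambient open substack $\mM_Q^{\dag, \theta \sss}(v)$ (whose own wall-chamber structure, coming from $Q^\dag$-stability without imposing the relation $\partial W$, is in general strictly finer than the one in Figure~\ref{figure4} coming from $(Q^\dag, W)$-stability). The only care to exercise is that the walls in Figure~\ref{figure4} really do exhaust the $\theta$ at which $\mM_{(Q,W)}^{\dag, \theta \sss}(v)$ jumps; this is built into the Nagao--Nakajima description~\cite{NN} of stable perverse coherent systems and their wall-crossing.
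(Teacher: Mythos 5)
Your argument is correct and is essentially the paper's own proof: both hinge on the fact that the factorization category only depends on a neighborhood of $\Crit(w)$ (the equivalence (\ref{Pre:rest}), packaged for you in Lemma~\ref{lem:bigger}) together with the constancy of the $(Q^{\dag},W)$-semistable locus inside a chamber of Figure~\ref{figure4}, so that any $Q^{\dag}$-wall-crossing within the chamber only removes loci disjoint from the critical locus. Routing the comparison through the common open substack $\mM_Q^{\dag}(v)\setminus\mM_{(Q,W)}^{\dag,\theta\us}(v)$ rather than comparing the two semistable loci directly is only a cosmetic difference.
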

 \begin{proof}
 	Suppose that $\theta$ lies in a chamber in Figure~\ref{figure4}.
 	Although $\theta$ does not lie in a wall for 
 	$(Q^{\dag}, W)$-representations, it may lie on a wall for 
 	$Q^{\dag}$-representations. 
 	However the destabilizing locus 
 	in $\mM_Q^{\dag, \theta \sss}(v)$ 
 	is disjoint from $\Crit(w)=\mM_{(Q, W)}^{\dag, \theta \sss}(v)$, 
 	so by (\ref{Pre:rest})
 	the triangulated categories
 	 $\MF(\mM_{Q}^{\dag, \theta \sss}(v), w)$
 	 are equivalent under wall-crossing inside a 
 	 chamber of Figure~\ref{figure4}.  	
 	\end{proof}
 
 For $\theta \in W_m$, there is a unique (up to isomorphism)
 $\theta$-stable $(Q, W)$-representation $S_m$ (i.e. $(Q^{\dag}, W)$-representation whose dimension vector at $\infty$ is zero), 
 given by (see~\cite[Theorem~3.5]{NN})
 \begin{align}\label{def:Tm}
 	S_m	\cneq \left(\xymatrix{ \mathbb{C}^m  \ar@/^1.5pc/[rr]^{0}  \ar@/^0.5pc/[rr]^{0} 
 		&& \mathbb{C}^{m-1} \ar@/^0.5pc/[ll]^{B_1^0}  \ar@/^1.5pc/[ll]^{B_2^0}  }
 	\right), \
  	B_1^0(\mathbf{f}_i)=\mathbf{e}_i, \ 
 	B_2^0(\mathbf{f}_i)=\mathbf{e}_{i+1}. 
 	\end{align}
 Here $\{\mathbf{e}_1, \ldots, \mathbf{e}_m\}$, 
 $\{\mathbf{f}_1, \ldots, \mathbf{f}_{m-1}\}$ are basis of 
 $\mathbb{C}^m$, $\mathbb{C}^{m-1}$ respectively. 
 Note that $S_m$ has dimension vector $s_m=(m, m-1)$ so that 
 $\theta(S_m)=0$ when $\theta \in W_m$. 
 Under the equivalence $\Phi$ in (\ref{equiv:Phi}), we have 
 the following relation (see~\cite[Remark~3.6]{NN})
 \begin{align}\label{Phi:T}
 	\Phi(\oO_C(m-1))=S_m. 
 	\end{align}
 
 Since $s_m=(m, m-1)$ is primitive, the moduli stack 
 $\mM_{Q}^{\theta \sss}(s_m)$ consists of $\theta$-stable 
 $Q$-representations, and the good moduli space morphism 
 \begin{align}\label{MQ:sm}
 	\mM_Q^{\theta \sss}(s_m) \to M_Q^{\theta \sss}(s_m)
 	\end{align} 
 is a $\C$-gerbe. 
 There is a function defined similarly to (\ref{func:w})
 \begin{align}\notag
 	w=\mathrm{Tr}(W) \colon 
 	\mM_Q^{\theta \sss}(s_m) \to \mathbb{A}^1
 	\end{align}
 whose critical locus $\mM_{(Q, W)}^{\theta \sss}(s_m)$
 is the moduli stack of 
 $\theta$-stable $(Q, W)$-representation.
 Note that $\mM_{(Q, W)}^{\theta \sss}(s_m)$
 consists of a one point
 corresponding to the unique $\theta$-stable
 $(Q, W)$-representation $S_m$. 
 \begin{lem}\label{lem:crit:simple}
 	For any $j\in \mathbb{Z}$, there is an equivalence 
 	\begin{align*}
 		\MF(\mM_Q^{\theta \sss}(s_m), w)_j \simeq \MF(\Spec \mathbb{C}, 0). 
 		\end{align*} 	
 	\end{lem}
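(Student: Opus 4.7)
The plan is to exploit the primitivity of $s_m=(m,m-1)$ (so that $\mM_Q^{\theta\sss}(s_m)$ is a smooth $\C$-gerbe over a smooth variety and $\Crit(w)$ is the single closed point $S_m$), combined with the rigidity of $\oO_C(m-1)$ on $X$, to reduce the computation via the formal Morse lemma and Knörrer periodicity to the category of matrix factorizations of a non-degenerate quadratic form.

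First I would reduce to a formal neighborhood of $S_m$. By the equivalence (\ref{Pre:rest}), the category $\MF(\mM_Q^{\theta\sss}(s_m), w)$ is unchanged by passage to any open neighborhood of $\Crit(w)$. Since $\Crit(w)=\{S_m\}$ sits over the single point $p\cneq \pi_M(S_m)$ of the good moduli space, combining this with the base-change formalism along the good moduli map reviewed in Subsection~\ref{subsec:bchange} lets me pass further to the formal fiber $\widehat{\mM}_Q^{\theta\sss}(s_m)_p$.

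By Luna's étale slice theorem applied at the $\theta$-stable point $S_m$, whose stabilizer in $G(s_m)$ is the scalar $\C$, this formal fiber is identified with $[\widehat{\Ext^1_Q(S_m, S_m)}_0 / \C]$, with $\C$ acting trivially on $\Ext^1_Q(S_m, S_m)$ since scalar conjugation is trivial. The pulled-back function $\widehat{w}$ vanishes to second order at the origin, and its Hessian is a symmetric bilinear form whose kernel equals $\Ext^1_{(Q,W)}(S_m, S_m)$. Via the equivalence $\Phi$ of (\ref{equiv:Phi}) and the relation $\Phi(\oO_C(m-1))=S_m$ from (\ref{Phi:T}), this kernel is isomorphic to $\Ext^1_X(\oO_C(m-1), \oO_C(m-1))$, which vanishes because $\oO_C(m-1)$ is a rigid sheaf supported on the $(-1,-1)$-curve $C$. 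A direct count using Lemma~\ref{lem:Euler} gives $\dim \Ext^1_Q(S_m, S_m)=2m(m-1)$, consistent with the smooth dimension of the slice, so the Hessian is non-degenerate.

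The formal Morse lemma (trivially $\C$-equivariant since the action is trivial) then produces a formal change of coordinates in which $\widehat{w}=\sum_i x_i y_i$ is an exact non-degenerate quadratic form. Iterating Knörrer periodicity (Theorem~\ref{thm:period}) across each hyperbolic pair $(x_i, y_i)$ collapses the affine factor and yields $\MF([\widehat{\Ext^1_Q(S_m, S_m)}_0/\C], \widehat{w}) \simeq \MF(B\C, 0)$; extracting the weight-$j$ summand in the decomposition (\ref{decom:Y}) gives $\MF(\Spec \mathbb{C}, 0)$, as claimed. The main obstacle is the first step: the restriction equivalence (\ref{Pre:rest}) is only Zariski-local, so justifying descent to a formal fiber requires carefully combining it with the good moduli base change and the fact that $\Crit(w)$ lies over the single point $p$; once this reduction is achieved, the Morse and Knörrer steps proceed essentially mechanically.
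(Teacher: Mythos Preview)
Your argument is correct, but the paper's proof of this particular lemma is more direct. Rather than passing to a formal fiber, the paper projects $\mM_Q^{\theta\sss}(s_m)$ onto the $(B_1,B_2)$-variables, landing in the moduli stack $[\Hom(V_1,V_0)^{\oplus 2}/G(s_m)]$ of Kronecker quiver representations; under the Beilinson equivalence the point $(B_1^0,B_2^0)$ corresponds to $\oO_{\mathbb{P}^1}(m-1)$, which is rigid on $\mathbb{P}^1$. This rigidity produces a genuine \emph{Zariski}-open neighborhood $\uU$ with $[\uU/G(s_m)]\cong B\C$, whose preimage $\Hom(V_0,V_1)^{\oplus 2}\times B\C$ is a Zariski-open chart containing $\Crit(w)$, so the restriction equivalence (\ref{Pre:rest}) applies on the nose with no completion step needed. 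Moreover, on this chart $(B_1,B_2)$ are frozen at $(B_1^0,B_2^0)$, so $w(A_1,A_2)=\mathrm{Tr}(A_1B_1^0A_2B_2^0-A_1B_2^0A_2B_1^0)$ is \emph{exactly} quadratic in $(A_1,A_2)$; the Morse lemma is unnecessary. Your \'etale-slice-plus-formal-Morse approach is essentially the route the paper takes later in Lemma~\ref{lem:Kn} and Proposition~\ref{prop:func}, and the completion step you flag as the main obstacle is handled there by the known invariance of $\MF$ under formal completion at an isolated critical point. So your route works, but the paper's Zariski-open trick sidesteps both the completion subtlety and the Morse lemma at the cost of a small explicit computation with the Kronecker quiver.
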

 \begin{proof}
 	Let $V_0=\mathbb{C}^{m}$, $V_1=\mathbb{C}^{m-1}$ and 
 	$B_1^0, B_2^0 \colon V_1 \to V_0$ be maps as in (\ref{def:Tm}). 
 	Note that we have 
 	\begin{align*}
 \mM_Q^{\theta \sss}(s_m)=\left[ (\Hom(V_0, V_1)^{\oplus 2} \oplus \Hom(V_1, V_0)^{\oplus 2})/\GL(V_0) \times \GL(V_1)   \right]. 
 \end{align*}	
It admits a projection 
 \begin{align}\label{project:Q}
 \mM_Q^{\theta \sss}(s_m) \to 	
 	[\Hom(V_1, V_0)^{\oplus 2}/\GL(V_0) \times \GL(V_1)].
 	\end{align}
 The target of the above morphism is identified with the moduli stack of 
 representations of the Kronecker quiver $Q_K$ (i.e. two vertices $\{0, 1\}$ with 
 two arrows from $1$ to $0$), so it contains an open 
 substack corresponding to stable coherent sheaves on $\mathbb{P}^1$
 under the Beilison equivalence 
 \begin{align*}
 	\RHom(\oO_{\mathbb{P}^1} \oplus \oO_{\mathbb{P}^1}(1), -) \colon 
 	D^b(\mathbb{P}^1) \stackrel{\sim}{\to} D^b(\mathrm{Rep}(Q_K)). 
 	\end{align*} 
 Under the above equivalence, 
 $\oO_{\mathbb{P}^1}(m-1)$ corresponds to $(B_1^0, B_2^0)$. 
 Since $\oO_{\mathbb{P}^1}(m-1)$ is rigid in $\mathbb{P}^1$, there is a 
 $\GL(V_0) \times \GL(V_1)$-invariant open neighborhood 
 \begin{align*}
 	(B_1^0, B_2^0) \in \uU \subset \Hom(V_1, V_0)^{\oplus 2}
 	\end{align*}
 such that $
 	[\uU/\GL(V_0) \times \GL(V_1)]$ is isomorphic to $B\C$. 
 By pulling it back by the projection (\ref{project:Q}), we see that 
 there is an open immersion 
 \begin{align}\label{open:V}
 	\Hom(V_0, V_1)^{\oplus 2} \times B\C \subset 
 	\mM_Q^{\dag, \theta \sss}(s_m), \ 
 	(A_1, A_2) \mapsto (A_1, A_2, B_1^0, B_2^0),
 	\end{align}
 whose image contains $\{S_m\}=\Crit(w)$
 as $0 \in \Hom(V_0, V_1)^{\oplus 2}$. 
 By the equivalence (\ref{Pre:rest}), the restriction functor gives an equivalence 
 \begin{align}\label{equiv:BCast}
 	\MF(\mM_Q^{\dag, \theta \sss}(s_m), w) \stackrel{\sim}{\to}
 	\MF(\Hom(V_0, V_1)^{\oplus 2} \times B\C, w). 
 	\end{align}
 The function $w$ restricted (\ref{open:V}) is a quadratic function by the definition of $w$, 
 which must be non-degenerate as its critical locus is one point. 
 Since $\Hom(V_0, V_1)^{\oplus 2}$ is even dimensional, the 
 RHS of (\ref{equiv:BCast}) is equivalent to $\MF(B\C, w)$
 by the Kn\"{o}rrer periodicity in Theorem~\ref{Phi:period}.  
 	\end{proof}
 
 \subsection{Descriptions of formal fibers}\label{subsec:formal}
 By the above classification of $\theta$-stable $(Q, W)$-representations, 
 a $\theta$-polystable $(Q^{\dag}, W)$-representation of 
 dimension vector $(1, v_0, v_1)$ at the wall $\theta \in W_m$
 is of the form 
 \begin{align}\label{pstableR}
 	R=R_{\infty} \oplus (V \otimes S_m)
 	\end{align}
 where $V$ is a finite dimensional vector space
 and $R_{\infty}$ is a $\theta$-stable $(Q^{\dag}, W)$-representation. 
 By setting $d \cneq \dim V$, 
 the dimension vector of $R_{\infty}$ is 
 $(1, v_0-dm, v_1-d(m-1))$. 
 By regarding $R$ as a $\theta$-polystable $Q^{\dag}$-representation, 
 it determines a point $p \in M_{Q}^{\dag, \theta\sss}(v)$. 
 \begin{rmk}\label{rmk:V}
 	The vector space $V$ in (\ref{pstableR}) will play the same
 	role of the vector space $V$ in Section~\ref{sec:DGflip}. 
 Below we fix a basis of $V$ 
 	and use the same convention of the dominant chamber in Subsection~\ref{subsec:Gflip:2}. 
 	\end{rmk}
 
 Below we fix a $\theta$-polystable object (\ref{pstableR}), and
 $p \in M_{Q}^{\dag, \theta\sss}(v)$ is the corresponding 
 point as above. We will give a description of the formal fiber 
 of the good moduli space morphism $\pi_Q^{\dag} \colon \mM_Q^{\dag, \theta \sss}(v) \to 
 M_Q^{\dag, \theta \sss}(v)$ at $p$. 
 We set 
 \begin{align*}
 	G_p \cneq \Aut(R) =\GL(V).
 	\end{align*} 
 It acts on $\Ext_{Q^{\dag}}^1(R, R)$
 by the conjugation, 
 and we have the good moduli space morphism
  \begin{align}\label{ExtQ}
 	[\Ext_{Q^{\dag}}^1(R, R)/G_p] \to \Ext_{Q^{\dag}}^1(R, R)\ssslash G_p. 
 \end{align} 
Let $q \in R_{Q^{\dag}}(v)$ be a point corresponding to the polystable object (\ref{pstableR}). 
Note that $\Ext_{Q^{\dag}}^1(R, R)$ is the tangent space of the stack 
$\mM_{Q}^{\dag, \theta \sss}(v)$ at $q$. 
By Luna's \'{e}tale slice theorem, there exists a 
$G_p$-invariant locally closed subset $q \in W_p \subset R_{Q^{\dag}}(v)$
and a commutative diagram 
\begin{align}\label{dia:Luna}
	\xymatrix{
	([\Ext_{Q^{\dag}}^1(R, R)/G_p], 0)  \ar[d] \diasquare & \ar[l]  ([W_p/G_p], q)
	 \ar[r] \ar[d] \diasquare
	 & (\mM_{Q}^{\dag, \theta \sss}(v), q) \ar[d] \\
	(\Ext_{Q^{\dag}}^1(R, R) \ssslash G_p, 0)  & (W_p \ssslash G_p, q) \ar[r] \ar[l] &
	(M_Q^{\dag, \theta \sss}(v), p)
}
	\end{align}
such that each horizontal arrows are \'{e}tale. 

 We have the following decomposition of $\Ext_{Q^{\dag}}^1(R, R)$ as $G_p$-representations, 
 \begin{align} \label{Ext:decom}	
 	\Ext_{Q^{\dag}}^1(R, R)&=
 	\Ext_{Q^{\dag}}^1(R_{\infty}, R_{\infty}) \oplus 
 	(V \otimes \Ext_{Q^{\dag}}^1(R_{\infty}, S_m)) \\
 	&\quad \oplus \notag
 	(V^{\vee} \otimes \Ext_{Q^{\dag}}(S_m, R_{\infty}))
 	\oplus (\End(V) \otimes \Ext_{Q}^1(S_m, S_m)) \\
 \notag	&=(\Ext_{Q^{\dag}}^1(R_{\infty}, R_{\infty}) \oplus \Ext_Q^1(S_m, S_m)) \oplus 
 	(V \otimes \Ext_{Q^{\dag}}^1(R_{\infty}, S_m)) \\
\notag &\quad \oplus 
 	(V^{\vee} \otimes \Ext_{Q^{\dag}}^1(S_m, R_{\infty}))
 	\oplus (\End_0(V) \otimes \Ext_{Q}^1(S_m, S_m)). 	 
 	\end{align}
 Here $\End_0(V)$ is the kernel of 
 the trace map $\mathrm{Tr} \colon \End(V) \to \mathbb{C}$
 which is an irreducible $G_p$-representation, 
  and the last identity gives a direct sum 
  decomposition of 
 $\Ext_{Q^{\dag}}^1(R, R)$ into 
 its irreducible $G_p$-representations whose 
 irreducible factors are $\mathbb{C}$ (trivial representation), 
 $V$, $V^{\vee}$ and $\End_0(V)$. 
 The number of summands 
 is calculated as follows: 
 \begin{lem}\label{lem:computea}
We have the following identities: 
 \begin{align}
 \label{compute:abc}&a_{v, m, d} \cneq \ext_{Q^{\dag}}^1(R_{\infty}, S_m)=
 C_{v, m}+m+d(-2m^2+2m+1), \\ 
 \notag&b_{v, m, d} \cneq \ext_{Q^{\dag}}^1(S_m, R_{\infty})=C_{v, m}+d(-2m^2+2m+1), \\
 \notag&C_{v, m} \cneq (m-2)v_0 +(m+1)v_1, \\
 \notag&c_m \cneq \ext_{Q^{\dag}}^1(S_m, S_m)=2m^2-2m. 
 	\end{align}
 \end{lem}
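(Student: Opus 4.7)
The plan is to apply the Euler pairing formula of Lemma~\ref{lem:Euler} to each of the three pairs $(R_\infty, S_m)$, $(S_m, R_\infty)$, $(S_m, S_m)$, and separately pin down the corresponding $\Hom$-dimensions from stability. Since both $R_\infty$ and $S_m$ are $\theta$-stable with $\theta(R_\infty) = \theta(S_m) = 0$, they are simple objects of the abelian subcategory of $\theta$-semistables with $\theta$-slope zero. Because this subcategory is closed under subquotients inside $\modu(\mathbb{C}[Q^\dag])$ and $R_\infty$, $S_m$ carry different dimension vectors at the vertex $\infty$, Schur's lemma yields
\begin{align*}
\Hom_{Q^\dag}(R_\infty, S_m) = \Hom_{Q^\dag}(S_m, R_\infty) = 0, \quad \End_{Q^\dag}(S_m) = \mathbb{C}.
\end{align*}

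Next I would substitute the dimension vectors $(1, v_0 - dm, v_1 - d(m-1))$ for $R_\infty$ and $(0, m, m-1)$ for $S_m$ into the formula of Lemma~\ref{lem:Euler}. For the pair $(R_\infty, S_m)$ the right hand side collapses to
\begin{align*}
-m + (v_0 - dm)(2 - m) - (v_1 - d(m-1))(m+1) = -m + (2-m)v_0 - (m+1)v_1 + d(2m^2 - 2m - 1),
\end{align*}
and since $\hom_{Q^\dag}(R_\infty, S_m) = 0$, negating gives
\begin{align*}
\ext^1_{Q^\dag}(R_\infty, S_m) = (m-2)v_0 + (m+1)v_1 + m + d(-2m^2 + 2m + 1) = C_{v,m} + m + d(-2m^2 + 2m + 1),
\end{align*}
as required. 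An identical computation for $(S_m, R_\infty)$ produces the same expression minus the additive $m$ (the $-v_\infty v_0'$ term disappears because $S_m$ has $v_\infty = 0$), giving $\ext^1_{Q^\dag}(S_m, R_\infty) = C_{v,m} + d(-2m^2 + 2m + 1)$. Finally, for $(S_m, S_m)$ the formula yields $\hom - \ext^1 = -2m^2 + 2m + 1$, and together with $\hom_{Q^\dag}(S_m, S_m) = 1$ this gives $\ext^1_{Q^\dag}(S_m, S_m) = 2m^2 - 2m$.

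No genuine obstacle is expected: the lemma amounts to bookkeeping once Lemma~\ref{lem:Euler} and the three $\Hom$-vanishings are in hand. The only substantive ingredient is the justification of the $\Hom$-vanishings via Schur's lemma in the abelian subcategory of $\theta$-semistables, which is standard. Everything else is direct arithmetic and so I would present only one of the Ext computations in full and indicate that the remaining two follow by the same substitution.
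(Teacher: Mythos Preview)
Your proposal is correct and follows essentially the same approach as the paper: both apply the Euler pairing formula of Lemma~\ref{lem:Euler} together with the vanishings $\Hom(R_\infty,S_m)=\Hom(S_m,R_\infty)=0$ and $\End(S_m)=\mathbb{C}$ coming from $\theta$-stability, and then carry out the explicit arithmetic for one case. The paper is slightly more terse in justifying the $\Hom$-vanishings, but the argument is the same.
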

\begin{proof}
	The lemma easily follows from 
	Lemma~\ref{lem:Euler} noting that 
	\begin{align*}
		\Hom(R_{\infty}, S_m)=\Hom(S_m, R_{\infty})=0, \ 
		\Hom(T_{m}, S_m)=\mathbb{C}. 
	\end{align*}
For example, 
since the dimension vectors of $R_{\infty}$, $S_m$
are $(1, v_0-md, v_1-(m-1)d)$, $(0, m, m-1)$ respectively, 
we have 
\begin{align*}
	-a_{v, m, d}&=\hom(R_{\infty}, S_m)-\ext^1(R_{\infty}, S_m) \\
	&=-m+(v_0-md)m-2(v_0-md)(m-1)-2(v_1-(m-1)d)m+(m-1)(v_1-(m-1)d) \\
	&=-(m-2)v_0-(m+1)v_1-m-d(-2m^2+2m+1). 
	\end{align*}
	\end{proof}

The left vertical arrow in (\ref{dia:Luna}) is also identified with 
a moduli stack of some quiver representations and its good moduli space. 
We define $Q_p$ to be the Ext-quiver for $\{S_m\}$
and $Q_p^{\dag}$ to be the Ext-quiver for $\{R_{\infty}, S_m\}$. 
Namely  
 $Q_p$ is the quiver with one vertex $\{1\}$
 and the number of loops at $1$ is $c_m$. 
The quiver 
 $Q_p^{\dag}$ consists of two vertices $\{\infty, 1\}$, 
 the number of arrows from $\infty$ to $1$ is $a_{v,m,d}$, 
 from $1$ to $\infty$ is $b_{v,m,d}$, and the number of loops at 
 $\infty$ (resp.~$1$) is $\ext_{Q^{\dag}}^1(R_{\infty}, R_{\infty})$ (resp.~$c_m$).  
 From (\ref{Ext:decom}), we have the identification  
 \begin{align}\label{isom:Ext1}
 	\xymatrix{\mM_{Q_p}^{\dag}(d) \ar@{=}[r] \ar[d] & \left[\Ext^1_{Q^{\dag}}(R, R)/G_p \right] \ar[d] \\
 		 		M_{Q_p}^{\dag}(d) \ar@{=}[r] & \Ext^1_{Q^{\dag}}(R, R)\ssslash G_p. 
 	 }
 	\end{align}
   By combining the diagrams (\ref{dia:Luna}), (\ref{isom:Ext1}) and 
   taking the formal fibers,  
 we have a commutative diagram 
 \begin{align}\label{ffiber}
 	\xymatrix{\mM_{Q_p}^{\dag}(d) \ar[d] \diasquare &  \ar[l]
 		\widehat{\mM}_{Q_p}^{\dag}(d) \ar[d] \ar@{=}[r]\ar@/^2.0pc/[rr]^-{\eta_p} & 
 		 		\left[\widehat{\Ext}^1_{Q^{\dag}}(R, R)/G_p \right] 
 		\ar[r]^-{\cong} \ar[d] & \widehat{\mM}_{Q}^{\dag, \theta \sss}(v)_{p} \ar[d] \ar[r] \diasquare & \mM_Q^{\dag, \theta \sss}(v) \ar[d]  \\
 		M_{Q_p}^{\dag}(d) & \ar[l]
 		\widehat{M}_{Q_p}^{\dag}(d) \ar@{=}[r] & 
 		\widehat{\Ext}^1_{Q^{\dag}}(R, R) \ssslash G_p
 		\ar[r]^-{\cong}	 & \widehat{M}_{Q}^{\dag, \theta\sss}(v)_{p} \ar[r] & 
 		M_Q^{\dag, \theta \sss}(v).  
 	} 	
 \end{align}
Here each vertical arrow is a good moduli space morphism, 
the vertical arrow second from the right (resp.~left)
is the formal fiber of the right (resp.~left) one at $p$ (resp.~origin), 
the middle vertical arrow is the formal fiber of the morphism (\ref{ExtQ})
at the origin, and the square second from the right 
is 
obtained by the formal completions 
of good moduli spaces in the diagram (\ref{dia:Luna}).

We then compare the semistable loci under the isomorphism $\eta_p$ in 
the diagram (\ref{ffiber}).  
 We take $\theta=(\theta_0, \theta_1) \in W_m$ and 
 $\theta_{\pm}$ of the form 
 \begin{align}\label{thetapm}
 	\theta_{\pm}=(\theta_0 \mp\varepsilon, \theta_1\pm\varepsilon), \ \varepsilon>0. 
 	\end{align}
 We take $(\theta_0, \theta_1)$ and $\varepsilon$ to be integers and 
 $\theta_{\pm}$ lie on chambers adjacent to $W_m$ which are sufficiently 
 close to $W_m$, e.g. 
 take $\varepsilon=1$ and $(\theta_0, \theta_1)=N \cdot (1-m, m)$ for a sufficiently 
 large integer $N$. 
 We have the open substacks 
 \begin{align*}
 	\mM_Q^{\dag, \theta_{\pm} \sss}(v) \subset 
 	\mM_Q^{\dag, \theta \sss}(v), \ 
 	\widehat{\mM}_Q^{\dag, \theta_{\pm} \sss}(v)_p
 	\subset \widehat{\mM}_Q^{\dag, \theta \sss}(v)_p
 	\end{align*}
 corresponding to $\theta_{\pm}$-semistable representations.  
 
 On the other hand, as in (\ref{ch:det})
 we set $\chi_0 \colon \GL(V) \to \C$ to be the 
 determinant character $g \mapsto \det(g)$. 
 We have the open substacks 
 \begin{align*}
 	\mM_{Q_p}^{\dag, \chi_0^{\pm 1} \sss}(d) \subset 
 	\mM_{Q_p}^{\dag}(d), \ 
 		\widehat{\mM}_{Q_p}^{\dag, \chi_0^{\pm 1} \sss}(d) \subset 
 	\widehat{\mM}_{Q_p}^{\dag}(d)
 	\end{align*}
 corresponding to $\chi_0^{\pm 1}$-semistable $Q_p^{\dag}$-representations. 
 We have the following lemma: 
 \begin{lem}\label{lem:chitheta}
 The isomorphism 	
  $\eta_p$ in (\ref{ffiber}) restricts to the isomorphisms 
 	\begin{align}\label{isom:thetap}
 		\eta_p \colon \widehat{\mM}_{Q_p}^{\dag, \chi_0^{\pm 1} \sss}(d)
 		\stackrel{\cong}{\to}
 		\widehat{\mM}_Q^{\dag, \theta_{\pm} \sss}(v)_p. 
 	\end{align}
 	\end{lem}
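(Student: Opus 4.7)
The plan is to reduce the comparison of semistable loci to a numerical calculation on dimension vectors, by combining the étale slice from \eqref{ffiber} with the general fact that stability is detected on the lattice of classes of subobjects arising from Jordan--Hölder factors of the polystable limit $R$.

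First, I will use that the isomorphism $\eta_p$ in \eqref{ffiber} comes from Luna's slice theorem applied at $R$, and so it not only identifies the stacks but also canonically identifies subobjects whose dimension vector lies in the sublattice
\[
\Lambda\subset\mathbb{Z}^{Q^\dag_0}
\]
spanned by $\dim R_\infty$ and $\dim S_m$: given $\mathbb{V}\in\widehat{\mM}^{\dag,\theta\sss}_Q(v)_p$ and the corresponding $Q^\dag_p$-representation $\mathbb{V}''=\eta_p^{-1}(\mathbb{V})$ of dimension vector $(v''_\infty,v''_1)$, the $Q^\dag$-subobjects of $\mathbb{V}$ of class $v''_\infty\dim R_\infty+v''_1\dim S_m$ correspond bijectively to the $Q^\dag_p$-subobjects of $\mathbb{V}''$ of class $(v''_\infty,v''_1)$. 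This uses that $R_\infty$ and $S_m$ are precisely the $\theta$-stable simples of slope $0$ in the relevant tube, so that the Ext-quiver $Q^\dag_p$ computes the abelian category generated by them.

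Second, because $\mathbb{V}$ is already $\theta$-semistable and $\theta\in W_m$, any potentially destabilizing subobject $\mathbb{V}'\subsetneq\mathbb{V}$ for $\theta_\pm$ satisfies $\theta(\mathbb{V}')\le 0$, and those with $\theta(\mathbb{V}')<0$ remain strictly stable for $\theta_\pm$ for small enough $\varepsilon$. Hence the only possible destabilizers have $\theta(\mathbb{V}')=0$, and their JH-graded (as a slope-$0$ $\theta$-semistable object) lies in the extension closure of $\{R_\infty,S_m\}$. Therefore $\dim\mathbb{V}'\in\Lambda$, and by the previous paragraph such subobjects match exactly with $Q^\dag_p$-subobjects of $\mathbb{V}''$.

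Third, I will carry out the direct weight calculation. Using $\theta(S_m)=0$ and $\theta(R)=0$ (which forces $\theta(R_\infty)=0$), a straightforward expansion of the shifted stability condition $\theta_\pm=\theta\pm\varepsilon(-1,1)$ together with the reindexing $\theta_{\pm,\infty}=-v_0(\theta_0\mp\varepsilon)-v_1(\theta_1\pm\varepsilon)$ gives
\[
\theta_\pm(S_m)=\mp\varepsilon,\qquad \theta_\pm(R_\infty)=\pm d\varepsilon,
\]
so for a subobject of class $v''_\infty\dim R_\infty+v''_1\dim S_m$ one obtains $\theta_\pm(\mathbb{V}')=\pm\varepsilon(d\,v''_\infty-v''_1)$. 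On the other side, $\chi_0=\det$ on $G_p=\GL(V)$, combined with the convention $\chi_\theta=\det(g_1)^{-\theta_1}$, translates $\chi_0^{\pm 1}$-stability into King stability with $\theta_{p,1}=\mp 1$ and $\theta_{p,\infty}=\pm d$, under which a subobject $(v''_\infty,v''_1)$ has value $\pm(d\,v''_\infty-v''_1)$. The two numerical inequalities are identical, so $\mathbb{V}$ is $\theta_\pm$-semistable if and only if $\mathbb{V}''$ is $\chi_0^{\pm 1}$-semistable, yielding the isomorphisms \eqref{isom:thetap}.

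The main subtlety is the correspondence of subobjects in the first paragraph; the rest is bookkeeping. One must verify that Luna's slice is compatible with the abelian structure in a way that preserves the notion of "subobject of a given class in $\Lambda$" — this is standard for slices at polystable points of moduli stacks of quiver representations, but I would state and apply it carefully to ensure that no spurious (non-slope-$0$) subobject is missed when checking semistability.
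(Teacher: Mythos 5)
Your step (3) — the numerical comparison — is correct and is in substance the paper's entire proof: since $\theta\in W_m$ means $m\theta_0+(m-1)\theta_1=0$, the restriction of $\chi_{\theta_\pm}$ along $G_p=\GL(V)\hookrightarrow \GL(V_0)\times\GL(V_1)$ (where $V_0=(V\otimes\mathbb{C}^m)\oplus\mathbb{C}^{v_0-dm}$, $V_1=(V\otimes\mathbb{C}^{m-1})\oplus\mathbb{C}^{v_1-d(m-1)}$) is $g\mapsto\det(g)^{\pm\varepsilon}$, i.e. the linearization $\chi_{\theta_\pm}$ corresponds under $\eta_p$ to $\chi_0^{\pm\varepsilon}$, and the paper concludes directly that the GIT-semistable loci match, because semistability is defined by the pair (stack, linearization) and is compatible with restriction to formal fibers of the good moduli space. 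Your proposal instead tries to route the comparison through King stability and destabilizing subobjects, and this is where it has genuine gaps.

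First, your step (1) is asserted, not proved: $\eta_p$ is only an isomorphism of stacks over $\Spec\widehat{\oO}_{M_Q^{\dag,\theta\sss}(v),p}$ coming from Luna's slice theorem; it carries no identification of abelian-categorical structure, so the claimed bijection between $\theta$-degree-zero subobjects of $\mathbb{V}$ and $Q_p^{\dag}$-subobjects of $\mathbb{V}''=\eta_p^{-1}(\mathbb{V})$ does not follow from anything in \eqref{ffiber}. Making it precise amounts to proving a local ``Ext-quiver'' equivalence between the category of $Q^{\dag}$-representations whose slope-zero Jordan--H\"older factors lie in $\{R_\infty,S_m\}$ and (nilpotent) $Q_p^{\dag}$-representations, compatibly with the chosen slice — that is the heart of your argument and it is exactly what the character computation avoids. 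Second, even granting that correspondence, your pointwise analysis only treats points whose semisimplification is $R$, i.e.\ points of the formal fiber lying over the closed point of $\Spec\widehat{\oO}_{M,p}$. The formal fiber also has field-valued points over non-closed points of this complete local ring, where the JH factors need not be among $\{R_\infty,S_m\}$ and your lattice $\Lambda$ argument breaks down; since an open substack of the formal fiber is not determined by the points over the closed point alone (already $\Spec\mathbb{C}[[t]]$ shows this), the identification of the two open substacks is not established. Both issues disappear if you compare linearizations as in the paper and invoke the compatibility of KN/GIT stratifications with base change to formal fibers, which the paper uses throughout (cf.\ \eqref{KN:Spm0}, \eqref{KN:Spm2}).
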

 \begin{proof}
Let us consider the composition 
 \begin{align}\label{compose:Gp}
 	G_p=\GL(V) \hookrightarrow \GL(V_0) \times \GL(V_1) \stackrel{\chi_{\theta_{\pm}}}{\to}
 	\C.
 	\end{align}
 We see that the above composition is given by 
 $g \mapsto \det(g)^{\pm \varepsilon}$, where 
 $\chi_{\theta_{\pm}}$ is the character 
 (\ref{chi:theta}) applied to $\theta_{\pm}$. 
Indeed we have 
	\begin{align*}
		V_0=(V\otimes \mathbb{C}^{m}) \oplus \mathbb{C}^{v_0-dm}, \ 
		V_1=(V \otimes \mathbb{C}^{m-1}) \oplus \mathbb{C}^{v_1-d(m-1)}. 
		\end{align*}
	The embedding $\GL(V) \hookrightarrow \GL(V_0) \times \GL(V_1)$
	is given by 
	\begin{align*}
		g \mapsto \left( (g \otimes 1_{\mathbb{C}^m}) \oplus 1_{\mathbb{C}^{v_0-dm}}, (g \otimes 1_{\mathbb{C}^{m-1}})
		\oplus 1_{\mathbb{C}^{v_1-d(m-1)}}   \right). 
		\end{align*}
	By composing it with $\chi_{\theta_{\pm}}$, we see that the composition (\ref{compose:Gp})
	is given by $g \mapsto \det(g)^{\pm \varepsilon}$. 
	Therefore under the isomorphism $\eta_p$ in (\ref{ffiber}) the line bundle on $\widehat{\mM}_Q^{\dag, \theta \sss}(v)$ 
	determined by $\chi_{\theta_{\pm}}$ corresponds to 
	that on $\widehat{\mM}_{Q_p}^{\dag}(d)$
	determined by $\chi_0^{\pm \varepsilon}$. 
	Therefore the lemma holds. 
	\end{proof}

 \subsection{Reduced Ext-quiver}
  We define the \textit{reduced Ext-quiver}
 $Q_{p}^{\rm{red}, \dag}$
 to be 
 the quiver obtained from $Q_p^{\dag}$ by removing 
 all the loops at the vertex $\{1\}$, 
 and adding $c_m$-loops at the vertex $\{\infty\}$, 
 where $c_m$ is given in (\ref{compute:abc}). 
 It contains the full sub quiver
 \begin{align}\label{def:Qpred}
 	Q_p^{\rm{red}} \subset Q_p^{\rm{red}, \dag}
 \end{align}
 consisting of the vertex $\{1\}$ and no loops. 
 See the following picture: 
 \[
 Q_p^{\dag}=
 \begin{tikzcd}
 	\bullet_{\infty}
 	\arrow[r,bend left]
 	\arrow[r,bend left=50]
 	\arrow[out=195, in=165, loop] & 
 	\bullet_1
 	\arrow[l, bend left]
 	\arrow[
 	out=105,
 	in=75,
 	loop,
 	distance=0.5cm]
 	\arrow[
 	out=120,
 	in=60,
 	loop,
 	distance=1cm]
 	\arrow[
 	out=-105,
 	in=-75,
 	loop,
 	distance=0.5cm]
 	\arrow[
 	out=-120,
 	in=-60,
 	loop,
 	distance=1cm]
 \end{tikzcd}
 \quad 
 Q_p^{\rm{red}, \dag}=
 \begin{tikzcd}
 	\bullet_{\infty}
 	\arrow[r,bend left]
 	\arrow[r,bend left=50]
 	\arrow[out=195, in=165, loop] 
 	\arrow[
 	out=105,
 	in=75,
 	loop,
 	distance=0.5cm]
 	\arrow[
 	out=120,
 	in=60,
 	loop,
 	distance=1cm]
 	\arrow[
 	out=-105,
 	in=-75,
 	loop,
 	distance=0.5cm]
 	\arrow[
 	out=-120,
 	in=-60,
 	loop,
 	distance=1cm]
 	& 
 	\bullet_1
 	\arrow[l, bend left]
 \end{tikzcd}
 \]
  
 Let $\mM_{Q^{\rm{red}}_{p}}^{\dag}(d)$ be the 
 $\C$-rigidified moduli stack of $Q_{p}^{\rm{red}, \dag}$-representations 
 with dimension vector $(1, d)$. It is described as 
 \begin{align}\label{M:reduced}
 	\mM_{Q_{p}^{\rm{red}}}^{\dag}(d)&=
 	\left[\left(\mathbb{C}^{\ext_{Q^{\dag}}^1(R_{\infty}, R_{\infty})+c_m}
 	\oplus V^{\oplus a_{v, m, d}}
 	\oplus (V^{\vee})^{\oplus b_{v, m, d}} \right)
 	/\GL(V) \right], \\
 \notag	&=\mathbb{C}^{\ext_{Q^{\dag}}^1(R_{\infty}, R_{\infty})+c_m} \times 
 	\gG_{a_{v, m, d}, b_{v, m, d}}(d), 
 \end{align}
 i.e. it is obtained from (\ref{Ext:decom}) by removing 
 the last factor $\End_0(V) \otimes \Ext_Q^1(S_m, S_m)$, and taking 
 the quotient by $\GL(V)$. 
 Here $\gG_{a, b}(d)$ is the quotient stack (\ref{Gabd}) studied in 
 Section~\ref{sec:DGflip}. 
 We also denote by 
 $\widehat{\mM}_{Q_{p}^{\rm{red}}}^{\dag}(d)$
 the formal fiber for the 
 good moduli space morphism 
 \begin{align*}
 	\mM_{Q_{p}^{\rm{red}}}^{\dag}(d) \to M_{Q_{p}^{\rm{red}}}^{\dag}(d)
 	\end{align*}
 at the origin.

   By restricting the function (\ref{func:w}) to the formal fiber
  of the good moduli space morphism (\ref{gmoduli}) and 
  pulling it back by 
  the isomorphism $\eta_p$ in (\ref{ffiber}), we have the 
function  
 \begin{align*}
 	w_p \colon
 	\widehat{\mM}_{Q_p}^{\dag}(d)= [\widehat{\Ext}_{Q^{\dag}}^1(R, R)/G_p] \to \mathbb{A}^1. 
 	\end{align*}
 We see that the above function is a sum of 
 a function from $\widehat{\mM}_{Q_p^{\rm{red}}}^{\dag}(d)$
 and some non-degenerate quadratic form. 
 Let us take a (non-canonical) isomorphism 
 of $\mathbb{C}$-vector spaces 
  \begin{align}\label{isom:W}
 \Ext_Q^1(S_m, S_m) \cong H \oplus H^{\vee}
 \end{align}
where the dimension of $H$ is $m^2-m$. 
There is also an isomorphism 
$\End_0(V) \cong \End_0(V)^{\vee}$ of $G_p$-representations, 
so 
we have 
  an isomorphism of $G_p$-representations
 \begin{align}\notag
 	\End_0(V) \otimes \Ext_Q^1(S_m, S_m) \cong W \oplus W^{\vee}
 \end{align}
 where $W=\End_0(V) \otimes H$.
 In particular we have the non-degenerate symmetric 
 quadratic form
 \begin{align}\label{q:W}
 	q=\langle -, -\rangle \colon 
 	\End_0(V) \otimes \Ext_Q^1(S_m, S_m) \to \mathbb{A}^1
 \end{align}
 defined to be the natural pairing on 
 $W$ and $W^{\vee}$. Note that (\ref{isom:W}) is a summand of 
 $\Ext_{Q^{\dag}}^1(R, R)$ by the decomposition (\ref{Ext:decom}). 
 We will use the following proposition, whose proof will be given 
 in Subsection~\ref{subsec:prop:func}: 
 \begin{prop}\label{prop:func}
 	By replacing the isomorphisms in (\ref{ffiber}) and (\ref{isom:W}) if necessary, 
 	the function $w_p$ is written as 
 	\begin{align}\label{func:wp}
 		w_p=w_{p}^{\rm{red}} + q. 
 		\end{align}
 	Here $w_{p}^{\rm{red}}$ is non-zero and 
 	does not contain variables from 
 	$\End_0(V) \otimes  \Ext_Q^1(S_m, S_m)$
 	under the decomposition (\ref{Ext:decom}).  
 	\end{prop}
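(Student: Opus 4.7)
The plan is to Taylor-expand $w_p$ at the origin of $\widehat{\Ext}_{Q^{\dag}}^1(R, R)$ and apply an equivariant formal Morse lemma to split off a non-degenerate quadratic form in the $\End_0(V) \otimes \Ext_Q^1(S_m, S_m)$-directions. Since the polystable representation $R$ satisfies $\partial W = 0$ (being a $(Q^{\dag}, W)$-representation, see (\ref{w-10})), it is a critical point of $w$, so $w_p$ vanishes to order at least two at the origin. By $G_p = \GL(V)$-equivariance together with Schur's lemma applied to the decomposition (\ref{Ext:decom}) into irreducible $G_p$-isotypic components $\mathbb{C}$, $V$, $V^{\vee}$, $\End_0(V)$, the quadratic part of $w_p$ is necessarily block-diagonal: the only isotypic pairings contributing to a $G_p$-invariant symmetric form are $(\mathbb{C}, \mathbb{C})$, $(V, V^{\vee})$, and the self-pairing of $\End_0(V)$ via trace. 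In particular, the quadratic part of $w_p$ on $\End_0(V) \otimes \Ext_Q^1(S_m, S_m)$ has no cross terms with any other summand.

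The key point is the non-degeneracy of the quadratic form on $\End_0(V) \otimes \Ext_Q^1(S_m, S_m)$. The Hessian of $w_p$ on the larger subspace $\End(V) \otimes \Ext_Q^1(S_m, S_m)$ factors as $\mathrm{id}_{\End(V)} \otimes H$ where $H$ is a symmetric bilinear form on $\Ext_Q^1(S_m, S_m)$, obtained as the second derivative of $w$ in slice directions at $S_m$. The kernel of $H$ parametrizes first-order deformations of $S_m$ preserving $\partial W = 0$, i.e.\ $\ker H = \Ext^1_{(Q,W)}(S_m, S_m)$. Under the derived equivalence $\Phi$ of (\ref{equiv:Phi}) and the correspondence (\ref{Phi:T}), $S_m$ corresponds to $\oO_C(m-1)$ on $X$; since the normal bundle of $C \subset X$ is $\oO(-1)^{\oplus 2}$, a direct local-to-global computation gives $\Ext_X^i(\oO_C(m-1), \oO_C(m-1)) = 0$ for $i = 1, 2$. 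Hence $H$ is non-degenerate, and its restriction to $\End_0(V) \otimes \Ext_Q^1(S_m, S_m)$ is likewise non-degenerate.

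Next I would invoke an equivariant formal Morse lemma: given a $G_p$-invariant formal function on a $G_p$-representation $E = U \oplus U'$ whose quadratic part restricts non-degenerately to $U$ and has no quadratic cross terms pairing $U$ with $U'$, there is a $G_p$-equivariant formal change of coordinates bringing the function into the form (quadratic form on $U$) $+$ (function on $U'$). Applying this with $U = \End_0(V) \otimes \Ext_Q^1(S_m, S_m)$ and $U'$ the complementary summands of (\ref{Ext:decom}) yields $w_p = w_p^{\rm red} + q$, where the identification in (\ref{ffiber}) is modified by this $G_p$-equivariant formal change of coordinates. Since $\End_0(V)$ is an irreducible $G_p$-representation, it carries a unique-up-to-scalar non-degenerate $G_p$-invariant symmetric form; hence the polarization (\ref{isom:W}) can be adjusted so that the resulting $q$ agrees with the natural pairing (\ref{q:W}). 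Non-vanishing of $w_p^{\rm red}$ follows because otherwise $w_p = q$ would have critical locus equal to the origin, forcing $p$ to be an isolated point in $M_Q^{\dag, \theta\sss}(v)$, which fails for generic $R_\infty$ at walls with $m \geq 1$.

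The main obstacle is the equivariant formal Morse step together with the matching of $q$ to the natural pairing. Splitting off the non-degenerate quadratic block equivariantly is standard once the absence of quadratic cross terms is established (which is immediate from Schur's lemma applied to (\ref{Ext:decom})), but the iterative removal of higher-order cross terms between $U$ and $U'$ must be performed in a $G_p$-equivariant fashion. The matching of the resulting $q$ with the natural pairing on $\End_0(V) \otimes (H \oplus H^{\vee})$ is then achieved by exploiting the freedom in the choice of polarization (\ref{isom:W}) and absorbing any scalar factors.
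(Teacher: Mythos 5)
Your main line of argument is essentially the paper's own: order of vanishing $\ge 2$ at the origin, Schur's lemma applied to the isotypic pieces $\mathbb{C}$, $V$, $V^{\vee}$, $\End_0(V)$ of the decomposition (\ref{Ext:decom}), non-degeneracy of the Hessian on the $\Ext_Q^1(S_m,S_m)$-block via $\Ext^1_{(Q,W)}(S_m,S_m)=\Ext^1_X(\oO_C(m-1),\oO_C(m-1))=0$, a $G_p$-equivariant formal splitting lemma (this is exactly the content of Lemma~\ref{lem:morse}), and finally adjusting the polarization (\ref{isom:W}) to identify the resulting invariant form $\mathrm{tr}\otimes H_1$ with the hyperbolic pairing (\ref{q:W}). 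Splitting off only the $\End_0(V)$-block rather than the full non-degenerate part of the Hessian, as the paper does before peeling off $q$ by Schur, is a cosmetic difference. (You should also dispose of $\dim V\le 1$ separately, where $\End_0(V)=0$.)

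There is, however, a genuine gap in your argument for the non-vanishing of $w_p^{\rm red}$. If $w_p=q$ with $q$ a non-degenerate quadratic form involving only the $\End_0(V)\otimes\Ext_Q^1(S_m,S_m)$-variables, then $\Crit(w_p)$ is \emph{not} the origin: it is the entire zero locus of those variables, i.e.\ the (positive-dimensional) formal completion of the complementary summand of (\ref{Ext:decom}). So no conclusion that $p$ is isolated in $M_Q^{\dag,\theta\sss}(v)$ follows, and the contradiction you invoke does not arise. Moreover an appeal to ``generic $R_\infty$'' cannot work: the proposition is used formally locally at \emph{every} closed point of the good moduli space in the proof of Theorem~\ref{thm:wincon}, so it must hold for every polystable $R$. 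The correct argument is the one in the paper: the vanishing $\Ext^1_{(Q,W)}(S_m,S_m)=0$ gives non-degeneracy of the Hessian on all of $\End(V)\otimes\Ext_Q^1(S_m,S_m)$, see (\ref{Hess:w}), while $q$ only accounts for the traceless summand $\End_0(V)\otimes\Ext_Q^1(S_m,S_m)$; since $\End_0(V)\subsetneq\End(V)$, the trivial summand $\mathbb{C}\otimes\Ext_Q^1(S_m,S_m)$ must enter the quadratic part of $w_p^{\rm red}$ non-trivially (by Schur it can only pair with other trivial summands), so $w_p^{\rm red}\neq 0$. With this replacement your proposal becomes a complete proof along the same route as the paper.
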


 The $\GL(V)$-representation $W$ determines the vector bundle
 $\wW$ on $\widehat{\mM}_{Q_{p}^{\rm{red}}}^{\dag}(d)$. 
 By Proposition~\ref{prop:func}, we have the commutative diagram  
 \begin{align}\label{dia:WW}
 	\xymatrix{
\wW^{\vee} \ar@<-0.3ex>@{^{(}->}[r]^-{i} \ar[d]_-{\mathrm{pr}} & \wW \oplus \wW^{\vee} \ar[rd]_-{w_p^{\rm{red}}+q}& \ar[l]_-{\iota} \widehat{\mM}_{Q_{p}}^{\dag}(d) \ar[d]^-{w_p} \ar[r]^-{\eta_p}_-{\cong} & \widehat{\mM}^{\dag, \theta \sss}_Q(v)_p \ar[ld]^-{w_p}  \\
\widehat{\mM}_{Q_{p}^{\rm{red}}}^{\dag}(d) \ar[rr]_-{w_{p}^{\rm{red}}} & &
\mathbb{A}^1.   &
}
 	\end{align}
 Here $\mathrm{pr}$ is the projection, $i$ is given by $i(x)=(0, x)$, 
 $\iota$ is the natural morphism by the formal completion 
 (see Lemma~\ref{lem:formW})
 and $\eta_p$ is the isomorphism in (\ref{ffiber}). 
 We have the following proposition: 
 \begin{prop}\label{prop:funct}
There is an equivalence
\begin{align}\label{funct:Knoer}
	\Phi_p \cneq 
	\iota^{\ast}i_{\ast}\mathrm{pr}^{\ast} \colon 
	\MF(\widehat{\mM}_{Q_{p}^{\rm{red}}}^{\dag}(d), w_p^{\rm{red}})
	\stackrel{\sim}{\to}
	\MF(\widehat{\mM}_{Q_p}^{\dag}(d), w_p). 
	\end{align}	
 	\end{prop}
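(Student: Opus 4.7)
The plan is to combine Knörrer periodicity (Theorem~\ref{thm:period}) with the locality principle that a matrix factorization category only depends on a neighborhood of the critical locus (equivalence (\ref{Pre:rest})).

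First, I would apply Theorem~\ref{thm:period} to the $\GL(V)$-representation $W = \End_0(V) \otimes H$, viewed as the vector bundle $\wW$ over the smooth formal stack $\widehat{\mM}_{Q_p^{\rm{red}}}^{\dag}(d)$ equipped with the function $w_p^{\rm{red}}$. Although Theorem~\ref{thm:period} is stated for a smooth affine $Y$ with a $G$-action, its proof via the Koszul factorization (\ref{Koszul}) is local in the base, so it extends verbatim to the formal setting. This yields the Knörrer equivalence
\[
i_{\ast} \mathrm{pr}^{\ast} \colon \MF(\widehat{\mM}_{Q_p^{\rm{red}}}^{\dag}(d), w_p^{\rm{red}}) \stackrel{\sim}{\longrightarrow} \MF(\wW \oplus \wW^{\vee}, w_p^{\rm{red}} + q),
\]
where $\wW \oplus \wW^{\vee}$ denotes the total space taken over the formal base.

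Next, I claim that $\iota^{\ast}$ is an equivalence. By Proposition~\ref{prop:func} and the commutativity of (\ref{dia:WW}), the two potentials agree via $\iota$, so $\iota^{\ast}$ is defined at the level of matrix factorizations. Since $q$ is a non-degenerate quadratic form in the fiber directions of $\wW \oplus \wW^{\vee} \to \widehat{\mM}_{Q_p^{\rm{red}}}^{\dag}(d)$, its critical locus in those directions is exactly the zero section; therefore $\Crit(w_p^{\rm{red}} + q)$ coincides with $\Crit(w_p^{\rm{red}})$ and lies entirely in the zero section $\widehat{\mM}_{Q_p^{\rm{red}}}^{\dag}(d) \hookrightarrow \wW \oplus \wW^{\vee}$. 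This zero section is contained in $\widehat{\mM}_{Q_p}^{\dag}(d)$, since $\widehat{\mM}_{Q_p}^{\dag}(d)$ is by definition the formal fiber of $\mM_{Q_p}^{\dag}(d) = \wW \oplus \wW^{\vee}$ over the origin of $M_{Q_p}^{\dag}(d)$, and the zero section maps to that origin under the good moduli space morphism. Two applications of (\ref{Pre:rest}) — comparing both $\MF$-categories with the MF category on a common formal neighborhood of the critical locus — then show that $\iota^{\ast}$ is an equivalence. Composing with the Knörrer equivalence yields the claimed identification $\Phi_p = \iota^{\ast} i_{\ast} \mathrm{pr}^{\ast}$.

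The main technical obstacle is the careful justification of (\ref{Pre:rest}) in the formal/completion setting, namely that $\iota$ identifies a formal neighborhood of the critical locus inside $\widehat{\mM}_{Q_p}^{\dag}(d)$ with one inside the total space of $\wW \oplus \wW^{\vee}$. This reduces to checking that the natural map $M_{Q_p}^{\dag}(d) \to M_{Q_p^{\rm{red}}}^{\dag}(d)$ induced by the vector bundle projection sends origin to origin, and that both formal completions contain the zero section. These are bookkeeping facts that follow once one traces through the definitions of the formal fibers; with them in hand, the equivalence follows cleanly from Knörrer periodicity and locality.
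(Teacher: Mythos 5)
There is a genuine gap in the step where you claim that $\iota^{\ast}$ is an equivalence ``by two applications of (\ref{Pre:rest})''. The equivalence (\ref{Pre:rest}) is invariance of $\MF$ under restriction to a Zariski \emph{open} substack containing the critical locus, but $\iota$ in the diagram (\ref{dia:WW}) is not an open immersion: $\widehat{\mM}_{Q_p}^{\dag}(d)$ is the formal fiber of the \emph{full} stack $[\Ext^1_{Q^{\dag}}(R,R)/G_p]$ over its own good moduli space, whereas $\wW\oplus\wW^{\vee}$ is only completed in the base directions (it is an honest vector bundle over $\widehat{\mM}_{Q_p^{\rm{red}}}^{\dag}(d)$, algebraic in the $W$-fiber directions). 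So $\iota$ is a formal completion morphism, and there is no ``common formal neighborhood of the critical locus'' to which (\ref{Pre:rest}) applies. The correct comparison tool is Orlov's theorem on completion along the critical locus, and that theorem only gives that $\iota^{\ast}$ is fully faithful with \emph{dense} image, i.e.\ an equivalence up to direct summands --- this is exactly the content of Lemma~\ref{lem:formW} in the paper.

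Consequently your argument also misses the second nontrivial ingredient: to upgrade ``fully faithful with dense image'' to an actual equivalence one must know that the source category $\MF(\wW\oplus\wW^{\vee}, w_p^{\rm{red}}+q)$ is idempotent complete. The paper obtains this by transporting the problem through the Kn\"{o}rrer equivalence (\ref{func:Kos}) and invoking Lemma~\ref{lem:idem}, whose proof is genuinely substantial (Orlov's equivalence with the $G$-equivariant singularity category of the zero locus, coherent completeness of the quotient stack, and lifting idempotents of equivariant maximal Cohen--Macaulay modules over a complete local ring). Your use of Kn\"{o}rrer periodicity over the formal base agrees with the paper, and your observation that $\Crit(w_p^{\rm{red}}+q)$ lies in the zero section is correct and is indeed used, but the reduction of the $\iota^{\ast}$-step to (\ref{Pre:rest}) would not go through as written; you need the completion theorem plus the idempotent-completeness argument to close the proof.
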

 \begin{proof}
  	The composition functor 
 	\begin{align}\label{func:Kos}
 		i_{\ast}\mathrm{pr}^{\ast} \colon 
 		\MF(\widehat{\mM}_{Q^{\rm{red}}_{p}}^{\dag}(d), w_p^{\rm{red}})
 		\stackrel{\mathrm{pr}^{\ast}}{\to}
 		\MF(\wW^{\vee}, w_p^{\rm{red}})
 		\stackrel{i_{\ast}}{\to} 
 		\MF(\wW\oplus \wW^{\vee}, w_p^{\rm{red}}+q)
 		\end{align}
 	is an equivalence by Theorem~\ref{thm:period}. 
By Lemma~\ref{lem:formW}, 
 	the functor 
 	\begin{align}\label{iota:ast2}
 		\iota^{\ast} \colon 
 		\MF(\wW\oplus \wW^{\vee}, w_p^{\rm{red}}+q)
 		\to
 		\MF(\widehat{\mM}_{Q_{p}}^{\dag}(d), w_p)
 		\end{align}
 	is fully-faithful with dense image. 
 By Lemma~\ref{lem:idem} and the equivalence (\ref{func:Kos}), 
 the LHS of (\ref{iota:ast2}) is idempotent complete, 
 so the functor (\ref{iota:ast2}) is an equivalence. 
 	Therefore we obtain the proposition. 
 	\end{proof}

 \subsection{Window subcategories}
 In this subsection, we define several window subcategories 
 for moduli stacks of representations of quivers and their 
 formal fibers discussed in the previous subsection. 
 The notation is summarized in Table~1. 
 
 \begin{table}[htb]\label{table1}
 	\caption{Notation of moduli spaces and windows}
 	\begin{tabular}{|c||c|c|c|} \hline
 		& moduli stack & formal fiber &
 		windows  \\ \hline \hline
 		conifold quiver $Q^{\dag}$ & $\mM_Q^{\dag, \theta \sss}(v)$ &
 		$\widehat{\mM}_Q^{\dag, \theta \sss}(v)_p$	
 		& $\mathbb{W}_{\rm{glob}}^{\theta_{\pm}}(v), \mathbb{W}_{\rm{loc}}^{\theta_{\pm}}(v)_p$
 		\\ \hline
 		Ext-quiver $Q_p^{\dag}$ & 	$\mM^{\dag}_{Q_p}(d)$ &
 		$\widehat{\mM}^{\dag}_{Q_p}(d)$ & $\mathbb{W}^{\pm}(d)_p$
 		\\  \hline  
 		reduced Ext-quiver $Q_p^{\rm{red}, \dag}$ &
 		$\mM^{\dag}_{Q^{\rm{red}}_p}(d)$  &	$\widehat{\mM}^{\dag}_{Q^{\rm{red}}_p}(d)$
 		& $\mathbb{W}_c(d)_p$
 		\\
 		\hline
 	\end{tabular}
 \end{table}

 \subsubsection*{Global window subcategory $\mathbb{W}_{\rm{glob}}^{\theta_{\pm}}(v)$}
  We take $\theta \in W_m$ and 
 $\theta_{\pm}$ as in (\ref{thetapm}) which are sufficiently close 
 to the wall $W_m$.
 Then 
 the KN stratification of 
 $\mM^{\dag}_{Q}(v)$ for $\chi_{\theta_{\pm}}$ is finer than those 
 for $\chi_{\theta}$. 
 So we have KN stratifications 
 for $\mM_{Q}^{\dag, \theta \sss}(v)$ with respect to 
 $\chi_{\theta_{\pm}}$
 \begin{align}\label{KN:Spm}
 	\mM_Q^{\dag, \theta \sss}(v)=
 	\sS_1^{\pm} \sqcup \cdots \sqcup \sS_{N^{\pm}}^{\pm} \sqcup 
 	\mM_Q^{\dag, \theta_{\pm} \sss}(v)
 	\end{align}
 with associated one parameter subgroups $\lambda_i^{\pm} \colon \C \to \GL(V_0) \times \GL(V_1)$
 and the associated number $\eta_i^{\pm} \in \mathbb{Z}$ as in (\ref{etai}). 
 By Theorem~\ref{thm:window:MF} (and also noting Lemma~\ref{lem:bigger}), 
 for each choice of real numbers 
 $m_{\bullet}^{\pm}=\{(m_i^{\pm})\}_{1\le i\le N^{\pm}}$
 we have the subcategories
 \begin{align}\label{window:Q}
 	\mathbb{W}_{m_{\bullet}^{\pm}}^{\theta_{\pm}}(v) \subset 
 	\MF(\mM_Q^{\dag, \theta \sss}(v), w)
 	\end{align}
 such that the compositions 
 \begin{align}\label{equiv:glob}
 		\mathbb{W}_{m_{\bullet}^{\pm}}^{\theta_{\pm}}(v) \hookrightarrow 
 	\MF(\mM_Q^{\dag, \theta \sss}(v), w)
 	\twoheadrightarrow \MF(\mM_Q^{\dag, \theta_{\pm} \sss}(v), w)
 	\end{align}
 are equivalences. 
 The subcategory (\ref{window:Q}) consists 
 of objects whose 
 $\lambda_{i}^{\pm}$-weights at 
 each center of $\sS_i^{\pm}$ 
 are contained in $[m_i^{\pm}, m_i^{\pm}+\eta_i^{\pm})$. 
 
 We define the following character
 \begin{align*}
 	\chi_0 \colon \GL(V_0) \times \GL(V_1) \to \C, \ (g_0, g_1) \mapsto \det(g_0) \cdot \det(g_1)^{-1}
 	\end{align*}
 i.e. $\chi_0=\chi_{(-1, 1)}$ in the notation (\ref{chi:theta}). 
 As we discussed in (\ref{compose:Gp}), the composition 
 \begin{align*}
 	G_p=\GL(V) \hookrightarrow \GL(V_0) \times \GL(V_1) \stackrel{\chi_0}{\to}
 	\mathbb{C}^{\ast}
 	\end{align*}
 coincides with the determinant character 
 $\chi_0 \colon \GL(V) \to \mathbb{C}^{\ast}$. 
 We use the following special choices for $m_{\bullet}^{\pm}$
 \begin{align}\label{choice:m}
 	m_i^{+} =-\frac{1}{2} \eta_i^+ +\left(\frac{1}{2}C_{v, m}+
 	\frac{m}{2}\right)
 	\langle \lambda_i^+, \chi_0 \rangle, \
 		m_i^{-} =-\frac{1}{2} \eta_i^- +\frac{1}{2}C_{v, m}\langle \lambda_i^-, \chi_0 \rangle. 
 	\end{align}
 Here $C_{v, m}$ is given in (\ref{compute:abc}). 
 We then define 
 \begin{align}\label{window:glob}
 	\mathbb{W}_{\mathrm{glob}}^{\theta_{\pm}}(v) \subset \MF(\mM_Q^{\dag, \theta \sss}(v), w)
 	\end{align}
  to be 
  the window subcategories (\ref{window:Q}) for the choices of $m_{\bullet}^{\pm}$ 
  as (\ref{choice:m}). 
  
  \subsubsection*{Local window subcategories $\mathbb{W}_{\rm{loc}}^{\theta_{\pm}}(v)_p$}
  Let us take a $\theta$-polystable object $R$ as in (\ref{pstableR}), 
  and the corresponding closed point 
  $p \in M_Q^{\dag, \theta \sss}(v)$. 
  Then we have the diagram of formal fibers (\ref{ffiber}). 
  By restricting the KN stratification (\ref{KN:Spm})
  to the formal fiber, 
  we obtain the KN stratification 
  of $\widehat{\mM}_{Q}^{\dag, \theta \sss}(v)_p$
  \begin{align}\label{KN:Spm0}
  	\widehat{\mM}_{Q}^{\dag, \theta \sss}(v)_p=
  	\widehat{\sS}_{1, p}^{\pm} \sqcup \cdots \sqcup \widehat{\sS}_{N^{\pm}, p}^{\pm} \sqcup 
  		\widehat{\mM}_{Q}^{\dag, \theta_{\pm} \sss}(v)_p. 
  	\end{align}
  We define local window subcategories
  \begin{align}\notag
  	\mathbb{W}_{\rm{loc}}^{\theta_{\pm}}(v)_p \subset 
  	\MF(\widehat{\mM}_{Q}^{\dag, \theta \sss}(v)_p, w_p)
  \end{align}
  similarly to (\ref{window:glob}) as in Theorem~\ref{thm:window:MF}, 
  with respect to the KN stratifications (\ref{KN:Spm0})
  and the choices of $m_{\bullet}^{\pm}$ in (\ref{choice:m}). 
  The following lemma follows immediately from the definition of window 
subcategories: 
\begin{lem}\label{lem:globloc}
	An object $\eE \in \MF(\mM_Q^{\dag, \theta \sss}(v), w)$ is an object 
	in $\mathbb{W}_{\rm{glob}}^{\theta_{\pm}}(v)$ if and only if 
	for any closed point $p \in M_Q^{\dag, \theta \sss}(v)$ represented by 
	an object of the form (\ref{pstableR})
	we have $\eE|_{\widehat{\mM}_Q^{\dag, \theta \sss}(v)_p} \in 
	\mathbb{W}_{\rm{loc}}^{\theta_{\pm}}(v)_p$. 
	\end{lem}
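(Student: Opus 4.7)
The plan is to reduce the statement to the defining property of the window subcategories in Theorem~\ref{thm:window:MF}: membership in $\mathbb{W}_{\rm{glob}}^{\theta_{\pm}}(v)$ (resp.\ $\mathbb{W}_{\rm{loc}}^{\theta_{\pm}}(v)_p$) is dictated by a $\lambda_i^{\pm}$-weight constraint on the restriction of the factorization to each center $\overline{Z}_i^{\pm}$ (resp.\ $\widehat{\overline{Z}}_{i,p}^{\pm}$) of the KN stratification. Since the thresholds $m_i^{\pm}$, the one-parameter subgroups $\lambda_i^{\pm}$, and the integers $\overline{\eta}_i^{\pm}$ are defined numerically and depend only on the ambient stratification of $\mM_Q^{\dag,\theta\sss}(v)$, they agree for the global and formal-local pictures. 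So the key point is that a weight-space decomposition of a coherent factorization on $\overline{Z}_i^{\pm}$ can be verified formally locally on its image in the good moduli space.

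First, I would verify that the local KN stratification (\ref{KN:Spm0}) is precisely the base change of the global one (\ref{KN:Spm}) along the formal fiber $\widehat{M}_Q^{\dag,\theta\sss}(v)_p \to M_Q^{\dag,\theta\sss}(v)$. This is a direct consequence of the fact that attracting loci, centers, and parabolic subgroups commute with flat base change on the good moduli space, combined with the observation that the formal fiber construction is compatible with open substacks. In particular, under the identification $\eta_p$ from (\ref{ffiber}), each $\widehat{\overline{Z}}_{i,p}^{\pm}$ is the formal fiber of $\overline{Z}_i^{\pm}$ at the closed points of its image lying over $p$.

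For the \emph{only if} direction, since pullback along flat morphisms preserves direct sum decompositions into $\lambda_i^{\pm}$-eigenspaces, the window condition (\ref{cond:P}) for $\mathcal{E}$ on the ambient stack immediately restricts to the analogous condition on each formal fiber. For the \emph{if} direction, suppose $\mathcal{E}|_{\widehat{\mM}_Q^{\dag,\theta\sss}(v)_p}$ satisfies the local window condition at every closed point $p$ of $M_Q^{\dag,\theta\sss}(v)$. Then the restriction $\mathcal{E}|_{\overline{Z}_i^{\pm}}$ has its putative weight-space decomposition witnessed on every formal fiber over $M_Q^{\dag,\theta\sss}(v)$; since a coherent factorization admits such a direct sum decomposition globally if and only if it does so after faithfully flat base change to the collection of all formal fibers at closed points (a standard descent argument for coherent sheaves on Noetherian stacks with $\mathbb{C}^{\ast}$-action, applied separately to the two components of each factorization), the decomposition holds on $\overline{Z}_i^{\pm}$ itself. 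Hence $\mathcal{E} \in \mathbb{W}_{\rm{glob}}^{\theta_{\pm}}(v)$.

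The main subtlety I would expect is the faithful-flatness/descent step in the \emph{if} direction: one must make sure that the closed points of $M_Q^{\dag,\theta\sss}(v)$ whose polystable representatives take the form (\ref{pstableR}) are enough to detect the weight decomposition on every $\overline{Z}_i^{\pm}$. This reduces to the fact that the good moduli space morphism $\pi_Q^{\dag}$ sends every closed point of $\overline{Z}_i^{\pm}$ to a $\theta$-polystable point, which by the classification of $\theta$-polystable $(Q^{\dag},W)$-representations on the wall $W_m$ is automatically of the form (\ref{pstableR}); together with the fact that closed points are Zariski-dense, this suffices for the descent argument.
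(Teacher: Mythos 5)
Your overall strategy (the window conditions of Theorem~\ref{thm:window:MF} are weight conditions on centers of KN strata, these strata base-change to the formal fibers, and weight decompositions/vanishing can be checked formally locally over the good moduli space) is the same locality principle the paper invokes, and your ``only if'' direction is fine. The genuine problem is in the last step of your ``if'' direction, precisely at the subtlety you flagged: you claim that every closed point of $M_Q^{\dag,\theta\sss}(v)$ hit by the centers $\overline{Z}_i^{\pm}$ is ``automatically of the form (\ref{pstableR}) by the classification of $\theta$-polystable $(Q^{\dag},W)$-representations.'' This is false. The stack $\mM_Q^{\dag,\theta\sss}(v)$ parametrizes representations of the quiver $Q^{\dag}$ \emph{without} the relations $\partial W$, so closed points of its good moduli space correspond to $\theta$-polystable $Q^{\dag}$-representations, not $(Q^{\dag},W)$-representations; the classification (\ref{pstableR}) only applies to polystable objects lying in $\Crit(w)$. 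For instance, $M_Q^{\theta\sss}(s_m)$ is positive-dimensional and $S_m$ is the unique point of it satisfying the relations (Lemma~\ref{lem:crit:simple}), so a polystable $Q^{\dag}$-representation with a summand $T\otimes V$ for a $\theta$-stable $Q$-representation $T\neq S_m$ of dimension vector $s_m$ gives a closed point not of the form (\ref{pstableR}). Hence your descent argument is only fed the hypothesis at some, not all, closed points, and your justification for why the remaining points cause no trouble does not hold.

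The correct way to close this gap --- and this is exactly the content of the paper's short proof --- is not that the other points do not exist, but that they impose no condition: if $p$ is not represented by an object of the form (\ref{pstableR}), then the unique closed point of $\pi_Q^{\dag,-1}(p)$ is a polystable $Q^{\dag}$-representation not satisfying $\partial W=0$, so the formal fiber $\widehat{\mM}_Q^{\dag,\theta\sss}(v)_p$ does not meet $\Crit(w)$, and by the critical-locus invariance (\ref{Pre:rest}) one has $\MF(\widehat{\mM}_Q^{\dag,\theta\sss}(v)_p, w_p)=0$. Consequently the local window condition at such $p$ is vacuous, and the equivalence ``global window $\Leftrightarrow$ local window at all closed points'' (your locality/descent step) reduces to checking only the points of the form (\ref{pstableR}), as stated in the lemma. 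With this replacement for your final paragraph the argument becomes correct and essentially coincides with the paper's.
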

\begin{proof}
	The defining conditions of window subcategories $\mathbb{W}_{\rm{glob}}^{\theta_{\pm}}(v)$ is local on 
	the good moduli space, so $\eE$ is an object in 
	$\mathbb{W}_{\rm{glob}}^{\theta_{\pm}}(v)$
	if and only if $\eE|_{\widehat{\mM}_Q^{\dag, \theta \sss}(v)_p} \in 
	\mathbb{W}_{\rm{loc}}^{\theta_{\pm}}(v)_p$
	for any $p \in M_{Q}^{\dag, \theta \sss}(v)$. 
		If $p$ is not represented by an object of the form (\ref{pstableR}), 
	then the formal fiber $\widehat{\mM}_Q^{\dag, \theta \sss}(v)_p$
	does not intersect with the critical locus of $w$, 
	so $\MF(\widehat{\mM}_Q^{\dag, \theta \sss}(v)_p, w_p)=0$.
	\end{proof}

\subsubsection*{Window subcategories
$\mathbb{W}^{\pm}(d)_p$ for the Ext-quiver}
  By pulling the KN stratification (\ref{KN:Spm0}) 
back to $\widehat{\mM}_{Q_p}^{\dag}(d)$ 
by the isomorphism $\eta_p$ in (\ref{ffiber}),  
we have the KN stratification of $\widehat{\mM}_{Q_p}^{\dag}(d)$
with respect to $\chi_0^{\pm 1}$
\begin{align}\label{KN:Spm2}
	\widehat{\mM}_{Q_p}^{\dag}(d)=
	\widetilde{\sS}_{1, p}^{\pm} \sqcup \cdots \sqcup \widetilde{\sS}_{N^{\pm}, p}^{\pm} \sqcup 
	\widehat{\mM}_{Q_p}^{\dag, \chi_0^{\pm 1}}(d). 
\end{align}
We define window subcategories
\begin{align}\notag
	\mathbb{W}^{\pm}(d)_p \subset 
	\MF(\widehat{\mM}_{Q_p}^{\dag}(d), w_p)
\end{align}
as in Theorem~\ref{thm:window:MF}, 
with respect to the KN stratifications (\ref{KN:Spm2})
and the choices of $m_{\bullet}^{\pm}$ in (\ref{choice:m}). 
By the isomorphism $\eta_p$ in (\ref{ffiber}), 
we have the equivalence 
\begin{align}\label{etap:equiv}
	\eta_p^{\ast} \colon \mathbb{W}_{\rm{loc}}^{\theta_{\pm}}(v)_p
	\stackrel{\sim}{\to} \mathbb{W}^{\pm}(d)_p. 
\end{align}

\subsubsection*{Window subcategories $\mathbb{W}_c(d)_p$
for the reduced Ext-quiver}
For $c \in \mathbb{Z}_{\ge 0}$, we also define 
\begin{align*}
	\mathbb{W}_c(d)_p \subset 
	\MF(\widehat{\mM}^{\dag}_{Q_p^{\rm{red}}}(d), w_p^{\rm{red}})
	\end{align*}
to be the thick closure of matrix factorizations whose entries are
of the form $V(\chi) \otimes \oO$ for $\chi \in \mathbb{B}_c(d)$, 
where $\mathbb{B}_c(d)$ is defined in (\ref{Bcd}). 
By the description (\ref{M:reduced}) of $\mM^{\dag}_{Q_p^{\rm{red}}}(d)$
in terms of the stack $\gG_{a_{v, m, d}, b_{v, m, d}}(d)$, 
the argument of Proposition~\ref{prop:WGD} (also see the argument of Corollary~\ref{cor:MF}) implies that the  
following composition functors are equivalences
\begin{align}\label{wind:localExt}
	&\mathbb{W}_{a_{v, m, d}}(d)_p
	\hookrightarrow \MF(\widehat{\mM}^{\dag}_{Q_p^{\rm{red}}}(d), w_p^{\rm{red}})
	\twoheadrightarrow \MF(\widehat{\mM}^{\dag, \chi_0 \sss}_{Q_p^{\rm{red}}}(d), w_p^{\rm{red}}), \\
	\notag&\mathbb{W}_{b_{v, m, d}}(d)_p
	\hookrightarrow \MF(\widehat{\mM}^{\dag}_{Q_p^{\rm{red}}}(d), w_p^{\rm{red}})
	\twoheadrightarrow \MF(\widehat{\mM}^{\dag, \chi_0^{-1}\sss}_{Q_p^{\rm{red}}}(d), w_p^{\rm{red}}). 
	\end{align}

\subsection{Comparison of window subcategories}
We compare the window subcategories in the previous subsection 
under the Kn\"{o}rrer periodicity: 
\begin{prop}\label{prop:Koszul}
	The equivalence (\ref{funct:Knoer})
restricts to the equivalences
\begin{align*}
&\Phi_p \colon \mathbb{W}_{a_{v, m, d}}(d)_p \otimes \chi_0^{d(m^2-m)}
\stackrel{\sim}{\to} \mathbb{W}^{+}(d)_p, \\
&	\Phi_p \colon \mathbb{W}_{b_{v, m, d}}(d)_p \otimes \chi_0^{d(m^2-m)}
\stackrel{\sim}{\to} \mathbb{W}^{-}(d)_p.
	\end{align*}
	\end{prop}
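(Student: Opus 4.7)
The plan is to prove the $+$ statement by verifying the window condition on generators and then upgrading the resulting inclusion to an equality via restriction to the $\chi_0$-semistable locus; the $-$ case is entirely analogous (with $a_{v,m,d}$ replaced by $b_{v,m,d}$, $\lambda_i^+$ by $\lambda_i^-$, and $\langle \lambda_i^-, \chi_0 \rangle = d-i$).

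First, I would record the structural observation that passing from $\widehat{\mM}^{\dag}_{Q_p^{\rm{red}}}(d)$ to $\widehat{\mM}^{\dag}_{Q_p}(d)$ amounts to adjoining the factor $W \oplus W^{\vee}$, on which $\GL(V)$ acts only through $\End_0(V)$ (since $H$ is a trivial representation). Since stability for $\chi_0^{\pm 1}$ is detected by the $V, V^{\vee}$ directions alone, the one-parameter subgroups $\lambda_i^{\pm}$ and attracting strata are parallel on both sides, with $\overline{Z}_i$ on the full side a $(W \oplus W^{\vee})^{\lambda_i^{\pm}=0}$-bundle over its reduced counterpart. A direct computation using the triviality of $H$ yields $\dim W^{\lambda_i^{\pm}>0}=i(d-i)(m^2-m)$, and therefore $\eta_i^{\pm}-\eta_i^{\rm{red},\pm}=2i(d-i)(m^2-m)$; together with $\langle \lambda_i^{+},\chi_0\rangle=-(d-i)$, from the identification of (\ref{compose:Gp}), this supplies all the numerical inputs needed.

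Second, for a generator $V(\chi)\otimes \oO$ of $\mathbb{W}_{a_{v,m,d}}(d)_p$ with $\chi\in \mathbb{B}_{a_{v,m,d}}(d)$, the Koszul description (\ref{Koszul}) shows that $\Phi_p(V(\chi)\otimes \chi_0^{d(m^2-m)}\otimes \oO)$ has entries of the form $V(\chi)\otimes \chi_0^{d(m^2-m)}\otimes \bigwedge^{\bullet} W^{\vee}\otimes \oO$. Its $\lambda_i^{+}$-weight on $\overline{Z}_i$ is a sum of three contributions: from $V(\chi)$ with range $[-(d-i)(a_{v,m,d}-d),\,0]$, from $\chi_0^{d(m^2-m)}$ with the constant value $-d(d-i)(m^2-m)$, and from $\bigwedge^{\bullet} W^{\vee}$ with range $[-i(d-i)(m^2-m),\,i(d-i)(m^2-m)]$. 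Substituting the identity $C_{v,m}+m=a_{v,m,d}-d+d c_m$ coming from Lemma~\ref{lem:computea} into (\ref{choice:m}), one verifies by a direct arithmetic check that the total weight range lies strictly inside $[m_i^{+},\,m_i^{+}+\eta_i^{+})$, with the same slack of $(d-i)^2/2$ at each endpoint. Since the window $\mathbb{W}^{+}(d)_p$ is a thick subcategory, this gives the inclusion
\[
\Phi_p(\mathbb{W}_{a_{v,m,d}}(d)_p \otimes \chi_0^{d(m^2-m)}) \subset \mathbb{W}^{+}(d)_p.
\]

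Third, I would upgrade the inclusion to an equality by restricting to the $\chi_0$-semistable locus. By (\ref{wind:localExt}) the left-hand side maps equivalently onto $\MF(\widehat{\mM}^{\dag,\chi_0\sss}_{Q_p^{\rm{red}}}(d), w_p^{\rm{red}})$, and by Theorem~\ref{thm:window:MF} the right-hand side maps equivalently onto $\MF(\widehat{\mM}^{\dag,\chi_0\sss}_{Q_p}(d), w_p)$. Since the $\chi_0$-semistable locus on the full Ext-quiver is the pullback along $\iota$ of the restriction of the $W\oplus W^{\vee}$-bundle over the $\chi_0$-semistable locus on the reduced Ext-quiver, Theorem~\ref{thm:period} applied in family shows that $\Phi_p$ commutes with restriction to the semistable loci and induces an equivalence between them. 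A diagram chase on the resulting commutative square then promotes the inclusion above to an equivalence of thick subcategories of $\MF(\widehat{\mM}^{\dag}_{Q_p}(d), w_p)$, hence to the desired equality of subcategories.

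The main obstacle is the weight bookkeeping in the second step. Because $\eta_i^{+}>\eta_i^{\rm{red},+}$, the $V(\chi)$ contribution alone cannot fill the target window, so one must genuinely use that $\bigwedge^{\bullet} W^{\vee}$ provides the remaining weight spread $2i(d-i)(m^2-m)$. The symmetric distribution of the $(W\oplus W^{\vee})$-weights about zero, combined with the specific choice of $m_i^{\pm}$ in (\ref{choice:m}) and the twist by $\chi_0^{d(m^2-m)}$, is precisely what causes the shifted reduced interval to land strictly inside the full window rather than spilling outside of it.
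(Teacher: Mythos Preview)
Your overall strategy (verify the weight condition on Koszul generators, then upgrade the inclusion to an equality via the commuting square with restriction to the $\chi_0$-semistable loci) and your weight computation in step~2 are correct and coincide with the paper's argument, including the slack $(d-i)^2/2$ at each endpoint.

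The gap is in step~3. The assertion that the $\chi_0$-semistable locus on the full Ext-quiver is the $\iota$-pullback of $(\wW\oplus\wW^{\vee})|_{\widehat{\mM}^{\dag,\chi_0\sss}_{Q_p^{\rm{red}}}(d)}$ is not true: the loops at vertex~$1$ (the $\End_0(V)$-directions inside $W\oplus W^{\vee}$) genuinely affect $\chi_0$-stability, so a point $(y_{\rm red},x)$ with $y_{\rm red}$ unstable on the reduced side can become $\chi_0$-semistable for suitable $x$ (e.g.\ when the loops make the representation cyclic). Thus ``stability is detected by the $V,V^{\vee}$ directions alone'' fails, and applying Theorem~\ref{thm:period} ``in family over the reduced semistable locus'' does not, by itself, identify the two MF categories on the semistable loci.

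The fix, which is exactly what the paper does, is to work at the level of critical loci rather than semistable loci. Since $\Crit(w_p^{\rm red}+q)=\Crit(w_p^{\rm red})\times\{0\}$ sits in the zero section of $\wW\oplus\wW^{\vee}$, two things follow: first, only the one-parameter subgroups $\lambda_i^{\pm}$ coming from the reduced KN stratification are relevant in Theorem~\ref{thm:window:MF}, so your weight check in step~2 is indeed sufficient to land in $\mathbb{W}^{+}(d)_p$; second, one has the inclusion
\[
\Crit(w_p^{\rm red}+q)\cap(\wW\oplus\wW^{\vee})^{\chi_0\sss}\ \subset\ (\wW\oplus\wW^{\vee})\big|_{\widehat{\mM}^{\dag,\chi_0\sss}_{Q_p^{\rm red}}(d)},
\]
so by~(\ref{Pre:rest}) together with Kn\"orrer periodicity over $\widehat{\mM}^{\dag,\chi_0\sss}_{Q_p^{\rm red}}(d)$ one obtains the desired equivalence of MF categories on the semistable loci. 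With this correction the diagram chase in your step~3 goes through, and the inclusion becomes an equality.
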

\begin{proof}
	We only give a proof for the $+$ part. 
	Let $\wW \to \widehat{\mM}_{Q_p^{\rm{red}}}^{\dag}(d)$
	be the vector bundle as in the diagram (\ref{dia:WW}). 
	The KN stratifications (\ref{KN:Spm}) are pull-back 
	of the KN stratifications
		\begin{align}\label{W:strata}
		\wW \oplus \wW^{\vee}=\overline{\sS}_1^{\pm} \sqcup \cdots 
		\sqcup \overline{\sS}_{N^{\pm}}^{\pm} \sqcup (\wW \oplus \wW^{\vee})^{\chi_0^{\pm 1} \sss} 
		\end{align}
	 of $\wW \oplus \wW^{\vee}$ 
	with respect to $\chi_0^{\pm 1}$
	by the morphism $\iota$ in (\ref{dia:WW}).
	We denote by 
	\begin{align*}
		\overline{\mathbb{W}}^{\pm}(d)_p \subset 
		\MF(\wW \oplus \wW^{\vee}, w_p^{\rm{red}}+q)
		\end{align*}
	the window subcategories with respect to the above 
	stratifications (\ref{W:strata}) and $m_{\bullet}^{\pm} \in \mathbb{R}$ 
	given by (\ref{choice:m}). 
	By the definition of the above window subcategories, 
	the equivalence (\ref{iota:ast2}) 
	restricts to the equivalence 
	\begin{align*}
		\iota^{\ast} \colon 
		\overline{\mathbb{W}}^{\pm}(d)_p 
		\stackrel{\sim}{\to} \mathbb{W}^{\pm}(d)_p.
		\end{align*}
	Therefore it is enough to show that 
	the equivalence (\ref{func:Kos}) restricts to the 
	equivalence 
	\begin{align*}
		i_{\ast}\mathrm{pr}^{\ast} \colon 
		\mathbb{W}_{a_{v, m, d}}(d)_p \otimes \chi_0^{d(m^2-m)} \stackrel{\sim}{\to}
		\overline{\mathbb{W}}^{+}(d)_p. 
		\end{align*}
	We have the following commutative diagram 
	\begin{align}\label{dia:descend}
		\xymatrix{
			\mathbb{W}_{a_{v, m, d}}(d)_p \otimes \chi_0^{d(m^2-m)}
			\ar@<-0.3ex>@{^{(}->}[r] \ar@{.>}[d] & \MF(\widehat{\mM}^{\dag}_{Q_p^{\rm{red}}}(d), w_p^{\rm{red}})
			\ar[r] \ar[d]_-{i_{\ast}\mathrm{pr}^{\ast}}^-{\sim} & 
			\MF(\widehat{\mM}^{\dag, \chi_0\sss}_{Q_p^{\rm{red}}}(d), w_p^{\rm{red}}) \ar@{.>}[d]^-{\sim}\\
			\overline{\mathbb{W}}^{+}(d)_p \ar@<-0.3ex>@{^{(}->}[r] & 
			\MF(\wW \oplus \wW^{\vee}, w_p^{\rm{red}}+q) \ar[r] & 
			\MF((\wW \oplus \wW^{\vee})^{\chi_0 \sss}, w_p^{\rm{red}}+q).	
		}
	\end{align}
The composition of top 
arrows is an equivalence by the equivalence in (\ref{wind:localExt}), 
and that of bottom arrows is also an equivalence by Theorem~\ref{thm:window:MF}.
We see that the middle vertical arrow descends to an equivalence of right vertical dotted 
arrow. 
Note that we have the isomorphism 
\begin{align}\notag
	\Crit(w_p^{\rm{red}}) \cap \widehat{\mM}_{Q_p^{\rm{red}}}^{\dag, \chi_0 \sss}
	\stackrel{\cong}{\hookrightarrow}
	\mathrm{Crit}(w_p^{\rm{red}}+q)
	\cap (\wW \oplus \wW^{\vee})^{\chi_0 \sss}
	\end{align}
induced by the zero section 
$\widehat{\mM}_{Q_p^{\rm{red}}}^{\dag}(d) \hookrightarrow 
\wW \oplus \wW^{\vee}$. 
In particular we have the inclusion 
\begin{align}
\label{inc:W}
\mathrm{Crit}(w_p^{\rm{red}}+q)
\cap (\wW \oplus \wW^{\vee})^{\chi_0 \sss}
	\subset (\wW \oplus \wW^{\vee}) \times_{\widehat{\mM}^{\dag}_{Q_p^{\rm{red}}}(d)}\widehat{\mM}^{\dag, \chi_0\sss}_{Q_p^{\rm{red}}}(d).
\end{align}
The desired equivalence is given by 
the composition 
\begin{align*}
	\MF(\widehat{\mM}^{\dag, \chi_0\sss}_{Q_p^{\rm{red}}}(d), w_p^{\rm{red}})
	&\stackrel{\sim}{\to} 
	\MF(
	 (\wW \oplus \wW^{\vee}) \times_{\widehat{\mM}^{\dag}_{Q_p^{\rm{red}}}(d)}\widehat{\mM}^{\dag, \chi_0\sss}_{Q_p^{\rm{red}}}(d), w_p^{\rm{red}}+q) \\
	 &\stackrel{\sim}{\to} 
	 \MF((\wW \oplus \wW^{\vee})^{\chi_0 \sss}, w_p^{\rm{red}}+q).
	\end{align*}
Here the first equivalence is Kn\"{o}rrer periodicity in Theorem~\ref{thm:period},
and the second equivalence follows from (\ref{inc:W}) and the 
equivalence (\ref{Pre:rest}). 

Therefore it is enough to show 
that the middle vertical arrow in (\ref{dia:descend}) restricts to the 
left dotted arrow, i.e. for $\pP \in \mathbb{W}_{a_{v, m, d}}(d)_p \otimes \chi_0^{d(m^2-m)}$, 
we show that
the object $i_{\ast}\mathrm{pr}^{\ast}(\pP)$ 
lies in $\overline{\mathbb{W}}^+(d)_p$. 
Note that the critical locus of $w_p^{\rm{red}}+q$ 
lies in the zero section 
$\widehat{\mM}^{\dag}_{Q_p^{\rm{red}}}(d) \subset \wW \oplus \wW^{\vee}$. 
From Theorem~\ref{thm:window:MF}, 
it is enough to that $i_{\ast}\mathrm{pr}^{\ast}(\pP)$ 
satisfies the condition (\ref{cond:P})
for one parameter subgroups 
which appear in the KN stratification of  
 $\widehat{\mM}^{\dag}_{Q_p^{\rm{red}}}(d)$. 
From the
description (\ref{M:reduced}) of $\mM^{\dag}_{Q_p^{\rm{red}}}(d)$, 
its KN stratifications with respect to $\chi_0^{\pm 1}$
are KN stratifications of $\gG_{a_{v, m, d}, b_{v, m, d}}(d)$
discussed in Subsection~\ref{subsec:Gflip}, 
 up to a 
product with a trivial factor. 
Therefore they are of the form  
\begin{align*}
	\widehat{\mM}^{\dag}_{Q_p^{\rm{red}}}(d)=\sS_0^{\pm}
	\sqcup \cdots \sqcup \sS_{d-1}^{\pm} \sqcup 
	\widehat{\mM}^{\dag, \chi_0^{\pm 1}\sss}_{Q_p^{\rm{red}}}(d)
	\end{align*}
such that each associated one parameter subgroup $\lambda_i^{\pm} \colon \C \to G_p=\GL(V)$ is 
given by (\ref{lambdai}), i.e. $\lambda_i^+$ is 
\begin{align}\label{lambdai2}
	\lambda_i^{+}(t)=(\overbrace{1, \ldots, 1}^i, \overbrace{t^{-1}, \ldots, t^{-1}}^{d-i}). 
\end{align}
Therefore in order to show 
that the object $i_{\ast}\mathrm{pr}^{\ast}(\pP)$ 
lies in $\overline{\mathbb{W}}^{+}(d)_p$, 
	it is enough to check the weight conditions (\ref{cond:P})
for the above $\lambda_i^{+}$. 

Since the object $i_{\ast}\mathrm{pr}^{\ast}(\pP)$ is 
given by taking the tensor product with the Koszul factorization (\ref{Koszul}), 
it is 
isomorphic 
to a direct summand of a matrix factorization whose entries 
are of the form 
\begin{align*}
	V(\chi) \otimes \bigwedge^k W \otimes 
\chi_0^{d(m^2-m)} \otimes \oO, \ 
\chi  \in \mathbb{B}_{a_{v, m, d}}(d), \ 0\le k\le \dim W. 
\end{align*}
For each one parameter subgroup $\lambda \colon \C \to G_p$, 
we set 
\begin{align*}
	\gamma_{\lambda} \cneq \langle \lambda, W^{\lambda>0} \rangle 
=-\langle \lambda, W^{\lambda<0} \rangle,
\end{align*} where 
the second identity holds as $W=\End_0(V) \otimes \mathbb{C}^{m^2-m}$
 is a self-dual $G_p$-representation. 
Then 
we have the following inclusions of the 
set of $\lambda_i^+$-weights of $V(\chi) \otimes \bigwedge^k W \otimes \chi_0^{d(m^2-m)}$:
\begin{align}\notag
	&\mathrm{wt}_{\lambda_i^+}(V(\chi) \otimes \bigwedge^k W\otimes 
	 \chi_0^{d(m^2-m)}) \\
	&\subset 
	\notag
	\bigcup_{\chi' \in \mathrm{wt}(V(\chi))}
	\left[ -\sum_{j=i+1}^d x_j'-(d-i) \cdot d(m^2-m)-\gamma_{\lambda_i^+}, 
	  -\sum_{j=i+1}^d x_j'-(d-i) \cdot d(m^2-m)+\gamma_{\lambda_i^+} \right] \\
	  &\notag \subset 
	  \left[ (d-i)(-a_{v, m, d}+d-dm^2+dm)-\gamma_{\lambda_{i}^+}, 
	  (d-i)(-dm^2+dm)+\gamma_{\lambda_i^+} \right]. 
	\end{align}
Here $\mathrm{wt}(V(\chi))$ is the set of $T$-weights 
of $V(\chi)$ for the maximal torus $T \subset G_p$, 
and we have written $\chi' \in \mathrm{wt}(V(\chi))$ as 
$\chi'=(x_1', \ldots, x_d')$
satisfying $0\le x_j' \le a_{v, m, d}-d$. 

We show that the above set of weights is contained in 
$[m_i^+, m_i^+ +\eta_i^+)$. 
From the decomposition (\ref{Ext:decom}), 
the $\eta_i^{+} \in \mathbb{Z}$ which appears in (\ref{choice:m}) for 
the one parameter subgroup (\ref{lambdai2})
is calculated as in the proof of Proposition~\ref{prop:WGD}:
\begin{align*}
	\eta_i^+ &= \langle \lambda_i^+, (\Ext_{Q^{\dag}}^1(R, R)^{\vee})^{\lambda_i^+>0} - (\mathfrak{g}_p^{\vee})^{\lambda_i^+>0}
	\rangle \\
	&=\langle \lambda_i^+, ((V^{\vee})^{\oplus a_{v, m, d}} \oplus 
	V^{\oplus b_{b, m, d}} \oplus W \oplus W^{\vee})^{\lambda_i^+ > 0}
	-\End(V)^{\lambda_i^+>0} \rangle \\
	&=(a_{v, m, d}-i)(d-i)+2\gamma_{\lambda_i^+}. 
	\end{align*}
Here $\mathfrak{g}_p=\End(V)$ is the Lie algebra of $G_p=\GL(V)$. 
Therefore we have 
\begin{align*}
	& \left[ m_i^+, m_i^+ +\eta_i^{+}   \right) \\
	&=\left[-\frac{1}{2}\eta_i^+ +\left(\frac{1}{2}C_{v, m}+\frac{m}{2}\right) \langle \lambda_i^+, \chi_0 \rangle, \frac{1}{2}\eta_i^+ +\left(\frac{1}{2}C_{v, m}+\frac{m}{2}\right) \langle \lambda_i^+, \chi_0 \rangle\right) \\
	&=\left[(d-i)\left(-a_{v, m, d}+\frac{i}{2}+\frac{d}{2}-dm^2+dm \right)-\gamma_{\lambda_i^+},  (d-i)\left(-dm^2+dm+\frac{d}{2}-\frac{i}{2} \right) + \gamma_{\lambda_i^+}\right).
	\end{align*}
Since $0\le i\le d-1$, 
we conclude the inclusion 
\begin{align*}
	\mathrm{wt}_{\lambda_i^+}(V(\chi) \otimes \bigwedge^k W\otimes 
	\chi_0^{d(m^2-m)})
	 \subset [m_i^+, m_i^+ +\eta_i^+).
	\end{align*}
Therefore 
the weight condition (\ref{cond:P})
for $i_{\ast}\mathrm{pr}^{\ast}\pP$  
with respect to $\lambda_i^{+}$ is satisfied. 
\end{proof}

Let $s_m=(m, m-1)$ be the dimension vector of 
the stable $Q$-representation $S_m$, defined in (\ref{def:Tm}). 
Let $q_m \in M_Q^{\theta \sss}(s_m)$ be the corresponding 
closed point. 
We consider the formal fiber of the 
good moduli space morphism (\ref{MQ:sm}) at $q_m$
\begin{align*}
	\widehat{\mM}_Q^{\theta \sss}(s_m) \to \widehat{M}_Q^{\theta \sss}(s_m). 
	\end{align*}
Similarly to (\ref{dia:Luna}), the \'{e}tale slice theorem 
implies an isomorphism 
\begin{align}\label{eslice:M}
	\widehat{\mM}_{Q_p}^{\dag}(1)=\left[\widehat{\Ext}^1_{Q}(S_m, S_m)/\Aut(S_m)\right] \stackrel{\cong}{\to} \widehat{\mM}_{Q}^{\theta \sss}(s_m). 	
	\end{align} 
Here $\Aut(S_m)=\C$ acts on $\Ext_Q^1(S_m, S_m)$ trivially. 
We will also use the following lemma, which compares 
window subcategories for quivers without framings: 
\begin{lem}\label{lem:Kn}
	For any $j \in \mathbb{Z}$, we have equivalences
	\begin{align}\label{equiv:sm}
				\MF(\widehat{\mM}_{Q_p^{\rm{red}}}(1), w_p^{\rm{red}})_j
		\stackrel{\sim}{\to} \MF(\widehat{\mM}_{Q_p}(1), w_p)_j
		\stackrel{\sim}{\to}\MF(\widehat{\mM}_Q^{\theta\sss}(s_m), w)_{j}, 
		\end{align}
	and all of them are equivalent to $\MF(\Spec \mathbb{C}, 0)$. 
	Here the first equivalence 
	is given by the Kn\"{o}rrer periodicity in Theorem~\ref{thm:period}. 
		\end{lem}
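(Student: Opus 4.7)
The plan is to combine three ingredients: the triviality of $\widehat{\mM}_{Q_p^{\rm{red}}}(1)$, the \'{e}tale slice isomorphism (\ref{eslice:M}) for $S_m$, and Kn\"{o}rrer periodicity (Theorem~\ref{thm:period}). I first observe that by the definition (\ref{def:Qpred}), the sub-quiver $Q_p^{\rm{red}}$ has a single vertex and no arrows, so the representation space at dimension vector $1$ is zero and $\mM_{Q_p^{\rm{red}}}(1) = B\C$ with $w_p^{\rm{red}} = 0$; consequently $\widehat{\mM}_{Q_p^{\rm{red}}}(1) = B\C$ and
\begin{align*}
\MF(\widehat{\mM}_{Q_p^{\rm{red}}}(1), w_p^{\rm{red}})_j = \MF(B\C, 0)_j \simeq \MF(\Spec \mathbb{C}, 0)
\end{align*}
by the weight decomposition (\ref{decom:Y}). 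This establishes the final clause of the lemma once the two equivalences in (\ref{equiv:sm}) are proved.

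For the second equivalence in (\ref{equiv:sm}), I apply (\ref{eslice:M}) in the unframed setting: $S_m$ is the unique $\theta$-stable $Q$-representation of dimension vector $s_m$, with $\Aut(S_m) = \C$ acting trivially on $\Ext^1_Q(S_m, S_m)$, so
\begin{align*}
\widehat{\mM}_{Q_p}(1) = [\widehat{\Ext^1_Q(S_m, S_m)}_0/\C] \cong \widehat{\mM}_Q^{\theta\sss}(s_m),
\end{align*}
and under this isomorphism $w_p$ is by construction the pullback of $w$. Passing to $\MF(-)_j$ preserves the equivalence.

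The first equivalence is the Kn\"{o}rrer step. From the proof of Lemma~\ref{lem:crit:simple}, there is an open immersion (\ref{open:V}) from $\Hom(V_0, V_1)^{\oplus 2} \times B\C$ into $\mM_Q^{\theta\sss}(s_m)$ whose image contains $\Crit(w) = \{S_m\}$ and on which $w$ restricts to a non-degenerate quadratic form on the $2m(m-1) = c_m$-dimensional space $\Hom(V_0, V_1)^{\oplus 2}$. Pulling back to the formal fiber via (\ref{eslice:M}), $w_p$ on $\widehat{\mM}_{Q_p}(1) \cong \widehat{\Ext^1_Q(S_m, S_m)}_0 \times B\C$ becomes a non-degenerate $\C$-invariant quadratic form $q$ on $\Ext^1_Q(S_m, S_m)$. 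Choosing a Lagrangian polarization $\Ext^1_Q(S_m, S_m) = H \oplus H^\vee$ as in (\ref{isom:W}) so that $q$ becomes the natural pairing, and taking $\wW$ to be the trivial bundle $H$ over $\widehat{\mM}_{Q_p^{\rm{red}}}(1) = B\C$, Theorem~\ref{thm:period} yields
\begin{align*}
i_{\ast}\mathrm{pr}^{\ast} \colon \MF(B\C, 0) \stackrel{\sim}{\to} \MF(\wW \oplus \wW^\vee, q),
\end{align*}
and restricting to the formal completion $\widehat{\mM}_{Q_p}(1) \subset \wW \oplus \wW^\vee$ (as in Proposition~\ref{prop:funct}) gives the desired equivalence. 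The construction is $\C$-equivariant, so it preserves the weight-$j$ component. The main delicate point is arranging the match between the intrinsic quadratic form $q$ and the canonical Kn\"{o}rrer pairing on $\wW \oplus \wW^\vee$; this is the analog of the re-choice of isomorphism in Proposition~\ref{prop:func}, and is always possible over $\mathbb{C}$ since any non-degenerate symmetric form on a finite-dimensional vector space with trivial $\C$-action admits a Lagrangian splitting.
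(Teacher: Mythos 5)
Your proposal is correct and follows essentially the same route as the paper: identify $(\widehat{\mM}_{Q_p^{\rm{red}}}(1), w_p^{\rm{red}})$ with $(B\C, 0)$, use the slice isomorphism (\ref{eslice:M}) to identify $w_p$ with a non-degenerate quadratic form $q$ on $H \oplus H^{\vee}$ with trivial $\C$-action, and then apply Kn\"{o}rrer periodicity together with an equivalence between the category on the affine model and on its formal completion. The only cosmetic differences are that you justify the non-degeneracy of $w_p$ via the explicit chart (\ref{open:V}) from Lemma~\ref{lem:crit:simple} and treat the completion step by the fully-faithful-plus-dense-image argument of Proposition~\ref{prop:funct}, whereas the paper invokes the Hessian/Morse-lemma argument of Proposition~\ref{prop:func} and cites a completion equivalence directly; both justifications are valid.
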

\begin{proof}
By the definition of $Q_p^{\rm{red}}$ in (\ref{def:Qpred}), we have 	
$(\widehat{\mM}_{Q_p^{\rm{red}}}(1), w_p^{\rm{red}})=
	(B\C, 0)$.
	On the other hand, 
	the isomorphisms (\ref{isom:W}), (\ref{eslice:M}) and an argument of Proposition~\ref{prop:func}
	imply an isomorphism 
\begin{align}\label{isom:Msm}
		(\widehat{\mM}_{Q_p}(1), w_p)\cong 
		\left(\left[\widehat{(H \oplus H^{\vee})}/\C \right], q \right)
		\end{align}
where $\C$ acts on $H=\mathbb{C}^{m^2-m}$ trivially
and $q$ is the natural paring on $H$ and its dual. 
By the 
Kn\"{o}rrer periodicity in Theorem~\ref{thm:period}, we have 
an equivalence 
\begin{align*}
	\MF(\widehat{\mM}_{Q_p^{\rm{red}}}(1), w_p^{\rm{red}})_j 
	=\MF(\Spec \mathbb{C}, 0)
	\stackrel{\sim}{\to} \MF(H\oplus H^{\vee}, q). 
	\end{align*}
The natural functor by the formal completion 
\begin{align*}
	\MF(H \oplus H^{\vee}, q)
	\to \MF(\widehat{H \oplus H^{\vee}}, q)
	\end{align*}
is an equivalence (see~\cite[Remark~2.18]{Brown}). 
Therefore we obtain the desired equivalences (\ref{equiv:sm}). 
	\end{proof}

\subsection{Comparison of Hall products}
As in the previous subsections, we take 
a stability condition on the wall 
$\theta \in W_m$ for $m \ge 1$. 
As in Subsection~\ref{subsec:catHw}, 
we have the categorified Hall product 
 \begin{align}\label{Hahat-1}
\MF(\mM_Q^{\theta \sss}(s_m), w)_{j_1}
	\boxtimes \cdots \boxtimes 	\MF(\mM_Q^{\theta \sss}(s_m), w)_{j_l}
	\boxtimes &\MF(\mM_Q^{\dag, \theta \sss}(v-l s_m), w) \\
\notag	&\to \MF(\mM_Q^{\dag, \theta \sss}(v), w). 
	\end{align}
Here $s_m=(m, m-1)$ is the dimension vector of $S_m$. 
We take a $\theta$-polystable representation $Q^{\dag}$-representation
$R$ of the form (\ref{pstableR}), 
i.e. $R=R_{\infty} \oplus (V \otimes S_m)$
with $\dim V=d$, 
and the corresponding closed point $p \in M_{Q}^{\dag, \theta \sss}(v)$. 
By taking the base change of the above 
categorified Hall product 
to the formal completion at $p$ (see Subsection~\ref{subsec:bchange}),   
we obtain the functor 
\begin{align}\label{Hahat0}
	\MF(\widehat{\mM}_Q^{\theta \sss}(s_m), w)_{j_1}
	\boxtimes \cdots \boxtimes 	\MF(\widehat{\mM}_Q^{\theta \sss}(s_m), w)_{j_l}
	\boxtimes &\MF(\widehat{\mM}_Q^{\dag, \theta \sss}(v-l s_m)_{p_l}, w) \\
\notag	&\to \MF(\widehat{\mM}_Q^{\dag, \theta \sss}(v)_p, w). 
\end{align}
Here $p_l \in M_{Q}^{\dag, \theta \sss}(v-l s_m)$ 
corresponds to the $\theta$-polystable representation 
$R_{\infty} \oplus (V'\otimes S_m)$
with $\dim V'=d-l$. 
We note that by the isomorphism $\eta_p$ in (\ref{ffiber})
and the isomorphism (\ref{eslice:M}), 
the above functor is identified with 
the functor 
\begin{align}\label{Hahat1}
	\MF(\widehat{\mM}_{Q_p}(1), w_p)_{j_1} \boxtimes \cdots \boxtimes 
	\MF(\widehat{\mM}_{Q_p}(1), w_p)_{j_l} \boxtimes 
	&\MF(\widehat{\mM}_{Q_p}^{\dag}(d-l), w_p) \\
\notag	& \to 
	\MF(\widehat{\mM}_{Q_p}^{\dag}(d), w_p)
	\end{align}
obtained by the categorified Hall products for $Q_p^{\dag}$-representations 
and the completions at the origins. 
A similar construction also gives the categorified Hall product 
for $Q_p^{\rm{red}, \dag}$-representations
\begin{align}\label{Hahat2}
	\MF(\widehat{\mM}_{Q_p^{\rm{red}}}(1), 0)_{j_1} \boxtimes \cdots \boxtimes 
	\MF(\widehat{\mM}_{Q_p^{\rm{red}}}(1), 0)_{j_l} \boxtimes 
	&\MF(\widehat{\mM}_{Q_p^{\rm{red}}}^{\dag}(d-l), w_p^{\rm{red}}) \\
\notag	 &\to 
	\MF(\widehat{\mM}_{Q_p^{\rm{red}}}^{\dag}(d), w_p^{\rm{red}}).
\end{align}
We compare the above categorified Hall products under the
Kn\"{o}rrer periodicity: 
\begin{prop}\label{prop:compare}
	The following diagram commutes
	\begin{align}\label{dia:Hcompare}
		\xymatrix{
\boxtimes_{i=1}^l 	\MF(\widehat{\mM}_{Q_p^{\rm{red}}}(1), 0)_{j_i} \boxtimes
\MF(\widehat{\mM}_{Q_p^{\rm{red}}}^{\dag}(d-l), w_p^{\rm{red}}) \ar[r] \ar[d] & 
\MF(\widehat{\mM}_{Q_p^{\rm{red}}}^{\dag}(d), w_p^{\rm{red}}) \ar[d] \\
\boxtimes_{i=1}^l	\MF(\widehat{\mM}_{Q_p}(1), w_p)_{j_i+(2i-d-1)(m^2-m)}
\boxtimes \MF(\widehat{\mM}_{Q_p}^{\dag}(d-l), w_p) \ar[r] &
\MF(\widehat{\mM}_{Q_p}^{\dag}(d), w_p).
}
		\end{align}
	Here the horizontal arrows are given by 
	categorized Hall products (\ref{Hahat1}), (\ref{Hahat2}), 
	 the right vertical arrow is 
	given in Proposition~\ref{prop:funct}, and 
	the left vertical arrow is a composition of 
	the functors in Proposition~\ref{prop:funct}, Lemma~\ref{lem:Kn} 
	with the equivalence 
	\begin{align}\label{factor:tensor}
		\boxtimes_{i=1}^l &\otimes \oO_{B\C}((2i-d-1)(m^2-m)) \boxtimes \otimes \chi_0^{l(m^2-m)}
		\left[\left(dl-\frac{l}{2}-\frac{l^2}{2}\right)(m^2-m) \right]
		\colon \\
		\notag&\boxtimes_{i=1}^l 	\MF(\widehat{\mM}_{Q_p^{\rm{red}}}(1), 0)_{j_i} \boxtimes
		\MF(\widehat{\mM}_{Q_p^{\rm{red}}}^{\dag}(d-l), w_p^{\rm{red}}) \\
		\notag&\stackrel{\sim}{\to}
		 \boxtimes_{i=1}^l 	\MF(\widehat{\mM}_{Q_p^{\rm{red}}}(1), 0)_{j_i+(2i-d-1)(m^2-m)}\boxtimes
		\MF(\widehat{\mM}_{Q_p^{\rm{red}}}^{\dag}(d-l), w_p^{\rm{red}}).
		\end{align}
	\end{prop}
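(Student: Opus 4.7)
The plan is to realize both horizontal arrows of~(\ref{dia:Hcompare}) by a single attracting-loci diagram and apply Proposition~\ref{prop:Knoer}, then identify the resulting twist with~(\ref{factor:tensor}) by an explicit weight computation.

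First, by associativity of the categorified Hall product, the iterated products in both rows can be written as a single functor $\widehat{p}_{\lambda\ast}\widehat{q}_{\lambda}^{\ast}$ for the one parameter subgroup $\lambda\colon \mathbb{C}^{\ast}\to \GL(V)$ given by $\lambda(t)=(t^{\lambda^{(1)}},\ldots,t^{\lambda^{(l)}},1,\ldots,1)$ on a fixed splitting $V=V^{(1)}\oplus\cdots\oplus V^{(l)}\oplus V^{(\infty)}$, with $\dim V^{(i)}=1$ for $i\le l$, $\dim V^{(\infty)}=d-l$, and $\lambda^{(1)}>\cdots>\lambda^{(l)}>0$. Both rows of~(\ref{dia:Hcompare}) are then $\widehat{p}_{\lambda\ast}\widehat{q}_{\lambda}^{\ast}$ on $(\widehat{\mM}_{Q_{p}^{\mathrm{red}}}^{\dag}(d),w_p^{\mathrm{red}})$ and on $(\widehat{\mM}_{Q_p}^{\dag}(d),w_p)$ respectively, and the $\lambda$-fixed stack on the top is precisely $\widehat{\mM}_{Q_p^{\mathrm{red}}}(1)^{l}\times \widehat{\mM}_{Q_p^{\mathrm{red}}}^{\dag}(d-l)$.

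Second, Proposition~\ref{prop:Knoer} applied to this $\lambda$ on the Kn\"{o}rrer pair $(\widehat{\mM}_{Q_p^{\mathrm{red}}}^{\dag}(d),w_p^{\mathrm{red}})$ with vector bundle $\wW$ induced by the $\GL(V)$-representation $W=\End_0(V)\otimes H$ yields the desired commutation up to tensoring with $(\det \wW^{\lambda>0})^{\vee}$ and shifting by $[\dim W^{\lambda>0}]$ on the $\lambda$-fixed stack. The extra pull-back $\iota^{\ast}$ in $\Phi_p=\iota^{\ast}i_{\ast}\mathrm{pr}^{\ast}$ (compared to the bare Kn\"{o}rrer $\Phi$ of Theorem~\ref{thm:period}) commutes with $\widehat{p}_{\lambda\ast}\widehat{q}_{\lambda}^{\ast}$ by flat base change along the attracting-loci square, so no correction is needed from that source. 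It remains to identify the twist with~(\ref{factor:tensor}) by an explicit weight computation. Since $\End_0(V)^{\lambda>0}=\bigoplus_{i<j}\Hom(V^{(j)},V^{(i)})$, where the indices range over $\{1,\ldots,l,\infty\}$ with $\lambda^{(\infty)}\cneq 0$, we obtain
\[
\dim W^{\lambda>0}=\Bigl(\binom{l}{2}+l(d-l)\Bigr)(m^2-m)=\Bigl(dl-\tfrac{l}{2}-\tfrac{l^2}{2}\Bigr)(m^2-m),
\]
matching the cohomological shift. For each $1\le i\le l$, the $\GL(V^{(i)})\cong \mathbb{C}^{\ast}$-weight of $(\det \wW^{\lambda>0})^{\vee}$ collects contributions $-(l-i)(m^2-m)$ from $\Hom(V^{(j)},V^{(i)})\otimes H$ with $i<j\le l$, $-(d-l)(m^2-m)$ from $\Hom(V^{(\infty)},V^{(i)})\otimes H$, and $+(i-1)(m^2-m)$ from $\Hom(V^{(i)},V^{(k)})\otimes H$ with $k<i$, which sum to $(2i-d-1)(m^2-m)$ and so produce the tensor factor $\oO_{B\C}((2i-d-1)(m^2-m))$ on the $i$-th slot. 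The $\GL(V^{(\infty)})$-character receives contributions only from $\Hom(V^{(\infty)},V^{(i)})\otimes H$ for $i=1,\ldots,l$, which sum to $\chi_0^{l(m^2-m)}$ after dualizing, giving the tensor factor on the last slot.

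The main subtlety is the compatibility of $\iota^{\ast}$ with the Hall product for $Q_p^{\dag}$-representations, since $\iota$ identifies $\widehat{\mM}_{Q_p}^{\dag}(d)$ with a formal completion inside $\wW\oplus \wW^{\vee}$ over $\widehat{\mM}_{Q_p^{\mathrm{red}}}^{\dag}(d)$. This reduces to the functoriality of formal completion with respect to the $\GL(V)$-equivariant attracting loci, followed by flat base change exactly as in~(\ref{bchange:2}), so it does not introduce a genuine obstacle.
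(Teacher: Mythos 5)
Your proposal is correct and follows essentially the same route as the paper: both reduce the diagram to Proposition~\ref{prop:Knoer} applied to the one parameter subgroup $\lambda(t)=(t^{l},\ldots,t,1,\ldots,1)$ with $W=\End_0(V)\otimes H$, and then identify the resulting twist $(\det \wW^{\lambda>0})^{\vee}[\dim W^{\lambda>0}]$ with (\ref{factor:tensor}) by the same weight count, yielding the exponents $(2i-d-1)(m^2-m)$, $\chi_0^{l(m^2-m)}$ and the shift $(dl-\tfrac{l}{2}-\tfrac{l^2}{2})(m^2-m)$. Your extra remark on the compatibility of $\iota^{\ast}$ (formal completion) with the attracting-loci functors via base change is a point the paper leaves implicit, but it does not change the argument.
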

\begin{proof}
We take $\lambda \colon \C \to G_p=\GL(V)$
	by 
	\begin{align*}
		\lambda(t)=(t^l, t^{l-1}, \ldots, t, \overbrace{1, \ldots, 1}^{d-l}). 
		\end{align*}
	Then the top arrow in the diagram (\ref{dia:Hcompare})
	is obtained from the diagram of attracting loci 
	for $\widehat{\mM}_{Q_p^{\rm{red}}}^{\dag}(d)$
	with respect to the above $\lambda$. 
	Recall that in the diagram (\ref{dia:WW}), the vector bundle 
	 $\wW \to \widehat{\mM}_{Q_p^{\rm{red}}}^{\dag}(d)$ is 
	induced by the $\GL(V)$-representation $W=\End_0(V) \otimes H$
	for $H=\mathbb{C}^{m^2-m}$ by its definition. 
By Proposition~\ref{prop:Knoer}, the categorified Hall products
	in (\ref{dia:Hcompare})
	commute with Kn\"{o}rrer periodicity equivalences
	up to the following equivalence 
	\begin{align}\label{factor:tensor2}
		\otimes  \det (\End_0(V)^{\lambda >0} & \otimes H)^{\vee}
		[\dim (\End_0(V)^{\lambda >0} \otimes H)]
		 \colon \\
		 	\notag \boxtimes_{i=1}^l &	\MF(\widehat{\mM}_{Q_p^{\rm{red}}}(1), 0) \boxtimes
		 \MF(\widehat{\mM}_{Q_p^{\rm{red}}}^{\dag}(d-l), w_p^{\rm{red}}) \\
		 \notag & \stackrel{\sim}{\to}
		 \boxtimes_{i=1}^l 	\MF(\widehat{\mM}_{Q_p^{\rm{red}}}(1), 0)\boxtimes
		 \MF(\widehat{\mM}_{Q_p^{\rm{red}}}^{\dag}(d-l), w_p^{\rm{red}}).		 
		\end{align}
	
It is enough to show that the equivalence (\ref{factor:tensor2}) 
	restricts to the equivalence (\ref{factor:tensor}). 	
	Let 
	$V=\oplus_{i=0}^l V_i$ be the decomposition 
	into $\lambda$-weight part, i.e. 
	$V_i$ has $\lambda$-weight $i$ so that 
	$\dim V_i=1$ for $1\le i\le l$ and 
	$\dim V_{0}=d-l$. 
	We have 
	\begin{align*}
		\End_0(V, V)^{\lambda>0} =
		\left(\bigoplus_{0\le i<j\le l} V_i^{\vee} \otimes V_j\right). 
		\end{align*}
		We compute that 
	\begin{align*}
		\det \left(\End_0(V, V)^{\lambda >0}\right)^{\vee} &=
		\bigotimes_{0\le i<j\le l} \det(V_i \otimes V_j^{\vee}) \\
		&=\bigotimes_{1\le j\le l} \det(V_0 \otimes V_j^{\vee})
		\otimes \bigotimes_{1\le i<j\le l} \det(V_i \otimes V_j^{\vee}) \\
		&=(\det V_0)^l \otimes 
		\bigotimes_{i=1}^l (\det V_i)^{2l-2i+1-d}. 
		\end{align*}
	We note that 
	$\otimes \det V_i=\otimes \oO_{B\C}(1)$ on 
	the factor 
	$\MF(\widehat{\mM}_{Q_p^{\rm{red}}}(1), 0)_{j_{l-i+1}}$. 
	We also have 
	\begin{align*}
		\dim \End_0(V, V)^{\lambda>0}=dl-\frac{l}{2}-\frac{l^2}{2}. 
		\end{align*}
	Therefore the equivalence (\ref{factor:tensor2}) restricts to the equivalence (\ref{factor:tensor}). 
	\end{proof}
\subsection{Semiorthogonal decomposition of global window subcategories}
The following is the main result in this section:  
\begin{thm}\label{thm:wincon}
	For 
	 $l\ge 0$ and $0\le j_1 \le \cdots \le j_l \le m-l$, 
	 the categorified Hall product (\ref{Hahat-1}) restricts 
	 to the fully-faithful functor 
	 \begin{align}\label{FF:global}
	 	\Upsilon_{j_{\bullet}} \colon 
	 \boxtimes_{i=1}^l \MF(\mM_Q^{\theta\sss}(s_m), w)_{j_i+(2i-1)(m^2-m)}
	\boxtimes \left(\mathbb{W}_{\rm{glob}}^{\theta_-}(v-ls_m) \otimes \chi_0^{j_l+2l(m^2-m)}  \right) 
	\to 	\mathbb{W}_{\rm{glob}}^{\theta_+}(v)
	\end{align}
such that, by setting $\cC_{j_{\bullet}}$ to be the essential image of the above functor $\Upsilon_{j_{\bullet}}$,  
we have the semiorthogonal decomposition 
	\begin{align}\label{sod:global}
		\mathbb{W}_{\rm{glob}}^{\theta_+}(v)=
		\langle \cC_{j_{\bullet}} : l\ge 0, 0\le j_1 \le \cdots \le j_l \le m-l \rangle, 
			\end{align}
		where $\Hom(\cC_{j_{\bullet}}, \cC_{j_{\bullet}'})=0$
	for $j_{\bullet} \succ j_{\bullet}'$
	(see Definition~\ref{defi:orderj}). 
	\end{thm}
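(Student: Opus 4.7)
The plan is to verify the statement formally locally on the good moduli space $M_Q^{\dag, \theta \sss}(v)$ at each closed point $p$ represented by a $\theta$-polystable representation of the form (\ref{pstableR}), reducing via the \'etale slice description (\ref{ffiber}) and Kn\"orrer periodicity (Proposition~\ref{prop:Koszul}) to a statement about the reduced Ext-quiver, where Corollary~\ref{cor:MF} supplies the required semiorthogonal decomposition.

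First I would show that $\Upsilon_{j_\bullet}$ factors through $\mathbb{W}_{\rm{glob}}^{\theta_+}(v)$. By Lemma~\ref{lem:globloc} this reduces to a formal-local check at each polystable point $p = [R_\infty \oplus (V \otimes S_m)]$ with $\dim V = d$: the Hall product (\ref{Hahat-1}) base-changes to (\ref{Hahat0}) (Subsection~\ref{subsec:bchange}), which via the isomorphism $\eta_p$ of (\ref{ffiber}) and the \'etale slice isomorphism (\ref{eslice:M}) becomes the local Hall product (\ref{Hahat1}) for $Q_p^\dag$. Full-faithfulness and the semiorthogonality $\Hom(\cC_{j_\bullet}, \cC_{j_\bullet'}) = 0$ for $j_\bullet \succ j_\bullet'$ are likewise local properties on $M_Q^{\dag, \theta \sss}(v)$, and will be verified at each such $p$.

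The key identification comes from combining Proposition~\ref{prop:compare} (which relates the Hall product for $Q_p^\dag$ with the Hall product for $Q_p^{\rm{red}, \dag}$, introducing the weight shift $(2i - d - 1)(m^2 - m)$ on the $i$-th Hall factor and a $\chi_0^{l(m^2-m)}$ twist on the last factor) with Proposition~\ref{prop:Koszul} (which identifies $\mathbb{W}^\pm(d)_p$ with $\mathbb{W}_{a_{v,m,d}}(d)_p \otimes \chi_0^{d(m^2-m)}$ and $\mathbb{W}_{b_{v,m,d}}(d-l)_{p_l} \otimes \chi_0^{(d-l)(m^2-m)}$ in the relevant dimensions), Lemma~\ref{lem:Kn} (which identifies the Hall factors $\MF(\widehat{\mM}_Q^{\theta \sss}(s_m), w)_j$ with their reduced-Ext-quiver counterparts), the identities $a_{v - l s_m, m, d-l} = a_{v, m, d}$ and $b_{v - l s_m, m, d-l} = b_{v, m, d}$ that follow from a direct computation using Lemma~\ref{lem:computea}, and the intertwining Lemma~\ref{lem:chi0} between $\chi_0$-twists and Hall products. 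These combine to identify the local restriction of $\Upsilon_{j_\bullet}$, modulo the overall $\chi_0^{d(m^2-m)}$-twist, with the iterated Hall product
$$\MF(\Spec \mathbb{C}, 0)_{j_1} \ast \cdots \ast \MF(\Spec \mathbb{C}, 0)_{j_l} \ast \bigl( \widehat{\mathbb{W}}_{b_{v,m,d}}(d-l)_{p_l} \otimes \chi_0^{j_l} \bigr)$$
sitting inside $\widehat{\mathbb{W}}_{a_{v,m,d}}(d)_p$, i.e.\ with the subcategory $\widehat{\cC}_{j_\bullet}$ of (\ref{subcat:product2}).

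Having made this identification, I would finally apply Corollary~\ref{cor:MF} with $c = a_{v,m,d}$ and $b = b_{v,m,d}$. By Lemma~\ref{lem:computea} we have $a_{v,m,d} - b_{v,m,d} = m$, so the indexing range $0 \le j_1 \le \cdots \le j_l \le c - b - l = m - l$ in Corollary~\ref{cor:MF} matches precisely the range in the statement. The full-faithfulness, the equivalence onto $\widehat{\cC}_{j_\bullet}$, and the desired semiorthogonality then follow locally from Corollary~\ref{cor:MF} and globally via Lemma~\ref{lem:globloc}. The main obstacle will be the careful weight bookkeeping in the previous paragraph: the three independent shifts (the $\chi_0^{d(m^2-m)}$ from Kn\"orrer, the $(2i - d - 1)(m^2 - m)$ on the $i$-th Hall factor from (\ref{factor:tensor}), and the $\chi_0^{l(m^2-m)}$ on the last factor there) must combine, via iterated application of Lemma~\ref{lem:chi0}, to reproduce exactly the shifts $j_i + (2i-1)(m^2-m)$ on the Hall factors and $\chi_0^{j_l + 2l(m^2-m)}$ on the last factor as stated.
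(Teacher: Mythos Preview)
Your overall strategy matches the paper's: reduce to formal fibers at polystable points, pass through Kn\"orrer periodicity via Propositions~\ref{prop:Koszul} and~\ref{prop:compare} and Lemma~\ref{lem:Kn} to the reduced Ext-quiver, and invoke Corollary~\ref{cor:MF} with $a_{v,m,d}-b_{v,m,d}=m$. The weight bookkeeping you outline is also the right computation, and your observation $a_{v-ls_m,m,d-l}=a_{v,m,d}$, $b_{v-ls_m,m,d-l}=b_{v,m,d}$ is correct and exactly what makes the local picture independent of $d$.

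However, there is a genuine gap in your local-to-global step. You assert that full-faithfulness and semiorthogonality are local on $M_Q^{\dag,\theta\sss}(v)$ and then invoke Lemma~\ref{lem:globloc}. But Lemma~\ref{lem:globloc} only says that \emph{membership in a window subcategory} is local; it does not by itself promote full-faithfulness of a functor from formal fibers to the global stack. The paper handles this by constructing a right adjoint $\Upsilon_{j_\bullet}^R$ (Lemma~\ref{lem:radjoint}, which requires a nontrivial Fourier--Mukai argument), and then observing that full-faithfulness is equivalent to the cone of the unit $\id\to\Upsilon_{j_\bullet}^R\Upsilon_{j_\bullet}$ being zero; \emph{that} vanishing is a local property by Lemma~\ref{lem:locvani}. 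Semiorthogonality is treated the same way. Without the right adjoint you have no obvious mechanism to globalize.

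Second, you do not address generation at all. The semiorthogonal decomposition (\ref{sod:global}) requires that the $\cC_{j_\bullet}$ generate $\mathbb{W}_{\rm{glob}}^{\theta_+}(v)$, not just that they are semiorthogonal. The paper again uses the right adjoints: given $\eE$ in the window, one peels off successive pieces $\Upsilon_{j_\bullet}\Upsilon_{j_\bullet}^R(\eE)$ in decreasing order of $j_\bullet$, obtaining a remainder in $\langle\cC_{j_\bullet}\rangle^\perp$; the formal-local semiorthogonal decomposition (\ref{sod:formal}) forces this remainder to vanish on each formal fiber, hence globally by Lemma~\ref{lem:locvani}. You should add both of these ingredients.
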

\begin{proof}
	We take a $\theta$-polystable representation $R$ of the form 
	(\ref{pstableR}), i.e. $R=R_{\infty} \oplus (V \otimes S_m)$
	with $\dim V=d$, 
	the corresponding closed point $p\in M_{Q}^{\dag, \theta \sss}(v)$, 
	and consider the quivers $Q_p^{\dag}$, $Q_p^{\dag, \rm{red}}$ as in 
	the previous subsections. 
	Note that if we remove the loops at the vertex $\{\infty\}$
	from $Q_p^{\dag}$, then 
	we obtain the quiver $Q_{a, b}$ for $a=a_{v, m, d}$
	and $b=b_{v, m, d}$ considered in Remark~\ref{rmk:Gab}.  
	By applying Corollary~\ref{cor:MF}
 for
	the above $Q_{a, b}$, and then taking the tensor product 
with $\chi_0^{d(m^2-m)}$, 
we obtain the semiorthogonal decomposition 
\begin{align*}
	&\mathbb{W}_{a_{v, m, d}}(d)_p \otimes \chi_0^{d(m^2-m)}= \\
	&\left \langle \boxtimes_{i=1}^l \MF(\widehat{\mM}_{Q_p^{\rm{red}}}(1), 0)_{j_i+d(m^2-m)} \boxtimes \left(\mathbb{W}_{b_{v, m, d}}(d-l)_{p_l} \otimes \chi_0^{(d-l)(m^2-m)}
	\otimes \chi_0^{j_l+l(m^2-m)} \right)  \right \rangle. 
	\end{align*} 
	Here $l \ge 0$, $p_l \in M_Q^{\dag, \theta \sss}(v-l s_m)$
	corresponds to $R_{\infty} \oplus (V' \otimes S_m)$ with $\dim V'=d-l$,
	 and 
	\begin{align}\label{j:ab}
		0 \le j_1 \le \cdots \le j_l \le a_{v, m, d}-b_{v, m, d}-l
	=m-l.
	\end{align}
	By applying Proposition~\ref{prop:Koszul}, Lemma~\ref{lem:Kn}
	and 
	Proposition~\ref{prop:compare},  
	we obtain the semiorthogonal decomposition 
		\begin{align*}
		\mathbb{W}^{+}(d)_p=
		\left \langle \boxtimes_{i=1}^l \MF(\widehat{\mM}_{Q_p}(1), w_p)_{j_i+(2i-1)(m^2-m)} \boxtimes 
		\left(\mathbb{W}^-(d-l)_{p_l} \otimes \chi_0^{j_l+2l(m^2-m)}  
		\right) \right \rangle. 
	\end{align*}
	By the identification of categorified Hall products (\ref{Hahat0}) with (\ref{Hahat1})
	together with the equivalence (\ref{etap:equiv}), 
	we obtain the semiorthogonal decomposition 
	\begin{align}\label{sod:formal}
		&\mathbb{W}_{\rm{loc}}^{\theta_+}(v)_p= \\
		&\notag \left \langle \boxtimes_{i=1}^l \MF(\widehat{\mM}_Q^{\theta \sss}(s_m), w)_{j_i+(2i-1)(m^2-m)} \boxtimes 
		\left(\mathbb{W}_{\rm{loc}}^{\theta_-}(v-ls_m)_{p_l} \otimes \chi_0^{j_l+2l(m^2-m)}  
		\right) \right \rangle. 
		\end{align}
	A key observation is that in the above semiorthogonal 
	decomposition there is no term involving $d=\dim V$ (which
	depends on 
	a choice of $\theta$-polystable object (\ref{pstableR}))
	so that we can globalize it. Indeed we have 
	globally defined functors (\ref{FF:global}) and, noting 
	Lemma~\ref{lem:globloc}, in order to show that they are fully-faithful and 
	forms a semiorthogonal decomposition it is enough to check 
	these properties formally locally at each closed point 
	of $M_{Q}^{\dag, \theta\sss}(v)$ corresponding to 
	a $\theta$-polystable $(Q^{\dag}, W)$-representation (see the arguments 
	in~\cite[Proposition~6.9, Theorem~6.11]{Totheta} for example). 
	
	Here we give some more details for how to derive the global 
	semiorthogonal decomposition (\ref{sod:global})
	from the formal local one (\ref{sod:formal}). 
	We first note that the categorified Hall product (\ref{Hahat-1}) restricts to the functor (\ref{FF:global}). This follows from the fact that the categorified Hall products commute 
	with base change to the formal completion of good moduli 
	spaces
	(see the diagram (\ref{bchange:2})), the fact (which 
	follows from (\ref{sod:formal})) 
	that 
	formally locally over $M_Q^{\dag, \theta \sss}(v)$
	the categorified Hall product restricts to the functor 
	\begin{align*}
		 \boxtimes_{i=1}^l \MF(\widehat{\mM}_Q^{\theta \sss}(s_m), w)_{j_i+(2i-1)(m^2-m)} \boxtimes 
		\left(\mathbb{W}_{\rm{loc}}^{\theta_-}(v-ls_m)_{p_l} \otimes \chi_0^{j_l+2l(m^2-m)}  
		\right) 
		\to \mathbb{W}_{\rm{loc}}^{\theta_+}(v)_p
		\end{align*}
	and noting Lemma~\ref{lem:globloc}. 
	
By Lemma~\ref{lem:radjoint} below, the functor 
$\Upsilon_{j_{\bullet}}$ admits a right adjoint 
$\Upsilon_{j_{\bullet}}^R$. 
Now in order to show that $\Upsilon_{j_{\bullet}}$ is fully-faithful, 
it is enough to show 
that the adjunction morphism 
\begin{align*}
	(-) \to \Upsilon^R_{j_{\bullet}} \circ \Upsilon_{j_{\bullet}}(-)
	\end{align*}
 is an isomorphism. Equivalently, it is enough to show the cone of the above 
 morphism is zero. 
By Lemma~\ref{lem:locvani}, 
this is a property formally locally over $M_Q^{\dag, \theta \sss}(v)$. So from 
the semiorthogonal decomposition (\ref{sod:formal})
we conclude that 
$\Upsilon_{j_{\bullet}}$ is fully-faithful. 
A similar argument also shows that 
$\cC_{j_{\bullet}}$ for $j_{\bullet}$ 
given in (\ref{j:ab}) are semiorthogonal. 

In order to show that $\cC_{j_{\bullet}}$ for $j_{\bullet}$ 
given in (\ref{j:ab}) generate
$\mathbb{W}_{\rm{glob}}^{\theta_+}(v)$, 
let us take $\eE \in \mathbb{W}_{\rm{glob}}^{\theta_+}(v)$
and $j_{\bullet}$ so that $j_{\bullet}$ is maximal
in the order of Definition~\ref{defi:orderj}. 
We have the distinguished triangle 
\begin{align*}
	\Upsilon_{j_{\bullet}}\Upsilon_{j_{\bullet}}^R(\eE) \to \eE \to \eE', \ 
	\eE' \in \cC_{j_{\bullet}}^{\perp}. 
	\end{align*}
By applying the above construction for $\eE'$ and the second 
maximal $j_{\bullet}$ and repeating, 
we obtain the distinguished triangle 
\begin{align*}
\eE_1 \to \eE \to \eE_2, \ 
\eE_1 \in \langle \cC_{j_{\bullet}} \rangle, \ 
\eE_2 \in \langle \cC_{j_{\bullet}} \rangle^{\perp}. 
	\end{align*}
Here $\langle \cC_{j_{\bullet}} \rangle$
is the right hand side of (\ref{sod:global}). 
From the semiorthogonal decomposition (\ref{sod:formal}), 
we have $\eE_2|_{\widehat{\mM}_Q^{\dag, \theta \sss}(v)_p}=0$
for any closed point $p \in M_Q^{\dag, \theta \sss}(v)$, 
therefore $\eE_2=0$ by Lemma~\ref{lem:locvani}. 
	Therefore
	$\eE \in \langle \cC_{j_{\bullet}} \rangle$, 
	and we have the desired semiorthogonal decomposition (\ref{sod:global}). 
	\end{proof}

The following corollary, which is an immediate consequence from 
Theorem~\ref{thm:wincon}, 
categorifies wall-crossing formula of 
the associated DT invariants in~\cite{NN}.  
\begin{cor}\label{cor:sod}
	There exists a semiorthogonal decomposition of the form 
	\begin{align*}
		\MF(\mM_{Q}^{\dag, \theta_+}(v), w)=
		\left \langle \MF(\mM_Q^{\dag, \theta_{-}}(v-l s_m), w)_{j_{\bullet}} : 
		l \ge 0, 0\le j_1 \le \cdots \le j_l \le m-l  \right \rangle. 
		\end{align*}
	Here $\MF(\mM_Q^{\dag, \theta_{-}}(v-l s_m), w)_{j_{\bullet}}$
	is a copy of $\MF(\mM_Q^{\dag, \theta_{-}}(v-l s_m), w)$. 
		\end{cor}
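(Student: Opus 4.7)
The plan is to deduce Corollary~\ref{cor:sod} as an essentially formal consequence of Theorem~\ref{thm:wincon}, combined with two facts already established earlier in the paper: the window equivalence relating $\mathbb{W}_{\rm{glob}}^{\theta_\pm}(v)$ with the categorical DT invariant on the open substack, and the computation of Lemma~\ref{lem:crit:simple} which identifies each weight piece of $\MF(\mM_Q^{\theta\sss}(s_m),w)$ with $\MF(\Spec\mathbb{C},0)$.

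First I would apply the equivalence (\ref{equiv:glob}) for $\theta_+$, which identifies $\mathbb{W}_{\rm{glob}}^{\theta_+}(v)$ with $\MF(\mM_Q^{\dag,\theta_+\sss}(v),w)$, transporting the semiorthogonal decomposition (\ref{sod:global}) of Theorem~\ref{thm:wincon} onto the right-hand side of the desired corollary. It then remains to identify each essential image $\cC_{j_\bullet}$ with a copy of $\MF(\mM_Q^{\dag,\theta_-\sss}(v-ls_m),w)$.

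For this I would invoke Lemma~\ref{lem:crit:simple} to replace each of the $l$ factors $\MF(\mM_Q^{\theta\sss}(s_m),w)_{j_i+(2i-1)(m^2-m)}$ by $\MF(\Spec\mathbb{C},0)$. Since the external product convention of Subsection~\ref{subsec:MF} gives $\MF(\Spec\mathbb{C},0)\boxtimes\dD\simeq\dD$ (because $0+w=w$ on $\Spec\mathbb{C}\times\yY=\yY$), these $l$ factors may be discarded. The remaining factor is $\mathbb{W}_{\rm{glob}}^{\theta_-}(v-ls_m)\otimes\chi_0^{j_l+2l(m^2-m)}$; the twist by the line bundle $\chi_0^{j_l+2l(m^2-m)}$ is an autoequivalence, and the second equivalence in (\ref{equiv:glob}) applied to $v-ls_m$ identifies what remains with $\MF(\mM_Q^{\dag,\theta_-\sss}(v-ls_m),w)$. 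Labelling this copy by the index $j_\bullet$ gives the factor $\MF(\mM_Q^{\dag,\theta_-\sss}(v-ls_m),w)_{j_\bullet}$ in the corollary.

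Finally, the semiorthogonality and order from Theorem~\ref{thm:wincon} (with respect to the lexicographic order on the padded sequences $\widetilde{j}_\bullet$ of Definition~\ref{defi:orderj}) transfer directly through these equivalences, yielding the claimed semiorthogonal decomposition. There is no real obstacle at this stage since the heavy lifting — constructing the fully-faithful functors $\Upsilon_{j_\bullet}$ from categorified Hall products and verifying their orthogonality and generation via the local-to-global reduction through the \'etale slice theorem and Kn\"orrer periodicity — has already been carried out in Theorem~\ref{thm:wincon}; the only bookkeeping here is the cancellation of the $\MF(\Spec\mathbb{C},0)$ factors and the twist by $\chi_0$.
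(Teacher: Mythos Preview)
Your proposal is correct and follows essentially the same route as the paper's proof: transport the semiorthogonal decomposition of Theorem~\ref{thm:wincon} through the window equivalences (\ref{equiv:glob}) on both sides, and identify each $\cC_{j_\bullet}$ with a copy of $\MF(\mM_Q^{\dag,\theta_-\sss}(v-ls_m),w)$ using Lemma~\ref{lem:crit:simple} to collapse the $\MF(\mM_Q^{\theta\sss}(s_m),w)_j$ factors. The paper's proof is just a two-sentence version of exactly what you wrote.
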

	\begin{proof}
		By the equivalences (\ref{equiv:glob}), the LHS of (\ref{sod:global}) is equivalent 
		to $\MF(\mM_{Q}^{\dag, \theta_+}(v), w)$. 
		On the other hand, the subcategory $\cC_{j_{\bullet}}$ 
		in (\ref{sod:global}) is equivalent to $\MF(\mM_Q^{\dag, \theta_{-}}(v-l s_m), w)$
		by the equivalences (\ref{equiv:glob}) together with Lemma~\ref{lem:crit:simple}. 
		\end{proof}

	\begin{rmk}\label{rmk:recover0}
		The semiorthogonal decomposition in Corollary~\ref{cor:sod} recovers 
	the numerical wall-crossing formula (\ref{intro:wcf}).
	Indeed the periodic cyclic homologies are additive with 
	respect to semiorthogonal decompositions 
	(see~\cite[Theorem~6.3, Section~6.1]{Tab}), so we have 
	\begin{align*}
		\mathrm{HP}_{\ast}(\MF(\mM_Q^{\dag, \theta_+\sss}(v), w))
		=\bigoplus_{l\ge 0}
		\mathrm{HP}_{\ast}(\MF(\mM_Q^{\dag, \theta_-\sss}(v), w))^{\oplus 
			\binom{m}{l}}. 
	\end{align*}
	By taking the Euler characteristics and using Lemma~\ref{lem:pcyc}, we obtain 
	the formula (\ref{intro:wcf}). 
	\end{rmk}

By applying Corollary~\ref{cor:sod} from the empty chamber in 
Figure~\ref{figure4} to 
the wall-crossing at $W_m$, and noting Lemma~\ref{lem:catwall}, 
we obtain the following: 
\begin{cor}\label{cor:sod2}
	For $\theta \in W_m$, 
	there exists a semiorthogonal decomposition 
	\begin{align*}
		\MF(\mM_Q^{\dag, \theta+}(v), w)
		=\left\langle \cC_{j_{\bullet}^{(\ast)}} \right\rangle .
		\end{align*}
	Here each $\cC_{j_{\bullet}^{(\ast)}}$ is equivalent to $\MF(\Spec \mathbb{C}, 0)$
	and $j_{\bullet}^{(\ast)}$ is a collection of non-positive 
	integers of the form 
	\begin{align*}
		j_{\bullet}^{(\ast)}=\{(0 \le j_1^{(i)} \le \cdots \le j_{l_i}^{(i)} \le i-l_i)\}_{1\le i\le m}
		\end{align*} for some integers
	$l_i \ge 0$ satisfying 
	\begin{align*}
		(v_0, v_1)=\sum_{i=1}^m l_i \cdot (i, i-1). 
		\end{align*}
	We have $\Hom(\cC_{j_{\bullet}^{(\ast)}}, \cC_{j_{\bullet}^{'(\ast)}})=0$ 
	if $j_{\bullet}^{(i)}=j_{\bullet}^{'(i)}$
	for $k< i\le m$ for some $k$ and 
	$j_{\bullet}^{(k)} \succ j_{\bullet}^{'(k)}$. 
	\end{cor}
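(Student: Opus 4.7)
The plan is to iterate Corollary~\ref{cor:sod} at the walls $W_1, W_2, \ldots, W_m$, starting from the empty chamber and using Lemma~\ref{lem:catwall} to translate stability parameters freely within each chamber. Let $\theta^{(0)}$ lie in the empty chamber and, for $1 \le i \le m$, let $\theta^{(i)}$ lie in the chamber between $W_i$ and $W_{i+1}$; in particular $\theta_+$ of the statement coincides with $\theta^{(m)}$. By Lemma~\ref{lem:empty}, $\MF(\mM_Q^{\dag, \theta^{(0)}\sss}(v''), w) \simeq \MF(\Spec \mathbb{C}, 0)$ when $v'' = 0$ and vanishes otherwise. By Lemma~\ref{lem:catwall} the factorization category for a given chamber is constant, so $\theta^{(i-1)}$ and $\theta^{(i)}$ may serve as $\theta_-$ and $\theta_+$ at the wall $W_i$, and Corollary~\ref{cor:sod} gives
\[
\MF(\mM_Q^{\dag, \theta^{(i)}\sss}(v'), w) = \left\langle \MF(\mM_Q^{\dag, \theta^{(i-1)}\sss}(v' - l_i s_i), w)_{j_\bullet^{(i)}} \right\rangle
\]
for every dimension vector $v'$, where $(l_i, j_\bullet^{(i)})$ ranges over $0 \le l_i \le i$ with $0 \le j_1^{(i)} \le \cdots \le j_{l_i}^{(i)} \le i - l_i$, ordered by $\succ$ on $j_\bullet^{(i)}$.

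Starting from $\MF(\mM_Q^{\dag, \theta_+\sss}(v), w) = \MF(\mM_Q^{\dag, \theta^{(m)}\sss}(v), w)$ and substituting the above decomposition at $W_m$, then recursively substituting for each summand at $W_{m-1}, \ldots, W_1$, produces a nested semiorthogonal decomposition whose pieces are indexed by tuples $j_\bullet^{(\ast)} = \{(l_i, j_\bullet^{(i)})\}_{1 \le i \le m}$ satisfying the stated inequalities, and whose innermost $\theta^{(0)}$-category carries dimension vector $v - \sum_{i=1}^m l_i s_i$. By the base case, only tuples with $\sum_i l_i (i, i-1) = v$ contribute non-vanishing pieces, and each such piece is a copy of $\MF(\Spec \mathbb{C}, 0)$, matching the indexing set of the corollary.

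For the ordering, the outermost semiorthogonal decomposition is the one at $W_m$, and each of its summands is recursively refined by the decompositions at $W_{m-1}, \ldots, W_1$. Standard nesting of semiorthogonal decompositions then implies that $\Hom(\cC_{j_\bullet^{(\ast)}}, \cC_{j_\bullet^{'(\ast)}})$ vanishes whenever the tuples agree in components $k < i \le m$ and satisfy $j_\bullet^{(k)} \succ j_\bullet^{'(k)}$, which is precisely the claimed order. The only point requiring genuine care is this inheritance of the order under iterated substitution; the remainder is routine bookkeeping of dimension vectors together with Lemmas~\ref{lem:empty}, \ref{lem:catwall} and Corollary~\ref{cor:sod}.
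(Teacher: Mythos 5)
Your proposal is correct and follows essentially the same route as the paper: a successive application of Corollary~\ref{cor:sod} across the walls $W_m, W_{m-1}, \ldots, W_1$, using Lemma~\ref{lem:catwall} to move within chambers and Lemma~\ref{lem:empty} to identify the surviving innermost summands with $\MF(\Spec\mathbb{C},0)$. Your extra remark on how the stated ordering is inherited from the nesting of the semiorthogonal decompositions is consistent with (and slightly more explicit than) the paper's proof, which leaves that bookkeeping implicit.
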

\begin{proof}
	Let $\theta_{\rm{en}} \in \mathbb{R}^2$ lie 
	in the empty chamber in Figure~\ref{figure4}. 
	By Lemma~\ref{lem:catwall}, a
	successive application of
	 Corollary~\ref{cor:sod} 
	 gives the semiorthogonal decomposition 
	 \begin{align*}
	 	\MF(\mM_{Q}^{\dag, \theta_+}(v), w)=
	 \left \langle \MF(\mM_Q^{\dag, \theta_{\rm{en}}}(v-l_m s_m-l_{m-1}s_{m-1}
	 - \cdots -l_1s_1), w)_{j^{(m)}_{\bullet}, j^{(m-1)}_{\bullet}, \ldots, 
	 j^{(1)}_{\bullet}} \right\rangle	
	 	\end{align*}
 	Here $l_i \ge 0$ are integers and $0\le j_1^{(1)} \le \cdots 
 	\le j_{l_i}^{(i)} \le i-l_i$ for $1\le i\le m$. 
 	By applying Lemma~\ref{lem:empty}, we obtain the corollary. 
		\end{proof}

\begin{rmk}\label{rmk:owall}
	The arguments of Theorem~\ref{thm:wincon} and Corollary~\ref{cor:sod}
	work for other walls except walls at $\{\theta_0+\theta_1=0\}$. 
	For example, let us consider the wall in Figure~\ref{figure4}
	\begin{align*}
		W_m' \cneq \mathbb{R}_{>0}(-m-1, m), \ m \in \mathbb{Z}_{\ge 0}. 
		\end{align*}
	Then for $\theta \in W_m'$, 
	there is a unique $\theta$-stable $(Q, W)$-representation $S_m'$
	of dimension vector $s_m'=(m, m+1)$,  
	which corresponds to $\oO_C(-m-1)[1]$ under the equivalence $\Phi$ in (\ref{equiv:Phi})
	(see~\cite[Remark~3.6]{NN}). 
	The arguments of Theorem~\ref{thm:wincon} and Corollary~\ref{cor:sod}
	work verbatim by replacing $S_m$, $s_m$ with $S_m'$, $s_m'$, so that 
	we have the semiorthogonal decomposition 
		\begin{align*}
		\MF(\mM_{Q}^{\dag, \theta_+}(v), w)=
		\left \langle \MF(\mM_Q^{\dag, \theta_{-}}(v-l s_m'), w)_{j_{\bullet}} : 
		l \ge 0, 0\le j_1 \le \cdots \le j_l \le m-l  \right \rangle. 
	\end{align*}

On the other hand, the above arguments do not work 
at walls in $\{\theta_0+\theta_1=0\}$. 
For example at the DT/PT wall $\theta \in \mathbb{R}_{>0}(-1, 1)$, 
there exist infinite number of $\theta$-stable 
$(Q, W)$-representations corresponding to closed points in 
$X$, and the associated Ext-quivers are more complicated. 
		\end{rmk}

\subsection{Semiorthogonal decompositions of categorical stable pair theory}\label{subsec:catPT}
By definition a \textit{PT stable pair}~\cite{PT} on $X$ is a pair $(F, s)$
where $F$ is a pure one dimensional coherent sheaf on $X$
and $s \colon \oO_X \to F$ is surjective in dimension one. 
For $(\beta, n) \in \mathbb{Z}^2$, 
we denote by 
\begin{align*}
	P_n(X, \beta)
	\end{align*}
the moduli space of PT stable pair moduli space
$(F, s)$ on $X$ satisfying $[F]=\beta[C]$ and $\chi(F)=n$, where 
$[F]$ is the fundamental one cycle of $F$.   
Since any such a sheaf $F$ is supported on $C$, 
the moduli space $P_n(X, \beta)$ is a projective scheme. 

It is proved in~\cite[Proposition~2.11]{NN} that
the equivalence (\ref{equiv:Phi})
induces the isomorphism 
\begin{align*}
	\Phi_{\ast} \colon P_n(X, \beta) \stackrel{\cong}{\to}
	\mM_{(Q, W)}^{\dag, \theta_{\rm{PT}}}(n, n-\beta)
	\end{align*}
where $\theta_{\rm{PT}} \cneq (-1+\varepsilon, 1+\varepsilon)$ for 
$0<\varepsilon \ll 1$. 
The RHS is the critical locus of the function 
$w \colon \mM_{Q}^{\dag, \theta_{\rm{PT}}}(n, n-\beta) \to \mathbb{A}^1$
defined by (\ref{func:w}). 
Based on the above isomorphism, 
the categorical PT invariant is defined as follows: 
\begin{defi}\label{defi:catPT}
	We define the categorical PT invariant for the 
	resolved conifold $X$ to be 
\begin{align*}
	\mathcal{DT}(P_n(X, \beta)) \cneq 
	\MF(\mM_Q^{\dag, \theta_{\rm{PT}}}(n, n-\beta), w). 
	\end{align*}
\end{defi}
Similarly to Lemma~\ref{lem:pcyc}, the categorical PT invariant 
recovers the numerical PT invariant by
\begin{align}\label{eqn:PT}
	P_{n, \beta}=(-1)^{n+\beta} e_{\mathbb{C} \lgakko u \rgakko}
	(\mathrm{HP}_{\ast}(\mathcal{DT}(P_n(X, \beta)))). 
	\end{align}
By applying Corollary~\ref{cor:sod2} for 
$m\gg 0$, we obtain the following: 
\begin{cor}\label{cor:sod2.5}
	For any $(\beta, n) \in \mathbb{Z}^2$, 
	there exists a semiorthogonal decomposition 
	\begin{align*}
	\mathcal{DT}(P_n(X, \beta))
		=\left\langle \cC_{j_{\bullet}^{(\ast)}} \right\rangle .
	\end{align*}
	Here each $\cC_{j_{\bullet}^{(\ast)}}$ is equivalent to $\MF(\Spec \mathbb{C}, 0)$
	and $j_{\bullet}^{(\ast)}$ is a collection of non-positive 
	integers of the form 
	\begin{align*}
		j_{\bullet}^{(\ast)}=\{(0 \le j_1^{(i)} \le \cdots \le j_{l_i}^{(i)} \le i-l_i)\}_{i\ge 1}
	\end{align*} for some integers 
	$l_i \ge 0$ satisfying 
	\begin{align*}
		(\beta, n)=\sum_{i\ge 1} l_i \cdot (1, i). 
	\end{align*}
	We have $\Hom(\cC_{j_{\bullet}^{(\ast)}}, \cC_{j_{\bullet}^{'(\ast)}})=0$ 
	if $j_{\bullet}^{(i)}=j_{\bullet}^{'(i)}$
	for $i>k$ for some $k$ and 
	$j_{\bullet}^{(k)} \succ j_{\bullet}^{'(k)}$. 
\end{cor}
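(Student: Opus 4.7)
The plan is to reduce Corollary~\ref{cor:sod2.5} to Corollary~\ref{cor:sod2} applied at a wall $W_m$ with $m$ sufficiently large. Fix any integer $m > n$, a stability condition $\theta \in W_m$, and set $\theta_+ = \theta + (-\varepsilon, \varepsilon)$ for $0 < \varepsilon \ll 1$, so that $\theta_+$ lies in the chamber immediately above $W_m$. The first step is to establish an equivalence
\begin{align*}
\mathcal{DT}(P_n(X, \beta)) = \MF(\mM_Q^{\dag, \theta_{\rm{PT}}}(n, n-\beta), w) \simeq \MF(\mM_Q^{\dag, \theta_+}(n, n-\beta), w),
\end{align*}
which identifies the left-hand side of the claimed decomposition with the category to which Corollary~\ref{cor:sod2} applies.

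For this equivalence, I will verify that no wall lying strictly between the $\theta_+$-chamber and the $\theta_{\rm{PT}}$-chamber is a wall for $(Q^{\dag}, W)$-representations of dimension vector $(1, n, n-\beta)$. Granted this, Lemma~\ref{lem:catwall} together with the restriction equivalence~(\ref{Pre:rest}) delivers the equivalence, since the critical locus of $w$ inside $\mM_Q^{\dag}(n, n-\beta)$ remains constant across the region traversed. A direct check in Figure~\ref{figure4} shows that the intermediate walls are precisely the $W_{m'}$ with $m' > m > n$. At such a wall, any $\theta$-polystable $(Q^{\dag}, W)$-representation of dimension $(1, n, n-\beta)$ decomposes as $R_\infty \oplus (V \otimes S_{m'})$, where $S_{m'}$ has dimension vector $(m', m'-1)$; since the vertex-$0$ dimension of the right-hand summand is $dm'$ with $d = \dim V$, the constraint $dm' \le n$ together with $m' > n$ forces $d = 0$, so no strict semistability actually occurs for this dimension vector. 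The asserted equivalence follows.

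Applying Corollary~\ref{cor:sod2} for this $m$ then produces a semiorthogonal decomposition of $\MF(\mM_Q^{\dag, \theta_+}(n, n-\beta), w)$ indexed by tuples $\{(0 \le j_1^{(i)} \le \cdots \le j_{l_i}^{(i)} \le i - l_i)\}_{1 \le i \le m}$ where $l_i \ge 0$ satisfy $(n, n-\beta) = \sum_{i=1}^m l_i (i, i-1)$, each summand being equivalent to $\MF(\Spec \mathbb{C}, 0)$ with the semiorthogonality order inherited from Corollary~\ref{cor:sod2}. Subtracting the second component from the first rearranges the constraint to $(\beta, n) = \sum_{i=1}^m l_i (1, i)$, and the relation $n = \sum_i l_i i$ with $l_i \ge 0$ automatically forces $l_i = 0$ whenever $i > n$; hence extending the index range $1 \le i \le m$ to $i \ge 1$ is purely cosmetic and yields exactly the form stated in Corollary~\ref{cor:sod2.5}. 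The only non-routine ingredient is the numerical verification that walls $W_{m'}$ with $m' > n$ are inactive for dimension vector $(n, n-\beta)$, which is a very mild obstacle.
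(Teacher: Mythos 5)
Your proposal is correct and follows essentially the same route as the paper, which deduces the statement by applying Corollary~\ref{cor:sod2} for $m\gg 0$ after identifying $\mathcal{DT}(P_n(X,\beta))$ with $\MF(\mM_Q^{\dag,\theta_+}(n,n-\beta),w)$ for a chamber just above $W_m$. The only difference is that you spell out the (correct) numerical check that the walls $W_{m'}$ with $m'>n$ carry no strictly semistable objects of dimension vector $(1,n,n-\beta)$, a point the paper leaves implicit.
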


\begin{rmk}\label{rmk:recover}
Similarly to Remark~\ref{rmk:recover0}, 
 the semiorthogonal decomposition in Corollary~\ref{cor:sod2.5}
 implies 
 \begin{align*}
 	\mathrm{HP}_{\ast}(\mathcal{DT}(P_n(X, \beta)))
 	=\mathrm{HP}_{\ast}(\MF(\Spec \mathbb{C}, 0))^{\oplus a_{n, \beta}}
 	\end{align*}
 where $a_{n, \beta}$ is given by (\ref{def:an}). 
 By taking the Euler characteristics of both sides, we 
 obtain $P_{n, \beta}=(-1)^{n+\beta}a_{n, \beta}$, which 
recovers the formula (\ref{intro:formula}). 
	\end{rmk}

\section{Some technical lemmas}
In this section, we give proofs of some postponed technical lemmas. 

\subsection{Functoriality of Kn\"{o}rrer periodicity}
Let $\yY_1$, $\yY_2$ be stacks of the form 
$\yY_i=[Y_i/G_i]$ where $Y_i$ is a smooth affine scheme and 
$G_i$ is a reductive algebraic group which acts on $Y_i$. 
Let $\wW_i \to \yY_i$ be vector bundles. 
Then by Theorem~\ref{thm:period}, 
we have equivalences 
\begin{align}\label{equiv:Phii}
	\Phi_i \colon \MF(\yY_i, w_i) \stackrel{\sim}{\to} \MF(\wW_i \oplus 
	\wW_i^{\vee}, w_i+q_i)
	\end{align}
where $q_i$ is a natural quadratic form on $\wW_i \oplus \wW_i^{\vee}$, 
i.e. $q_i(x, x')=\langle x, x'\rangle$. 
On the other hand, the categories of quasi-coherent factorizations 
$\MF_{\qcoh}(\yY_i, w_i)$ are compactly generated by 
$\MF(\yY_i, w_i)$ 
(see~\cite[Proposition~3.15]{MR3270588}), so it 
is equivalent to the ind-completion of $\MF(\yY_i, w_i)$. 
Therefore by taking ind-completions of both sides in (\ref{equiv:Phii}), the 
above equivalences extend to equivalences
\begin{align*}
	\Phi_i \colon \MF_{\qcoh}(\yY_i, w_i) \stackrel{\sim}{\to} \MF_{\qcoh}(\wW_i \oplus 
	\wW_i^{\vee}, w_i+q_i). 
	\end{align*}

Suppose that we have a commutative diagram 
\begin{align*}
	\xymatrix{
\wW_1 \ar[r]^-{g} \ar[d] & \wW_2 \ar[d] \\
\yY_1 \ar[r]_-{f} & \yY_2
}
	\end{align*}
where $f$ is a morphism of stacks, 
and the top arrow is induced by a morphism of 
vector bundles $g \colon \wW_1 \to f^{\ast}\wW_2$. 
We have the induced diagram 
\begin{align}\label{mor:h12}
	\wW_1 \oplus \wW_1^{\vee} \stackrel{h_1}{\leftarrow}
	\wW_1 \oplus f^{\ast}\wW_2^{\vee} \stackrel{h_2}{\to}
	\wW_2 \oplus \wW_2^{\vee}
	\end{align} 
where $h_1=(\id_{\wW_1}, g^{\vee})$ and $h_2=(g, f)$. 
The following lemma is a variant of~\cite[Lemma~2.4.4]{TocatDT}. 
\begin{lem}\label{lem:commute1}
	The following diagram commutes: 
	\begin{align}\label{dia:MFqcoh}
		\xymatrix{
	\MF_{\qcoh}(\yY_1, w_1) \ar[r]^-{f_{\ast}}	\ar[d]_-{\Phi_1}^-{\sim} & \MF_{\qcoh}(\yY_2, w_2)
	\ar[d]^-{\Phi_2}_-{\sim} \\
	\MF_{\qcoh}(\wW_1 \oplus \wW_1^{\vee}, w_1+q_1) 
	\ar[r]^-{h_{2\ast}h_1^{\ast}} & 
	\MF_{\qcoh}(\wW_2 \oplus 
	\wW_2^{\vee}, w_2+q_2).
	}
		\end{align}
	\end{lem}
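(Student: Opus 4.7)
The plan is to establish the commutativity of diagram (\ref{dia:MFqcoh}) by showing that both compositions factor through a common third functor, via a chain of base change and projection formula arguments. First, I would use the description $\Phi_i = i_{i\ast}\mathrm{pr}_i^{\ast}$ from (\ref{Phi:period}), so that the claim becomes $i_{2\ast}\mathrm{pr}_2^{\ast} \circ f_{\ast} \simeq h_{2\ast} h_1^{\ast} \circ i_{1\ast}\mathrm{pr}_1^{\ast}$. Note that for $f_\ast$ to define a functor between the factorization categories one needs $w_1 = f^\ast w_2$, and a direct check shows $h_1^\ast(w_1+q_1) = h_2^\ast(w_2+q_2) = w_1+\langle g(-),-\rangle$ on $\wW_1 \oplus f^\ast \wW_2^\vee$, so every functor below lands in the correct factorization category.

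Next, I would factor $h_2 = \tilde f \circ j$, where $j = (g,\mathrm{id}) \colon \wW_1 \oplus f^\ast \wW_2^\vee \to f^\ast \wW_2 \oplus f^\ast \wW_2^\vee$ and $\tilde f$ is the base change of $f$ along $\mathrm{pr}_2$. The Cartesian square
\[
\xymatrix{
f^\ast \wW_2^\vee \ar[r]^-{f_{\wW^\vee}} \ar[d]_-{\tilde{\mathrm{pr}}_2} \diasquare & \wW_2^\vee \ar[d]^-{\mathrm{pr}_2} \\
\yY_1 \ar[r]_-{f} & \yY_2
}
\]
supplies the identity $\mathrm{pr}_2^\ast f_\ast \simeq f_{\wW^\vee \ast} \tilde{\mathrm{pr}}_2^\ast$. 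Writing $\tilde i_2 \colon f^\ast \wW_2^\vee \hookrightarrow f^\ast \wW_2 \oplus f^\ast \wW_2^\vee$ for the zero section in the $f^\ast \wW_2$-direction, the equality $i_2 \circ f_{\wW^\vee} = \tilde f \circ \tilde i_2$ then yields $\Phi_2 \circ f_\ast \simeq \tilde f_\ast \tilde i_{2\ast} \tilde{\mathrm{pr}}_2^\ast$.

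For the other composition, I would analyze $h_1^\ast i_{1\ast}$ through the Cartesian square
\[
\xymatrix{
f^\ast \wW_2^\vee \ar[r]^-{i'_1} \ar[d]_-{g^\vee} \diasquare & \wW_1 \oplus f^\ast \wW_2^\vee \ar[d]^-{h_1} \\
\wW_1^\vee \ar[r]_-{i_1} & \wW_1 \oplus \wW_1^\vee
}
\]
where $i'_1$ is the zero section in the $\wW_1$-direction; Cartesianness is immediate from the explicit fiber product description. Base change gives $h_1^\ast i_{1\ast} \simeq i'_{1\ast}(g^\vee)^\ast$, and since $\mathrm{pr}_1 \circ g^\vee = \tilde{\mathrm{pr}}_2$, I obtain $h_1^\ast \Phi_1 \simeq i'_{1\ast}\tilde{\mathrm{pr}}_2^\ast$ and hence $h_{2\ast} h_1^\ast \Phi_1 \simeq (h_2 \circ i'_1)_\ast \tilde{\mathrm{pr}}_2^\ast$. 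A direct computation on sections (both maps send $x' \in f^\ast \wW_2^\vee$ to $(0, f_{\wW^\vee}(x'))$) shows $h_2 \circ i'_1 = \tilde f \circ \tilde i_2$, so both sides of (\ref{dia:MFqcoh}) agree with $\tilde f_\ast \tilde i_{2\ast} \tilde{\mathrm{pr}}_2^\ast$.

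The main obstacle is to justify derived base change in the quasi-coherent factorization categories rather than merely on the underlying $\mathbb{Z}/2$-graded derived categories. This is manageable here because both Cartesian squares above are pullbacks along vector bundle projections and sections of vector bundles, hence Tor-independent, and the matching of potentials recorded in the first paragraph guarantees the base change natural transformations lift to the factorization level. As a sanity check one can alternatively exploit compact generation of $\MF_{\qcoh}$ by $\MF$ to reduce to the underlying derived category, where the base change statements are standard; this reproduces (a variant of) the argument of~\cite[Lemma~2.4.4]{TocatDT}.
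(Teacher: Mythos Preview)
Your proof is correct and follows essentially the same route as the paper: both compositions are reduced, via two Tor-independent base change squares, to the functor $(h_2\circ i'_1)_{\ast}\widetilde{\mathrm{pr}}_2^{\ast}$ (which the paper writes as $h_{6\ast}h_5^{\ast}$, with $h_5=\widetilde{\mathrm{pr}}_2$ and $h_6=h_2\circ i'_1=i_2\circ f_{\wW^{\vee}}$). Your intermediate factorization $h_2=\tilde f\circ j$ through $f^{\ast}\wW_2\oplus f^{\ast}\wW_2^{\vee}$ is a harmless extra step that the paper bypasses by working directly with $h_6=i_2\circ h_7$.
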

\begin{proof}
	We have the commutative diagram 
	\begin{align}\notag
		\xymatrix{
&  f^{\ast}\wW_2^{\vee} \ar[dl]_-{h_5} \ar@/^1.5pc/[rr]^-{h_6}\diasquare \ar@<-0.3ex>@{^{(}->}[r]^-{h_4} \ar[d]_-{h_3} & \wW_1 \oplus f^{\ast}\wW_2^{\vee} \ar[d]_-{h_1}
\ar[r]^-{h_2} & \wW_2 \oplus \wW_2^{\vee} \\
\yY_1 & \ar[l]^-{\rm{pr}_1} \wW_1^{\vee} \ar@<-0.3ex>@{^{(}->}[r]_-{i_1} & \wW_1 \oplus \wW_1^{\vee}. & 	
}		\end{align}
Here $\mathrm{pr}_1$ is the projection and $i_1(x)=(0, x)$. 
By the above diagram together with derived base change, we have 
\begin{align*}
	h_{2\ast}h_1^{\ast}\Phi_1(-)  \cong 
	h_{2\ast}h_1^{\ast}i_{1\ast}\mathrm{pr}_1^{\ast}(-) 
	\cong h_{2\ast}h_{4\ast}h_3^{\ast}\mathrm{pr}_1^{\ast}(-) 
	\cong h_{6\ast} h_{5}^{\ast}(-). 
	\end{align*}
On the other hand, 
we have the commutative diagram 
	\begin{align*}
	\xymatrix{
		 f^{\ast}\wW_2^{\vee}  \ar@/^1.5pc/[rr]^-{h_6}\diasquare \ar[r]^-{h_7} \ar[d]_-{h_5} & \wW_2^{\vee} \ar[d]_-{\mathrm{pr}_2}
		\ar@<-0.3ex>@{^{(}->}[r]^-{i_2} & \wW_2 \oplus \wW_2^{\vee} \\
		 \yY_1 \ar[r]_-{f} & \yY_2. & 	
}		\end{align*}
Here $\mathrm{pr}_2$ is the projection and $i_2(x)=(0, x)$. 
Similarly we have 
\begin{align*}
	\Phi_2 f_{\ast}(-) \cong i_{2\ast}\mathrm{pr}_{2}^{\ast}f_{\ast} 
	\cong i_{2\ast} h_{7\ast}h_5^{\ast} 
	\cong h_{6\ast}h_5^{\ast}(-). 
	\end{align*}
Therefore the diagram (\ref{dia:MFqcoh}) commutes. 
	\end{proof}

We also have the following lemma, 
which is a variant of~\cite[Lemma~2.4.7]{TocatDT}. 	
\begin{lem}\label{lem:commute2}
	Suppose that $g \colon \wW_1 \to f^{\ast}\wW_2$ is a surjective 
	morphism of vector bundles on $\yY_1$. 
	Then we have the commutative diagram 
		\begin{align}\notag
		\xymatrix{
			\MF(\yY_2, w_2) \ar[r]^-{f^{\ast}}	\ar[d]_-{\Phi_2}^-{\sim} & \MF(\yY_1, w_1)
			\ar[d]^-{\Phi_1}_-{\sim} \\
			\MF(\wW_2 \oplus \wW_2^{\vee}, w_2+q_2) 
			\ar[r]^-{h_{1!}h_2^{\ast}} & 
			\MF(\wW_1 \oplus 
			\wW_1^{\vee}, w_1+q_1).
		}
	\end{align}
	\end{lem}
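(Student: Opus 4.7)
The plan is to mirror the argument used to establish Lemma~\ref{lem:commute1}, employing the explicit presentation $\Phi_i = i_{i\ast}\mathrm{pr}_i^\ast$ where $i_i \colon \wW_i^\vee \hookrightarrow \wW_i \oplus \wW_i^\vee$ is the zero section of the first factor and $\mathrm{pr}_i \colon \wW_i^\vee \to \yY_i$ is the bundle projection. The key structural observation is that the surjectivity of $g$ makes its dual $g^\vee \colon f^\ast \wW_2^\vee \hookrightarrow \wW_1^\vee$ an inclusion of vector bundles on $\yY_1$, and hence $h_1 = (\id_{\wW_1}, g^\vee)$ a regular closed embedding of codimension $\rk \ker g$ with conormal bundle $\ker(g)$. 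Consequently, the exceptional pushforward takes the expected form
\[
h_{1!}(-) \cong h_{1\ast}\bigl(-\otimes h_1^\ast \det(\ker g)[-\rk\ker g]\bigr),
\]
which is what must be produced on the right-hand side of the diagram.

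First, I would compute $h_2^\ast i_{2\ast}$ by flat base change along the Cartesian square
\[
\xymatrix{
\ker(g) \oplus f^\ast \wW_2^\vee \ar[r]^-{k} \ar[d]_-{j} \diasquare
& \wW_1 \oplus f^\ast \wW_2^\vee \ar[d]^-{h_2} \\
\wW_2^\vee \ar[r]_-{i_2} & \wW_2 \oplus \wW_2^\vee,
}
\]
where the fiber product is $\ker(g) \oplus f^\ast\wW_2^\vee$ because $h_2(w_1, w_2') = (g(w_1),\cdot)$ meets $0\oplus\wW_2^\vee$ precisely when $w_1 \in \ker g$; the map $j$ is the obvious one and $k$ is a closed embedding. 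Combining this with $\mathrm{pr}_2\circ j = f\circ \pi$ (for $\pi$ the projection of $\ker(g)\oplus f^\ast\wW_2^\vee$ to $\yY_1$), one obtains
\[
h_{1!}h_2^\ast\Phi_2 \cong h_{1!}k_\ast (f\circ\pi)^\ast \cong h_{1!}k_\ast \pi^\ast f^\ast,
\]
while the left-hand side is simply $\Phi_1 f^\ast \cong i_{1\ast}\mathrm{pr}_1^\ast f^\ast$.

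The final matching step is the main obstacle: the right-hand side is supported on $h_1(k(\ker g \oplus f^\ast\wW_2^\vee))$, a sub-bundle of $\wW_1 \oplus \wW_1^\vee$ of rank $\rk\wW_1 + \rk f^\ast\wW_2^\vee$, while the left-hand side is supported on the zero section $\wW_1^\vee \subset \wW_1 \oplus \wW_1^\vee$ of rank $\rk\wW_1$. To identify them in $\MF$, I would invoke the fact that both composites are ultimately computed by tensoring with the Koszul factorization (\ref{Koszul}) of the quadratic form $q_1$, restricted to compatible sub-bundles of $\wW_1 \oplus \wW_1^\vee$. The discrepancy is controlled by a non-degenerate pairing on $\ker g \oplus \ker(g)^\vee \subset \wW_1 \oplus \wW_1^\vee$, whose Kn\"orrer periodicity equivalence contributes precisely the twist $\det(\ker g)[-\rk\ker g]$ that distinguishes $h_{1!}$ from $h_{1\ast}$. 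Bookkeeping the line bundles and homological shifts to confirm this exact match is the technical heart of the argument; alternatively, one can reduce to $f = \id$ via Lemma~\ref{lem:commute1} and then to the universal local model $\wW_1 = \wW_2 \oplus \ker g$ over a fixed stack, where the claim specializes to the standard Kn\"orrer periodicity applied to the summand $\ker g$.
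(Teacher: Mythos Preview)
Your direct-computation strategy is plausible in outline, but you have missed a dramatically simpler argument that the paper uses. Since the $\Phi_i$ are equivalences, the commutative square of Lemma~\ref{lem:commute1} (established at the level of quasi-coherent factorizations, with horizontal arrows $f_\ast$ and $h_{2\ast}h_1^\ast$) remains commutative after passing to \emph{left adjoints} of the horizontal arrows. The left adjoint of $f_\ast$ is $f^\ast$, and the left adjoint of $h_{2\ast}h_1^\ast$ is $h_{1!}h_2^\ast$; the surjectivity hypothesis on $g$ is used only to guarantee that $h_1$ is a closed immersion, so that $h_{1!}$ exists as the left adjoint of $h_1^\ast$. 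Restricting back to coherent factorizations gives the lemma immediately. No base change, no Koszul bookkeeping, no explicit twist computation is needed.

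Your approach could likely be pushed through with enough care --- the idea of absorbing the discrepancy via Kn\"orrer periodicity on the summand $\ker g \oplus (\ker g)^\vee$ is sound --- but the ``technical heart'' you identify is precisely what the adjunction argument bypasses entirely. It is worth internalizing this pattern: once a commutative square has been established and the vertical arrows are equivalences, the adjoint square comes for free.
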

\begin{proof}
	The assumption that $g \colon \wW_1 \to f^{\ast}\wW_2$ is surjective 
	implies that the morphism 
	$h_1$ in (\ref{mor:h12}) is a closed immersion, 
	hence $h_{1!}$ gives a left adjoint of $h_1^{\ast}$. The lemma 
	now follows by taking left adjoints of horizontal arrows in (\ref{dia:MFqcoh})
	and restrict to coherent factorizations. 
	\end{proof}

\subsection{The categories of factorizations on formal fibers}\label{subsec:lem:idem}
Let $G$ be a reductive algebraic group and $Y$ 
be a finite dimensional $G$-representation. 
We denote by $\widehat{Y}$ the formal fiber of 
the quotient morphism $Y \to Y \ssslash G$
at the origin (see Subsection~\ref{subsec:notation}
for the definition of formal fiber). 
Then 
\begin{align*}
	[\widehat{Y}/G] \to
	 \widehat{Y}\ssslash G =\Spec \widehat{\oO}_{Y\ssslash G, 0}
\end{align*}
is a good moduli space for $[\widehat{Y}/G]$, and 
is isomorphic to the formal fiber of the morphism
$[Y/G] \to Y\ssslash G$ 
at $0$. 
We take an element $w \in \Gamma(\oO_{[\widehat{Y}/G]})=
\widehat{\oO}_{Y\ssslash G, 0}$ with $w(0)=0$. 
We have the following lemma: 
\begin{lem}\label{lem:idem}
	For $w \neq 0$, the triangulated category $\MF([\widehat{Y}/G], w)$ 
	is idempotent complete. 
\end{lem}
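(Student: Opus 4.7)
The plan is to establish the Krull--Schmidt property for $\MF([\widehat{Y}/G], w)$: namely, that every object has a semiperfect endomorphism ring, which forces idempotents to split. This is a standard route to idempotent completeness for triangulated categories.

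First, I would set $R \cneq \widehat{\oO}_{Y\ssslash G, 0}$ and $R_w \cneq R/(w)$. Since $w(0) = 0$, the element $w$ lies in the maximal ideal, and since $w \neq 0$ the quotient $R_w$ is a nonzero complete Noetherian local ring. The category $\MF([\widehat{Y}/G], w)$ is naturally $R$-linear via the good moduli space morphism, and I claim that $w$ acts as zero on every morphism. Indeed, for any coherent factorization $\eE = (\pP_\bullet, \alpha_\bullet)$, the endomorphism $\cdot w$ on $\eE$ is null-homotopic via the differential $\alpha_\bullet$ itself (from the factorization condition $\alpha_0 \alpha_1 = \cdot w = \alpha_1 \alpha_0$). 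Hence every $\Hom$-space in $\MF$ carries a canonical $R_w$-module structure.

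Second, for a coherent factorization $\pP$, I would show that $\End_{\MF}(\pP)$ is finitely generated as an $R_w$-module. In the homotopy category, $\End_{\mathrm{HMF}}(\pP)$ is a subquotient of $\Hom_{\Coh([\widehat{Y}/G])}(\pP_0, \pP_0) \oplus \Hom_{\Coh([\widehat{Y}/G])}(\pP_1, \pP_1)$. These are coherent $G$-equivariant sheaves on the affine Noetherian scheme $\widehat{Y}$, and by reductivity of $G$ combined with Hilbert--Nagata finiteness, their $G$-invariant global sections are finitely generated $R$-modules. For the passage to the Verdier quotient $\MF = \mathrm{HMF}/\mathrm{Acy}^{\rm{abs}}$, I would invoke that coherent factorizations are compact in $\MF_{\qcoh}$, so $\Hom_{\MF}(\pP, \pP)$ coincides with $\Hom_{\mathrm{HMF}}(\pP, \pP)$ modulo maps factoring through absolutely acyclic objects, and Noetherianity of $R_w$ ensures that this quotient remains a finitely generated $R_w$-module.

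Third, since $R_w$ is a complete Noetherian local ring, any finitely generated $R_w$-module (viewed as an associative $R_w$-algebra) is automatically semiperfect: its Jacobson radical contains $\mathfrak{m}_{R_w} \cdot \End_{\MF}(\pP)$, the residue algebra modulo the Jacobson radical is a finite-dimensional semisimple $k$-algebra, and idempotents lift along $\mathfrak{m}_{R_w}$-adic completion by Hensel's lemma. Thus every endomorphism algebra in $\MF([\widehat{Y}/G], w)$ is semiperfect, which implies that every idempotent splits and hence $\MF([\widehat{Y}/G], w)$ is idempotent complete (in fact Krull--Schmidt).

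The main obstacle I anticipate is the finite-generation verification in the second step: carefully confirming that morphism spaces in the Verdier quotient $\mathrm{HMF}/\mathrm{Acy}^{\rm{abs}}$ inherit coherence from the homotopy category without blow-up through the calculus of fractions. The hypothesis $w \neq 0$ is used precisely to ensure that $R_w$ is a proper quotient and that the factorization structure genuinely constrains morphisms (so that the two-periodic cohomology of the Koszul-type $\Hom$ complex has bounded cohomology with coherent components over the quotient ring).
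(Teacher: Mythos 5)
The fatal gap is your final inference: from ``every endomorphism ring in $\MF([\widehat{Y}/G], w)$ is semiperfect'' you conclude that every idempotent splits. That implication is false for triangulated (or additive) categories in general: idempotent completeness is an existence statement about objects and cannot be detected from endomorphism rings alone. Concretely, by Thomason's classification of dense triangulated subcategories, the full triangulated subcategory of $D^b(\Spec \mathbb{C})$ consisting of complexes of even Euler characteristic has all endomorphism algebras finite-dimensional and semisimple (hence semiperfect), yet the projection $\mathbb{C}^{\oplus 2} \to \mathbb{C}$ is a non-split idempotent there, since $\mathbb{C}$ has odd Euler characteristic. The correct statement (Krause) is that an additive category is Krull--Schmidt if and only if endomorphism rings are semiperfect \emph{and} idempotents split, so the splitting must be proved separately, and this is exactly where the paper does the real work: it transports the problem through the Orlov-type equivalence $\MF([\widehat{Y}/G], w) \simeq D^b([\widehat{Z}/G])/\mathrm{Perf}([\widehat{Z}/G])$, uses coherent completeness to replace the formal fiber $\widehat{Z}$ by $\overline{Z} = \Spec \widehat{\oO}_{\widehat{Z}}$, and then lifts a given idempotent of the $G$-equivariant stable category of maximal Cohen--Macaulay modules to an honest $G$-invariant idempotent in $\End(M)$ via the limit $\lim f_j(a)$ over the complete local ring. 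The lifted idempotent splits because the ambient module category is idempotent complete, and the splitting descends to the quotient. Your Hensel-type lifting is the same mechanism, but it only pays off when performed against an ambient idempotent-complete category; done abstractly inside $\End_{\MF}(\pP)$ it proves nothing about splitting.

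There is also a gap in your finite-generation step. Morphisms in the Verdier quotient $\MF = \mathrm{HMF}/\mathrm{Acy}^{\mathrm{abs}}$ are equivalence classes of roofs, not residue classes of morphisms in $\mathrm{HMF}$, so $\Hom_{\MF}(\pP, \pP)$ is not simply $\Hom_{\mathrm{HMF}}(\pP, \pP)$ modulo maps factoring through absolutely acyclic objects; new morphisms can appear. Compactness of coherent factorizations in $\MF_{\qcoh}$ does not give the asserted description either: it identifies $\MF$ with a full subcategory of the compact objects whose idempotent completion is precisely the issue in question. To obtain coherence of the Hom spaces one would first replace a coherent factorization by one with vector-bundle entries (available here since $[\widehat{Y}/G]$ has the resolution property, $G$ being reductive and $\widehat{Y}$ affine) and compute Hom as the cohomology of the $2$-periodic Hom complex, taking $G$-invariants of finitely generated modules --- or again pass through the singularity category as the paper does. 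So the finiteness conclusion is true but not established as written, and even granting it, the argument still fails at the splitting step above.
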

\begin{proof}
	Let $\widehat{Z} \subset \widehat{Y}$ be the 
	closed subscheme defined by the zero locus of $w$. 
	We have the following version of Orlov equivalence~\cite{Orsin}
	relating the categories of factorizations and 
	those of singularities (see~\cite[Theorem~3.14]{MR3112502})
	\begin{align*}
		\MF([\widehat{Y}/G], w)
		\stackrel{\sim}{\to} D^b([\widehat{Z}/G])/\mathrm{Perf}([\widehat{Z}/G]). 
	\end{align*}
	Let $\mathbf{m}_0 \subset \oO_{\widehat{Z}}$ be the maximal 
	ideal which defines $0 \in \widehat{Z}$, 
	and denote by $\widehat{\oO}_{\widehat{Z}}$ 
	the formal completion of $\oO_{\widehat{Z}}$
	at $\mathbf{m}_0$. Let $Z^{(n)} \cneq \Spec \oO_{\widehat{Z}}/\mathbf{m}_0^{n}$ and 
	$\overline{Z}\cneq \Spec \widehat{\oO}_{\widehat{Z}}$. 
	By the coherent completeness for the stacks
	$[\widehat{Z}/G]$ and $[\overline{Z}/G]$ (see~\cite[Theorem~1.6]{AHR2}), we have the equivalences
	\begin{align*}
		\Coh([\widehat{Z}/G]) \stackrel{\sim}{\to}\mathop{\lim_{\longleftarrow}}_{n}
		\Coh([Z^{(n)}/G]) \stackrel{\sim}{\leftarrow} \Coh([\overline{Z}/G]). 
		\end{align*}
	In particular we have an equivalence 
	\begin{align*}
		D^b([\widehat{Z}/G]) \stackrel{\sim}{\to} 
		D^b([\overline{Z}/G])
	\end{align*}
	which restricts to the
	equivalence for subcategories of perfect objects. 
	Therefore we obtain the equivalence 
	\begin{align*}
		\MF([\widehat{Y}/G], w)
		\stackrel{\sim}{\to} D^b([\overline{Z}/G])/\mathrm{Perf}([\overline{Z}/G]).
	\end{align*}

	Since $\widehat{\oO}_{\widehat{Z}}$ is a complete local ring, 
	the singularity category 
	$D^b(\overline{Z})/\mathrm{Perf}(\overline{Z})$ 
	is well-known to be idempotent complete 
	(for example, see~\cite[Lemma~5.6]{Dyc}, \cite[Lemma~5.5]{KaYa}). 
	The argument can be easily extended to the $G$-equivariant setting. 
	Indeed following the proof of~\cite[Lemma~5.5]{KaYa}, 
	it is enough to show that for a $G$-equivariant  
	maximal Cohen-Macaulay $\widehat{\oO}_{\widehat{Z}}$-module $M$
	and an idempotent $e \in \underline{\End}^G(M)$, 
	it is lifted to a $G$-invariant idempotent 
	in $\End(M)$. 
	Here $\underline{\End}^G(M)$ is the set of morphisms in the 
	$G$-equivariant stable 
	category of maximal Cohen-Macaulay modules over $\widehat{\oO}_{\widehat{Z}}$.
	For an idempotent $e \in \underline{\End}^G(M)$, 
	we lift it to
	$a \in \End(M)$, which we can assume 
	to be $G$-invariant as $G$ is reductive. 
	Then as in the proof of~\cite[Theorem~6.7]{repbook}, 
	the limit 
	$\widetilde{e} \cneq \lim f_j(a)$
	converges, idempotent in $\End(M)$ which lifts 
	$e$.    
	Here $f_j(x)$ is given by 
	\begin{align*}
		f_j(x)=\sum_{i=0}^n 
		\dbinom{2n}{i}
			x^{2n-i}(1-x)^i. 
		\end{align*}
	By the construction $\widetilde{e}$ is $G$-invariant, so we obtain the 
	desired lifting property of the idempotents. 
\end{proof}

Let $W$ be another finite dimensional $G$-representation 
and $q \colon W \to \mathbb{A}^1$ be a $G$-invariant non-degenerate 
quadratic form. 
We take $w \in \widehat{\oO}_{Y\ssslash G, 0}$ with $w(0)=0$. 
We have the following lemma: 
\begin{lem}\label{lem:formW}
There is a natural morphism of stacks
\begin{align}\label{mor:iota}
	\iota \colon \left[(\widehat{Y \oplus W})/G  \right]
	\to\left[(\widehat{Y} \times W)/G \right]
	\end{align}
such that the induced functor 
\begin{align}\label{iota:ast}
	\iota^{\ast} \colon 
\MF\left(\left[(\widehat{Y} \times W)/G \right], w+q\right) 
\to \MF\left(\left[(\widehat{Y \oplus W})/G \right], w+q   \right)
	\end{align}
is fully-faithful with dense image. 
	\end{lem}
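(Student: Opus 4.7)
The plan is to realize $\iota^{*}$ as an equivalence by applying Kn\"orrer periodicity on both sides and checking compatibility via base change along a Cartesian square.

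First, I would construct $\iota$ explicitly. The $G$-equivariant projection $Y \oplus W \to Y$ descends to a morphism of good moduli spaces $(Y\oplus W) \ssslash G \to Y \ssslash G$ sending the origin to the origin, hence a morphism of formal neighborhoods $\Spec \widehat{\oO}_{(Y\oplus W)\ssslash G, 0} \to \Spec \widehat{\oO}_{Y\ssslash G, 0}$. Writing
\begin{align*}
\widehat{Y\oplus W} &= (Y\oplus W) \times_{(Y\oplus W)\ssslash G} \Spec \widehat{\oO}_{(Y\oplus W)\ssslash G, 0}, \\
\widehat{Y} \times W &= (Y\oplus W) \times_{Y\ssslash G} \Spec \widehat{\oO}_{Y\ssslash G, 0},
\end{align*}
base change produces a $G$-equivariant morphism $\widehat{Y\oplus W} \to \widehat{Y}\times W$ descending to $\iota$.

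Next, since $q$ is non-degenerate on $W$, on both ambient stacks the critical locus of $w+q$ is contained in the zero section $\{W=0\}$, which I identify with $[\widehat{Y}/G]$ via closed immersions $i_1 \colon [\widehat{Y}/G] \hookrightarrow [(\widehat{Y}\times W)/G]$ and $i_2 \colon [\widehat{Y}/G] \hookrightarrow [\widehat{Y\oplus W}/G]$, both cut out by the regular sequence of $W$-coordinates and satisfying $\iota \circ i_2 = i_1$. The resulting square with $\iota$ on the right and $\mathrm{id}$ on the left is Cartesian (since $\iota^{-1}(\{W=0\})$ is cut out in $\widehat{Y\oplus W}$ by the same regular sequence) and Tor-independent (the Koszul complex resolving $\oO_{\widehat{Y}}$ over $\oO_{\widehat{Y}\times W}$ pulls back to a resolution of $\oO_{\widehat{Y}}$ over $\oO_{\widehat{Y\oplus W}}$).

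The core step is Kn\"orrer periodicity on both sides. On $[(\widehat{Y}\times W)/G]$, a vector bundle over $[\widehat{Y}/G]$, Theorem~\ref{thm:period} (after a $G$-equivariant splitting $W = \mathcal{W}\oplus \mathcal{W}^{\vee}$ of the non-degenerate quadratic space, which is available in the application) gives an equivalence
\begin{align*}
\Phi_1 \colon \MF([\widehat{Y}/G], w) \stackrel{\sim}{\to} \MF([(\widehat{Y}\times W)/G], w+q)
\end{align*}
of the form $i_{1*}\mathrm{pr}^{*}$. On $[\widehat{Y\oplus W}/G]$, the zero section $[\widehat{Y}/G]$ is again a regular closed immersion, $(w+q)$ restricts to $w$ on it, and its transverse quadratic part is $q$ (non-degenerate); the Koszul-factorization argument underlying Theorem~\ref{thm:period} then gives an equivalence
\begin{align*}
\Phi_2 \colon \MF([\widehat{Y}/G], w) \stackrel{\sim}{\to} \MF([\widehat{Y\oplus W}/G], w+q)
\end{align*}
of the form $i_{2*}$. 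Tor-independent base change along the Cartesian square gives $\iota^{*} \circ i_{1*} \simeq i_{2*}$, hence $\iota^{*} \circ \Phi_1 \simeq \Phi_2$, so $\iota^{*}$ is an equivalence---in particular fully-faithful with dense image, as claimed.

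The main obstacle is the second Kn\"orrer application: $[\widehat{Y\oplus W}/G]$ is not literally a vector bundle over $[\widehat{Y}/G]$, only a formal thickening of it along $W$. However, all that is actually used in the Kn\"orrer argument is that the zero section is cut out by a regular sequence in which the relevant function is a non-degenerate quadratic form, together with the computation that the resulting Koszul factorization implements the equivalence. These hold verbatim in the formal setting, so $\Phi_2$ is constructed by the same formula as $\Phi_1$ and the argument goes through.
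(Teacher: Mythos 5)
Your construction of $\iota$ is fine and matches the paper's. The core of your argument, however, has a genuine error: the Kn\"orrer equivalence of Theorem~\ref{thm:period} is push-forward from a \emph{Lagrangian} subbundle $\wW^{\vee} \subset \wW \oplus \wW^{\vee}$ followed by $\mathrm{pr}^{\ast}$, not push-forward from the zero section of the whole quadratic bundle. Your $i_1$ and $i_2$ are the zero sections (you cut them out by the full regular sequence of $W$-coordinates, and your Cartesian/Tor-independence square is built for exactly these), and push-forward from the zero section is \emph{not} fully faithful: already for $G$ trivial, $Y=\mathrm{pt}$, $w=0$, $W=\mathbb{C}^2$, $q=xy$, the skyscraper at the node of $\{xy=0\}$ decomposes in $\MF(\mathbb{A}^2, xy) \simeq D_{sg}(\mathbb{C}[x,y]/(xy))$ as the sum of the two maximal Cohen--Macaulay modules $R/(x)$ and $R/(y)$, so $\End$ of the image of $\mathbb{C}$ is two-dimensional in even degree. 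Hence neither "$\Phi_1=i_{1\ast}\mathrm{pr}^{\ast}$" nor "$\Phi_2=i_{2\ast}$" as you wrote them is an equivalence, and the base-change identity $\iota^{\ast} i_{1\ast}\simeq i_{2\ast}$, while correct, does not compare the actual Kn\"orrer functors. If you repair this by replacing zero sections with Lagrangian subbundles, two further problems appear: (a) the lemma is stated for an arbitrary $G$-invariant non-degenerate quadratic form $q$ on a $G$-representation $W$, and a $G$-equivariant Lagrangian splitting $W=\lL\oplus\lL^{\vee}$ need not exist (e.g.\ $W$ an irreducible orthogonal representation); appealing to "the application" does not prove the lemma as stated; and (b) on the completed side, $[\widehat{Y\oplus W}/G]$ is not a vector bundle over $[\widehat{Y}/G]$, so the would-be equivalence $\Phi_2$ is not an instance of Theorem~\ref{thm:period}; asserting that the Kn\"orrer argument "holds verbatim in the formal setting" is precisely the nontrivial content you are supposed to establish, so the argument begs the question.

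The paper's route avoids Kn\"orrer here altogether and is much softer: since $q$ is non-degenerate, $\Crit(w+q)=\Crit(w)\times\{0\}$ on both stacks, and $\iota$ induces an isomorphism of the critical loci together with their formal neighborhoods. One then invokes Orlov's theorem on completion along the singular/critical locus (\cite[Theorem~2.10]{Orcomp}), which says exactly that such a functor is fully faithful with dense image, and the argument carries over to the $G$-equivariant setting verbatim. Note also that the lemma deliberately claims only "fully faithful with dense image"; the upgrade to an equivalence in Proposition~\ref{prop:funct} uses idempotent completeness of the source (Lemma~\ref{lem:idem} via the genuine bundle-case Kn\"orrer equivalence), so even a corrected version of your argument should not expect to get essential surjectivity for free.
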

\begin{proof}
	Let $\pi_Y$, $\pi_{Y \oplus W}$ be the quotient morphisms 
	\begin{align*}
		\pi_Y \colon Y \to Y\ssslash G, \ 
		\pi_{Y \oplus W} \colon Y \oplus W \to (Y\oplus W)\ssslash G. 
		\end{align*}
	Then we have 
	$\pi_{Y \oplus W}^{-1}(0, 0) \subset \pi_Y^{-1}(0) \times W$, 
	therefore we have the induced natural morphism (\ref{mor:iota}) by the definition 
	of formal fibers. 
	
	Note that we have 
$\Crit(w+q)=\Crit(w) \times \{0\}$, so 
the morphism (\ref{mor:iota}) induces the isomorphism of 
critical loci of $w+q$ on 
$\widehat{Y} \times W$ and $\widehat{Y \oplus W}$, 
and also their formal neighborhoods. 
Therefore the functor (\ref{iota:ast}) is fully-faithful with 
dense image by~\cite[Theorem~2.10]{Orcomp} (in \textit{loc.~cit.~} it is stated without 
$G$-action, but the same argument applies to the $G$-equivariant 
setting verbatim). 
	\end{proof}

Suppose that $Y$ is quasi-projective variety 
with an action of a reductive algebraic group $G$ 
such that the good moduli space 
$\pi \colon [Y/G] \to Y\ssslash G$ exists. 
For each closed point $y \in Y \ssslash G$, 
we denote by $[\widehat{Y}_y/G]$ 
the formal fiber of $\pi$
at $y$. 
For a regular function 
$w \colon [Y/G] \to \mathbb{A}^1$, we denote by 
$\widehat{w}_y$ its restriction to $[\widehat{Y}_y/G]$, 
and $\widehat{\pi}_y \colon [\widehat{Y}_y/G]
\to \widehat{Y}_y \ssslash G$ its good moduli space. 
  We have the following lemma: 
  \begin{lem}\label{lem:locvani}
  	For $\eE \in \MF([Y/G], w)$, suppose that 
  	$\eE|_{[\widehat{Y}_y/G]} \in \MF([\widehat{Y}_y/G], \widehat{w}_y)$
  	is isomorphic to zero for any closed point $y \in Y\ssslash G$. 
  	Then we have $\eE \cong 0$. 
  	\end{lem}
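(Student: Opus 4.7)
The plan is to reduce the claim to a locality property for perfection in the singularity category and to exploit universal closedness of the good moduli morphism. First, by the Orlov-type equivalence $\MF([Y/G], w) \simeq D^b([Z/G])/\mathrm{Perf}([Z/G])$ with $Z \cneq w^{-1}(0)$ (cf.\ \cite[Theorem~3.14]{MR3112502}, as used in the proof of Lemma~\ref{lem:idem}, which applies in the global setting as well as the formal one), the factorization $\eE$ is represented by a bounded complex $\eE' \in D^b([Z/G])$, and the analogous equivalence on each formal fiber identifies the restriction $\eE|_{[\widehat{Y}_y/G]}$ with $\eE'|_{[\widehat{Z}_y/G]}$, where $\widehat{Z}_y \cneq Z \times_{Y\ssslash G} \Spec \widehat{\oO}_{Y\ssslash G, y}$. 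Under this correspondence the hypothesis $\eE|_{[\widehat{Y}_y/G]} \cong 0$ translates to perfection of $\eE'|_{[\widehat{Z}_y/G]}$ for every closed $y \in Y\ssslash G$, and the goal becomes perfection of $\eE'$ on $[Z/G]$.

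Suppose for contradiction that $\eE'$ is not perfect. The locus $K \subset |[Z/G]|$ where $\eE'$ fails to be perfect is a closed, $G$-invariant subset, since perfection is an open condition on any smooth presentation. The composition $\pi \colon [Z/G] \hookrightarrow [Y/G] \to Y\ssslash G$ is universally closed \cite{MR3237451}, so $\pi(K)$ is a nonempty closed subset of $Y\ssslash G$; pick a closed point $y_0 \in \pi(K)$ and a point $x \in \pi^{-1}(y_0) \cap K$. The completion $\Spec \widehat{\oO}_{Y\ssslash G, y_0} \to Y\ssslash G$ is flat and contains $y_0$ in its image, so the base-changed morphism $[\widehat{Z}_{y_0}/G] \to [Z/G]$ is flat with set-theoretic image containing the entire fiber $\pi^{-1}(y_0)$, and is moreover faithfully flat on local rings at $x$ and any lift thereof. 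Since perfection of a coherent complex descends along faithfully flat local homomorphisms of noetherian local rings, non-perfection of $\eE'$ at $x$ forces non-perfection of $\eE'|_{[\widehat{Z}_{y_0}/G]}$ at a lift of $x$, contradicting the hypothesis. Hence $\eE'$ is perfect and $\eE \cong 0$ in $\MF([Y/G], w)$.

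The main technical obstacle is confirming, in this stacky equivariant setting, that perfection of a complex on $[Z/G]$ is genuinely detected by formal completions at closed points of the good moduli space $Y\ssslash G$ (rather than at closed points of the stack itself, which need not exist in a useful way for infinite stabilizers). This is handled by two ingredients already present in the paper: cohomological affineness of $\pi$, which lets one test coherent properties after $\pi_\ast$ on $Y\ssslash G$; and the coherent completeness theorem of Alper--Hall--Rydh (\cite[Theorem~1.6]{AHR2}, invoked in Lemma~\ref{lem:idem}), which identifies $\Coh([\widehat{Z}_y/G])$ with the inverse limit over artinian thickenings of the reduced fiber and thereby guarantees faithful transport of non-perfection from $[Z/G]$ to the formal fiber.
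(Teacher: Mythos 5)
Your core mechanism is sound and genuinely different from the paper's: translate vanishing in $\MF$ into perfection in a singularity category, note that the non-perfect locus $K$ of the representing complex is closed and $G$-invariant, use universal closedness of the good moduli morphism to find a closed point $y_0 \in \pi(K)$, and descend non-perfection along the faithfully flat local map $\oO_{Z,x} \to \oO_{\widehat{Z}_{y_0},\tilde{x}}$. That descent step, and the fact that perfection of a bounded coherent complex is a pointwise open condition, are correct. However, the translation step introduces hypotheses the lemma does not have, and this is a genuine gap. The equivalence $\MF([Y/G],w)\simeq D^b([Z/G])/\mathrm{Perf}([Z/G])$ (globally and on each formal fiber) requires $[Y/G]$ to be regular — the lemma only assumes $Y$ quasi-projective — and $w$ to be a non-zero-divisor. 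The latter actually fails in situations where the lemma is used: Lemma~\ref{lem:locvani} is invoked in the proof of Theorem~\ref{thm:wincon} for $\MF(\mM_Q^{\dag,\theta\sss}(v),w)$, and for dimension vectors with $v_1=0$ (e.g.\ $v=l\,s_1$ at the wall $W_1$) the super-potential $w=\mathrm{Tr}(W)$ is identically zero, so $Z=Y$ and the singularity-category dictionary is unavailable; you would have to treat this case separately. You also assert, but do not check, that the equivalence intertwines restriction to formal fibers with pullback of the representing complex (this is true — the cokernel functor commutes with the flat base change — but it needs a sentence), and that equivariant perfection can be tested on the atlas (fine here via the resolution property). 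Finally, your closing paragraph appeals to cohomological affineness and to Alper--Hall--Rydh coherent completeness, but neither is actually used by, nor needed for, the argument you gave; it reads as a substitute for the missing points above rather than a resolution of them.

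For comparison, the paper's proof avoids all of this and needs no smoothness or non-degeneracy of $w$: it considers $\hH om^{\bullet}(\eE,\eE)\in\MF([Y/G],0)$, pushes forward by $\pi_{\ast}$, and uses derived base change to identify its completion at each closed point $y$ with $\widehat{\pi}_{y\ast}\hH om^{\bullet}(\eE|_{[\widehat{Y}_y/G]},\eE|_{[\widehat{Y}_y/G]})\cong 0$; since a quasi-coherent complex on $Y\ssslash G$ whose completions at all closed points vanish is zero, one gets $\Hom^{\bullet}(\eE,\eE)=\dR\Gamma(\pi_{\ast}\hH om^{\bullet}(\eE,\eE))=0$ and hence $\eE\cong 0$. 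If you wish to keep your route, state and use the extra hypotheses (regular total stack, $w$ a non-zero-divisor) and handle the $w=0$ components separately — but note that the separate case is essentially the paper's support/base-change argument, so the simpler proof subsumes yours in generality.
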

  \begin{proof}
  	The inner homomorphism $\hH om^{\bullet}(\eE, \eE)$ is 
  	an object in $\MF([Y/G], 0)$, which is equivalent to 
  	the $\mathbb{Z}/2$-periodic 
  	derived category of coherent sheaves on $[Y/G]$. 
  By the derived base change, we have 
  	\begin{align*}
  		\pi_{\ast}\hH om^{\bullet}(\eE, \eE)
  		\otimes_{\oO_{Y\ssslash G}} \widehat{\oO}_{Y\ssslash G, y}
  		&\cong \widehat{\pi}_{y\ast}
  		\hH om^{\bullet}(\eE|_{[\widehat{Y}_y/G]}, \eE|_{[\widehat{Y}_y/G]}) \\
  		&\cong 0
  		\end{align*}
  	in the $\mathbb{Z}/2$-periodic derived category of quasi-coherent 
  	sheaves on $\widehat{Y}_y \ssslash G$. 
  	The object $\pi_{\ast}\hH om^{\bullet}(\eE, \eE)$
  	is an object in the $\mathbb{Z}/2$-periodic derived category of quasi-coherent
  	sheaves on $Y\ssslash G$ whose 
  	formal completions at any $y \in Y\ssslash G$ is zero, so 
  	it is isomorphic to zero. 
  	Then we have 
  	$\Hom^{\bullet}(\eE, \eE)=\dR \Gamma(\hH om^{\bullet}(\eE, \eE))=0$, 
  	so $\eE \cong 0$. 
  	\end{proof}
  
  \subsection{Right adjoint functor}
  \begin{lem}\label{lem:radjoint}
  	The functor $\Upsilon_{j_{\bullet}}$ in (\ref{FF:global}) 
  	admits a right adjoint $\Upsilon_{j_{\bullet}}^R$. 
  \end{lem}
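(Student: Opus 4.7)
The plan is to realize $\Upsilon_{j_{\bullet}}$ as a composition of two functors each admitting a right adjoint, and take the corresponding composition of right adjoints in reverse order. Let $\mathcal{A}$ denote the source of $\Upsilon_{j_{\bullet}}$, set
\begin{align*}
\widetilde{\mathcal{A}} \cneq \boxtimes_{i=1}^l \MF(\mM_Q^{\theta \sss}(s_m), w) \boxtimes \MF(\mM_Q^{\dag, \theta \sss}(v-l s_m), w),
\end{align*}
and write $\alpha \colon \mathcal{A} \hookrightarrow \widetilde{\mathcal{A}}$ for the natural inclusion of the source into the unrestricted product, $\beta \colon \mathbb{W}_{\rm{glob}}^{\theta_+}(v) \hookrightarrow \MF(\mM_Q^{\dag, \theta \sss}(v), w)$ for the inclusion of the target window subcategory, and $\Phi = p_{\lambda \ast} q_{\lambda}^{\ast}$ for the full (unrestricted) categorified Hall product. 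By construction $\beta \circ \Upsilon_{j_{\bullet}} = \Phi \circ \alpha$, so the adjunction computation
\begin{align*}
\Hom(\Upsilon_{j_{\bullet}}(X), Y) = \Hom(\beta \Upsilon_{j_{\bullet}}(X), \beta Y) = \Hom(\Phi \alpha(X), \beta Y)
\end{align*}
shows that a right adjoint is given by $\Upsilon_{j_{\bullet}}^R = \alpha^R \circ \Phi^R \circ \beta$, provided the right adjoints $\Phi^R$ and $\alpha^R$ exist.

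For $\Phi$, the right adjoint is $\Phi^R = q_{\lambda \ast} p_{\lambda}^{!}$: since $p_{\lambda}$ is proper, $p_{\lambda \ast}$ admits the right adjoint $p_{\lambda}^{!}$, and $q_{\lambda}^{\ast}$ always admits $q_{\lambda \ast}$ as its right adjoint. For $\alpha$, the inclusion is a box product of inclusions at each factor, so a right adjoint is obtained factor-wise. For each of the first $l$ factors, the inclusion of the $\C$-weight subspace $\MF(\mM_Q^{\theta \sss}(s_m), w)_k \hookrightarrow \MF(\mM_Q^{\theta \sss}(s_m), w)$ is the inclusion of a direct summand in the weight decomposition (\ref{decom:Y}), whose right adjoint is just the projection onto that summand. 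For the last factor, twisting by $\chi_0^{j_l + 2l(m^2-m)}$ is an autoequivalence, so finding a right adjoint of
\begin{align*}
\mathbb{W}_{\rm{glob}}^{\theta_-}(v-l s_m) \otimes \chi_0^{j_l + 2l(m^2-m)} \hookrightarrow \MF(\mM_Q^{\dag, \theta \sss}(v-l s_m), w)
\end{align*}
reduces to finding a right adjoint of the window inclusion $\mathbb{W}_{\rm{glob}}^{\theta_-}(v-l s_m) \hookrightarrow \MF(\mM_Q^{\dag, \theta \sss}(v-l s_m), w)$.

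The main technical point is therefore the admissibility of the window subcategory $\mathbb{W}_{\rm{glob}}^{\theta_-}(v-l s_m)$ inside the ambient factorization category. This is a standard consequence of the window theory of Halpern-Leistner~\cite{MR3327537} and Ballard-Favero-Katzarkov~\cite{MR3895631} underlying Theorem~\ref{thm:window:MF}: beyond the equivalence with the semistable quotient, the window comes equipped with a semiorthogonal decomposition of the ambient category whose complementary summands are generated by factorizations pushed forward from the KN strata $\overline{Z}_i$ with prescribed $\lambda_i$-weight ranges. From such a semiorthogonal decomposition, the window inclusion is automatically left and right admissible, producing the desired right adjoint. Composing $\beta$, $\Phi^R$ and $\alpha^R$ then yields $\Upsilon_{j_{\bullet}}^R$.
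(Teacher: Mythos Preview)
Your reduction $\Upsilon_{j_{\bullet}}^R = \alpha^R \circ \Phi^R \circ \beta$ is formally correct, and the admissibility of the window inclusion in the last factor is indeed a standard by-product of the semiorthogonal decompositions underlying Theorem~\ref{thm:window:MF}. The real gap is in your construction of $\Phi^R = q_{\lambda \ast} p_{\lambda}^{!}$. The morphism $q_{\lambda}$ is an affine fibration (its fibres parametrize filtrations with prescribed associated graded, i.e.\ iterated extensions), hence not proper, and $q_{\lambda \ast}$ does \emph{not} preserve coherence: already $q_{\lambda \ast}\oO$ is an infinite-rank symmetric algebra on the base. Thus $q_{\lambda \ast}p_{\lambda}^{!}$ a priori only gives a right adjoint after ind-completion, i.e.\ on $\MF_{\qcoh}$, and you have not argued that it lands back in coherent factorizations when applied to objects of $\beta\bigl(\mathbb{W}_{\rm glob}^{\theta_+}(v)\bigr)$. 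There is no formal reason it should.

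The paper avoids this problem by passing to the good moduli \emph{schemes}. Using the window equivalences (\ref{equiv:glob}) together with the fact that (\ref{MQ:sm}) is a $\C$-gerbe, one rewrites $\Upsilon_{j_{\bullet}}$ as a functor between derived categories of the smooth quasi-projective varieties $M_Q^{\theta \sss}(s_m)^{\times l} \times M_Q^{\dag,\theta_-\sss}(v-ls_m)$ and $M_Q^{\dag,\theta_+\sss}(v)$, and then checks that this composite is Fourier--Mukai with a kernel $\pP$ supported on the fibre product over the common base $M_Q^{\dag,\theta\sss}(v)$ (this uses Lemma~\ref{lem:window:supp}). Both varieties are \emph{proper} over $M_Q^{\dag,\theta\sss}(v)$, so Grothendieck duality produces a coherent right-adjoint kernel $\pP^R$, and one finally transports this back to factorization categories via the functor $\Xi$ of~\cite{MR3581302}. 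The properness over $M_Q^{\dag,\theta\sss}(v)$ is exactly what compensates for the non-properness of $q_{\lambda}$, and is the missing ingredient in your direct stacky argument.
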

  \begin{proof}
  	We consider the following diagram 
  \begin{align}\label{dia:Mplus}
 \footnotesize{ 		\xymatrix{
 		& \mM_Q^{\dag, \theta \sss}(v^{\bullet}) \ar[rd]^-{p} \ar[d]_-{q} & & \\
  \mM_Q^{\theta \sss}(s_m)^{\times l} \times 
\mM_Q^{\dag, \theta_- \sss}(v-ls_m) \ar@<-0.3ex>@{^{(}->}[r] \ar[d] &  \mM_Q^{\theta \sss}(s_m)^{\times l} \times 
\mM_Q^{\dag, \theta \sss}(v-ls_m) \ar[d] & \mM_Q^{\dag, \theta \sss}(v) \ar[d] & \ar@<0.3ex>@{_{(}->}[l] 
\mM_Q^{\dag, \theta_+ \sss}(v) \ar[d] \\
 M_Q^{\theta \sss}(s_m)^{\times l} \times 
M_Q^{\dag, \theta_- \sss}(v-ls_m) \ar[r] & M_Q^{\theta \sss}(s_m)^{\times l} \times M_Q^{\dag, \theta \sss}(v-ls_m) \ar[r]_-{\oplus} & 
M_Q^{\dag, \theta \sss}(v) \ar[d]_-{w}  & M_Q^{\dag, \theta_+\sss}(v) \ar[l] \\
& & \mathbb{A}^1. & 
   }}
  		\end{align}
  	
  	Similarly to (\ref{window:glob}),
  	let 
  	\begin{align*}
  		\widetilde{\mathbb{W}}_{\rm{glob}}^{\theta_{\pm}}(v) \subset D^b(\mM_Q^{\dag, \theta \sss}(v))
  	\end{align*} 	
  	be the window subcategory (\ref{window:m})
  	for the choice $m_{\bullet}^{\pm}$ in (\ref{choice:m}). 
  	We consider the composition functor 
  	\begin{align}\notag
  		D^b(M_Q^{\theta \sss}(s_m))^{\boxtimes l} \boxtimes 
  		D^b(M_Q^{\dag, \theta \sss}(v-ls_m))
  		&\stackrel{\sim}{\to}
  		\boxtimes_{i=1}^l D^b(\mM_Q^{\theta \sss}(s_m))_{j_i+(2i-1)(m^2-m)}
  		\boxtimes \widetilde{\mathbb{W}}_{\rm{glob}}^{\theta_-}(v-ls_m) \\
  		\label{compose:D} &\to D^b(\mM_Q^{\dag, \theta \sss}(v)) 
  		\to 
  		D^b(M_Q^{\dag, \theta_+\sss}(v)). 
  	\end{align}
  	Here the first equivalence is due to window theorem in Theorem~\ref{thm:window} together 
  	with the fact that (\ref{MQ:sm}) is a $\C$-gerbe, 
  	the second arrow is the categorified Hall product (i.e. $p_{\ast}q^{\ast}$ in the 
  	diagram (\ref{dia:Mplus})), 
  	and the last arrow is the restriction to the semistable locus. 
  	The first arrow is of Fourier-Mukai type 
  	by Lemma~\ref{lem:window:supp} below, 
  	and the second and the third arrows are also of Fourier-Mukai 
  	type by their constructions. Therefore the above 
  	composition functor is of Fourier-Mukai type. 
  	So we have the kernel object 
  	\begin{align*}
  		\pP \in D^b((M_Q^{\theta \sss}(s_m)^{\times l} \times M_Q^{\dag, 
  			\theta_- \sss}(v-ls_m)) \times M_Q^{\dag, \theta_+ \sss}(v)). 
  	\end{align*}
  
  Moreover the kernel objects of the second and the third arrows in (\ref{compose:D}) 
  are push-forward from the fiber products over $M_Q^{\dag, \theta \sss}(v)$ by their constructions. 
  By Lemma~\ref{lem:window:supp} below, the kernel object of the first arrow in (\ref{compose:D})
  is a push-forward from the fiber product over $\mathbb{A}^1$
  and supported on the fiber product over $M_Q^{\dag, \theta \sss}(v)$. 
  Therefore 
   the object $\pP$ is a push-forward of 
  an object 
  \begin{align}\label{P:omega}
  	\pP_{w} \in D^b((M_Q^{\theta \sss}(s_m)^{\times l} \times M_Q^{\dag, \theta_- \sss}(v-ls_m)) \times_{\mathbb{A}^1}
  	M_Q^{\dag, \theta_+ \sss}(v)) 
  \end{align}
supported on the fiber product over $M_Q^{\dag, \theta \sss}(v)$. 
    	Since $M_Q^{\dag, \theta_+ \sss}(v)$ and 
  	$M_Q^{\theta \sss}(s_m)^{\times l} \times 
  	M_Q^{\dag, \theta_- \sss}(v-ls_m)$
  	are proper over $M_Q^{\dag, \theta \sss}(v)$, 
  	the functor (\ref{compose:D}) admits a right 
  	adjoint given by the Fourier-Mukai kernel $\pP^R$ defined by 
  	\begin{align*}
  		\pP^R \cneq \pP^{\vee} \boxtimes \omega_{M_Q^{\theta \sss}(s_m)^{\times l} \times M_Q^{\dag, \theta_- \sss}(v-ls_m)}[\dim M_Q^{\theta \sss}(s_m)^{\times l} \times M_Q^{\dag, \theta_- \sss}(v-ls_m)].
  		\end{align*}  
  		
  	By
  	Theorem~\ref{thm:window:MF}, the functor $\Upsilon_{j_{\bullet}}$ in (\ref{FF:global}) is regarded 
  	as a functor
  	\begin{align}\label{funct:MFF}
  		\Upsilon_{j_{\bullet}} \colon 
  		\MF(M_Q^{\theta \sss}(s_m), w)^{\boxtimes l}
  		\boxtimes \MF(M_Q^{\dag, \theta_- \sss}(v-l s_m), w)
  		\to \MF(M_Q^{\dag, \theta_+ \sss}(v), w). 
  	\end{align}
 The above functor
  	is a Fourier-Mukai functor with kernel given by $\Xi(\pP_{w})$, 
  	where $\Xi$ is the natural 
  	functor (see~\cite[Theorem~5.5]{MR3581302})
  	\begin{align*}
  	\Xi	\colon &D^b((M_Q^{\theta \sss}(s_m)^{\times l})
  		\times M_Q^{\dag, \theta_- \sss}(v-ls_m)) 
  		\times_{\mathbb{A}^1}
  		M_Q^{\dag, \theta_+ \sss}(v)) \\
  		&\to \MF((M_Q^{\theta \sss}(s_m)^{\times l} \times M_Q^{\dag, \theta_- \sss}(v-ls_m)) \times
  		M_Q^{\dag, \theta_+ \sss}(v), w \boxplus (-w))
  	\end{align*} 
  By the Grothendieck Riemann-Roch theorem, the object 
  $\pP^R$ is the push-forward of an object 
  $\pP_w^R$ in the RHS of (\ref{P:omega}). 
  	Then the right adjoint of (\ref{funct:MFF}) is obtained by the Fourier-Mukai 
  	kernel $\Xi(\pP_{w}^R)$. 
  \end{proof}

\begin{lem}\label{lem:window:supp}
	In the setting of Theorem~\ref{thm:window}, 
	let $\yY=[Y/G]$, $\yY^{\rm{ss}}=[Y^{l\sss}/G]$, 
	and assume that $\yY^{\rm{ss}}$ is a projective scheme over $Y\ssslash G$. 
	Then the splitting of 
	$D^b(\yY) \twoheadrightarrow D^b(\yY^{\rm{ss}})$
	in Theorem~\ref{thm:window} (applied for $N'=0$) is of Fourier-Mukai type 
	whose kernel object $\pP \in D^b(\yY \times \yY^{\rm{ss}})$
	is supported on $\yY \times_{Y\ssslash G} \yY^{\rm{ss}}$. 
	Moreover for any non-constant $w \colon Y\ssslash G \to \mathbb{A}^1$, 
	we have 
	$\pP=i_{\ast}\pP_{w}$
	for some $\pP_w \in D^b(\yY \times_{\mathbb{A}^1} \yY^{\rm{ss}})$.
	Here $\yY \times_{\mathbb{A}^1} \yY^{\rm{ss}}$
	is given by the diagram
	\begin{align*}
		\xymatrix{
	\yY \times_{\mathbb{A}^1}\yY^{\rm{ss}} \ar[r]^-{i} \ar[d] & 
	\yY \times \yY^{\rm{ss}} \ar[d]^-{w \boxplus(-w)} \\
	0 \ar[r] & \mathbb{A}^1. 	
	}
		\end{align*}
	\end{lem}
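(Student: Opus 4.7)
The plan is to construct the Fourier-Mukai kernel $\pP$ by applying the window theorem to the product stack $\yY \times \yY^{\rm{ss}}$, starting from the structure sheaf of the diagonal, and then to match it against the splitting by comparing restrictions to $\yY^{\rm{ss}}$. The argument proceeds in four stages.

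First, I would apply Theorem~\ref{thm:window} to the product stack $\yY \times \yY^{\rm{ss}}$ equipped with the pullback of $l$ along the first projection. With the $G$-action taken on the first factor only, the KN stratification of $Y \times Y^{l\sss}$ is $\{S_i \times Y^{l\sss}\}_{1\le i\le N}$, with the same one-parameter subgroups $\lambda_i$ (trivial on the second factor), the same centers $Z_i \times \yY^{\rm{ss}}$, and the same invariants $\eta_i$. Taking parameters $m_i^{\rm{new}}=m_i$, the theorem yields a window subcategory $\widetilde{\mathbb{W}} \subset D^b(\yY \times \yY^{\rm{ss}})$ whose restriction to $\yY^{\rm{ss}} \times \yY^{\rm{ss}}$ is an equivalence. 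I then define $\pP \in \widetilde{\mathbb{W}}$ to be the unique lift of $\Delta_*\oO_{\yY^{\rm{ss}}} \in D^b(\yY^{\rm{ss}} \times \yY^{\rm{ss}})$ along this equivalence.

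Second, I would identify $\pP$ as the kernel of the splitting $\sigma \cneq (j^*|_{\mathbb{W}^l})^{-1} \colon D^b(\yY^{\rm{ss}}) \to D^b(\yY)$. Defining the Fourier-Mukai functor $\Phi_{\pP}(F) \cneq (p_\yY)_{\ast}(\pP \otimes p_{\yY^{\rm{ss}}}^{\ast}F)$ requires $p_\yY$ to be proper on $\mathrm{Supp}(\pP)$, which will follow from Stage~3 below combined with the hypothesis that $\yY^{\rm{ss}} \to Y\ssslash G$ is projective. Granting this, the essential point is that $\Phi_\pP(F)$ lies in $\mathbb{W}^l$: this is because the first-factor KN strata defining $\mathbb{W}^l$ pull back to exactly the strata defining $\widetilde{\mathbb{W}}$, and the derived base change square formed by restricting to the centers $Z_i$ shows that the weight conditions on $\pP$ are preserved by $p_{\yY,*}$. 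Combined with $j^*\Phi_\pP(F)=F$ (via base change along the open immersion $j$ and the projection formula, using $\pP|_{\yY^{\rm{ss}}\times\yY^{\rm{ss}}}=\Delta_*\oO$) and the window equivalence $\mathbb{W}^l\simeq D^b(\yY^{\rm{ss}})$, this forces $\Phi_{\pP}=\sigma$.

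Third, I would establish that $\pP$ is supported on $\yY \times_{Y\ssslash G} \yY^{\rm{ss}}$. Observe that $\mathbb{W}^l$ is stable under $\otimes\,\pi^{\ast}L$ for $L\in D^b(Y\ssslash G)$, since $\pi^*L$ has trivial $\lambda_i$-weight on each center $Z_i$ (because $\lambda_i$ acts trivially on $Y\ssslash G$). This makes $\sigma$ linear over $\oO_{Y\ssslash G}$: $\sigma(F\otimes\pi^{\rm{ss},\ast}L)\simeq\sigma(F)\otimes\pi^{\ast}L$. Equivalently, for every $g\in\oO(Y\ssslash G)$, multiplication by $\pi_1^{\ast}g-\pi_2^{\ast}g$ acts as zero on $\pP$; this can also be seen directly from the vanishing of the corresponding action on $\Delta_{\ast}\oO_{\yY^{\rm{ss}}}$ via the $\oO(\yY\times\yY^{\rm{ss}})$-linearity of the window equivalence. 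This yields scheme-theoretic support of $\pP$ on the fiber product $\yY \times_{Y\ssslash G}\yY^{\rm{ss}}$. For the final assertion, since $w\circ\pi_1-w\circ\pi_2$ lies in the defining ideal of $\yY\times_{Y\ssslash G}\yY^{\rm{ss}}$, we have a chain of closed immersions $\yY\times_{Y\ssslash G}\yY^{\rm{ss}} \subset \yY\times_{\mathbb{A}^1}\yY^{\rm{ss}} \subset \yY\times\yY^{\rm{ss}}$, and $\pP_w\cneq i^{\ast}\pP\in D^b(\yY\times_{\mathbb{A}^1}\yY^{\rm{ss}})$ satisfies $i_{\ast}\pP_w=\pP$.

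The main obstacle will be Stage~2: verifying that $\Phi_\pP(F)$ satisfies the weight conditions defining $\mathbb{W}^l$. This requires a careful base change analysis at each KN center $Z_i$, using the diagram $Z_i\leftarrow Z_i\times\yY^{\rm{ss}}\to \yY\times\yY^{\rm{ss}}$ to pass weight information from $\pP\in\widetilde{\mathbb{W}}$ through $p_{\yY,*}$. A secondary subtlety is promoting the $\oO_{Y\ssslash G}$-linearity of $\sigma$ to scheme-theoretic (rather than merely set-theoretic) support of $\pP$ on the fiber product, but this follows from the fact that the annihilation of $\pP$ by $\pi_1^*g-\pi_2^*g$ holds already in the abelian (not just derived) sense, through the window equivalence applied to the action of global functions.
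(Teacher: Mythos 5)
Your Stages 1 and 2 follow essentially the same route as the paper: take the window subcategory on $\yY\times\yY^{\rm{ss}}$ for the pulled-back stratification, lift $\oO_{\Delta}$ through the window equivalence to get $\pP$, and identify the resulting Fourier--Mukai transform with the splitting (the paper simply cites the argument of Halpern-Leistner for this last identification, while you sketch the weight-preservation check directly; that part is fine modulo the properness caveat you note). The genuine gap is in Stage 3, precisely at the assertion the lemma is really after, namely $\pP=i_{\ast}\pP_w$ for some $\pP_w\in D^b(\yY\times_{\mathbb{A}^1}\yY^{\rm{ss}})$. What your linearity argument gives is that multiplication by $f=\pi_1^{\ast}w-\pi_2^{\ast}w$ is zero as an endomorphism of $\pP$ in the derived category (because it corresponds, under the window equivalence, to the zero endomorphism of $\Delta_{\ast}\oO_{\yY^{\rm{ss}}}$). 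This implies that $f$ annihilates each cohomology sheaf of $\pP$, hence the support statement, and via the triangle $\pP\xrightarrow{f}\pP\to i_{\ast}Li^{\ast}\pP$ it implies that $\pP$ is a \emph{direct summand} of an object in the image of $i_{\ast}$; but neither of these yields that $\pP$ itself lies in the essential image of $i_{\ast}$. Your remark that the annihilation ``holds already in the abelian sense'' does not repair this: the window equivalence only produces a homotopy-category statement ($f\cdot\mathrm{id}_{\pP}$ null-homotopic), not a strict chain-level $\oO/(f)$-module structure, and annihilation of cohomology sheaves by $f$ is well known to be insufficient for an object to be a pushforward from the divisor. Since Lemma~\ref{lem:radjoint} uses $\pP_w$ itself (it is fed into the functor $\Xi$ to produce the matrix-factorization kernel), this is not a cosmetic point.

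The paper closes exactly this gap by a different mechanism: the splitting $\Psi$ of $D^b(\yY\times\yY^{\rm{ss}})\twoheadrightarrow D^b(\yY^{\rm{ss}}\times\yY^{\rm{ss}})$ is linear over $\mathrm{Perf}(Y\ssslash G\times Y\ssslash G)$, and by the base-change result for window splittings (\cite[Proposition~5.5]{MR3327537}) applied to the base change along $\{0\}\hookrightarrow\mathbb{A}^1$ via $w\boxplus(-w)$, one obtains a splitting $\Phi_w$ on $D^b(\yY\times_{\mathbb{A}^1}\yY^{\rm{ss}})$ fitting into a square commuting with $i_{\ast}$. Since the diagonal itself lies inside $\yY^{\rm{ss}}\times_{\mathbb{A}^1}\yY^{\rm{ss}}$, one has $\oO_{\Delta}=i_{\ast}\oO_{\Delta'}$, and then $\pP=\Psi(i_{\ast}\oO_{\Delta'})=i_{\ast}\Phi_w(\oO_{\Delta'})=i_{\ast}\pP_w$ on the nose; letting $w$ vary gives the support statement over $Y\ssslash G$. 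To fix your proof you would need to invoke such a base-change compatibility (or otherwise exhibit an honest $\oO_{\yY\times_{\mathbb{A}^1}\yY^{\rm{ss}}}$-linear enhancement of the window splitting), rather than arguing from annihilation of $\pP$ by $f$.
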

\begin{proof}
	The KN stratification of $\yY$ pulls back to the one on 
	$\yY \times \yY^{\rm{ss}}$ via the first projection, thus by a choice of 
	$m_{\bullet}$ in Theorem~\ref{thm:window} we have the 
	splitting $\Psi$ of 
	$D^b(\yY \times \yY^{\rm{ss}}) \twoheadrightarrow 
	D^b(\yY^{\rm{ss}} \times \yY^{\rm{ss}})$. 
	From its construction, $\Psi$ is linear over 
	$\mathrm{Perf}(Y\ssslash G \times Y\ssslash G)$. 
	Therefore for any non-constant $w$, 
by~\cite[Proposition~5.5]{MR3327537} there is a splitting $\Phi_{w}$ of 
$D^b(\yY \times_{\mathbb{A}^1} \yY^{\rm{ss}}) \twoheadrightarrow 
D^b(\yY^{\rm{ss}} \times_{\mathbb{A}^1}\yY^{\rm{ss}})$
such that the following diagram commutes: 
\begin{align*}
	\xymatrix{
D^b(\yY^{\rm{ss}} \times_{\mathbb{A}^1} \yY^{\rm{ss}}) \ar[r]^-{\Phi_{w}}
 \ar[d]_-{i_{\ast}}
& D^b(\yY \times_{\mathbb{A}^1} \yY^{\rm{ss}}) \ar[d]_-{i_{\ast}} \\
D^b(\yY^{\rm{ss}} \times \yY^{\rm{ss}}) \ar[r]^-{\Phi} & 
D^b(\yY \times \yY^{\rm{ss}}). 
}
	\end{align*}
Since $\yY^{\rm{ss}}$ is a quasi-projective scheme, we have 
$\oO_{\Delta} \in D^b(\yY^{\rm{ss}} \times \yY^{\rm{ss}})$. 
We set $\pP=\Phi(\oO_{\Delta})$ and $\pP_{w}=\Phi_{w}(\oO_{\Delta})$. 
Then $\pP=i_{\ast}\pP_w$. 
Since this holds for any $w$, the object $\pP$ is supported on 
$\yY \times_{Y\ssslash G} \yY^{\rm{ss}}$. 
Then the object $\pP$ induces the Fourier-Mukai functor 
$D^b(\yY^{\rm{ss}}) \to D^b(\yY)$
which gives the splitting in Theorem~\ref{thm:window}
by the argument in~\cite[Section~2.3]{MR3327537}. 
	\end{proof}
  
  \subsection{Proof of Proposition~\ref{prop:func}}\label{subsec:prop:func}
 \begin{proof}
 	The assertion is trivial if $\dim V \le 1$. 
 	Below we assume that $\dim V \ge 2$. 
 	Note that $\mathrm{ord}_0(w_p) \ge 2$ where $\mathrm{ord}_0(w_p)$ is the vanishing 
 	order of $w_p$ at $0$. 
 	This is because $w_p(0)=0$ by the first inclusion in (\ref{w-10}) together with the fact that 
 	$0 \in \mathrm{Crit}(w_p) \neq \emptyset$. 
 	
 	Let us consider the Hessian of $w_p$
 	\begin{align*}
 		\mathrm{Hess}(w_p) \colon \Ext_{Q^{\dag}}^1(R, R) \otimes \oO_{\widehat{\mM}_{Q_p}^{\dag}(d)}
 		\to \Ext_{Q^{\dag}}^1(R, R)^{\vee} \otimes \oO_{\widehat{\mM}_{Q_p}^{\dag}(d)}.
 	\end{align*}
 	The kernel of the above morphism at the origin 
 	is $\Ext_{(Q^{\dag}, W)}^1(R, R)$. 
 	By the relation (\ref{Phi:T}), we have 
 	\begin{align*}
 		\Ext_{(Q, W)}^1(S_m, S_m) &=
 		\Ext_X^1(\oO_C(m-1), \oO_C(m-1)) \\
 		&=0.
 	\end{align*}
 	It follows that 
 	\begin{align}\label{Hess:w}
 		\Ker(\mathrm{Hess}(w_p)|_{0}) \cap 
 		(\End(V) \otimes \Ext_Q^1(S_m, S_m))=0. 
 	\end{align}
 	By Lemma~\ref{lem:morse} below, 
 	by replacing the isomorphism  $\eta_p$ in (\ref{ffiber})
 	if necessary, 
 	there exist linear subspaces 
 	\begin{align*}
 		W_1 \subset \Ext_{Q^{\dag}}^1(R_{\infty}, R_{\infty}), 
 		W_2 \subset \Ext_{Q^{\dag}}^1(R_{\infty}, S_m), \ 
 		W_3 \subset \Ext_{Q^{\dag}}^1(S_m, R_{\infty})
 	\end{align*}
 	such that $w_p$ is written as 
 	$w_p=w_1+w_2$, where 
 	$w_1$
 	does not contain variables from 
 	$\End(V) \otimes \Ext_Q^1(S_m, S_m)$ with $\deg(w_1) \ge 3$, 
 	and 
 	$w_2$ is a non-degenerate 
 	$G$-invariant quadratic form on 
 	\begin{align*}
 		&W_1 \oplus (W_2 \otimes V) \oplus (W_3 \otimes V^{\vee}) \oplus (\End(V) \otimes \Ext_Q^1(S_m, S_m)) \\
 		&=(W_1 \oplus \Ext_Q(S_m, S_m)) \oplus (W_2 \otimes V) \oplus (W_3 \otimes V^{\vee}) \oplus 
 		(\End_0(V) \otimes \Ext_Q^1(S_m, S_m)). 
 	\end{align*}
 	As we assumed that $\dim V \ge 2$, 
 	the $\GL(V)$-representation $\End_0(V)$ is a non-trivial irreducible $\GL(V)$-representation, 
 	and it is not isomorphic to $V$ nor $V^{\vee}$. 
 	Therefore $w_2$ is written as $w_2=w_3+q$ where 
 	$w_3$ does not contain variables from $\End_0(V) \otimes \Ext_Q^1(S_m, S_m)$
 	and $q$ is a non-degenerate $\GL(V)$-invariant quadratic form on 
 	$\End_0(V) \otimes \Ext_Q^1(S_m, S_m)$. Moreover $w_3$ is non-zero, 
 	since otherwise it contradicts with 
 	(\ref{Hess:w}) and $\End_0(V) \subsetneq \End(V)$. 
 	By replacing the isomorphism (\ref{isom:W}) if necessary, 
 	we can also assume that $q$ coincides with (\ref{q:W}). 
 	Therefore we obtain a desired form (\ref{func:wp}). 
 \end{proof}
 
 We have used the following lemma, whose proof is a variant of~\cite[Proposition~2.24]{MR3399099}: 
 \begin{lem}\label{lem:morse}
 	Let $G$ be a reductive algebraic group 
 	and $V$ be a finite dimensional $G$-representation. 
 	Let $w \colon \widehat{V} \to \mathbb{A}^1$ be 
 	a $G$-invariant formal function such that $\mathrm{ord}_0(w) \ge 2$. 
 	Let $V_1$ be the kernel of the Hessian at the origin 
 	\begin{align*}
 		V_1=\Ker(\mathrm{Hess}(w)|_{0} \colon V \to V^{\vee}). 
 		\end{align*}
 	Then there exists a direct sum decomposition 
 	$V=V_1 \oplus V_2$ of $G$-representations 
 	and a $G$-equivariant isomorphism 
 	$\phi \colon \widehat{V} \stackrel{\cong}{\to} \widehat{V}$
 	such that $\phi^{\ast}w=w_1+w_2$, where 
 	$w_1 \in \oO_{\widehat{V}_1}$ is $G$-invariant with $\mathrm{ord}_0(w_1) \ge 3$, and
 	$w_2 \in \mathrm{Sym}^2(V_2^{\vee})$ is a $G$-invariant 
 	non-degenerate quadratic form on $V_2$. 
 \end{lem}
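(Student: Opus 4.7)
The strategy is a standard equivariant splitting (parametric Morse) lemma carried out in three stages, using that $G$ is reductive to maintain $G$-equivariance throughout.

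First, by reductivity of $G$, I would choose a $G$-invariant complement $V_2$ of $V_1$ in $V$, so $V = V_1 \oplus V_2$ as $G$-representations; write $v = (v_1, v_2)$. Since $V_1 = \ker \mathrm{Hess}(w)|_0$, the quadratic part of $w$ vanishes on $V_1$ and on the cross-term $V_1 \otimes V_2$, and restricts to a non-degenerate $G$-invariant quadratic form $q \in \mathrm{Sym}^2(V_2^{\vee})^G$ on $V_2$.

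Second, I would apply the formal implicit function theorem to the $G$-equivariant map $\mu \cneq \nabla_{v_2} w \colon \widehat{V} \to V_2^{\vee}$, whose derivative at $0$ vanishes on $V_1$ and is the $G$-equivariant isomorphism $V_2 \simto V_2^{\vee}$ coming from the polarization of $q$. The unique formal solution $h \colon \widehat{V}_1 \to V_2$ with $h(0) = 0$, $dh|_0 = 0$, and $\mu(v_1, h(v_1)) = 0$ is $G$-equivariant by uniqueness, so $\phi_1(v_1, v_2) \cneq (v_1, v_2 + h(v_1))$ is a $G$-equivariant formal automorphism of $\widehat{V}$. It transforms $w$ into
\begin{align*}
\phi_1^{\ast} w = w_1(v_1) + R(v_1, v_2), \quad w_1(v_1) \cneq w(v_1, h(v_1)),
\end{align*}
where $R(v_1, 0) = \nabla_{v_2} R(v_1, 0) = 0$. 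By the second-order Taylor formula applied in the $v_2$-variable, $R(v_1, v_2) = B(v_1, v_2)(v_2, v_2)$ for the $G$-equivariant formal family $B(v_1, v_2) \cneq \int_0^1 (1-s)\, \nabla_{v_2}^2 R(v_1, s v_2)\, ds$ of symmetric bilinear forms on $V_2$, with $B(0, 0)$ proportional to the polarization of $q$ and therefore formally non-degenerate.

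Third, I would straighten $B$ to the constant form $q$ via Moser's trick applied equivariantly. Set $W_t \cneq w_1(v_1) + (1-t)\, q(v_2) + t\, R(v_1, v_2)$ for $t \in [0, 1]$, and seek a $V_2$-valued $G$-equivariant formal vector field $X_t$ on $\widehat{V}$, vanishing to order $\ge 2$ at $0$, solving the cohomological equation
\begin{align*}
\langle X_t, \nabla_{v_2} W_t \rangle = q(v_2) - R(v_1, v_2).
\end{align*}
Writing $\nabla_{v_2} W_t = M_t(v_1, v_2)\, v_2$ by Taylor expansion (with $M_t(0,0)$ the invertible symmetric form of $q$), and noting $q - R = \langle v_2, (q - B) v_2 \rangle$ with $q - B$ symmetric and of order $\ge 1$ at the origin, the explicit $G$-equivariant choice $X_t \cneq M_t^{-1}(q - B)\, v_2$ solves the equation and has order $\ge 2$ at $0$. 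Integrating $X_t$ over $t \in [0,1]$ order-by-order yields a $G$-equivariant formal automorphism $\phi_2$ of $\widehat{V}$ with $\phi_2^{\ast}(\phi_1^{\ast} w) = w_1(v_1) + q(v_2)$; setting $\phi \cneq \phi_1 \circ \phi_2$ and $w_2 \cneq q$ completes the proof.

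The main obstacle is the Moser step: one must verify that the formal vector field $X_t$ genuinely integrates to a $G$-equivariant formal automorphism of $\widehat{V}$ with identity linear part. This is handled by working order-by-order in the maximal ideal $\mathbf{m}_0 \subset \widehat{\oO}_V$: at each $\mathbf{m}_0$-adic degree, $\phi_2$ is uniquely determined modulo higher degree by data of strictly lower degree, and convergence in the $\mathbf{m}_0$-adic topology is automatic because $X_t$ vanishes to order $\ge 2$ at $0$. $G$-equivariance propagates through every construction (Hessian decomposition, implicit function, Taylor remainder, inversion of $M_t$, order-by-order flow) since $G$ acts linearly on $V$ and all constructions are canonical, so no further appeal to reductivity beyond the initial splitting in Step 1 is required.
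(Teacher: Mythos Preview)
Your proof is correct and follows the standard parametric Morse lemma strategy. The paper takes a somewhat different route, so a brief comparison is in order.

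Both arguments begin identically by splitting $V=V_1\oplus V_2$ using reductivity and writing $w=w^{\ge 3}+q$. For the reduction to a function with $\nabla_{v_2}w|_{v_2=0}=0$, you use the implicit function theorem once (finding the graph $v_2=h(v_1)$ of fiberwise critical points and shifting), with $G$-equivariance coming from uniqueness of $h$. The paper instead performs two explicit coordinate changes: first $y_2^{(i)}\mapsto \partial w/\partial y_2^{(i)}$ to force $\Crit(w)\subset\{y_2=0\}$, then a shear $\tilde y_1^{(i)}=y_1^{(i)}+\sum_j c_{ij}y_2^{(j)}$ to kill $\partial_{y_2}w|_{y_2=0}$; the coefficients $c_{ij}$ arise from a Nakayama argument and their $G$-equivariant choice requires a \emph{second} application of reductivity (lifting a map $V_2\otimes\oO_{\widehat V_1}\to I_2\hookrightarrow I_1$ through the surjection $V_1\otimes\oO_{\widehat V_1}\twoheadrightarrow I_1$). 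Your implicit-function approach avoids this extra step. For the final straightening of $R(v_1,v_2)=B(v_1,v_2)(v_2,v_2)$ to $q(v_2)$, you carry out Moser's method explicitly, whereas the paper simply cites the $G$-equivariant Morse lemma from Arnold; the content is the same, but your account is self-contained.

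One minor imprecision: you write that $B(0,0)$ is ``proportional to'' the polarization of $q$, but later use that $q-B$ has order $\ge 1$ at the origin, which needs $B(0,0)$ equal to (not just proportional to) that polarization. This holds because $R(0,v_2)=w(0,v_2)$ has quadratic part exactly $q(v_2)$; it is worth stating this explicitly.
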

 \begin{proof}
 	As $w$ is $G$-invariant, the 
 	Hessian of $w$ at the origin 
 	$\mathrm{Hess}(w)|_{0} \colon V \to V^{\vee}$
 	is $G$-equivariant. 
 	As $G$ is reductive, there is a splitting 
 	$V=V_1 \oplus V_2$ as $G$-representations
 	and the Hessian at the origin 
 	is written as 
 	\begin{align*}
 		\mathrm{Hess}(w)|_{0}=
 		\left( \begin{array}{cc}
 			0 & 0 \\
 			0 & q \end{array}
 		\right) \colon V_1 \oplus V_2 \to 
 		V_1^{\vee} \oplus V_2^{\vee}
 	\end{align*}
 where $q$ is a $G$-equivariant isomorphism 
 $q \colon V_2 \stackrel{\cong}{\to}V_2^{\vee}$
 with $q^{\vee}=q$. 
 We identify $q$ as an element $q \in \mathrm{Sym}^2(V_2^{\vee})^{G}$, 
 which is a $G$-invariant non-degenerate quadratic form $q$ on $V_2$. 
 	For $(y_1, y_2) \in V_1 \oplus V_2$, 
 	we can write $w(y_1, y_2)$ as 
 	\begin{align}\label{w:deg3}
 		w(y_1, y_2)=w^{\ge 3}(y_1, y_2)+q(y_2)
 	\end{align}
 	where $w^{\ge 3}(y_1, y_2)$ consists of terms with 
 	degrees bigger than or equal to three. 
 	We set $d_i=\dim V_i$ and fix
 	basis 
 	of $V_1$, $V_2$ so that we write elements of them 
 	as $y_1=\{y_1^{(i)}\}_{1\le i \le d_1}$, 
 	$y_2=\{y_2^{(i)}\}_{1\le i\le d_2}$ respectively. 
 	Here we take an orthonormal basis for $V_2$
 	so $q$ is written as 
 	\begin{align*}
 		q(y_2)=\frac{1}{2}\sum_{i=1}^{d_1} (y_2^{(i)})^2.
 	\end{align*} 
 	Then the closed subscheme
 	\begin{align*}
 		\left\{ \frac{\partial w}{\partial y_2^{(i)}}=0 : 1 \le i \le d_2 \right\}
 		=\left\{y_2^{(i)}+ \frac{\partial w^{\ge 3}}{\partial y_2^{(i)}}=0 : 1 \le i \le d_2 \right\}
 		\subset \widehat{V}
 	\end{align*}
 	is smooth of codimension $d_2$. 
 	By the variable change 
 	\begin{align}\label{vchange}
 		y_2^{(i)} \mapsto 
 		 		\frac{\partial w}{\partial y_2^{(i)}}=
 		y_2^{(i)}+\frac{\partial w^{\ge 3}}{\partial y_2^{(i)}}
 		\end{align}
 	we may assume that 
 	$\Crit(w)$ is contained in $\{y_2=0\} \subset \widehat{V}$. 
 	The variable change (\ref{vchange}) can be described without 
 	coordinates as follows. 
 	Let $dw$ be the morphism given by the derivation of $w$
 	\begin{align}\label{mor:dw}
 		dw \colon V \otimes \oO_{\widehat{V}} \to \oO_{\widehat{V}}. 
 		\end{align}
 	We have the following morphisms 
 	\begin{align*}
 		\phi \colon	V^{\vee}=V_1^{\vee} \oplus V_2^{\vee}
 		\stackrel{(\id, q^{-1})}{\longrightarrow} V_1^{\vee} \oplus V_2
 		\stackrel{(\id, dw|_{V_2})}{\longrightarrow}
 		\widehat{\oO}_V.  
 	\end{align*}
 	The above composition induces the isomorphism 
 	$\oO_{\widehat{V}} \stackrel{\cong}{\to} \oO_{\widehat{V}}$, 
 	which is identified with
 	the variable change (\ref{vchange}). 
 	The above construction is $G$-equivariant, 
 	so the variable change (\ref{vchange}) is $G$-equivariant. 
 	
 	The condition that 
 	$\Crit(w) \subset \{y_2=0\}$ implies that 
 	each $y_2^{(i)}$ is written as 
 	\begin{align*}
 		y_2^{(i)}=\sum_{j=1}^{d_1}a_{ij} \frac{\partial w}{\partial y_1^{(j)}}
 		+\sum_{j=1}^{d_2}b_{ij} \frac{\partial w}{\partial y_2^{(j)}}
 	\end{align*}
 	for some $a_{ij}, b_{ij} \in \oO_{\widehat{V}}$. 
 	By writing $b_{ij}=b_{ij}(0)+b_{ij}^{\ge 1}$ and comparing the 
 	degree one terms for $y_2$, we see that 
 	$b_{ij}(0)=\delta_{ij}$.
 	Therefore we obtain the relation 
 	\begin{align*}
 		-\frac{\partial w^{\ge 3}}{\partial y_2^{(i)}}
 		=\sum_{j=1}^{d_1}a_{ij} \frac{\partial w^{\ge 3}}{\partial y_1^{(i)}}+
 		\sum_{j=1}^{d_2}b_{ij}^{\ge 1}\left(y_2^{(j)}+\frac{\partial w^{\ge 3}}{\partial y_2^{(j)}}   \right). 
 	\end{align*}
 	The Nakayama lemma implies the inclusion of ideals 
 	\begin{align}\label{Nak:imp}
 		\left( \frac{\partial w^{\ge 3}}{\partial y_2^{(i)}} : 1\le i\le d_2 \right)
 		\subset 	\left( \frac{\partial w^{\ge 3}}{\partial y_1^{(j)}}, 
 		y_2^{(i)} : 1\le j\le d_1, 1\le i\le d_2 \right) 
 	\end{align}
 in $\widehat{\oO}_{\widehat{V}}$, the formal completion 
 at the maximal ideal of $\oO_{\widehat{V}}$.
  Since these are $G$-invariant 
 ideals, by the coherent completeness of $[\widehat{V}/G]$ 
 the inclusion (\ref{Nak:imp}) also holds in $\oO_{\widehat{V}}$
 (see the proof of Lemma~\ref{lem:idem}). 
 	In particular there is a relation of the form 
 	\begin{align}\label{rel:w}
 		\left.\frac{\partial w}{\partial y_2^{(i)}}\right\vert_{y_2=0}=
 		\sum_{i, j}c_{ij}\left.\frac{\partial w}{\partial y_1^{(j)}}\right\vert_{y_2=0}
 	\end{align}
 	for some $c_{ij} \in \oO_{\widehat{V}_1}$. 
 	We apply the variable change 
 	\begin{align}\label{varchang}
 		\widetilde{y}_1^{(i)} = y_1^{(i)}+\sum_{j}c_{ij}y_2^{(i)}, \ 
 		\widetilde{y}_2^{(i)}=y_2^{(i)}. 
 	\end{align}
 	Then we have 
 	\begin{align*}
 		\left.\frac{\partial w}{\partial \widetilde{y}_2^{(i)}}\right\vert_{\widehat{y}_2=0} 
 		&=\left.\left(\sum_{j}\frac{\partial y_1^{(j)}}{\partial \widetilde{y}_2^{(i)}} 
 		\frac{\partial w}{\partial y_1^{(j)}}
 		+\sum_{j}\frac{\partial y_2^{(j)}}{\partial \widetilde{y}_2^{(i)}}
 		\frac{\partial w}{\partial y_2^{(j)}}
 		\right)\right\vert_{\widetilde{y}_2=0} \\
 		&=-\sum_{j} c_{ij} \left.\frac{\partial w}{\partial y_1^{(j)}}\right\vert_{\widetilde{y}_2=0}
 		+\left.\frac{\partial w}{\partial y_2^{(i)}}\right\vert_{\widetilde{y}_2=0}=0. 
 	\end{align*}
 	It follows that we can assume that 
 	$(\partial w/\partial y_2^{(i)})|_{y_2=0}=0$. 
 	
 	We see that the variable change (\ref{varchang}) can be taken
 	to be $G$-equivariant. 
 	For the morphism (\ref{mor:dw}), we can write $dw \otimes \oO_{\widehat{V}_1}$ as 
 	\begin{align*}
 		dw \otimes \oO_{\widehat{V}_1}
 		=\alpha^{(1)}\oplus \alpha^{(2)} \colon 
 		(V_1 \otimes \oO_{\widehat{V}_1}) \oplus 
 		(V_2 \otimes \oO_{\widehat{V}_1})
 		\to \oO_{\widehat{V}_1}. 
 	\end{align*}
 	Then the ideals of $\oO_{\widehat{V}_1}$
 	\begin{align*}
 		I_1=\left(\left.\frac{\partial w}{\partial y_1^{(i)}}\right\vert_{y_2=0}   \right), \ 
 		I_2=\left(\left.\frac{\partial w}{\partial y_2^{(i)}}\right\vert_{y_2=0}   \right)
 	\end{align*}
 	are generated by the images of $\alpha^{(1)}$, $\alpha^{(2)}$ 
 	respectively, so in particular 
 	they are $G$-invariant.
 	By the relation (\ref{rel:w}) we have $I_2 \subset I_1$. 
 	We have the following $G$-equivariant diagram 
 	\begin{align*}
 		\xymatrix{
 			V_2 \otimes \oO_{\widehat{V}_1} \ar[r]^-{\alpha^{(2)}} \ar@{.>}[d]_-{\phi}& I_2  \ar@<-0.3ex>@{^{(}->}[d]  \\
 			V_1 \otimes \oO_{\widehat{V}_1} \ar[r]^-{\alpha^{(1)}} & I_1 
 		}
 	\end{align*}
 	where each horizontal arrows are surjections. 
 	As $G$ is reductive, from the above diagram 
 	there is a $G$-equivariant dotted arrow $\phi$ which makes
 	the above diagram commutative. 
 	A choice of $\phi$ corresponds to a choice of $c_{ij}$ in (\ref{rel:w}).
 	Then we have the $G$-equivariant morphism 
 	\begin{align*}
 		V^{\vee}=V_1^{\vee} \oplus V_2^{\vee}
 		\stackrel{(\id+\phi^{\vee}, \id)}{\longrightarrow} \oO_{\widehat{V}}.
 	\end{align*} 
 	The above morphism 
 	induces the $G$-equivariant isomorphism $\widehat{\oO}_V \stackrel{\cong}{\to} \widehat{\oO}_V$, which corresponds to the variable change (\ref{varchang}). 
 	In particular we can choose $c_{ij}$ so that (\ref{varchang}) is 
 	$G$-equivariant. 
 	
 	Finally we set 
 	\begin{align*}
 		g(y_1, y_2) \cneq w(y_1, y_2)-w(y_1, 0). 
 	\end{align*}
 	Then from the above arguments 
 	we have 
 	$g(y_1, 0)=0$ and $(\partial g/\partial y_2^{(i)})|_{y_2=0}=0$. 
 	It follows that $g(y_1, y_2)$ is written as 
 	\begin{align*}
 		g(y_1, y_2)=\sum_{i, j} y_2^{(i)} y_2^{(j)} Q_{ij}(y_1, y_2)
 	\end{align*}
 	for some $Q_{ij} \in \oO_{\widehat{V}}$. 
 	As the quadratic term of $g(y_1, y_2)$ coincides with $q$ by (\ref{w:deg3}), 
 	we have 
 	$Q_{ij}(0)=1/2 \cdot \delta_{ij}$. 
 	It follows that 
 	the critical locus of $g(y_1, y_2)$ is $\{y_2=0 \} \subset \widehat{V}$, so 
 	the $G$-equivariant Morse lemma (see~\cite[Section~17.3]{Arnod})
 	applied for $g$ implies that
 	by a $G$-equivariant variable change of the form 
 	$\widetilde{y}_1^{(i)}=y_1^{(i)}$, 
 	$\widetilde{y}_2^{(i)}=\sum_{i, j} \alpha^{(ij)}(y_1, y_2) y_2^{(j)}$
 	we can make $g(\widetilde{y}_1, \widetilde{y}_2)=q(\widetilde{y}_2)$. 
 	As $\mathrm{ord}_0(w(y_1, 0)) \ge 3$ from (\ref{w:deg3}), the lemma is proved.  	
 \end{proof}

		\bibliographystyle{amsalpha}
	\bibliography{math}

	\vspace{5mm}
	
	Kavli Institute for the Physics and 
	Mathematics of the Universe (WPI), University of Tokyo,
	5-1-5 Kashiwanoha, Kashiwa, 277-8583, Japan.

	\textit{E-mail address}: yukinobu.toda@ipmu.jp
\end{document}